\newtheorem{theorem}{Theorem}[section]
\newtheorem{lemma}[theorem]{Lemma}
\newtheorem{corollary}[theorem]{Corollary}
\newtheorem{proposition}[theorem]{Proposition}
\newtheorem{definition}[theorem]{Definition}
\theoremstyle{remark}
\newtheorem{remark}[theorem]{Remark}
\newtheorem{example}[theorem]{Example}
\newcommand{\fleche}[4]{                     % fonction
            \begin{array}{ccll} #1 & \rightarrow & #2 \\   %
                         #3 &\mapsto & #4          %
            \end{array}}
\newcommand{\foncIso}[5]{                     % fonction
            \begin{array}{rcll}#1 :& #2 & \overset{\sim}{\longrightarrow} & #3 \\   %
                         &#4 &\longmapsto & #5          %
            \end{array}}
\newcommand{\smallblacktriangleright}{%
  \vcenter{\hbox{\,\scalebox{0.75}{$\blacktriangleright$}\,}}%
}
\newcommand{\smallblacktriangleleft}{%
  \vcenter{\hbox{\,\scalebox{0.75}{$\blacktriangleleft$}\,}}%
}
\newcommand{\smallblacksquare}{%
  \vcenter{\hbox{\:\scalebox{0.4}{$\blacksquare$}\:}}%
}
\newcommand{\OO}{\mathcal{O}}
\newcommand{\UU}{\mathcal{U}}
\DeclareMathOperator{\Hom}{Hom}
\DeclareMathOperator{\End}{End}
\title[Braided categories of bimodules]{Braided categories of bimodules from stated skein TQFTs}
\author[F. Costantino, M. Faitg]{Francesco Costantino\textsuperscript{1} and Matthieu Faitg\textsuperscript{2}}
\begin{document}

\maketitle

\begin{center}
\textsuperscript{1,2}Institut de Math\'ematiques de Toulouse, Universit\'e Paul Sabatier,\\ 118 route de Narbonne, F-31062 Toulouse, France.

\medskip

\textsuperscript{1}francesco.costantino@math.univ-toulouse.fr, \textsuperscript{2}matthieu.faitg@math.univ-toulouse.fr
\end{center}

\begin{abstract}
For each braided category $\mathcal{C}$ we show that, under mild hypotheses, there is an associated category of ``half braided algebras'' and their bimodules internal to $\mathcal{C}$ which is not only monoidal but even braided and balanced. 
We use this in the case where $\mathcal{C}$ is the category of modules over a ribbon Hopf algebra to interpret stated skeins as a TQFT, namely a braided balanced functor from a category of cobordisms to this category of algebras and their bimodules. Although our construction works in full generality, we relate in the special case of finite-dimensional ribbon factorizable Hopf algebras the stated skein functor to the Kerler--Lyubashenko TQFT by interpreting the former as the ``endomorphisms'' of the latter. 
\end{abstract}

\bigskip

{\small \noindent {\em Keywords:} Stated skein algebras, TQFT, Categories of bimodules.

\smallskip

\noindent {\em 2020 Mathematics Subject Classification.} Primary: 18M15, 57K16. Secondary: 16D90}

\tableofcontents

\section{Introduction}
The links between algebra and topology have been strikingly fostered by the discovery of quantum invariants of knots and more in  general by topological quantum field theories. Braided categories are the natural setup for representing the braid groups, while ribbon categories yield a multitude of link invariants. The standard sources of braided categories are modules over a Hopf algebra $\mathcal{U}$, which must be quasi-triangular (\textit{i.e.} have an R-matrix $R\in \UU\otimes \UU$ satisfying suitable conditions). For the ribbon case, such algebras must further be endowed with a so-called ``ribbon element'' with special properties.

\indent Another key example is the study of TQFTs which, roughly, are monoidal functors from the category of surfaces and their cobordisms to a target monoidal category of algebraic nature. There are different versions of TQFTs, varying according to the choice of the source category and the target one. They are usually built from a ribbon Hopf algebra $\UU$ (with extra assumptions) as basic algebraic input, and a standard target is the category $\UU\text{-}\mathrm{Mod}$ of $\UU$-modules (or even $\UU\text{-}\mathrm{mod}$, \textit{i.e.} finite-dimensional modules).

\indent An important example of such a TQFT is the one constructed by Kerler and Lyubashenko, which has played a central role in topology. Denote by $\mathrm{Cob}$ the category of connected surfaces with one boundary component and their connected cobordisms with annular side boundary, and let $\mathrm{Cob}^{\sigma}$ be the refinement of $\mathrm{Cob}$ where surfaces are endowed with lagrangian subspaces of their homology and cobordisms are decorated by integers.

\begin{theorem}[\cite{KL}, following \cite{BD}]\label{thmKLintro}
If $\UU$ is finite dimensional and factorizable then there exists a braided monoidal functor $\mathrm{KL}: \mathrm{Cob}^{\sigma}\to \UU\text{-}\mathrm{mod}$ sending the once punctured torus to the adjoint representation of $\UU$. 
\end{theorem}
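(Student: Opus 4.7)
The plan is to build $\mathrm{KL}$ by giving a presentation of $\mathrm{Cob}^{\sigma}$ in terms of generators and relations and defining the functor on generators using the Hopf-algebra structure of the Lyubashenko coend
\[
\mathcal{L} \;=\; \int^{X\in \UU\text{-}\mathrm{mod}} X^{*}\otimes X
\]
in the braided category $\UU\text{-}\mathrm{mod}$. Since $\UU$ is finite-dimensional the coend exists and is represented concretely by $\UU^{*}$ with the coadjoint action; factorizability of $\UU$ provides an isomorphism $\UU^{*}\xrightarrow{\sim}\UU$ intertwining the coadjoint and adjoint actions, so that $\mathcal{L}$ is identified with $\UU_{\mathrm{ad}}$. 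This identification forces $\mathrm{KL}$ to send the once-punctured torus $\Sigma_{1,1}$ (the building block of the source category) to the adjoint representation, as required.

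First I would recall from Lyubashenko's work that $\mathcal{L}$ carries canonical structure maps making it a Hopf algebra in $\UU\text{-}\mathrm{mod}$, together with a Hopf pairing $\omega:\mathcal{L}\otimes\mathcal{L}\to\mathbf{1}$ whose non-degeneracy is equivalent to the factorizability of $\UU$, and a two-sided integral $\Lambda:\mathbf{1}\to\mathcal{L}$ coming from finite-dimensionality. Next I fix a Kerler-type handle presentation of $\mathrm{Cob}^{\sigma}$ (equivalently, the Bobtcheva--Piergallini presentation): the genus-$g$ surface $\Sigma_{g,1}$ is sent to $\mathcal{L}^{\otimes g}$, the $SL_2(\mathbb{Z})$-generators $S$ and $T$ acting on each handle are built from the $R$-matrix (through $\omega$) and the ribbon element, and the elementary cobordisms (handle attachments, bridge moves) are built from the product, coproduct, unit, counit, antipode and integral of $\mathcal{L}$. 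The lagrangian/$\mathbb{Z}$ decoration is used precisely to correct the signature anomaly generated by iterated $S$-moves.

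One then verifies that the assigned morphisms satisfy every relation in the presentation. Hopf-algebra axioms in $\UU\text{-}\mathrm{mod}$ match the ``lantern-type'' surface relations; the modular relations for $(S,T)$ reduce to standard identities between the integral, the ribbon element and the Hopf pairing; and handle-cancellation and handle-slide relations translate to bilinearity identities relating the product and coproduct of $\mathcal{L}$ through the $R$-matrix. Monoidality is built in by the tensor-power structure on objects, and the braiding on $\mathrm{Cob}^{\sigma}$ is sent to the braiding of $\UU\text{-}\mathrm{mod}$ by naturality of the coend.

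The main obstacle will be the verification of the modular and handle-slide relations, where the interplay between the two-sided integral, the ribbon element and the Hopf pairing $\omega$ has to be controlled exactly. This is where the two hypotheses enter essentially: finite-dimensionality is needed for the existence of the integral (without which no closed cobordism could be evaluated), while factorizability is needed for the non-degeneracy of $\omega$ that turns the abstract algebraic $S$-move into an isomorphism and makes $SL_2(\mathbb{Z})$ act. Once these relations are checked, the universal property of the presentation gives the braided monoidal functor $\mathrm{KL}$, and the computation $\mathrm{KL}(\Sigma_{1,1})=\mathcal{L}\cong\UU_{\mathrm{ad}}$ is then automatic.
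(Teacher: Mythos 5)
Your proposal follows essentially the same route the paper takes (and attributes to \cite{BD} and \cite{BBDP}): present $\mathrm{Cob}^{\sigma}$ as the braided category freely generated by a factorizable BP-Hopf algebra object (the punctured torus, with the lagrangian/integer decorations absorbing the anomaly), realize the coend $\mathscr{L}=\int^{X}X^{*}\otimes X$ as a BP-Hopf algebra in $\UU\text{-}\mathrm{mod}$ with integral and nondegenerate Hopf pairing, and identify $\mathscr{L}$ with the adjoint representation via the Drinfeld map, which is an isomorphism exactly when $\UU$ is factorizable. This is the content of Theorems \ref{teo:presentation} and \ref{teo:KLcoend} and the surrounding discussion in \S\ref{sec:BPhopf}, so your sketch is correct and matches the paper's approach.
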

\noindent We recall that factorizability is a non-degeneracy condition of the R-matrix. It is important to remark that the adjoint representation of $\UU$ is a very special object of $\UU\text{-}\mathrm{mod}$, namely it is a Hopf algebra object in it. 
That this is no accident has been understood from a topological point of view by the works of Kerler \cite{kerler}, Habiro (see in \cite{As11}), Bobtcheva--Piergallini \cite{BP} and more recently by Beliakova-Bobtcheva-De Renzi-Piergallini (see their paper \cite{BBDP} for a full account) who showed that the category of surfaces and their cobordisms is freely generated by a Hopf algebra object (the punctured torus) with extra properties which make it a so-called ``BP-Hopf algebra''.  

\smallskip

\indent Before explaining what is done in this paper, and how our results are related to the KL-TQFT, we make a technical comment: we will in general use a coribbon Hopf algebra $\OO$ instead of a ribbon Hopf algebra $\UU$ and right $\OO$-comodules instead of left $\UU$-modules. This switch from $\UU$ to $\OO$ is motivated by technical categorical reasons explained in \S\ref{sub:ComodVsMod}, and also because previous papers like \cite{CL,CL3Man} use comodules instead of modules. One can think of $\OO$ as a dual to $\UU$, the axioms of a coribbon Hopf algebra being completely dual to those of a ribbon Hopf algebra (in particular there is a co-R-matrix $\mathcal{R} : \OO \otimes \OO \to k$). When $\UU$ is finite-dimensional these two settings are equivalent. 

\smallskip

In the recent years a new kind of TQFT has emerged based on the choice of a coribbon Hopf algebra $\mathcal{O}$ and which associates to surfaces their ``stated skein algebras'' (as defined in \cite{LeTriDe,CL} for $\mathcal{O} = \mathcal{O}_q(\mathrm{SL}_2)$ and in \S\ref{subsecStatedSkMod} for general $\mathcal{O}$) and to cobordisms some bimodules over the boundary algebras. The target category has thus algebras as objects and isomorphism classes of bimodules over these algebras as morphisms. The composition is obtained by tensoring over the skein algebras of the glueing surfaces. The first TQFT of this kind has been formalised for the Hopf algebra $\mathcal{O}_q(\mathrm{SL}_2)$ in \cite{CL3Man}, for the cobordism category consisting of all ``marked surfaces'' and cobordisms with the symmetric monoidal structure coming from disjoint union (hence different from the category $\mathrm{Cob}$ introduced above) and with target the category consisting of algebras and bimodules in $\mathrm{Vect}_k$. However this construction lacks the richness of the Kerler-Lyubashenko TQFT because its target category is symmetric, while the target category of the KL TQFT is non-trivially braided. 

\smallskip

\indent The main purposes of this paper are:
\begin{enumerate}
\item to promote the stated skein module construction into a TQFT on $\mathrm{Cob}$ (Thm.\ref{thmStSkTQFTintro}),
\item to formalize in the general case of braided categories the algebraic properties of its target category, which consists of certain algebras and bimodules in $\mathrm{Comod}\text{-}\OO$ (Thm.\,\ref{teo:main1}),
\item and to compare our construction with the Kerler--Lyubashenko TQFT (Thm.\,\ref{thmEndKLintro}).
\end{enumerate}

\indent We start with the purely algebraic part of our results (item 2 above). The goal is to provide a complete algebraic setup which in particular appears as the target category for these TQFTs valued in categories of algebras and their bimodules. This setup can be constructed in a purely algebraic manner, without any reference to topology and also for more general ambient categories than $\mathrm{Comod}\text{-}\OO$, as we now explain.

\indent Let $\mathcal{C}$ be a braided category. The first main piece of the construction is the notion of {\em half-braided algebra}, which is an algebra object $A$ in $\mathcal{C}$ endowed with a half-braiding $t : A \otimes - \Rightarrow - \otimes A$ and satisfying a compatibility condition (Def.\,\ref{subsectionHbAlg}). Hence $(A,t)$ is an object in the Drinfeld center $\mathcal{Z}(\mathcal{C})$, but we stress that the compatibility condition does {\em not} mean that $(A,t)$ is an algebra in $\mathcal{Z}(\mathcal{C})$. The second main player is the notion of {\em hb-compatible bimodule}: a bimodule over two half-braided algebras is naturally endowed with two half-braidings coming from the left and right actions (see \eqref{halfBraidingInTermOfAction}) and we say that the bimodule is hb-compatible if these two half-braidings are equal. These two types of objects organize into a category, which moreover has striking properties:

\begin{theorem}[see Thm.\,\ref{thBraided} and \ref{thmBalanceBim}]\label{teo:main1}
Assume that the braided category $\mathcal{C}$ has coequalizers.
\\1. There is a monoidal category $\mathrm{Bim}^{\mathrm{hb}}_\mathcal{C}$ whose objects are half-braided algebras in $\mathcal{C}$ and whose morphisms are isomorphism classes of hb-compatible  bimodules. The composition is given by tensoring over middle algebras. The monoidal product is the ``braided tensor product'' (\S\ref{sectionPreliminariesModules}).
\\2. The category $\mathrm{Bim}^{\mathrm{hb}}_\mathcal{C}$ is braided, with the braiding provided in Thm.\,\ref{thBraided}.
\\3. Under extra assumptions on $\mathcal{C}$ (including LFP and cp-ribbon), $\mathrm{Bim}^{\mathrm{hb}}_\mathcal{C}$ is balanced (Thm.\,\ref{thmBalanceBim}).
\end{theorem}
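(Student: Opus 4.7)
The plan is to prove the three parts in order, since each builds on the previous one, and throughout the guiding principle is that the half-braiding $t$ attached to a half-braided algebra is precisely what is needed to make all the standard constructions (tensor product of algebras, tensor product of bimodules, braiding, twist) go through internally to $\mathcal{C}$. In particular I would treat the half-braidings as a ``partial lift'' of $A$ to $\mathcal{Z}(\mathcal{C})$: even though $(A,t)$ is not an algebra in $\mathcal{Z}(\mathcal{C})$ (by the warning after Def.\,\ref{subsectionHbAlg}), the half-braiding is still enough to commute $A$ past arbitrary factors in the constructions below. The role of the hb-compatibility of bimodules is dual: it says that the two half-braidings induced by the left and right actions agree, which is what allows the braiding and the twist to descend to bimodules.

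\textbf{Part 1: monoidal structure.} First I would define the composition: given hb-compatible bimodules $M$ from $A$ to $B$ and $N$ from $B$ to $C$, form the coequalizer $M\otimes_B N$ of the two $B$-actions; the existence of coequalizers in $\mathcal{C}$ makes this well-defined and (after standard argumentation) associative and unital with unit $A$ viewed as an $(A,A)$-bimodule. The key extra check, absent in the usual bimodule 2-category, is that $M\otimes_B N$ inherits a well-defined hb-structure from those of $M$ and $N$; this follows because the half-braiding induced by an action is natural and so descends to the coequalizer. Next I would define the braided tensor product of half-braided algebras: $(A,t)\boxtimes (B,s)$ has underlying object $A\otimes B$, multiplication obtained by inserting $t_B:A\otimes B\to B\otimes A$ between the factors, and half-braiding given by the composite of $t$ and $s$ on each variable; one verifies associativity using the hexagon for $t$ and $s$. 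The tensor product of bimodules is defined analogously by using the half-braidings to twist the actions past the ``other'' factor, and hb-compatibility of the result again reduces to hb-compatibility of the inputs. I would then bundle everything into a monoidal category, with unit the monoidal unit of $\mathcal{C}$ equipped with the canonical half-braiding.

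\textbf{Part 2: braiding.} I would define the braiding morphism $c_{A,B}:A\boxtimes B \to B\boxtimes A$ as an explicit hb-compatible bimodule, namely $A\otimes B$ viewed as an $(A\boxtimes B,\,B\boxtimes A)$-bimodule where the right action is twisted by the half-braiding $t_B$ (or equivalently the braiding of $\mathcal{C}$ applied to $A$ and $B$). The main work is to check the hexagon axioms in $\mathrm{Bim}^{\mathrm{hb}}_\mathcal{C}$; at the level of bimodules this amounts to producing explicit isomorphisms between the two relevant tensor products over middle algebras, which I would exhibit via the braiding of $\mathcal{C}$ together with the half-braidings. Naturality with respect to an arbitrary hb-compatible bimodule $M$ uses exactly the equality of the two induced half-braidings on $M$.

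\textbf{Part 3: balanced structure.} This is the main obstacle, since it is where the extra LFP and cp-ribbon hypotheses enter; accordingly it is treated separately as Thm.\,\ref{thmBalanceBim}. The plan here is to use the cp-ribbon structure to produce, for each half-braided algebra $(A,t)$, a twist bimodule $\theta_A\in\mathrm{Bim}^{\mathrm{hb}}_\mathcal{C}(A,A)$. Concretely, one expects $\theta_A$ to be $A$ itself as a bimodule, but with one of the two actions pre-composed with the ribbon twist of $\mathcal{C}$ acting on $A$; the LFP hypothesis is needed to control the fact that this twist is defined on all objects (including $A$), and cp-ribbon ensures compatibility of the twist with the half-braiding $t$. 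The remaining verifications, namely $\theta_{A\boxtimes B} = (\theta_A\boxtimes \theta_B)\circ c_{B,A}\circ c_{A,B}$ and naturality of $\theta$, follow from the corresponding ribbon identities in $\mathcal{C}$ passed through the half-braidings, once everything is expressed as an equality of explicit hb-compatible bimodules.
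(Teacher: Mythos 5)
Your Part~1 follows the paper's route and is sound in outline: the coequalizer composition, the descent of the half-braiding to the coequalizer (Prop.~\ref{propCompoOfHBCoherentBimodules}) and the stability of hb-compatibility under $\widetilde{\otimes}$ (Prop.~\ref{propHbCoherenceStableByBraidedTensorProduct}) are exactly the checks needed. One caveat already here: the multiplication of the braided tensor product of algebras uses the braiding $c$ of $\mathcal{C}$ for the middle crossing (see \eqref{defBraidedTensorProductOfAlgebras}), not the half-braiding $t$; only the half-braiding of the product is assembled from $t$ and $s$. Parts~2 and~3, however, each contain a genuine error at precisely the point where the half-braiding structure does the work.

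In Part~2 you twist the regular bimodule ``by the half-braiding $t_B$ (or equivalently the braiding of $\mathcal{C}$ applied to $A$ and $B$)''. These two choices are \emph{not} equivalent, and the distinction is the whole point of the construction. Twisting a regular bimodule by a morphism $f$ produces an hb-compatible bimodule only when $f$ is a morphism of \emph{half-braided} algebras (Lemma~\ref{lemmaTwistingCohBim}); the map $T_{\mathbb{A}_1,\mathbb{A}_2}=t^1_{A_2}$ is such a morphism (Prop.~\ref{propIsoBetweenA1A2AndA2A1}), whereas $c_{A_1,A_2}$ is an isomorphism of algebras but in general not a morphism in $\mathcal{Z}(\mathcal{C})$ (see the remark following Prop.~\ref{propIsoBetweenA1A2AndA2A1}). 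Twisting by $c_{A_1,A_2}$ would in general leave $\mathrm{Bim}^{\mathrm{hb}}_{\mathcal{C}}$, and if it worked it would braid all of $\mathrm{Bim}_{\mathcal{C}}$, which is not true. You must commit to $T_{\mathbb{A}_1,\mathbb{A}_2}$ as in \eqref{eq:braidingbimodule}.

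In Part~3 you propose to take $A$ with one action precomposed with the ribbon twist $\theta_A$ of $\mathcal{C}$. This fails: $\theta_A$ is not an algebra morphism (naturality gives $\theta_A\circ m = m\circ c_{A,A}\circ c_{A,A}\circ(\theta_A\otimes\theta_A)$, and the double braiding does not cancel), so precomposing an action with it does not even yield a bimodule, let alone an hb-compatible one. The correct twisting morphism is $\mathrm{bal}_{\mathbb{A}} = m\circ t_A\circ(\eta\otimes\theta_A)$ from \eqref{defBalOnHBAlg}, which combines the ribbon twist with the \emph{self half-braiding} $t_A$; it is an automorphism of half-braided algebras and coincides with the canonical balance $\Theta_{(A,t)}$ of $\mathcal{Z}(\mathcal{C})$ (Prop.~\ref{propPropertiesBalA}), and one sets $\mathrm{BAL}_{\mathbb{A}}=\mathbb{A}\smallblacktriangleleft\mathrm{bal}_{\mathbb{A}}$ as in \eqref{defBALbimod}. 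The missing factor $t_A$ is exactly what makes the construction land back in $\mathrm{Bim}^{\mathrm{hb}}_{\mathcal{C}}$ and satisfy the balance axiom; moreover LFP enters not only to extend $\theta$ from $\mathcal{C}_{\mathrm{cp}}$ to $\mathcal{C}$ but also, through the factorization of morphisms through compact objects (Lemma~\ref{lemmaFactoCompactObjects}), in the proof that $\mathrm{BAL}$ is natural.
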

\noindent We recall that a {\em balance} on a braided monoidal category $(\mathcal{C},\otimes,c)$ is a natural automorphism $b : \mathrm{Id}_{\mathcal{C}} \Rightarrow \mathrm{Id}_{\mathcal{C}}$ such that $b_{V \otimes W} = c_{W,V} \circ c_{V,W} \circ (b_V \otimes b_W)$ for all $V,W \in \mathcal{C}$. For the meaning of the other assumptions in item 3 (LFP and cp-ribbon), see the introduction of \S\ref{sectionLlinear} and \S\ref{sec:balanceBim}.

The novelty of the previous theorem relies mainly in points $2)$ and $3)$, namely the fact that the category $\mathrm{Bim}^{\mathrm{hb}}_\mathcal{C}$ is not only monoidal but it is braided and, under mild conditions, even balanced. Also, all the objects are naturally objects of $\mathcal{C}$ so that the whole construction is $\mathcal{C}$-ambient. These properties allow to use $\mathrm{Bim}^{\mathrm{hb}}_\mathcal{C}$ as target for topological constructions, as will be explained below.

\smallskip

\indent We remark that the notion of half-braided algebra is dual to the notion in \cite[Def.\,1.1]{schauenburg} (which could be called ``half-braided coalgebra'') defined by Peter Schauenburg. After finishing the writing of this paper we have been informed by David Jordan (whom we warmly thank) of the work of Theo Johnson-Freyd and David Reutter \cite{JR} where items 1 and 2 of Theorem \ref{teo:main1} were proved in the context of $2$-fusion categories in order to provide a concrete model for the $2$-categorical Drinfeld center of a connected fusion category. The proof we present here is based on standard $1$-categorical techniques and requires almost no restriction on $\mathcal{C}$, which in particular is not assumed to be semisimple nor finite. 

\smallskip

The definition of a half-braided algebra in $\mathcal{C}$ might seem unfamiliar at first sight, so we provide an equivalent definition under mild hypotheses on $\mathcal{C}$ (see \eqref{assumptionsCategoryC}), including in particular that $\mathcal{C}$ is locally finitely presentable (LFP, see Def.\,\ref{def:lfp}). A key and main example of such categories are the category of comodules over a coribbon Hopf algebra $\mathcal{O}$. Under these assumptions there exists a special object called the coend of $\mathcal{C}$ and denoted by $\mathscr{L}$, as recalled in \S\ref{sub:coend}. We show that a half-braided algebra in $\mathcal{C}$ is equivalent to a {\em $\mathscr{L}$-linear algebra} (Def.\,\ref{defLlinearAlgebra}), which is the datum of an algebra $A \in \mathcal{C}$ together with a special morphism $\mathscr{L} \to A$ satisfying a ``braided-commutativity'' relation (also this fact was observed in \cite{JR}). The definition of a hb-compatible bimodule can then be rephrased in $\mathscr{L}$-linear terms, yielding the notion of {\em $\mathscr{L}$-compatible bimodule}. $\mathscr{L}$-linear structures appear naturally in the topological examples explained below.%; it is from these examples that we had the idea to define $\mathscr{L}$-linear algebras and their compatible bimodules, and then to reformulate these notions in terms of half-braidings for more general categories $\mathcal{C}$.

\indent We note that what we call a $\mathscr{L}$-linear structure coincides with what is called a {\em quantum moment map} by Safronov \cite{safronov}. It does not exactly coincide with the original definition by Lu \cite{Lu} and Varagnolo-Vasserot \cite{VV} of a quantum moment map for a module-algebra over a Hopf algebra; the relation between the two definitions is explained in Section \ref{sub:LlinQMM}.

\smallskip

We now discuss the topological motivation underlying the above definitions. 
Half-braided algebras and their compatible bimodules are directly inspired from {\em stated skein modules} of surfaces and $3$-manifolds, initially defined for $\mathcal{O}=\mathcal{O}_q(\mathrm{SL}_2)$ \cite{LeTriDe,CL,CL3Man}, later for $\mathcal{O}=\OO_q(\mathrm{SL}_n)$ in \cite{LeSi} and which will be extended to a general categorical framework in the forthcoming paper \cite{CKL}. When the $3$-manifold is a thickened surface $\Sigma \times [-1,1]$, its stated skein module has a natural algebra structure and one speaks of the {\em stated skein algebra} of $\Sigma$. As shown in \cite{Ha} stated skein algebras are the combinatorial counterpart of factorization homology associated to a ribbon category (see \cite{BZBJ,BZBJ2}). Also, as shown in \cite[\S 6.2]{BFR}, stated skein algebras of surfaces with one boundary component are isomorphic to the so-called ``graph algebras". 

In this paper we consider the case of a general coribbon Hopf algebra $\OO$ and use a stated skein theory whose relations are based on the Reshetikhin--Turaev functor for the category $\mathcal{C}$ of right $\OO$-comodules (Def.\,\ref{defStSkMod}). In this way, we prove that to each object of the category $\mathrm{Cob}$ (defined above Thm.\,\ref{thmKLintro}) is associated an half-algebra in $\mathcal{C}$ which is the stated skein algebra of the surface and that to each morphism of $\mathrm{Cob}$ is associated a hb-compatible bimodule which is the stated skein module of the 3-manifold over the stated skein algebras of its boundary. Moreover this association defines a functor, which we call {\em stated skein functor} (Lem.\,\ref{lem:cutting}). The $\mathscr{L}$-linear structure on stated skein algebras appears very naturally, while the $\mathscr{L}$-compatibility of stated skein modules is easily seen topologically (Prop.\,\ref{prop:skeinLlinear}). Furthermore, the main new property is that this functor ``intertwines'' the braided balanced structure of $\mathrm{Cob}$ recalled in Section \ref{subsec:categoryCob} and the algebraically-defined braided balanced structure of $\mathrm{Bim}^{\mathrm{hb}}_\mathcal{C}$ in Thm.\,\ref{teo:main1}.
This is resumed in the following theorem:
\begin{theorem}[Thm.\,\ref{teo:monoidalfunctor}]\label{thmStSkTQFTintro}
Let $\mathcal{O}$ be a coribbon Hopf algebra and $\mathcal{C}=\mathrm{Comod}\text{-}\mathcal{O}$. The stated skein functor is a braided and balanced monoidal functor $\mathcal{S}_\mathcal{O}:\mathrm{Cob} \to \mathrm{Bim}^{\mathrm{hb}}_{\mathcal{C}}$. 
\end{theorem}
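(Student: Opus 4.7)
The plan is to take as given from Lem.\,\ref{lem:cutting} and Prop.\,\ref{prop:skeinLlinear} that $\mathcal{S}_\mathcal{O}$ is well-defined as a functor $\mathrm{Cob} \to \mathrm{Bim}^{\mathrm{hb}}_{\mathcal{C}}$: the latter identifies skein algebras (resp.\ skein modules) with half-braided algebras (resp.\ hb-compatible bimodules) via their $\mathscr{L}$-linear structure, while the former encodes the fact that gluing cobordisms corresponds to tensoring the associated bimodules over the middle algebra. It then remains to equip $\mathcal{S}_\mathcal{O}$ with a coherent tensorator and to verify the two additional compatibilities: with the braidings and with the balances.

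For monoidality, the key step is to identify the stated skein algebra of the boundary connect sum $\Sigma_1 \# \Sigma_2$ with the braided tensor product of $\mathcal{S}_\mathcal{O}(\Sigma_1)$ and $\mathcal{S}_\mathcal{O}(\Sigma_2)$ defined in \S\ref{sectionPreliminariesModules}. The isomorphism is induced by the natural inclusions of each factor into the connect sum; the fact that the resulting multiplication is twisted by the half-braiding of the two algebras is a direct consequence of the skein relations applied to tangles that cross the separating arc between the factors. The analogous statement for bimodules of cobordisms then yields the tensorator of $\mathcal{S}_\mathcal{O}$, and associator/unit coherence follows by a routine diagrammatic check.

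For the braided structure, the braiding in $\mathrm{Cob}$ is realised by an explicit ``half-twist'' cobordism $\beta_{\Sigma_1,\Sigma_2} : \Sigma_1 \# \Sigma_2 \to \Sigma_2 \# \Sigma_1$ described in \S\ref{subsec:categoryCob}. One must recognise the image bimodule $\mathcal{S}_\mathcal{O}(\beta_{\Sigma_1,\Sigma_2})$ as the algebraic braiding of Thm.\,\ref{thBraided}, which is given by a closed formula in terms of the half-braidings of the two skein algebras. Evaluating both sides on a product stated tangle $x \otimes y$, the topological half-twist moves $y$ across $x$, producing crossings which, by the skein relations, implement exactly the braiding of $\mathcal{C}$ on the coend tensor factors implicit in the $\mathscr{L}$-linear structure — and this coincides with the half-braiding appearing in Thm.\,\ref{thBraided} by Prop.\,\ref{prop:skeinLlinear}. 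The balance is handled analogously: the topological balance in $\mathrm{Cob}$ is a boundary Dehn twist cobordism, and its action on stated skeins amounts to inserting a full framing twist on any strand meeting the boundary, which, via the ribbon structure of $\mathcal{C}$ already built into the skein relations, realises the ribbon-element-based balance of Thm.\,\ref{thmBalanceBim}.

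The main obstacle will be the braided comparison: matching the topological half-twist cobordism with the algebraic braiding formula of Thm.\,\ref{thBraided} requires choosing a convenient presentation of $\beta_{\Sigma_1,\Sigma_2}$, carefully tracking how stated tangles traverse the exchange region, and identifying the resulting crossings with the specific half-braidings of $\mathcal{S}_\mathcal{O}(\Sigma_1)$ and $\mathcal{S}_\mathcal{O}(\Sigma_2)$. The other properties are comparatively formal: once this matching is in place, coherence of the braided balanced monoidal structure on $\mathcal{S}_\mathcal{O}$ is essentially forced by the fact that both $\mathrm{Cob}$ and $\mathrm{Bim}^{\mathrm{hb}}_{\mathcal{C}}$ already satisfy the corresponding axioms.
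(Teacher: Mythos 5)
Your proposal follows essentially the same route as the paper's proof of Thm.\,\ref{teo:monoidalfunctor}: functoriality via the cutting lemma, monoidality by splitting $M_1\otimes M_2$ along a separating disc and reading off the braided tensor product \eqref{brProdComodH} from the crossing skein relations, and the braided/balanced comparisons by isotoping a skein through the half-twist (resp.\ full-twist) cobordism to match \eqref{eq:braidingtwist} and \eqref{eq:balanceBimodHComod}. The only refinement worth noting is that the paper reduces the braiding and balance checks to computing the right action on the single generator $\emptyset$ (the unit, which is free for the left action), rather than on arbitrary elements.
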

We often write $\mathrm{Bim}^{\mathscr{L}}_{\mathcal{C}}$ instead of $\mathrm{Bim}^{\mathrm{hb}}_{\mathcal{C}}$ when we work with the $\mathscr{L}$-linear version of the notions of half-braided algebras and their compatible bimodules.

\indent The TQFT construction from Thm.\ref{thmStSkTQFTintro} is similar to that of \cite{CL3Man}  but essentially differs from it in that it applies to any coribbon Hopf algebra and, more importantly, deals with a different category of cobordisms which is braided but non-symmetric. This is reflected in the fact that the target category of the stated skein functor is non-trivially braided.
This is why in the present paper we decided to restrict to the category $\mathrm{Cob}$, but we remark that the extension of the definition of stated skein algebras and bimodules to the category used in \cite{CL3Man} would be straightforward.
\smallskip

\begin{remark}
There is another construction of a related categorical TQFT associated to $\mathrm{Cob}$ due essentially to \cite{GJS} which we roughly resume as follows. It is the functor $Z:\mathrm{Cob}\to \mathrm{Bimod}$ where $\mathrm{Bimod}$ is the category of presentable cocomplete $k$-linear categories with enough compact projectives and their ``bimodules'' (a bimodule on $C,D$ is a right exact functor $C\times D^{\mathrm{op}}\to \mathrm{Vect}$, a.k.a. a {\em profunctor}); the composition in $\mathrm{Bimod}$ is defined by a suitable coend. The value of $Z$ on a surface $\Sigma$ is the free cocompletion of the so-called ``skein category'' $\mathrm{Sk}(\Sigma)$ of $\Sigma$ with coefficients in $\mathcal{O}$-comod. 
By the works of Cooke \cite{Coo} and Ha\"ioun \cite{Ha} this category is equivalent to $\mathcal{S}_\mathcal{O}(\Sigma)$-modules (at least for $\mathcal{O}=\mathcal{O}_q(\mathfrak{sl}_2)$). 
On the level of morphisms of $\mathrm{Cob}$, by the Eilenberg-Watts equivalence and the above equivalence, one has that a right exact functor $\mathrm{Sk}(\Sigma)\times \mathrm{Sk}(\Sigma')^{\mathrm{op}}\to \mathrm{Vect}$ is given by a $\bigl(\mathcal{S}_\mathcal{O}(\Sigma), \mathcal{S}_\mathcal{O}(\Sigma')\bigr)$-bimodule. Therefore the TQFTs $\mathcal{S}_\mathcal{O}$ and $Z$ are directly comparable and we expect them to be equivalent. This could be proved by showing the isomorphism of the two bimodules $\mathcal{S}_\mathcal{O}(M)$ and $Z(M)$ for each of the generating morphisms $M$ of the torus with one boundary component (which is a Hopf algebra generator of $\mathrm{Cob}$: see Theorem \ref{teo:presentation}). We will not dwelve into these details in this paper and leave this for future research. 
\end{remark}

Finally we relate our construction with the famous Kerler-Lyubashenko TQFT. As a preliminary remark, we note that easy but important examples of half-braided algebras are internal endomorphism spaces $\underline{\End}(V)=V\otimes V^*$, where $V\in \mathcal{C}$ is dualizable (see \S\ref{sub:end}). Internal Hom spaces $\underline{\Hom}(V,W) = W \otimes V^*$ are then hb-compatible bimodules over the internal End spaces. We prove that these basic facts give rise to a braided monoidal functor $\underline{\End}:\overline{\mathcal{C}}\to \mathrm{Bim}^{\mathrm{hb}}_\mathcal{C}$, where $\overline{\mathcal{C}}$ is the full subcategory formed by dualisable objects which contain $\boldsymbol{1}$ as a direct summand. It turns out that this functor takes values in the set of invertible objects of $\mathrm{Bim}^{\mathrm{hb}}_\mathcal{C}$ (see Prop.\,\ref{propEmbedCIntoBim}).

\indent Now recall that the source category of the KL TQFT is $\mathrm{Cob}^\sigma$, which is similar to $\mathrm{Cob}$ but whose objects and morphisms are equipped with extra data. Hence there is a natural forgetful functor $\mathrm{Cob}^\sigma \to \mathrm{Cob}$. Its target category is $\mathcal{C} = \UU$-mod, the finite-dimensional $\UU$-modules, where $\UU$ is a given factorizable, ribbon finite-dimensional Hopf algebra (Thm.\,\ref{thmKLintro}). We note that in this finite-dimensional case, working over $\mathrm{comod}\text{-}\OO$ is equivalent to working over $\UU\text{-}\mathrm{mod}$, where $\OO$ is dual to $\UU$ as a Hopf algebra; thus the stated skein functor $\mathcal{S}_{\OO}$ in Thm.\,\ref{thmStSkTQFTintro} can be rephrased in terms of $\UU$-mod and hence denoted by $\mathcal{S}_{\UU}$. We show that in a proper sense ``stated skeins are the endomorphisms of the vector spaces associated to surfaces by the Kerler-Lyubashenko TQFT'': 
\begin{theorem}[See Thm.\,\ref{teo:commutativediagram} for a precise statement] \label{thmEndKLintro}
Suppose that the Hopf algebra $\UU$ is ribbon, finite-dimensional and factorizable and let $\mathcal{C}=\UU\text{-}\mathrm{mod}$. 
Then the following diagram of braided monoidal functors commutes:
 \begin{equation*}
\xymatrix@C=4em{
\mathrm{Cob}^{\sigma} \ar[r]^-{\mathrm{KL}} \ar[d]_-{\mathrm{Forget}} & \overline{\mathcal{C}}\ar[d]^-{\underline{\End}}\\
\mathrm{Cob} \ar[r]_{\mathcal{S}_\mathcal{O}=\mathcal{S}_{\mathcal{U}}}  & \mathrm{Bim}^{\mathrm{hb}}_{\mathcal{C}}
} \end{equation*} 
\end{theorem}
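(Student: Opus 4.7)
The strategy relies on the fact that, by the results of Beliakova--Bobtcheva--De Renzi--Piergallini mentioned in the introduction (Theorem \ref{teo:presentation}), the category $\mathrm{Cob}$ admits a presentation as a braided monoidal category freely generated by a BP-Hopf algebra object, namely the once-punctured torus $T = \Sigma_{1,1}$ with its standard cobordism-theoretic Hopf structure. All four functors $\mathrm{Forget}$, $\mathcal{S}_{\UU}$, $\mathrm{KL}$, and $\underline{\End}$ are braided monoidal (by hypothesis, by Theorem \ref{thmStSkTQFTintro}, by Theorem \ref{thmKLintro}, and by Proposition \ref{propEmbedCIntoBim} respectively), so the two composites $\underline{\End} \circ \mathrm{KL}$ and $\mathcal{S}_{\UU} \circ \mathrm{Forget}$ are braided monoidal functors out of $\mathrm{Cob}^{\sigma}$. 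Consequently, it suffices to exhibit a compatible isomorphism on the generating object $T$ and to verify that, under this identification, the images of the finite list of generating morphisms (the BP-Hopf algebra structure maps of $T$) agree on both sides.

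For the generating object, the top-right route yields $\underline{\End}(\mathrm{KL}(T)) = \underline{\End}(\UU_{\mathrm{ad}}) = \UU_{\mathrm{ad}} \otimes \UU_{\mathrm{ad}}^{*}$ with its internal endomorphism algebra structure and the canonical half-braiding from \S\ref{sub:end}. The bottom-left route produces the stated skein algebra $\mathcal{S}_{\UU}(T)$, which by direct skein computation admits a natural identification with the graph algebra of the torus in the sense of \cite{BFR}, and hence with $\UU \otimes \UU^{*}$ carrying a Heisenberg-double-type product together with its canonical $\mathscr{L}$-linear structure coming from the embedding $\mathscr{L} \hookrightarrow \mathcal{S}_{\UU}(T)$ given by skeins running around the non-separating curve of $T$. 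Translating the $\mathscr{L}$-linear structure into a half-braiding via the equivalence between half-braided algebras and $\mathscr{L}$-linear algebras from Section \ref{sectionLlinear}, both half-braided algebras are identified with $\underline{\End}(\UU_{\mathrm{ad}})$, producing the required isomorphism on the generator.

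It then remains to check the generating morphisms, i.e.\ the elementary cobordisms encoding the BP-Hopf algebra structure of $T$ (product, unit, coproduct, counit, antipode, and the integral/ribbon structure maps). For each such cobordism $M$, the top-right route gives the internal Hom bimodule $\underline{\Hom}\bigl(\UU_{\mathrm{ad}}^{\otimes m}, \UU_{\mathrm{ad}}^{\otimes n}\bigr)$ built from the corresponding Hopf algebra structure map of $\UU_{\mathrm{ad}}$, while the bottom-left route gives the stated skein bimodule $\mathcal{S}_{\UU}(M)$, which one computes diagrammatically by the stated-skein handle calculus. The comparison is routine for the (co)multiplicative generators, but I expect the main technical obstacle to lie in the integral-type generators, namely the $2$- and $3$-handle cobordisms: matching them requires identifying the integral of $\UU$ with specific skein elements, and this is precisely where the hypothesis that $\UU$ is ribbon, finite-dimensional and factorizable enters essentially, through the Drinfeld-type isomorphism $\UU \cong \UU^{*}$ used to identify two-sided integrals on both sides.

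Finally, the extra data carried by $\mathrm{Cob}^{\sigma}$ (lagrangians on objects, integer signature corrections on cobordisms) do not interfere with the comparison: lagrangians are erased by $\mathrm{Forget}$ and are used in $\mathrm{KL}$ only to define the functor consistently, while the integer decorations contribute framing anomaly scalars to $\mathrm{KL}$ that disappear after applying $\underline{\End}$, since $\underline{\End}$ of a scalar multiple of a morphism equals $\underline{\End}$ of the morphism. This ensures that the diagram commutes on the nose as claimed.
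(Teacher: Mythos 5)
Your overall strategy coincides with the paper's: reduce to the BP-Hopf generator via Theorem \ref{teo:presentation}, identify the images of the generating object through the Alekseev-type isomorphism $\mathcal{S}_{\UU}(\Sigma_1)\cong\underline{\End}(\mathscr{L})$ (Lemma \ref{lemmaAlekseevIso}), and then compare the images of the structural morphisms. The gap is in that last step. You propose to match each structural cobordism with the corresponding internal Hom bimodule by explicit stated-skein computation, declare the (co)multiplicative cases ``routine,'' and openly flag the integral-type generators as an unresolved technical obstacle requiring an identification of integrals with specific skein elements. None of this explicit matching is actually carried out, and as stated it would be genuinely hard: you would need to compute $\mathcal{S}_{\UU}(M)$ as a bimodule for each generator and exhibit an isomorphism with $\underline{\Hom}\bigl(\mathscr{L}^{\otimes m},\mathscr{L}^{\otimes n}\bigr)$, including for the surgery/integral cobordisms.

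The missing idea that makes the comparison tractable is the rigidity statement of Lemma \ref{lemmaUniqueBimodCoh}: since $\UU$ is factorizable, any non-zero finite-dimensional $\mathscr{L}$-compatible bimodule between $\underline{\End}(V)$ and $\underline{\End}(W)$ is automatically a direct sum of copies of $\underline{\Hom}(V,W)$. Combined with the $\mathscr{L}$-compatibility of stated skein bimodules (Prop.\,\ref{prop:skeinLlinear}), this reduces the morphism-level check to showing $0<\dim_k\mathcal{S}_{\UU}(M)\leq\dim_k\underline{\Hom}\bigl(\mathscr{L}^{\otimes m},\mathscr{L}^{\otimes n}\bigr)$ for each generating cobordism $M$, which the paper establishes by a splitting-surface and generating-skeins argument (non-vanishing coming from an embedding into a ball and the Reshetikhin--Turaev evaluation). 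In particular the precise form of the integral, cointegral, twist and copairing of $\mathscr{L}$ is irrelevant, because $\underline{\End}$ sends any two morphisms $V\to W$ to the same bimodule $\underline{\Hom}(V,W)$ --- an observation you do make, but only in the context of the framing anomaly, without exploiting it (together with Lemma \ref{lemmaUniqueBimodCoh}) to dissolve the very obstacle you flag. As written, your proof is incomplete at exactly the point you identify as the main difficulty.
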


We stress that, while the Kerler-Lyubashenko functor is defined only for finite-dimensional factorizable ribbon Hopf algebras, the stated skein functor is defined for general (co)-ribbon Hopf algebras. The above theorem might hint at the existence of Kerler-Lyubashenko functors at this level of generality which would still complete the commuting diagram. This is matter for future investigations. 

\medskip

\noindent \textbf{Acknowledgements.} ~We wish to thank St\'ephane Baseilhac, David Jordan, Benjamin Ha\"ioun, Julien Korinman, Thang Le, Jules Martel and Philippe Roche  for many interesting conversations. 
We also thank the anonymous referee for the careful reading of the text and his/her questions which helped us improving its quality. The authors are extremely grateful to CIMI Labex ANR 11-LABX-0040 at IMT Toulouse who funded this collaboration within the program ANR-11-IDEX-0002-02.

\section{Preliminaries}
In all this paper $\mathcal{C} = (\mathcal{C}, \otimes, \boldsymbol{1}, c)$ is a braided monoidal category, assumed strict for simplicity. Further assumptions on $\mathcal{C}$ will be made later. The braiding is a natural isomorphism $c_{X,Y} : X \otimes Y \overset{\sim}{\longrightarrow} Y \otimes X$ for all $X, Y \in \mathcal{C}$ satisfying the hexagon identities (which are triangles for strict $\mathcal{C}$). We will use the usual diagrammatic notation for braided monoidal categories:
\begin{center}
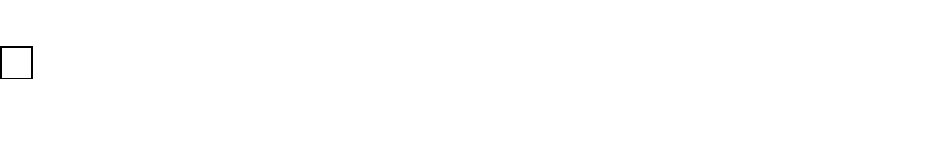
\end{center}
Note that we read diagrams {\em from bottom to top}. Later we will work with rigid objects (\textit{i.e.} which have left and right duals $X^*$ and $^*X$ \cite[\S 2.10]{EGNO}); in this case the duality morphisms will be represented by
\begin{equation}\label{diagramsForDuality}
%% Creator: Inkscape 1.1.2 (0a00cf5339, 2022-02-04), www.inkscape.org
%% PDF/EPS/PS + LaTeX output extension by Johan Engelen, 2010
%% Accompanies image file '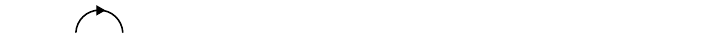' (pdf, eps, ps)
%%
%% To include the image in your LaTeX document, write
%%   \input{<filename>.pdf_tex}
%%  instead of
%%   \includegraphics{<filename>.pdf}
%% To scale the image, write
%%   \def\svgwidth{<desired width>}
%%   \input{<filename>.pdf_tex}
%%  instead of
%%   \includegraphics[width=<desired width>]{<filename>.pdf}
%%
%% Images with a different path to the parent latex file can
%% be accessed with the `import' package (which may need to be
%% installed) using
%%   \usepackage{import}
%% in the preamble, and then including the image with
%%   \import{<path to file>}{<filename>.pdf_tex}
%% Alternatively, one can specify
%%   \graphicspath{{<path to file>/}}
%% 
%% For more information, please see info/svg-inkscape on CTAN:
%%   http://tug.ctan.org/tex-archive/info/svg-inkscape
%%
\begingroup%
  \makeatletter%
  \providecommand\color[2][]{%
    \errmessage{(Inkscape) Color is used for the text in Inkscape, but the package 'color.sty' is not loaded}%
    \renewcommand\color[2][]{}%
  }%
  \providecommand\transparent[1]{%
    \errmessage{(Inkscape) Transparency is used (non-zero) for the text in Inkscape, but the package 'transparent.sty' is not loaded}%
    \renewcommand\transparent[1]{}%
  }%
  \providecommand\rotatebox[2]{#2}%
  \newcommand*\fsize{\dimexpr\f@size pt\relax}%
  \newcommand*\lineheight[1]{\fontsize{\fsize}{#1\fsize}\selectfont}%
  \ifx\svgwidth\undefined%
    \setlength{\unitlength}{344.49776042bp}%
    \ifx\svgscale\undefined%
      \relax%
    \else%
      \setlength{\unitlength}{\unitlength * \real{\svgscale}}%
    \fi%
  \else%
    \setlength{\unitlength}{\svgwidth}%
  \fi%
  \global\let\svgwidth\undefined%
  \global\let\svgscale\undefined%
  \makeatother%
  \begin{picture}(1,0.07701122)%
    \lineheight{1}%
    \setlength\tabcolsep{0pt}%
    \put(0,0){\includegraphics[width=\unitlength,page=1]{diagramsForDuality.pdf}}%
    \put(0.0962157,0.00404886){\color[rgb]{0,0,0}\makebox(0,0)[lt]{\lineheight{1.25}\smash{\begin{tabular}[t]{l}$_{X^*}$\end{tabular}}}}%
    \put(0.15981162,0.00376877){\color[rgb]{0,0,0}\makebox(0,0)[lt]{\lineheight{1.25}\smash{\begin{tabular}[t]{l}$_X$\end{tabular}}}}%
    \put(-0.0005196,0.03135637){\color[rgb]{0,0,0}\makebox(0,0)[lt]{\lineheight{1.25}\smash{\begin{tabular}[t]{l}$\mathrm{ev}_X =$\end{tabular}}}}%
    \put(0,0){\includegraphics[width=\unitlength,page=2]{diagramsForDuality.pdf}}%
    \put(0.44086559,0.06476812){\color[rgb]{0,0,0}\makebox(0,0)[lt]{\lineheight{1.25}\smash{\begin{tabular}[t]{l}$_{X^*}$\end{tabular}}}}%
    \put(0.38216153,0.06487){\color[rgb]{0,0,0}\makebox(0,0)[lt]{\lineheight{1.25}\smash{\begin{tabular}[t]{l}$_X$\end{tabular}}}}%
    \put(0.25326742,0.03168006){\color[rgb]{0,0,0}\makebox(0,0)[lt]{\lineheight{1.25}\smash{\begin{tabular}[t]{l}$\mathrm{coev}_X =$\end{tabular}}}}%
    \put(0,0){\includegraphics[width=\unitlength,page=3]{diagramsForDuality.pdf}}%
    \put(0.6404863,0.00404886){\color[rgb]{0,0,0}\makebox(0,0)[lt]{\lineheight{1.25}\smash{\begin{tabular}[t]{l}$_X$\end{tabular}}}}%
    \put(0.70056806,0.00376877){\color[rgb]{0,0,0}\makebox(0,0)[lt]{\lineheight{1.25}\smash{\begin{tabular}[t]{l}$_{^*\!X}$\end{tabular}}}}%
    \put(0.54383661,0.03135637){\color[rgb]{0,0,0}\makebox(0,0)[lt]{\lineheight{1.25}\smash{\begin{tabular}[t]{l}$\widetilde{\mathrm{ev}}_X =$\end{tabular}}}}%
    \put(0,0){\includegraphics[width=\unitlength,page=4]{diagramsForDuality.pdf}}%
    \put(0.98513632,0.06476813){\color[rgb]{0,0,0}\makebox(0,0)[lt]{\lineheight{1.25}\smash{\begin{tabular}[t]{l}$_X$\end{tabular}}}}%
    \put(0.92126016,0.06474489){\color[rgb]{0,0,0}\makebox(0,0)[lt]{\lineheight{1.25}\smash{\begin{tabular}[t]{l}$_{^*\!X}$\end{tabular}}}}%
    \put(0.7975381,0.03168007){\color[rgb]{0,0,0}\makebox(0,0)[lt]{\lineheight{1.25}\smash{\begin{tabular}[t]{l}$\widetilde{\mathrm{coev}}_X =$\end{tabular}}}}%
    \put(0,0){\includegraphics[width=\unitlength,page=5]{diagramsForDuality.pdf}}%
  \end{picture}%
\endgroup%

\end{equation}

\subsection{Drinfeld center of \texorpdfstring{$\mathcal{C}$}{a monoidal category}}\label{subsecZC}
\indent Let $V$ be an object in $\mathcal{C}$. Recall that a {\em half-braiding for $V$} is a natural isomorphism $t : V \otimes - \overset{\sim}{\implies} - \otimes V$ such that for all $X, Y \in \mathcal{C}$
\begin{equation}\label{axiomHalfBraiding}
t_{X \otimes Y} = (\mathrm{id}_X \,\otimes\, t_Y) \circ (t_X \,\otimes\, \mathrm{id}_Y).
\end{equation}
Such half-braiding might not exist or be not at all unique, depending on $V$. In particular we have $t_{\boldsymbol{1}} = \mathrm{id}_V$ (because $t_{\boldsymbol{1}} = t_{\boldsymbol{1} \otimes \boldsymbol{1}} = t_{\boldsymbol{1}} \circ t_{\boldsymbol{1}}$). The {\em Drinfeld center of $\mathcal{C}$} is the category $\mathcal{Z}(\mathcal{C})$ whose objects are pairs $(V,t)$ where $V \in \mathcal{C}$ and $t$ is a half-braiding for $V$. A morphism $f \in \Hom_{\mathcal{Z}(\mathcal{C})}\bigl( (V,t), (W,u) \bigr)$ is a morphism $f \in \Hom_{\mathcal{C}}(V,W)$ such that $(\mathrm{id}_X \otimes f) \circ t_X = u_X \circ (f \otimes \mathrm{id}_X)$. The category $\mathcal{Z}(\mathcal{C})$ becomes monoidal by defining
\begin{equation}\label{defTensorProductInZC}
(V_1,t^1) \otimes (V_2,t^2) = \bigl( V_1 \otimes V_2, t^{1,2} \bigr) \quad \text{with } \: t^{1,2}_X = (t^1_X \otimes \mathrm{id}_{V_2}) \circ (\mathrm{id}_{V_1} \otimes t^2_X).
\end{equation}
Due to the naturality of the half-braidings we have
\begin{equation}\label{braidingOnZC}
t^1_{V_2} \in \Hom_{\mathcal{Z}(\mathcal{C})}\bigl((V_1,t^1) \otimes (V_2,t^2), (V_2,t^2) \otimes (V_1,t^1) \bigr)
\end{equation}
and this fact allows to define a braiding $T$ on $\mathcal{Z}(\mathcal{C})$:
\[ T_{(V_1,t^1),(V_2,t^2)} = t^1_{V_2} : (V_1,t^1) \otimes (V_2,t^2) \overset{\sim}{\longrightarrow} (V_2,t^2) \otimes (V_1,t^1). \] 
The braiding $T$ will be of great importance in the sequel.

\subsection{Algebras and (bi)modules}\label{sectionPreliminariesModules}
In this subsection we recall the notion of algebra object in a braided category, and of left or right module over the algebra. We also define similarly bimodules and recall the definition of braided tensor product of two algebras and of two modules. All this material is standard. 

\indent An {\em algebra} $\mathbf{A}$ in $\mathcal{C}$ is a triple $(A, m, \eta)$ where $A \in \mathcal{C}$, $m \in \Hom_{\mathcal{C}}(A \otimes A, A)$ and $\eta \in \Hom_{\mathcal{C}}(\boldsymbol{1}, A)$ are such that
\[ m \circ (m \otimes \mathrm{id}_A) = m \circ (\mathrm{id}_A \otimes m), \qquad m \circ (\eta \otimes \mathrm{id}_A) = m \circ (\mathrm{id}_A \otimes \eta) = \mathrm{id}_A. \]
The morphism $m$ is called the multiplication and $\eta$ is called the unit. A {\em morphism of algebras} $f : \mathbf{A} \to \mathbf{A}' = (A', m', \eta')$ is $f \in \Hom_{\mathcal{C}}(A,A')$ such that $f \circ m = m' \circ (f \otimes f)$ and $f \circ \eta = \eta'$. Following \cite[Lem. 9.2.12]{Majid}, the {\em braided tensor product} of two algebras $\mathbf{A}_1 = (A_1, m_1, \eta_1)$ and $\mathbf{A}_2 = (A_2, m_2, \eta_2)$ is the algebra
\begin{equation}\label{defBraidedTensorProductOfAlgebras}
\mathbf{A}_1 \,\widetilde{\otimes}\, \mathbf{A}_2 = \bigl( A_1 \otimes A_2, \: (m_1 \otimes m_2) \circ (\mathrm{id}_{A_1} \otimes c_{A_2,A_1} \otimes \mathrm{id}_{A_2}),\: \eta_1 \otimes \eta_2 \bigr)
\end{equation}
where we recall that $c$ denotes the braiding in $\mathcal{C}$.

\smallskip

\indent Let $\mathbf{A} = (A,m,\eta)$ be an algebra in $\mathcal{C}$. A {\em left $\mathbf{A}$-module} is a couple $\mathbf{M} = (M, \smallblacktriangleright)$ where $M$ is an object in $\mathcal{C}$ and $\smallblacktriangleright \in \Hom_{\mathcal{C}}(A \otimes M, M)$ is such that
\begin{equation}\label{defLeftModuleInAMonoidalCat}
\smallblacktriangleright \circ (m \otimes \mathrm{id}_M) = \smallblacktriangleright \circ (\mathrm{id}_A \otimes \smallblacktriangleright), \qquad \smallblacktriangleright \circ (\eta \otimes \mathrm{id}_M) = \mathrm{id}_M.
\end{equation}
A {\em morphism of left $\mathbf{A}$-modules} $f : \mathbf{M} \to \mathbf{M}' = (M',\smallblacktriangleright')$ is $f \in \Hom_{\mathcal{C}}(M,M')$ such that $f \circ \smallblacktriangleright = \smallblacktriangleright' \circ (\mathrm{id}_A \otimes f)$. If $\mathbf{M}_1 = (M_1, \smallblacktriangleright_{\!1})$ is a left $\mathbf{A}_1$-module and $\mathbf{M}_2 = (M_2, \smallblacktriangleright_{\!2})$ is a left $\mathbf{A}_2$-module, the {\em braided tensor product} of $\mathbf{M}_1$ and $\mathbf{M}_2$ is the left $(\mathbf{A}_1 \,\widetilde{\otimes}\, \mathbf{A}_2)$-module
\begin{equation}\label{defBraidedTensorProductOfLeftModules}
\mathbf{M}_1 \,\widetilde{\otimes}\, \mathbf{M}_2 = \bigl( M_1 \otimes M_2, \; (\smallblacktriangleright_{\!1} \otimes \smallblacktriangleright_{\!2}) \circ (\mathrm{id}_{A_1} \otimes c_{A_2, M_1} \otimes \mathrm{id}_{M_2}) \bigr).
\end{equation}
The notions of right $\mathbf{A}$-module, morphism of right $\mathbf{A}$-modules and braided tensor product of right $\mathbf{A}$-modules are defined similarly; for the latter, given right $\mathbf{A}_i$-modules $\mathbf{N}_i = (N_i, \smallblacktriangleleft_i)$ for $i=1,2$ we have
\begin{equation}\label{defBraidedTensorProductOfRightModules}
\mathbf{N}_1 \,\widetilde{\otimes}\, \mathbf{N}_2 = \bigl( N_1 \otimes N_2, \: (\smallblacktriangleleft_{\!1} \otimes \smallblacktriangleleft_{\!2}) \circ (\mathrm{id}_{N_1} \otimes c_{N_2, A_1} \otimes \mathrm{id}_{A_2}) \bigr).
\end{equation}
\indent Finally, a $(\mathbf{A}', \mathbf{A})$-bimodule is a triple $\mathbf{B} = (B, \smallblacktriangleright, \smallblacktriangleleft)$ such $(B, \smallblacktriangleright)$ is a left $\mathbf{A}'$-module, $(B, \smallblacktriangleleft)$ is a right $\mathbf{A}$-module and we have
\begin{equation}\label{defBimoduleInAMonoidalCat}
\smallblacktriangleright \circ (\mathrm{id}_{A'} \otimes \smallblacktriangleleft) = \smallblacktriangleleft \circ (\smallblacktriangleright \otimes \mathrm{id}_A).
\end{equation}
If $\mathbf{B}_1 = (B_1, \smallblacktriangleright_{\!1}, \smallblacktriangleleft_{\!1})$ is a $(\mathbf{A}'_1, \mathbf{A}_1)$-bimodule and $\mathbf{B}_2 = (B_2, \smallblacktriangleright_{\!2}, \smallblacktriangleleft_{\!2})$ is a $(\mathbf{A}'_2, \mathbf{A}_2)$-bimodule, the {\em braided tensor product} of $\mathbf{B}_1$ and $\mathbf{B}_2$ is the $(\mathbf{A}'_1 \,\widetilde{\otimes}\, \mathbf{A}'_2, \mathbf{A}_1 \,\widetilde{\otimes}\, \mathbf{A}_2)$-bimodule defined by combining the left and right actions \eqref{defBraidedTensorProductOfLeftModules} and \eqref{defBraidedTensorProductOfRightModules}.

\subsection{The Morita category \texorpdfstring{$\mathrm{Bim}_{\mathcal{C}}$}{of algebras and bimodules in C}}\label{sectionMoritaCategoryInGeneral}
The goal of this subsection is to define a category whose objects are algebras and morphisms are isomorphism classes of bimodules in $\mathcal{C}$. The main non-trivial statement is that this category is monoidal. 

Recall from \cite[\S III.3]{MLCat} that a coequalizer of a pair of morphisms $\xymatrix{
X \ar@<.7ex>[r]^-{f_1} \ar@<-.7ex>[r]_-{f_2} & Y }$ in $\mathcal{C}$, if it exists, is an object $C \in \mathcal{C}$ together with a morphism $q : Y \to C$ such that
\begin{enumerate}
\item $q \circ f_1 = q \circ f_2$
\item if $r : Y \to D$ satisfies $r \circ f_1 = r \circ f_2$, then there is a unique $\widetilde{r} : C \to D$ such that $r = \widetilde{r} \circ q$.
\end{enumerate}
A coequalizer is unique up to a unique isomorphism, and denoted by $(C,q)= \mathrm{coeq}(f_1, f_2)$. Note the following simple fact: if $g,g' \in \Hom_{\mathcal{C}}(C,Z)$ are such that $g \circ q = g' \circ q$, then $g = g'$.

\smallskip

\indent From now on, we make the following two assumptions on the monoidal category $\mathcal{C}$:
\begin{itemize}
\item $\mathcal{C}$ has all coequalizers.
\item For any $X \in \mathcal{C}$, the functors $- \otimes X$ and $X \otimes -$ preserve coequalizers.
\end{itemize}
Explicitly, the second item means that if $(C,q) = \mathrm{coeq}(f_1,f_2)$ then $(C \otimes X, q \otimes \mathrm{id}_X) = \mathrm{coeq}(f_1 \otimes \mathrm{id}_X, f_2 \otimes \mathrm{id}_X)$ and similarly for $X \otimes -$. With these assumptions we can define the composition operation for bimodules as follows. For $i = 1,2,3$, let $\mathbf{A}_i = (A_i, m_i, \eta_i)$ be algebras in $\mathcal{C}$ and let $\mathbf{B}_1 = (B_1, \smallblacktriangleright_{\! 1}, \smallblacktriangleleft_{\! 1})$ be an $(\mathbf{A}_2, \mathbf{A}_1)$-bimodule and $\mathbf{B}_2 = (B_2, \smallblacktriangleright_{\!2}, \smallblacktriangleleft_{\!2})$ be an $(\mathbf{A}_3, \mathbf{A}_2)$-bimodule. We let
\begin{equation}\label{defCompositionOfBimodules}
\mathbf{B}_2 \circ \mathbf{B}_1 = \mathrm{coeq}\biggl(\xymatrix@C=5em{
B_2 \otimes A_2 \otimes B_1  \ar@<.7ex>[r]^-{\smallblacktriangleleft_{\!2} \,\otimes\, \mathrm{id}_{B_1}} \ar@<-.7ex>[r]_-{\mathrm{id}_{B_2} \,\otimes\, \smallblacktriangleright_{\!1}}
& B_2 \otimes B_1 }\biggr).
\end{equation}
and denote by $\pi \in \Hom_{\mathcal{C}}(B_2 \otimes B_1, \mathbf{B}_2 \circ \mathbf{B}_1)$ the morphism given by the definition of a coequalizer. A more usual notation for $\mathbf{B}_2 \circ \mathbf{B}_1$ is $\mathbf{B}_2 \otimes_{A_2} \mathbf{B}_1$, but it is less suited to our categorical purposes.

\smallskip

There is a natural structure of $(\mathbf{A}_3, \mathbf{A}_1)$-bimodule on $\mathbf{B}_2 \circ \mathbf{B}_1$, as we now recall. Consider the following pair of arrows in $\mathcal{C}$
\begin{equation}\label{arrowsForDefiningLeftActionOnCoequalizer}
\xymatrix@C=8em{
A_3 \otimes B_2 \otimes A_2 \otimes B_1  \ar@<.7ex>[r]^-{\mathrm{id}_{A_3} \otimes \,\smallblacktriangleleft_{\!2} \,\otimes\, \mathrm{id}_{B_1}} \ar@<-.7ex>[r]_-{\mathrm{id}_{A_3 \otimes B_2} \,\otimes\, \smallblacktriangleright_{\! 1}}
& A_3 \otimes B_2 \otimes B_1. }
\end{equation}
The morphism $\pi \circ (\smallblacktriangleright_{\!2} \otimes \mathrm{id}_{B_2}) : A_3 \otimes B_2 \otimes B_1 \to \mathbf{B}_2 \circ \mathbf{B}_1$ coequalizes \eqref{arrowsForDefiningLeftActionOnCoequalizer}. Since $A_3 \otimes (\mathbf{B}_2 \circ \mathbf{B}_1)$ together with $\mathrm{id}_{A_3} \otimes \pi$ is the coequalizer of \eqref{arrowsForDefiningLeftActionOnCoequalizer}, there exists a unique morphism $\lambda : A_3 \otimes (\mathbf{B}_2 \circ \mathbf{B}_1) \to \mathbf{B}_2 \circ \mathbf{B}_1$ in $\mathcal{C}$ such that the diagram
\begin{equation}\label{defLeftActionOnCoequalizer}
\xymatrix@C=5em{
A_3 \otimes B_2 \otimes B_1 \ar[r]^-{\smallblacktriangleright_{\!2} \otimes \mathrm{id}_{B_1}} \ar[d]_{\mathrm{id}_{A_3} \otimes \pi} & B_2 \otimes B_1 \ar[d]^{\pi}\\
A_3 \otimes (\mathbf{B}_2 \circ \mathbf{B}_1) \ar[r]_-{\lambda} & \mathbf{B}_2 \circ \mathbf{B}_1.
}
\end{equation}
commutes. It is easy to show that $\lambda$ is a left action:
\begin{center}
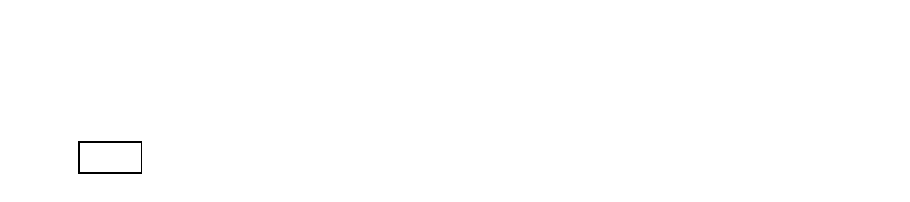
\end{center}
Hence the equality $\lambda \circ (m_3 \otimes \mathrm{id}_{\mathbf{B}_2 \circ \mathbf{B}_1}) = \lambda \circ (\mathrm{id}_{A_3} \otimes \lambda)$ follows from the universal property of $\mathrm{id}_{A_3 \otimes A_3} \otimes \pi$ as the coequalizer of
\[ \xymatrix@C=8em{
A_3 \otimes A_3 \otimes B_2 \otimes A_2 \otimes B_1  \ar@<.7ex>[r]^-{\mathrm{id}_{A_3 \otimes A_3} \otimes \,\smallblacktriangleleft_{\!2} \,\otimes\, \mathrm{id}_{B_1}} \ar@<-.7ex>[r]_-{\mathrm{id}_{A_3 \otimes A_3 \otimes B_2} \,\otimes\, \smallblacktriangleright_{\!1}}
& A_3 \otimes A_3 \otimes B_2 \otimes B_1. } \]
By similar arguments, a right action $\rho : (\mathbf{B}_2 \circ \mathbf{B}_1) \otimes A_1 \to \mathbf{B}_2 \circ \mathbf{B}_1$ is defined by the commutative diagram
\begin{equation}\label{defRightActionOnCoequalizer}
\xymatrix@C=5em{
B_2 \otimes B_1 \otimes A_1 \ar[r]^-{\mathrm{id}_{B_1} \otimes \, \smallblacktriangleleft_{\!1}} \ar[d]_-{\pi \otimes \mathrm{id}_{A_1}} & B_2 \otimes B_1 \ar[d]^{\pi}\\
(\mathbf{B}_2 \circ \mathbf{B}_1) \otimes A_1 \ar[r]_-{\exists! \, \rho} & \mathbf{B}_2 \circ \mathbf{B}_1.
}
\end{equation}
Moreover, one can check the actions $\lambda$ and $\rho$ commute in the sense of \eqref{defBimoduleInAMonoidalCat}. Hence $\mathbf{B}_2 \circ \mathbf{B}_1$ has a natural structure of $(\mathbf{A}_3, \mathbf{A}_1)$-bimodule. In the sequel when we write $\mathbf{B}_2 \circ \mathbf{B}_1$ we implicitly mean this bimodule and not just the object defined in \eqref{defCompositionOfBimodules}.

\smallskip

\indent Let moreover $\mathbf{B}_3$ be an $(\mathbf{A}_4, \mathbf{A}_3)$-bimodule. It is a tedious but easy exercise to check that there are isomorphisms of bimodules
\[ \mathbf{B}_3 \circ (\mathbf{B}_2 \circ \mathbf{B}_1) \cong (\mathbf{B}_3 \circ \mathbf{B}_2) \circ \mathbf{B}_1 \quad \text{and} \quad \mathbf{A}_2 \circ \mathbf{B}_1 \cong \mathbf{B}_1 \cong \mathbf{B}_1 \circ \mathbf{A}_1 \]
where the algebras $\mathbf{A}_1$, $\mathbf{A}_2$ are viewed as bimodules over themselves by multiplication.
As a result we can make the following definition:
\begin{definition}\label{defBimC}
The category $\mathrm{Bim}_{\mathcal{C}}$ has for objects the algebras in $\mathcal{C}$ and $\Hom_{\mathrm{Bim}_{\mathcal{C}}}(\mathbf{A}_1, \mathbf{A}_2)$ is the collection of isomorphism classes of $(\mathbf{A}_2, \mathbf{A}_1)$-bimodules.\footnote{Note the switch!} The identity morphism $\mathrm{id}_{\mathbf{A}}$ is the algebra $\mathbf{A}$ viewed as a $(\mathbf{A}, \mathbf{A})$-bimodule. The composition is the operation $\circ$ defined above.
\end{definition}

\indent Recall the braided tensor product of algebras and bimodules defined in \S\ref{sectionPreliminariesModules}, which uses the braiding in $\mathcal{C}$. The following is well known: 

\begin{lemma}\label{theoCompatibilityCompositionAndBraidedTensorProduct}
Let $\mathbf{B}_1 \in \Hom_{\mathrm{Bim}_{\mathcal{C}}}(\mathbf{A}_1, \mathbf{A}_2)$, $\mathbf{B}_2 \in \Hom_{\mathrm{Bim}_{\mathcal{C}}}(\mathbf{A}_2, \mathbf{A}_3)$, $\mathbf{B}'_1 \in \Hom_{\mathrm{Bim}_{\mathcal{C}}}(\mathbf{A}'_1, \mathbf{A}'_2)$ and $\mathbf{B}'_2 \in \Hom_{\mathrm{Bim}_{\mathcal{C}}}(\mathbf{A}'_2, \mathbf{A}'_3)$. Then
\[ (\mathbf{B}_2 \circ \mathbf{B}_1) \,\widetilde{\otimes}\, (\mathbf{B}'_2 \circ \mathbf{B}'_1) = (\mathbf{B}_2 \,\widetilde{\otimes}\, \mathbf{B}'_2) \circ (\mathbf{B}_1 \,\widetilde{\otimes}\, \mathbf{B}'_1) \text{ in } \Hom_{\mathrm{Bim}_{\mathcal{C}}}(\mathbf{A}_1 \,\widetilde{\otimes}\, \mathbf{A}'_1, \mathbf{A}_3 \,\widetilde{\otimes}\, \mathbf{A}'_3). \]
\end{lemma}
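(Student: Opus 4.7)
Write $P$ for the underlying object of the LHS bimodule $(\mathbf{B}_2 \circ \mathbf{B}_1) \,\widetilde{\otimes}\, (\mathbf{B}'_2 \circ \mathbf{B}'_1)$ and $Q$ for the underlying object of the RHS $(\mathbf{B}_2 \,\widetilde{\otimes}\, \mathbf{B}'_2) \circ (\mathbf{B}_1 \,\widetilde{\otimes}\, \mathbf{B}'_1)$. By construction $P$ is a quotient of $B_2 \otimes B_1 \otimes B'_2 \otimes B'_1$ via $\pi \otimes \pi'$, where $\pi, \pi'$ are the coequalizer maps for $\mathbf{B}_2 \circ \mathbf{B}_1$ and $\mathbf{B}'_2 \circ \mathbf{B}'_1$, while $Q$ is a quotient of $B_2 \otimes B'_2 \otimes B_1 \otimes B'_1$ via the coequalizer map $\Pi$. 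These two pre-quotient objects are identified by the isomorphism $\sigma := \mathrm{id}_{B_2} \otimes c_{B_1, B'_2} \otimes \mathrm{id}_{B'_1}$ of $\mathcal{C}$; the plan is to show that $\sigma$ descends to a bimodule isomorphism $\widetilde{\sigma} : P \to Q$.

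First I would unpack the universal property of $P$. The standing assumption that $X \otimes -$ and $- \otimes X$ preserve coequalizers implies, by computing the iterated coequalizer in either order, that morphisms $P \to Z$ are in bijection with morphisms $\phi : B_2 \otimes B_1 \otimes B'_2 \otimes B'_1 \to Z$ satisfying \emph{two} coequalizing identities, one for the pair $(\smallblacktriangleleft_{\!2} \otimes \mathrm{id}, \mathrm{id} \otimes \smallblacktriangleright_{\!1} \otimes \mathrm{id})$ coming from the action of $A_2$, and one for the analogous pair with $A'_2$ inserted between $B'_2$ and $B'_1$. On the other hand, morphisms $Q \to Z$ are in bijection with $\psi : B_2 \otimes B'_2 \otimes B_1 \otimes B'_1 \to Z$ coequalizing the \emph{single} pair built from the right $(\mathbf{A}_2 \,\widetilde{\otimes}\, \mathbf{A}'_2)$-action on $B_2 \otimes B'_2$ (which inserts $c_{B'_2, A_2}$) and the left action on $B_1 \otimes B'_1$ (which inserts $c_{A'_2, B_1}$).

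The key claim is that $\phi \longleftrightarrow \psi := \phi \circ \sigma^{-1}$ is a bijection between these two sets of conditions. In one direction, specializing the universal element of $A_2 \,\widetilde{\otimes}\, A'_2$ to $\eta_2 \otimes \mathrm{id}_{A'_2}$ or $\mathrm{id}_{A_2} \otimes \eta'_2$ in the $Q$-condition recovers the two $P$-conditions on $\phi$; conversely, the two $P$-conditions combined with naturality of $c$ and the definitions \eqref{defBraidedTensorProductOfLeftModules}, \eqref{defBraidedTensorProductOfRightModules} of the braided tensor product actions assemble to give the single $Q$-condition on $\psi$. Applying this equivalence to $\Pi \circ \sigma$ and to $(\pi \otimes \pi') \circ \sigma^{-1}$ yields, by the universal property, mutually inverse morphisms $\widetilde{\sigma} : P \to Q$ and $\widetilde{\sigma}^{-1} : Q \to P$.

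It remains to check that $\widetilde{\sigma}$ intertwines the left and right $(\mathbf{A}_3 \,\widetilde{\otimes}\, \mathbf{A}'_3)$- and $(\mathbf{A}_1 \,\widetilde{\otimes}\, \mathbf{A}'_1)$-actions. Both actions on $P$ are characterized by their commutation with $\pi \otimes \pi'$ as in \eqref{defLeftActionOnCoequalizer}, \eqref{defRightActionOnCoequalizer} (combined with the braided tensor product formulas), and likewise the actions on $Q$ are characterized by commutation with $\Pi$. Hence each bimodule identity reduces to the equality of two morphisms out of $(A_3 \otimes A'_3) \otimes B_2 \otimes B_1 \otimes B'_2 \otimes B'_1$ (respectively on the right), which follows by naturality of the braiding and the hexagon identities. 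The main obstacle, as usual in this kind of argument, is simply the bookkeeping of the many braidings $c_{A'_3, B_2}, c_{B'_2, A_2}, c_{A'_2, B_1}, c_{B_1, B'_2}, \ldots$ that appear on the two sides; the graphical calculus introduced before \eqref{diagramsForDuality} converts each such verification into a short isotopy of strand diagrams, so that every step reduces cleanly to naturality of $c$ and to the module axioms.
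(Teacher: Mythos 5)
The paper gives no proof of this lemma (it is dismissed as ``well known''), so there is nothing to compare against; on its own terms your strategy --- realize both sides as quotients of $B_2 \otimes B_1 \otimes B'_2 \otimes B'_1$, resp. $B_2 \otimes B'_2 \otimes B_1 \otimes B'_1$, identify the pre-quotient objects by a crossing of the two middle factors, check that the identification intertwines the coequalizing pairs (splitting the single balanced condition over $\mathbf{A}_2 \,\widetilde{\otimes}\, \mathbf{A}'_2$ into the two conditions over $\mathbf{A}_2$ and $\mathbf{A}'_2$ via the units), and then check the outer actions --- is exactly the standard argument. But your comparison map is the wrong one: with the conventions \eqref{defBraidedTensorProductOfLeftModules}--\eqref{defBraidedTensorProductOfRightModules} the middle crossing must be the \emph{inverse} braiding $c_{B'_2,B_1}^{-1} : B_1 \otimes B'_2 \to B'_2 \otimes B_1$, not $c_{B_1,B'_2}$, and with $c_{B_1,B'_2}$ the map $\sigma$ does not descend to the quotients at all.

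The point is the following. In the relation defining $(\mathbf{B}_2 \,\widetilde{\otimes}\, \mathbf{B}'_2) \circ (\mathbf{B}_1 \,\widetilde{\otimes}\, \mathbf{B}'_1)$, an element of $A_2$ reaches $B_2$ by crossing $B'_2$ \emph{leftward} through the positive braiding $c_{B'_2,A_2}$ (that is how \eqref{defBraidedTensorProductOfRightModules} is written), whereas in the relation defining $\mathbf{B}_2 \circ \mathbf{B}_1$ it is already adjacent to $B_2$. Hence in the identification $A_2$, travelling together with $B_1$, must be carried \emph{rightward} past $B'_2$ by $c_{B'_2,A_2}^{-1}$, so that the two crossings cancel and your conditions (a)/(b) reduce to naturality; carrying it by $c_{A_2,B'_2}$ instead leaves the double braiding $c_{B'_2,A_2} \circ c_{A_2,B'_2}$, which is not the identity in a non-symmetric $\mathcal{C}$, and the ``short isotopy of strand diagrams'' you invoke does not exist. (The $A'_2$-relation forces the same choice, via $c_{A'_2,B_1}$.) A concrete check: take all four bimodules to be the regular ones, so that both sides must be the identity of $\mathbf{A} \,\widetilde{\otimes}\, \mathbf{A}'$; the $A_2$-balanced condition for $\Pi \circ \sigma$ with your $\sigma$ becomes
\[ (m \otimes m') \circ (\mathrm{id} \otimes \beta \otimes \mathrm{id}) \circ (m \otimes \mathrm{id}^{\otimes 3}) \;=\; (m \otimes m') \circ (\mathrm{id} \otimes \beta \otimes \mathrm{id}) \circ (\mathrm{id} \otimes m \otimes \mathrm{id}^{\otimes 2}), \qquad \beta = c_{A',A} \circ c_{A,A'}, \]
which fails for generic algebras because the left-hand side double-braids only one of the two $A$-factors past $A'$. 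Note that the paper's own Lemma \ref{lemmaIntHomAndBrProd} uses precisely $c^{-1}$ for the analogous identification $\underline{\End}(U) \,\widetilde{\otimes}\, \underline{\End}(V) \cong \underline{\End}(U \otimes V)$. Once you replace $c_{B_1,B'_2}$ by $c_{B'_2,B_1}^{-1}$ (and correspondingly $c_{A_2 \otimes B_1, B'_2}$ by $c_{B'_2, A_2 \otimes B_1}^{-1}$, etc., in the auxiliary crossings), the rest of your argument goes through as written.
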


\noindent Said differently, we can see the braided tensor product as a bifunctor $\widetilde{\otimes} : \mathrm{Bim}_{\mathcal{C}} \times \mathrm{Bim}_{\mathcal{C}} \to \mathrm{Bim}_{\mathcal{C}}$. As a result:

\begin{corollary}\label{coroBimCMonoidal}
$\bigl( \mathrm{Bim}_{\mathcal{C}}, \,\widetilde{\otimes} \bigr)$ is a strict monoidal category. Its unit object is $\boldsymbol{1}$ (the unit object of $\mathcal{C}$) endowed with the multiplication $\boldsymbol{1} \otimes \boldsymbol{1} \overset{=}{\longrightarrow} \boldsymbol{1}$ and the unit $\mathrm{id}_{\boldsymbol{1}}$.
\end{corollary}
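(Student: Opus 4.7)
The plan is to derive the corollary as a direct consequence of Lemma \ref{theoCompatibilityCompositionAndBraidedTensorProduct} together with the strict monoidality of $(\mathcal{C},\otimes,\boldsymbol{1})$ and the hexagon axioms for the braiding $c$. The content to check is: (i) $\widetilde{\otimes}$ is a well-defined bifunctor on $\mathrm{Bim}_{\mathcal{C}}$; (ii) it is strictly associative at the level of both algebra objects and bimodule morphisms; (iii) the algebra $\boldsymbol{1}$ is a strict two-sided unit.

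\textbf{Bifunctoriality.} I would first observe that $\widetilde{\otimes}$ sends a pair of algebras to an algebra by \eqref{defBraidedTensorProductOfAlgebras}, and a pair of bimodules to a bimodule by combining \eqref{defBraidedTensorProductOfLeftModules} and \eqref{defBraidedTensorProductOfRightModules}. To make this well defined on isomorphism classes, one notes that if $f_i:\mathbf{B}_i\to\mathbf{B}_i'$ are isomorphisms of bimodules then $f_1\otimes f_2$ is an isomorphism $\mathbf{B}_1\,\widetilde\otimes\,\mathbf{B}_2\to\mathbf{B}_1'\,\widetilde\otimes\,\mathbf{B}_2'$ of bimodules over the relevant braided tensor product algebras (using naturality of $c$). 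The identity axiom $\mathrm{id}_{\mathbf{A}_1}\,\widetilde\otimes\,\mathrm{id}_{\mathbf{A}_2}=\mathrm{id}_{\mathbf{A}_1\,\widetilde\otimes\,\mathbf{A}_2}$ in $\mathrm{Bim}_\mathcal{C}$ amounts to $\mathbf{A}_1\,\widetilde\otimes\,\mathbf{A}_2$ being the correct identity bimodule, which follows from the definition. The functoriality axiom
\[ (\mathbf{B}_2\,\widetilde\otimes\,\mathbf{B}_2')\circ(\mathbf{B}_1\,\widetilde\otimes\,\mathbf{B}_1')=(\mathbf{B}_2\circ\mathbf{B}_1)\,\widetilde\otimes\,(\mathbf{B}_2'\circ\mathbf{B}_1')\]
is precisely Lemma \ref{theoCompatibilityCompositionAndBraidedTensorProduct}.

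\textbf{Strict associativity.} I would check that $(\mathbf{A}_1\,\widetilde\otimes\,\mathbf{A}_2)\,\widetilde\otimes\,\mathbf{A}_3$ and $\mathbf{A}_1\,\widetilde\otimes\,(\mathbf{A}_2\,\widetilde\otimes\,\mathbf{A}_3)$ agree on the nose as algebras. Their underlying object is $A_1\otimes A_2\otimes A_3$ thanks to strictness of $\mathcal{C}$; the two candidate multiplications differ a priori by an instance of the braiding on $A_3\otimes A_1\otimes A_2$ versus $A_2\otimes A_3\otimes A_1$, and the hexagon identities
\[c_{A_3,A_1\otimes A_2}=(c_{A_3,A_1}\otimes\mathrm{id}_{A_2})\circ(\mathrm{id}_{A_1}\otimes c_{A_3,A_2}),\qquad c_{A_2\otimes A_3,A_1}=(\mathrm{id}_{A_2}\otimes c_{A_3,A_1})\circ(c_{A_2,A_1}\otimes\mathrm{id}_{A_3})\]
make the two products coincide. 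The same reasoning, applied componentwise to the left and right actions of a bimodule, shows that $(\mathbf{B}_1\,\widetilde\otimes\,\mathbf{B}_2)\,\widetilde\otimes\,\mathbf{B}_3=\mathbf{B}_1\,\widetilde\otimes\,(\mathbf{B}_2\,\widetilde\otimes\,\mathbf{B}_3)$ as honest bimodules over the common braided tensor product algebra, not only up to isomorphism.

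\textbf{Unit.} Finally I would treat $\boldsymbol{1}$ with multiplication $\boldsymbol{1}\otimes\boldsymbol{1}\overset{=}{\to}\boldsymbol{1}$ and unit $\mathrm{id}_{\boldsymbol{1}}$: this is trivially an algebra. A standard consequence of the hexagon axioms for a strict braided category is $c_{\boldsymbol{1},X}=c_{X,\boldsymbol{1}}=\mathrm{id}_X$, so $\mathbf{A}\,\widetilde\otimes\,\boldsymbol{1}=\mathbf{A}=\boldsymbol{1}\,\widetilde\otimes\,\mathbf{A}$ on the nose, and similarly for any bimodule $\mathbf{B}$. Combined with the previous two points, this produces a strict monoidal structure on $\mathrm{Bim}_\mathcal{C}$.

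The only genuine obstacle is the bifunctoriality, which is not formal because the composition $\circ$ is built from coequalizers; however this is already settled by the lemma. Everything else is a bookkeeping exercise using strictness of $\mathcal{C}$ and the hexagon relations, so I would not present these verifications in detail.
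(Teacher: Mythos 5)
Your proposal is correct and takes essentially the same route as the paper, which states this corollary as an immediate consequence of Lemma \ref{theoCompatibilityCompositionAndBraidedTensorProduct} (bifunctoriality of $\widetilde{\otimes}$) and leaves the remaining verifications — strict associativity via the hexagons and unitality via $c_{\boldsymbol{1},X}=c_{X,\boldsymbol{1}}=\mathrm{id}_X$ — implicit. The only blemish is that your two displayed hexagon identities have their factors composed in the wrong order (it should read $c_{A_3,A_1\otimes A_2}=(\mathrm{id}_{A_1}\otimes c_{A_3,A_2})\circ(c_{A_3,A_1}\otimes\mathrm{id}_{A_2})$ and $c_{A_2\otimes A_3,A_1}=(c_{A_2,A_1}\otimes\mathrm{id}_{A_3})\circ(\mathrm{id}_{A_2}\otimes c_{A_3,A_1})$); with the correctly oriented identities the argument goes through exactly as you describe.
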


\subsection{Twisting of bimodules by algebra morphisms}\label{subsecTwistingBimod}
 Let $\mathbf{A}_1, \mathbf{A}'_1, \mathbf{A}_2, \mathbf{A}'_2$ be algebras in the braided monoidal category $\mathcal{C}$ and $\mathbf{B} = (B,\smallblacktriangleright,\smallblacktriangleleft)$ be an $(\mathbf{A}_2,\mathbf{A}_1)$-bimodule. Suppose we are given morphisms of algebras $f_1 : \mathbf{A}'_1 \to \mathbf{A}_1$ and $f_2 : \mathbf{A}'_2 \to \mathbf{A}_2$. Then we obtain the $(\mathbf{A}'_2,\mathbf{A}'_1)$-bimodule
\[ f_2 \smallblacktriangleright \mathbf{B} \smallblacktriangleleft f_1 = \bigl( B, \: \smallblacktriangleright \circ (f_2 \otimes \mathrm{id}_B), \: \smallblacktriangleleft \circ (\mathrm{id}_B \otimes f_1) \bigr). \]
This construction satisfies straightforward properties:
\begin{itemize}
\item Let $\mathbf{A}_1, \mathbf{A}'_1, \mathbf{A}''_1, \mathbf{A}_2, \mathbf{A}'_2, \mathbf{A}''_2$ be algebras in $\mathcal{C}$, let $f_1 : \mathbf{A}'_1 \to \mathbf{A}_1$, $f_1' : \mathbf{A}''_1 \to \mathbf{A}'_1$, $f_2 : \mathbf{A}'_2 \to \mathbf{A}_2$, $f_2' : \mathbf{A}''_2 \to \mathbf{A}'_2$ be morphisms of algebras and let $\mathbf{B}$ be an $(\mathbf{A}_2,\mathbf{A}_1)$-bimodule. Then
\begin{equation}\label{twistingCompMor}
f_2' \smallblacktriangleright (f_2 \smallblacktriangleright \mathbf{B} \smallblacktriangleleft f_1) \smallblacktriangleleft f_1' = (f_2 \circ f_2') \smallblacktriangleright \mathbf{B} \smallblacktriangleleft (f_1 \circ f_1').
\end{equation}
\item Let $\mathbf{A}_1, \mathbf{A}'_1, \mathbf{A}_2,  \mathbf{A}_3, \mathbf{A}'_3$ be algebras in $\mathcal{C}$, let $f_1 : \mathbf{A}'_1 \to \mathbf{A}_1$, $f_3 : \mathbf{A}'_3 \to \mathbf{A}_3$ be morphisms of algebras and let $\mathbf{B}_1$ be an $(\mathbf{A}_2,\mathbf{A}_1)$-bimodule and $\mathbf{B}_2$ be an $(\mathbf{A}_3,\mathbf{A}_2)$-bimodule. Then
\begin{equation}\label{twistingCompBim}
(f_3 \smallblacktriangleright \mathbf{B}_2) \circ (\mathbf{B}_1 \smallblacktriangleleft f_1) = f_3 \smallblacktriangleright (\mathbf{B}_2 \circ \mathbf{B}_1) \smallblacktriangleleft f_1.
\end{equation}
\item Let $\mathbf{A}_1, \mathbf{A}'_1, \mathbf{A}_2, \mathbf{A}'_2,  \mathbf{A}_3, \mathbf{A}'_3, \mathbf{A}_4, \mathbf{A}'_4$ be algebras in $\mathcal{C}$, let $f_i : \mathbf{A}'_i \to \mathbf{A}_i$ be a morphism of algebras for each $i=1,\ldots,4$ and let $\mathbf{B}_1$ be an $(\mathbf{A}_2,\mathbf{A}_1)$-bimodule and $\mathbf{B}_2$ be an $(\mathbf{A}_4,\mathbf{A}_3)$-bimodule. Then
\begin{equation}\label{twistingCompBrTens}
(f_2 \smallblacktriangleright \mathbf{B}_1 \smallblacktriangleleft f_1) \,\widetilde{\otimes}\, ( f_4 \smallblacktriangleright \mathbf{B}_2  \smallblacktriangleleft f_3) = (f_2 \otimes f_4) \smallblacktriangleright (\mathbf{B}_1 \,\widetilde{\otimes}\, \mathbf{B}_2) \smallblacktriangleleft (f_1 \otimes f_3).
\end{equation}
\end{itemize}

\begin{lemma}\label{lemmaTwistingRegBimod}
Let $\mathbf{A}, \mathbf{A}'$ be algebras in $\mathcal{C}$ and $f : \mathbf{A}' \to \mathbf{A}$ be an isomorphism of algebras. Then the bimodules $f \smallblacktriangleright \mathbf{A} \in \Hom_{\mathrm{Bim}_{\mathcal{C}}}(\mathbf{A},\mathbf{A}')$ and $\mathbf{A} \smallblacktriangleleft f \in \Hom_{\mathrm{Bim}_{\mathcal{C}}}(\mathbf{A}',\mathbf{A})$ are inverse to each other. It follows that $\mathbf{A}$ and $\mathbf{A}'$ are isomorphic in $\mathrm{Bim}_{\mathcal{C}}$.
\end{lemma}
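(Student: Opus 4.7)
The plan is to show that the two compositions in $\mathrm{Bim}_{\mathcal{C}}$ yield the identity bimodules. The key preliminary observation, valid for any algebra $\mathbf{A}$ in $\mathcal{C}$, is that $(A,m)$ is the coequalizer of the pair $m \otimes \mathrm{id}_A,\; \mathrm{id}_A \otimes m : A \otimes A \otimes A \rightrightarrows A \otimes A$, with coequalizer map $m$ itself: this follows from associativity (for the coequalizing property) and from the unit axiom, which provides the unique factorization via $r \mapsto r \circ (\mathrm{id}_A \otimes \eta)$. In particular one recovers $\mathbf{A} \circ \mathbf{A} = \mathbf{A}$ as bimodules, as anticipated by Definition \ref{defBimC}.

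For the composition $(\mathbf{A} \smallblacktriangleleft f) \circ (f \smallblacktriangleright \mathbf{A})$, I would unwind the defining pair of arrows from \eqref{defCompositionOfBimodules}: these are $(m \otimes \mathrm{id}_A) \circ (\mathrm{id}_A \otimes f \otimes \mathrm{id}_A)$ and $(\mathrm{id}_A \otimes m) \circ (\mathrm{id}_A \otimes f \otimes \mathrm{id}_A)$, that is, the standard pair above precomposed with the isomorphism $\mathrm{id}_A \otimes f \otimes \mathrm{id}_A$. Since a coequalizer is invariant under precomposition by an iso, the underlying object and coequalizer map remain $(A,m)$; and a direct inspection of the defining diagrams \eqref{defLeftActionOnCoequalizer} and \eqref{defRightActionOnCoequalizer} shows that both induced actions equal $m$. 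Hence $(\mathbf{A} \smallblacktriangleleft f) \circ (f \smallblacktriangleright \mathbf{A})$ is exactly the regular bimodule $\mathbf{A} = \mathrm{id}_\mathbf{A}$.

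For the reverse composition $(f \smallblacktriangleright \mathbf{A}) \circ (\mathbf{A} \smallblacktriangleleft f)$, I would bypass the coequalizer computation altogether by invoking identity \eqref{twistingCompBim} with $\mathbf{B}_2 = \mathbf{B}_1 = \mathbf{A}$ and $f_3 = f_1 = f$, which gives $f \smallblacktriangleright (\mathbf{A} \circ \mathbf{A}) \smallblacktriangleleft f = f \smallblacktriangleright \mathbf{A} \smallblacktriangleleft f$. The remaining task is to identify this $(\mathbf{A}',\mathbf{A}')$-bimodule with $\mathbf{A}'$; the natural candidate for the isomorphism is $f^{-1}: A \to A'$, which is an algebra morphism because $f$ is. The compatibility $f^{-1} \circ m \circ (f \otimes \mathrm{id}_A) = m' \circ (\mathrm{id}_{A'} \otimes f^{-1})$ with the twisted left action is immediate from $f^{-1} \circ m = m' \circ (f^{-1} \otimes f^{-1})$, and the right action is symmetric.

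Putting the two computations together shows that $\mathbf{A} \smallblacktriangleleft f$ and $f \smallblacktriangleright \mathbf{A}$ are mutually inverse morphisms in $\mathrm{Bim}_\mathcal{C}$, whence the final assertion $\mathbf{A} \cong \mathbf{A}'$ in $\mathrm{Bim}_\mathcal{C}$ is immediate. The only real bookkeeping concern is correctly tracking which action is twisted by $f$ in each coequalizer pair; once one notices that the twist in the first composition amounts to a single isomorphism applied to both arrows, the whole argument reduces to the familiar identification $A \otimes_A A \cong A$ together with the twisting identity \eqref{twistingCompBim} already proved.
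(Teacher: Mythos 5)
Your argument is correct and complete. For the composition $(f \smallblacktriangleright \mathbf{A}) \circ (\mathbf{A} \smallblacktriangleleft f)$ you follow essentially the paper's route: reduce via \eqref{twistingCompBim} to $f \smallblacktriangleright \mathbf{A} \smallblacktriangleleft f$ and then identify this with $\mathbf{A}'$ by an explicit bimodule isomorphism (you use $f^{-1}$; the paper instead records the isomorphisms \eqref{twistingLeftToRight}, rewrites $f \smallblacktriangleright \mathbf{A}$ as $\mathbf{A}' \smallblacktriangleleft f^{-1}$ and cancels the two right twists with \eqref{twistingCompMor} --- the same computation in different packaging). Where you genuinely diverge is in the other composition: the paper handles $(\mathbf{A} \smallblacktriangleleft f) \circ (f \smallblacktriangleright \mathbf{A})$ by the identical formal manipulation, first converting both factors through \eqref{twistingLeftToRight}, whereas you unwind the coequalizer \eqref{defCompositionOfBimodules} by hand, observing that its defining pair is $(m \otimes \mathrm{id}_A,\, \mathrm{id}_A \otimes m)$ precomposed with the isomorphism $\mathrm{id}_A \otimes f \otimes \mathrm{id}_A$, so that the coequalizer is $(A,m)$ and the induced actions from \eqref{defLeftActionOnCoequalizer}--\eqref{defRightActionOnCoequalizer} are both $m$. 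Your route is slightly longer but has the side benefit of actually proving the identification $\mathbf{A} \circ \mathbf{A} \cong \mathbf{A}$, including the correct factorization $r \mapsto r \circ (\mathrm{id}_A \otimes \eta)$, which the paper leaves as a ``tedious but easy exercise''; the paper's version is more uniform and never touches the coequalizer. Both establish that the two bimodules are mutually inverse in $\mathrm{Bim}_{\mathcal{C}}$.
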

\begin{proof}
Note that $f$ yields isomorphisms of bimodules
\begin{equation}\label{twistingLeftToRight}
f : (\mathbf{A}' \smallblacktriangleleft f^{-1}) \overset{\sim}{\longrightarrow} (f \smallblacktriangleright \mathbf{A}) \quad \text{ and } \quad f : (f^{-1} \smallblacktriangleright \mathbf{A}') \overset{\sim}{\longrightarrow} (\mathbf{A} \smallblacktriangleleft f).
\end{equation}
It follows that $(\mathbf{A}' \smallblacktriangleleft f^{-1}) = (f \smallblacktriangleright \mathbf{A})$ and $(f^{-1} \smallblacktriangleright \mathbf{A}') = (\mathbf{A} \smallblacktriangleleft f)$ as morphisms in $\mathrm{Bim}_{\mathcal{C}}$. Using \eqref{twistingCompMor} and \eqref{twistingCompBim} we thus have
\[ (f \smallblacktriangleright \mathbf{A}) \circ (\mathbf{A} \smallblacktriangleleft f) = f \smallblacktriangleright ( \mathbf{A} \circ \mathbf{A}) \smallblacktriangleleft f = f \smallblacktriangleright  \mathbf{A} \smallblacktriangleleft f = (\mathbf{A}' \smallblacktriangleleft f^{-1}) \smallblacktriangleleft f = \mathbf{A}' \]
and 
\begin{align*}
(\mathbf{A} \smallblacktriangleleft f) \circ (f \smallblacktriangleright \mathbf{A}) &= (f^{-1} \smallblacktriangleright \mathbf{A}') \circ (\mathbf{A}' \smallblacktriangleleft f^{-1})\\
&= f^{-1} \smallblacktriangleright (\mathbf{A}' \circ \mathbf{A}') \smallblacktriangleleft f^{-1} = f^{-1} \smallblacktriangleright \mathbf{A}' \smallblacktriangleleft f^{-1} = (\mathbf{A} \smallblacktriangleleft f) \smallblacktriangleleft f^{-1} = \mathbf{A}.
\end{align*}
It follows that $f \smallblacktriangleright \mathbf{A}$ and $\mathbf{A} \smallblacktriangleleft f$ are inverse each other.
\end{proof}

\section{The Morita category of half-braided algebras}\label{sectionMoritaCategoryHBAlgebras}
The goal of this section is to define a suitable extra structure on the algebras and bimodules of the previous section to ensure that the category of algebras and bimodules in $\mathcal{C}$ is not only monoidal as recalled in Corollary \ref{coroBimCMonoidal} but even braided. 
On the level of algebras the suitable notion is what we call a ``half-braided algebra'': it is an algebra endowed with a half-brading satisfying a suitable relation which, we warn the reader, is NOT that of an algebra in the Drinfeld center $\mathcal{Z}(\mathcal{C})$. 
After defining these objects we study their bimodules and show that they are automatically endowed with two natural half-braidings. 
When these coincide we say that the bimodule is {\em hb-compatible}. 
We end the section showing that the category of half-braided algebras and their hb-compatible bimodules (up to isomorphisms) is a braided monoidal category. 

\subsection{Half-braided algebras}\label{subsectionHbAlg}
Here we introduce half-braided algebras and a braided tensor product for them which turns out to be ``commutative''. This material is not new: P. Schauenburg defined the dual concept (\textit{i.e.} half-braided coalgebra) in \cite[Def.\,1.1]{schauenburg} under the name ``central coalgebra'', inspired by the example of the coend $\int^X X^* \otimes X$ studied in \cite{NS}. He proved that the braided tensor product of two central coalgebras is again a central coalgebra and that this operation is commutative up to isomorphism \cite[Prop.\,1.8]{schauenburg}\footnote{Warning: in Schauenburg's papers, diagrams must be read from top to bottom while we use the opposite convention. Also the ``crossing diagram'' for the braiding in $\mathcal{C}$ is the opposite to the one used here.}; for convenience of the reader we will repeat the dual statements and their proofs in Propositions \ref{propBraidedTensorProductAlgebra} and \ref{propIsoBetweenA1A2AndA2A1} below. Also, the concept of half-braided algebra is used in \cite[eq.\,(2.6)]{sch2} under the name ``cocentral algebra''.
\begin{definition}\label{defHBAlgebra}
1. A half-braided algebra in $\mathcal{C}$ is a quadruple $\mathbb{A} = (A, t, m, \eta)$ such that $(A,t) \in \mathcal{Z}(\mathcal{C})$, $(A, m, \eta)$ is an algebra in $\mathcal{C}$ and for all $X \in \mathcal{C}$ we have
\smallskip
\begin{equation}\label{axiomsHalfBraidedAlgebra}
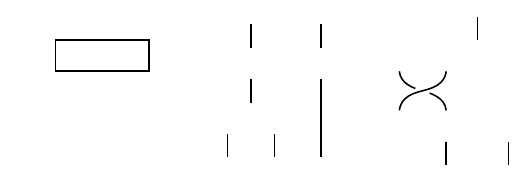
\end{equation}
2. Let $\mathbb{A} = (A,t,m,\eta)$ and $\mathbb{A}' = (A', t', m', \eta')$ be half-braided algebras. We say that $f \in \Hom_{\mathcal{C}}(A,A')$ is a morphism of half-braided algebras from $\mathbb{A}$ to $\mathbb{A}'$ if $f$ is a morphism of algebras and $f \in \Hom_{\mathcal{Z}(\mathcal{C})}\bigl( (A,t), (A',t') \bigr)$ \textit{i.e.} $f$ commutes with the half-braidings.
\end{definition}

\begin{remark}
A half-braided algebra in $\mathcal{C}$ is not an algebra in $\mathcal{Z}(\mathcal{C})$, which would mean that $m \in \Hom_{\mathcal{Z}(\mathcal{C})}\bigl( (A,t) \otimes (A,t), (A,t) \bigr)$ or explicitly (using \eqref{defTensorProductInZC}):
\[ (\mathrm{id}_X \otimes m) \circ (t_X \otimes \mathrm{id}_A) \circ (\mathrm{id}_A \otimes t_X) = t_X \circ (m \otimes \mathrm{id}_X). \]
Instead the two conditions in \eqref{axiomsHalfBraidedAlgebra} are respectively equivalent to
\[ m \in \Hom_{\mathcal{Z}(\mathcal{C})}\bigl( (A,t) \otimes (A,c^{-1}_{-,A}),\, (A,t) \bigr) \quad \text{ and } \quad m \in \Hom_{\mathcal{Z}(\mathcal{C})}\bigl( (A,c_{A,-}) \otimes (A,t),\, (A,t) \bigr). \]
\end{remark}

For two half-braided algebras $\mathbb{A}_1 = (A_1, t^1, m_1, \eta_1)$ and $\mathbb{A}_2 = (A_2, t^2, m_2, \eta_2)$ we define 
\begin{equation}\label{defBraidedTensorProductOfAlgebras2}
\begin{array}{l}
\mathbb{A}_1 \,\widetilde{\otimes}\, \mathbb{A}_2\\[.3em]
=\bigl( A_1 \otimes A_2, \: (t^1_- \otimes \mathrm{id}_{A_2}) \circ (\mathrm{id}_{A_1} \otimes t^2_-), \: (m_1 \otimes m_2) \circ (\mathrm{id}_{A_1} \otimes c_{A_2,A_1} \otimes \mathrm{id}_{A_2}),\: \eta_1 \otimes \eta_2 \bigr).
\end{array}
\end{equation}
This simply combines the monoidal product \eqref{defTensorProductInZC} in $\mathcal{Z}(\mathcal{C})$ and the braided tensor product \eqref{defBraidedTensorProductOfAlgebras} of algebras in $\mathcal{C}$.
\begin{proposition}\label{propBraidedTensorProductAlgebra}
$\mathbb{A}_1 \,\widetilde{\otimes}\, \mathbb{A}_2$ is a half-braided algebra.
\end{proposition}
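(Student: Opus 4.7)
My plan is to verify the three components of Definition \ref{defHBAlgebra} in turn. First, the pair $(A_1\otimes A_2, t^{1,2})$ with $t^{1,2}_X=(t^1_X\otimes\mathrm{id}_{A_2})\circ(\mathrm{id}_{A_1}\otimes t^2_X)$ is an object of $\mathcal{Z}(\mathcal{C})$: this is nothing but the monoidal product \eqref{defTensorProductInZC} of $(A_1,t^1)$ and $(A_2,t^2)$ in the Drinfeld center. Second, the triple $(A_1\otimes A_2, m_{1,2}, \eta_1\otimes\eta_2)$ is an algebra in $\mathcal{C}$, this being Majid's braided tensor product of algebras \eqref{defBraidedTensorProductOfAlgebras}. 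So the only non-trivial content is the verification of the two compatibility relations \eqref{axiomsHalfBraidedAlgebra} for $\mathbb{A}_1\,\widetilde{\otimes}\,\mathbb{A}_2$.

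For that verification I would use graphical calculus. For the first identity of \eqref{axiomsHalfBraidedAlgebra} one draws the LHS $t^{1,2}_X\circ(m_{1,2}\otimes\mathrm{id}_X)$ on inputs $A_1,A_2,A_1,A_2,X$ by expanding $m_{1,2}$ as the braiding $c_{A_2,A_1}$ on the two middle strands followed by $m_1\otimes m_2$, and $t^{1,2}_X$ as $(t^1_X\otimes\mathrm{id}_{A_2})\circ(\mathrm{id}_{A_1}\otimes t^2_X)$. Applying the first axiom of \eqref{axiomsHalfBraidedAlgebra} separately to $\mathbb{A}_1$ and to $\mathbb{A}_2$ pulls each $t^i_X$ past $m_i$, and the naturality of the braiding $c$ of $\mathcal{C}$ (used to slide the $X$-strand past $A_1$ and $A_2$ strands) together with the decomposition \eqref{axiomHalfBraiding} of half-braidings over tensor products rearranges the diagram into the desired RHS. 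The second identity follows by a symmetric argument.

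A conceptually cleaner alternative is to use the reformulation of the axioms \eqref{axiomsHalfBraidedAlgebra} given in the remark after Definition~\ref{defHBAlgebra}: the first axiom says exactly that $m$ is a morphism $(A,t)\otimes(A,c^{-1}_{-,A})\to(A,t)$ in $\mathcal{Z}(\mathcal{C})$. The hexagon identity in $\mathcal{C}$ yields
\[ \bigl(A_1\otimes A_2,\, c^{-1}_{-,A_1\otimes A_2}\bigr)\;=\;(A_1,c^{-1}_{-,A_1})\otimes(A_2,c^{-1}_{-,A_2}) \quad\text{in } \mathcal{Z}(\mathcal{C}), \]
so the statement to prove becomes that $m_{1,2}=(m_1\otimes m_2)\circ(\mathrm{id}_{A_1}\otimes c_{A_2,A_1}\otimes\mathrm{id}_{A_2})$ is a morphism
\[ (A_1,t^1)\otimes(A_2,t^2)\otimes(A_1,c^{-1}_{-,A_1})\otimes(A_2,c^{-1}_{-,A_2})\;\longrightarrow\;(A_1,t^1)\otimes(A_2,t^2) \]
in $\mathcal{Z}(\mathcal{C})$. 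The factor $m_1\otimes m_2$ is such a morphism by the hypothesis on each $\mathbb{A}_i$ and by monoidality of $\mathcal{Z}(\mathcal{C})$, so the only thing left to check is that $c_{A_2,A_1}$, viewed as swapping the two middle factors, is a morphism in $\mathcal{Z}(\mathcal{C})$ between the corresponding objects; this is a direct consequence of the naturality of $c$ applied to $t^1_X$ and $t^2_X$. The same strategy handles the second axiom, with $c^{-1}_{-,A}$ replaced by $c_{A,-}$ throughout.

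The expected main obstacle is purely the combinatorial bookkeeping: since $m_{1,2}$ already contains a crossing $c_{A_2,A_1}$ and $t^{1,2}_X$ contains two braiding-type strands, the diagram involves several intertwined crossings between $X$, $A_1$, and $A_2$. The key simplifying observation—crucial in either the graphical or the Drinfeld-center approach—is that the hexagon identity cleanly factorizes the ``reverse'' half-braiding $c^{-1}_{-,A_1\otimes A_2}$ as a tensor product, allowing one to reduce the whole check to the axioms for $\mathbb{A}_1$ and $\mathbb{A}_2$ separately plus the naturality of $c$.
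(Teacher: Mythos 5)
Your proposal is correct. The graphical route you describe first is exactly the paper's proof: the authors verify the first identity of \eqref{axiomsHalfBraidedAlgebra} by an isotopy followed by the first axiom applied to $(m_1,t^1)$ and $(m_2,t^2)$ separately, and treat the second identity symmetrically, so on that route there is nothing to add.

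Your Drinfeld-center alternative is a genuinely different organization of the same computation, and it does go through. The factorization $(A_1\otimes A_2,\,c^{-1}_{-,A_1\otimes A_2})=(A_1,c^{-1}_{-,A_1})\otimes(A_2,c^{-1}_{-,A_2})$ is indeed the hexagon identity, $m_1\otimes m_2$ is a morphism in $\mathcal{Z}(\mathcal{C})$ by monoidality of $\mathcal{Z}(\mathcal{C})$ and the hypothesis on each $\mathbb{A}_i$, and the remaining check that $c_{A_2,A_1}\colon (A_2,t^2)\otimes(A_1,c^{-1}_{-,A_1})\to(A_1,c^{-1}_{-,A_1})\otimes(A_2,t^2)$ is a morphism in $\mathcal{Z}(\mathcal{C})$ reduces, after expanding both composite half-braidings via the hexagon, to the single naturality identity $c_{X\otimes A_2,A_1}\circ(t^2_X\otimes\mathrm{id}_{A_1})=(\mathrm{id}_{A_1}\otimes t^2_X)\circ c_{A_2\otimes X,A_1}$ (and dually, naturality applied to $t^1_X$ for the second axiom) — so be precise that for the first axiom only $t^2$ enters and for the second only $t^1$. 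What this route buys is that the entire diagram chase collapses to a one-line naturality argument plus formal properties of $\mathcal{Z}(\mathcal{C})$; what it costs is reliance on the equivalence stated without proof in the remark after Definition \ref{defHBAlgebra}, which you would need to verify (it is routine) before leaning on it.
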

\begin{proof}
We just need to check the conditions \eqref{axiomsHalfBraidedAlgebra}:
\begin{center}
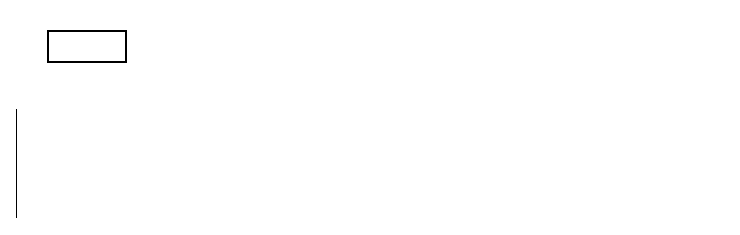
\end{center}
For the first equality we used isotopy and for the second equality we used the first condition in \eqref{axiomsHalfBraidedAlgebra} for $m_1$, $t_1$ and $m_2$, $t_2$ respectively. The second equality in \eqref{axiomsHalfBraidedAlgebra} is checked similarly.
\end{proof}

\begin{remark}
The operation $\widetilde{\otimes}$ is associative.
\end{remark}

\indent We remind the reader that our goal is to show that a suitable category whose objects are half-braided algebras is not only monoidal but even braided. Let $\mathbb{A}_1 = (A_1, t^1, m_1, \eta_1)$ and $\mathbb{A}_2 = (A_2, t^2, m_2, \eta_2)$ be half-braided algebras. The braiding $T$ in $\mathcal{Z}(\mathcal{C})$, recalled in \S\ref{subsecZC}, gives an isomorphism
\begin{equation}\label{isoBrZCforHBAlg}
T_{\mathbb{A}_1, \mathbb{A}_2} = t^1_{A_2} : (A_1,t^1) \otimes (A_2,t^2) \overset{\sim}{\longrightarrow} (A_2,t^2) \otimes (A_1,t^1)
\end{equation}
in $\mathcal{Z}(\mathcal{C})$. The following property will be the key ingredient to build the braiding in a category of bimodules over half-braided algebras: 

\begin{proposition}\label{propIsoBetweenA1A2AndA2A1}
$T_{\mathbb{A}_1, \mathbb{A}_2}$ is an isomorphism of half-braided algebras $\mathbb{A}_1 \,\widetilde{\otimes}\, \mathbb{A}_2 \to \mathbb{A}_2 \,\widetilde{\otimes}\, \mathbb{A}_1$.
\end{proposition}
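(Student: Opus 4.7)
The plan is to show that $T_{\mathbb{A}_1, \mathbb{A}_2} = t^1_{A_2}$ is an algebra morphism; invertibility and compatibility with the half-braidings are essentially free. Indeed, by the general theory recalled in \S\ref{subsecZC}, $T_{\mathbb{A}_1, \mathbb{A}_2}$ is already an invertible morphism in $\mathcal{Z}(\mathcal{C})$, and the half-braidings on $\mathbb{A}_1 \,\widetilde{\otimes}\, \mathbb{A}_2$ and $\mathbb{A}_2 \,\widetilde{\otimes}\, \mathbb{A}_1$ are, by definition \eqref{defBraidedTensorProductOfAlgebras2}, exactly the tensor-product half-braidings in $\mathcal{Z}(\mathcal{C})$, so the $\mathcal{Z}(\mathcal{C})$-compatibility is automatic.

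For the unit, naturality of $t^1$ applied to $\eta_2 : \boldsymbol{1} \to A_2$ gives $t^1_{A_2} \circ (\mathrm{id}_{A_1} \otimes \eta_2) = (\eta_2 \otimes \mathrm{id}_{A_1}) \circ t^1_{\boldsymbol{1}}$, and since $t^1_{\boldsymbol{1}} = \mathrm{id}_{A_1}$, post-composing with $\eta_1$ yields $t^1_{A_2} \circ (\eta_1 \otimes \eta_2) = \eta_2 \otimes \eta_1$, the unit of $\mathbb{A}_2 \,\widetilde{\otimes}\, \mathbb{A}_1$.

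The main step is multiplicativity, namely
\[ t^1_{A_2} \circ (m_1 \otimes m_2) \circ (\mathrm{id}_{A_1} \otimes c_{A_2, A_1} \otimes \mathrm{id}_{A_2}) = (m_2 \otimes m_1) \circ (\mathrm{id}_{A_2} \otimes c_{A_1, A_2} \otimes \mathrm{id}_{A_1}) \circ (t^1_{A_2} \otimes t^1_{A_2}). \]
Starting from the LHS, I would slide $t^1_{A_2}$ downward past the two multiplications. First, writing $(m_1 \otimes m_2) = (\mathrm{id}_{A_1} \otimes m_2) \circ (m_1 \otimes \mathrm{id}_{A_2 \otimes A_2})$, I use naturality of $t^1$ to push $t^1_{A_2}$ past $m_2$ and decompose $t^1_{A_2 \otimes A_2} = (\mathrm{id}_{A_2} \otimes t^1_{A_2}) \circ (t^1_{A_2} \otimes \mathrm{id}_{A_2})$ via \eqref{axiomHalfBraiding}. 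Next, I apply the \emph{first} condition of \eqref{axiomsHalfBraidedAlgebra} for $\mathbb{A}_1$ at $X=A_2$ to push one of these $t^1_{A_2}$'s past $m_1$: this introduces a factor $c_{A_2, A_1}^{-1}$ which exactly cancels the $c_{A_2, A_1}$ coming from the LHS multiplication. The expression reduces to
\[ (m_2 \otimes \mathrm{id}_{A_1}) \circ \bigl( \mathrm{id}_{A_2} \otimes [t^1_{A_2} \circ (m_1 \otimes \mathrm{id}_{A_2})] \bigr) \circ (t^1_{A_2} \otimes \mathrm{id}_{A_1 \otimes A_2}). \]
Finally, the \emph{second} condition of \eqref{axiomsHalfBraidedAlgebra} for $\mathbb{A}_1$ rewrites $t^1_{A_2} \circ (m_1 \otimes \mathrm{id}_{A_2})$ as $(\mathrm{id}_{A_2} \otimes m_1) \circ (c_{A_1, A_2} \otimes \mathrm{id}_{A_1}) \circ (\mathrm{id}_{A_1} \otimes t^1_{A_2})$; substituting and rearranging by bifunctoriality of $\otimes$, the two copies of $t^1_{A_2}$ combine into $t^1_{A_2} \otimes t^1_{A_2}$, the two copies of $m_1, m_2$ combine into $m_2 \otimes m_1$, and the newly introduced $c_{A_1, A_2}$ sits in the correct position, recovering the RHS.

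The main obstacle is the bookkeeping of strand types and positions; for a clean presentation I would reproduce the computation graphically, as in the proof of Proposition \ref{propBraidedTensorProductAlgebra}. Conceptually, the two axioms \eqref{axiomsHalfBraidedAlgebra} are precisely tuned to make this work: the first produces a $c^{-1}$ to cancel the crossing in the source multiplication, and the second produces a $c$ matching the crossing in the target multiplication. Notably, the proof uses both axioms for $\mathbb{A}_1$ but only naturality of $t^1$ applied to the plain $\mathcal{C}$-morphism $m_2$; the half-braided-algebra axioms for $\mathbb{A}_2$ are never invoked, consistent with the fact that $T_{\mathbb{A}_1, \mathbb{A}_2}$ depends only on $t^1$.
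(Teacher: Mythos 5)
Your proposal is correct and follows essentially the same route as the paper's proof: invertibility and compatibility with the half-braidings come for free from $T_{\mathbb{A}_1,\mathbb{A}_2}$ being the braiding of $\mathcal{Z}(\mathcal{C})$, unit preservation follows from naturality of $t^1$ together with $t^1_{\boldsymbol{1}} = \mathrm{id}$, and multiplicativity is a diagram chase using both conditions of \eqref{axiomsHalfBraidedAlgebra} for $\mathbb{A}_1$, the half-braiding axiom \eqref{axiomHalfBraiding}, and naturality of $t^1$ with respect to $m_2$. The paper performs the same four-step computation graphically (reading the chain of equalities in the opposite direction), so the two arguments coincide.
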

\begin{proof}
We already know that $T_{\mathbb{A}_1, \mathbb{A}_2}$ is an isomorphism in $\mathcal{Z}(\mathcal{C})$, \textit{i.e.} an iso in $\mathcal{C}$ which commutes with the half-braidings of $\mathbb{A}_1 \,\widetilde{\otimes}\, \mathbb{A}_2$ and $\mathbb{A}_2 \,\widetilde{\otimes}\, \mathbb{A}_1$. It remains to check that it is an algebra morphism:
\begin{center}
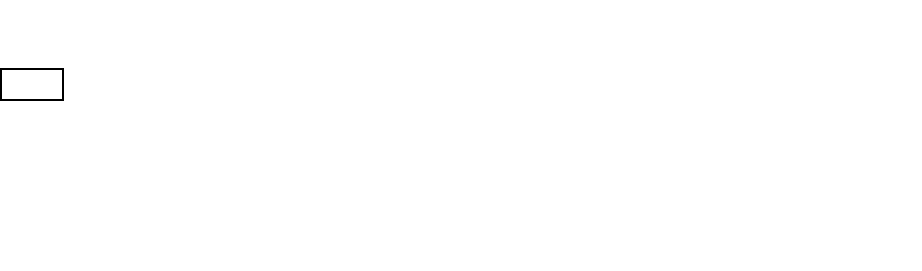
\end{center}
The first equality uses the second equality in \eqref{axiomsHalfBraidedAlgebra}, the second uses the first equality in \eqref{axiomsHalfBraidedAlgebra}, the third uses the defining property of half-braidings \eqref{axiomHalfBraiding} and the fourth is by naturality.  Finally by the naturality of $t^1$ we get
\[ t^1_{A_2} \circ (\eta_1 \otimes \eta_2) = (\eta_2 \otimes \mathrm{id}_{A_1}) \circ t^1_{\boldsymbol{1}} \circ \eta_1 = (\eta_2 \otimes \mathrm{id}_{A_1}) \circ \eta_1 = \eta_2 \otimes \eta_1. \]
Hence $t^1_{A_2}$ preserves the units.
\end{proof}

\begin{remark}
The braiding $c$ in $\mathcal{C}$ gives an isomorphism $c_{A_1,A_2} : A_1 \otimes A_2 \to A_2 \otimes A_1$ in $\mathcal{C}$ which is an isomorphism of algebras but not of half-braided algebras because in general $c_{A_1,A_2} \not\in \mathrm{Hom}_{\mathcal{Z}(\mathcal{C})}\bigl( (A_1,t^1) \otimes (A_2,t^2), (A_2,t^2) \otimes (A_1,t^1) \bigr)$.
\end{remark}

\subsection{(Bi)modules over half-braided algebras}\label{subsectionHbCohBimod}
As in Subsection \ref{sectionMoritaCategoryInGeneral}, the morphisms of our monoidal category are going to be bimodules, but now over {\em half-braided} algebras. In this subsection we show that a bimodule over half-braided algebras is automatically endowed with two half-braidings and when they coincide we call it {\em hb-compatible}. The main result of this section is to show that the monoidal product of hb-compatible bimodules is again hb-compatible. This endows what will be our final category with a monoidal structure.

\indent Let $\mathbb{A} = (A,t,m,\eta)$ and $\mathbb{A}' = (A',t',m',\eta')$ be half-braided algebras in $\mathcal{C}$. In particular $\mathbb{A}$ and $\mathbb{A}'$ are algebras. Let $\mathbf{M} = (M, \smallblacktriangleright)$ be a left $\mathbb{A}'$-module and $\mathbf{N} = (N, \smallblacktriangleleft)$ be a right $\mathbb{A}$-module (\S \ref{sectionPreliminariesModules}). For all $X \in \mathcal{C}$ define
\begin{equation}\label{halfBraidingInTermOfAction}
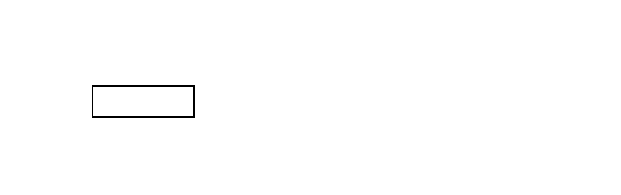
\end{equation}
The notation $\mathrm{hbl}$ (resp. $\mathrm{hbr}$) is an abbreviation of ``half-braiding on left (resp. right) module'', which makes sense thanks to the next proposition:
\begin{proposition}\label{propPropertiesOfModulesOverHBAlgebras}
1. $\mathrm{hbl}^{\mathbf{M}}$ and $\mathrm{hbr}^{\mathbf{N}}$ are half-braidings.
\\2. For all $X \in \mathcal{C}$ we have
\begin{equation}\label{axiomsHalfBraidedLeftModule}
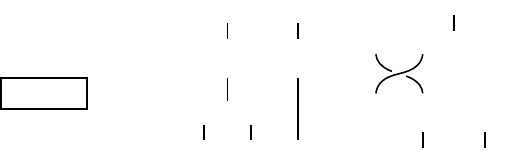
\end{equation}
and
\begin{equation}\label{axiomsHalfBraidedRightModule}
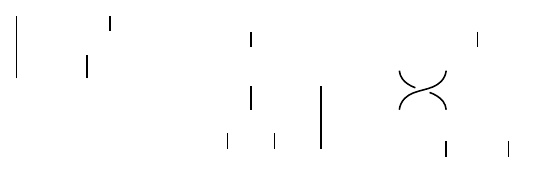
\end{equation}
\end{proposition}
\begin{proof}
1. Naturality is obvious, from the naturality of the half-braiding $t$. For all $X,Y \in \mathcal{C}$, $\mathrm{hbl}^{\mathbf{M}}_{X \otimes Y}$ can be rewritten as follows:
\begin{center}
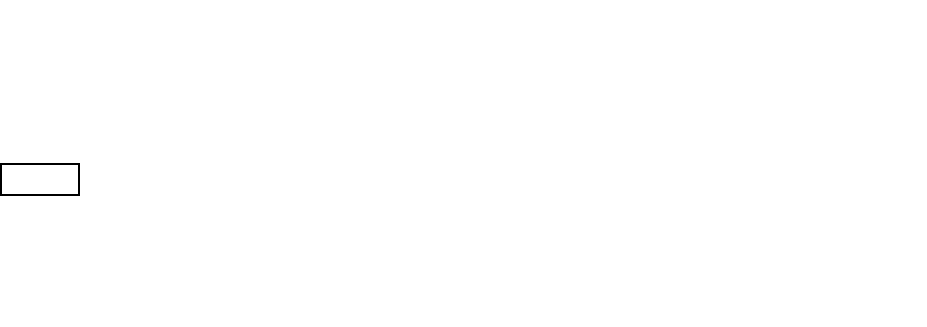
\end{center}
To obtain the first term we used \eqref{axiomHalfBraiding}, then for the first equality we used that $\eta'$ is the unit of $m$', for the second equality we used the first equality in \eqref{axiomsHalfBraidedAlgebra} and for the third equality we used that $\smallblacktriangleright$ is an action and naturality of the braiding. The last term is equal to $(\mathrm{id}_X \otimes \mathrm{hbl}^{\mathbf{M}}_Y) \circ (\mathrm{hbl}^{\mathbf{M}}_X \otimes \mathrm{id}_Y)$, as desired. A similar computation holds for $\mathrm{hbr}^{\mathbf{N}}$. Finally one can check that the inverses are given by
\begin{equation}\label{INVERSEhalfBraidingInTermOfAction}
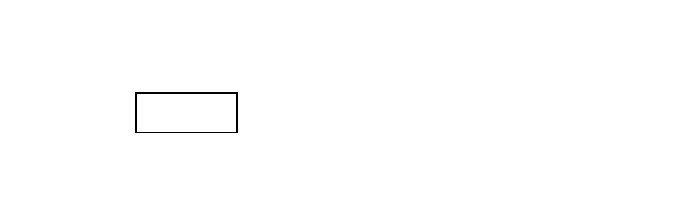
\end{equation}
\noindent 2. The first equality in \eqref{axiomsHalfBraidedLeftModule} is obtained as follows
\begin{center}
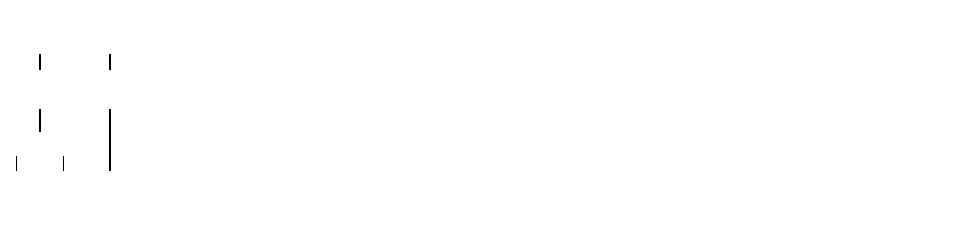
\end{center}
For the other equality in \eqref{axiomsHalfBraidedLeftModule} note that
\begin{center}
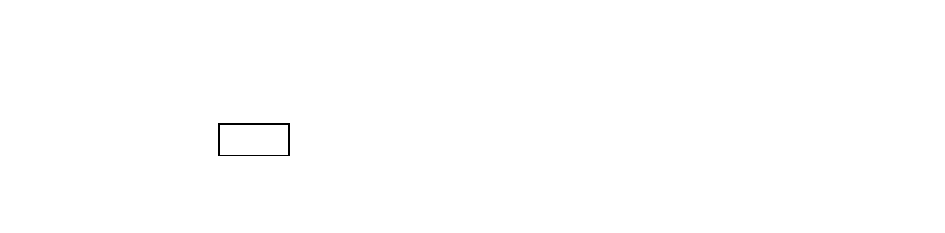
\end{center}
Equation \eqref{axiomsHalfBraidedRightModule} is proven similarly.
\end{proof}

\begin{remark}\label{remarkRegularBimoduleIsCoherent}
We can consider $\mathbb{A}$ as a left or right $\mathbb{A}$-module by multiplication. We get from \eqref{axiomsHalfBraidedAlgebra} that $\mathrm{hbl}^{\mathbb{A}} = \mathrm{hbr}^{\mathbb{A}} = t$. In this case the equalities \eqref{axiomsHalfBraidedLeftModule} and \eqref{axiomsHalfBraidedRightModule} simply reduce to \eqref{axiomsHalfBraidedAlgebra}.
\end{remark}

\indent The half-braidings $\mathrm{hbl}^{\mathbf{M}}$, $\mathrm{hbr}^{\mathbf{N}}$ are compatible with the braided tensor product of modules (defined in \S \ref{sectionPreliminariesModules}), in the following sense. For $i = 1,2$, let $\mathbf{M}_i = (M_i,\smallblacktriangleright_i)$ be a left module over a half-braided algebra $\mathbb{A}'_i$ and let $\mathbf{N}_i = (N_i, \smallblacktriangleleft_i)$ be a right module over a half-braided algebra $\mathbb{A}_i$.

\begin{lemma}\label{lemmaHalfBraidingOnModuleCompatibleWithTensorProductZC}
With these notations, we have for all $X \in \mathcal{C}$
\begin{align*}
\mathrm{hbl}^{\mathbf{M}_1 \,\widetilde{\otimes}\, \mathbf{M}_2}_X &= \bigl( \mathrm{hbl}^{\mathbf{M}_1}_X \otimes \mathrm{id}_{M_2} \bigr) \circ \bigl( \mathrm{id}_{M_1} \otimes \mathrm{hbl}^{\mathbf{M}_2}_X \bigr),\\
\mathrm{hbr}^{\mathbf{N}_1 \,\widetilde{\otimes}\, \mathbf{N}_2}_X &= \bigl( \mathrm{hbr}^{\mathbf{N}_1}_X \otimes \mathrm{id}_{N_2} \bigr) \circ \bigl( \mathrm{id}_{N_1} \otimes \mathrm{hbr}^{\mathbf{N}_2}_X \bigr).
\end{align*}
\end{lemma}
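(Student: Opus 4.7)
The plan is to give a direct graphical verification by unpacking \eqref{halfBraidingInTermOfAction} applied to the module $\mathbf{M}_1\,\widetilde{\otimes}\,\mathbf{M}_2$ over the half-braided algebra $\mathbb{A}'_1\,\widetilde{\otimes}\,\mathbb{A}'_2$, and then checking that the resulting diagram decomposes. Concretely, I would substitute three pieces of data coming from the braided tensor product: the unit $\eta'_1\otimes\eta'_2$, the half-braiding $t^{12}_X = (t'^1_X \otimes \mathrm{id}_{A'_2}) \circ (\mathrm{id}_{A'_1} \otimes t'^2_X)$ from \eqref{defBraidedTensorProductOfAlgebras2}, and the left action $\smallblacktriangleright_{12} = (\smallblacktriangleright_{\!1} \otimes \smallblacktriangleright_{\!2}) \circ (\mathrm{id}_{A'_1} \otimes c_{A'_2, M_1} \otimes \mathrm{id}_{M_2})$ from \eqref{defBraidedTensorProductOfLeftModules}.

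Next, I would rearrange the diagram for $\mathrm{hbl}^{\mathbf{M}_1\,\widetilde{\otimes}\,\mathbf{M}_2}_X$ so that it separates into two sub-diagrams, one acting only on the $M_1$-strand and using $(\eta'_1, t'^1_X, \smallblacktriangleright_{\!1})$, and the other acting only on the $M_2$-strand and using $(\eta'_2, t'^2_X, \smallblacktriangleright_{\!2})$. The crucial step is to slide the braiding $c_{A'_2, M_1}$ (introduced by $\smallblacktriangleright_{12}$) past the factor $t'^1_X \otimes \mathrm{id}_{A'_2}$ using the naturality of the braiding $c$ in $\mathcal{C}$; this decouples the two $A'_i$-strands from each other. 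Once this separation is performed, each sub-diagram is precisely the definition of $\mathrm{hbl}^{\mathbf{M}_i}_X$ acting on its respective module strand, so the composite equals $(\mathrm{hbl}^{\mathbf{M}_1}_X \otimes \mathrm{id}_{M_2}) \circ (\mathrm{id}_{M_1} \otimes \mathrm{hbl}^{\mathbf{M}_2}_X)$, as required.

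The right-module identity will be proved by exactly the same procedure, using \eqref{defBraidedTensorProductOfRightModules} for the action of $\mathbf{N}_1\,\widetilde{\otimes}\,\mathbf{N}_2$ and the symmetric right-module version of \eqref{halfBraidingInTermOfAction}. The expected main obstacle is purely combinatorial bookkeeping: keeping track of the various strands and of the single braiding $c_{A'_2, M_1}$ (resp. $c_{N_2, A_1}$) that appears when expanding the braided tensor product of actions, and making sure the naturality step used to move it is applied correctly. No genuine conceptual subtlety arises, since both sides of each identity are built from the very same atomic pieces --- two units, two half-braidings of the component algebras, two actions, and some braidings of $\mathcal{C}$ --- and the half-braiding axiom \eqref{axiomHalfBraiding} together with the hexagon identity $c_{M_1 \otimes M_2,X} = (c_{M_1,X} \otimes \mathrm{id}_{M_2}) \circ (\mathrm{id}_{M_1} \otimes c_{M_2,X})$ exactly match them up.
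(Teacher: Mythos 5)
Your plan is correct and is exactly the direct graphical computation that the paper's proof leaves to the reader. One small correction of emphasis: sliding $c_{A'_2,M_1}$ past $t'^1_X \otimes \mathrm{id}_{A'_2}$ is just the interchange law (these act on disjoint strands), whereas the steps that genuinely use naturality of $c$ are sliding $c_{A'_2,M_1}$ past $t'^2_X$ (which shares the $A'_2$-strand, via naturality of $c_{-,M_1}$ plus the hexagon) and then sliding the unit $\eta'_2 : \boldsymbol{1} \to A'_2$ through the resulting crossing with $M_1$ (using $c_{\boldsymbol{1},M_1} = \mathrm{id}$), after which the leftover $c^{\pm 1}_{X,M_1}$ factors cancel against the hexagon decomposition of $c^{-1}_{X,M_1\otimes M_2}$.
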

\begin{proof}
Easy graphical computations left to the reader.
\end{proof}

\indent Now let $\mathbb{A}$, $\mathbb{A}'$ be half-braided algebras and $\mathbf{B}$ be a $(\mathbb{A}', \mathbb{A})$-bimodule as defined in \S \ref{sectionPreliminariesModules}. Since $\mathbf{B}$ is both a left $\mathbb{A}'$-module and a right $\mathbb{A}$-module we have the two half-braidings $\mathrm{hbl}^{\mathbf{B}}$ and $\mathrm{hbr}^{\mathbf{B}}$ which come respectively from the left and right actions, recall \eqref{halfBraidingInTermOfAction}.

\begin{definition}\label{defHbCoherentBimodule}
We say that the bimodule $\mathbf{B}$ is hb-compatible if $\mathrm{hbl}^{\mathbf{B}} = \mathrm{hbr}^{\mathbf{B}}$. In such a case we denote this half-braiding by $\mathrm{hb}^{\mathbf{B}}$.
\end{definition}

\begin{example}\label{coherentRegularBimod}
The regular bimodule of a half-braided algebra $\mathbb{A}$ is hb-compatible, by Remark \ref{remarkRegularBimoduleIsCoherent}.
\end{example}

Here is an example of a non-compatible bimodule:
\begin{example} Let $\mathbb{A} = (A, t, m, \eta)$ be a half-braided algebra. Let $\mathbf{B}$ be the $(\mathbb{A}, \mathbb{A})$-bimodule $\bigl( A \otimes A, m \otimes \mathrm{id}_A, \mathrm{id}_A \otimes m \bigr)$. In this case, the definition \eqref{halfBraidingInTermOfAction} together with \eqref{axiomsHalfBraidedAlgebra} give
\begin{align*}
\mathrm{hbl}^{\mathbf{B}}_X &= \bigl( t_X \otimes \mathrm{id}_A \bigr) \circ \bigl( \mathrm{id}_A \otimes c_{X,A}^{-1} \bigr),\\
\mathrm{hbr}^{\mathbf{B}}_X &= \bigl( c_{A,X} \otimes \mathrm{id}_A \bigr) \circ \bigl( \mathrm{id}_A \otimes t_X \bigr).
\end{align*}
In general these two half-braidings are not equal, and thus $\mathbf{B}$ is not hb-compatible in general. For instance take  $A = \underline{\End}(V) = V \otimes V^*$ with the half-braided algebra structure described in Example \ref{lemmaMatrixAlgebra} below. Then 
\begin{center}
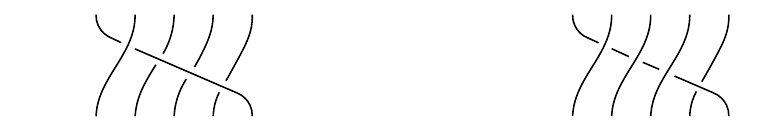
\end{center}
Assume that the functors $V \otimes -$ and $- \otimes V^*$ are faithful (which for instance is true for any $V$ in the category of finite-dimensional modules over a Hopf $k$-algebra, with $k$ a field). Then $\mathrm{hbl}^{\mathbf{B}}$ and $\mathrm{hbr}^{\mathbf{B}}$ are equal if and only if for all $X\in \mathcal{C}$ we have $c_{X, V^* \otimes V}^{-1} = c_{V^* \otimes V, X}$, \textit{i.e.} $\End(V)$ is in the M\"uger center of $\mathcal{C}$. There are many braided monoidal categories with trivial M\"uger center, which is equivalent to factorizability, see e.g. \cite{shimizu}.
\end{example}

\begin{proposition}\label{propHbCoherenceStableByBraidedTensorProduct}
If $\mathbf{B}_1$ is a hb-compatible $(\mathbb{A}'_1, \mathbb{A}_1)$-bimodule and $\mathbf{B}_2$ is a hb-compatible $(\mathbb{A}'_2, \mathbb{A}_2)$-bimodule then $\mathbf{B}_1 \,\widetilde{\otimes}\, \mathbf{B}_2$ is a hb-compatible $(\mathbb{A}'_1 \,\widetilde{\otimes}\, \mathbb{A}'_2, \mathbb{A}_1 \,\widetilde{\otimes}\, \mathbb{A}_2)$-bimodule.
\end{proposition}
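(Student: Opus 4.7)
The plan is to reduce the statement directly to the hypothesis via Lemma \ref{lemmaHalfBraidingOnModuleCompatibleWithTensorProductZC}, which already tells us how the half-braidings of a braided tensor product of modules decompose in terms of those of the factors. No new graphical calculation is needed; the work has essentially been done in that lemma and in the definition of the braided tensor product of bimodules.

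More precisely, recall that the $(\mathbb{A}'_1\,\widetilde{\otimes}\,\mathbb{A}'_2,\mathbb{A}_1\,\widetilde{\otimes}\,\mathbb{A}_2)$-bimodule structure on $\mathbf{B}_1\,\widetilde{\otimes}\,\mathbf{B}_2$ is obtained by combining the left-module structure \eqref{defBraidedTensorProductOfLeftModules} and the right-module structure \eqref{defBraidedTensorProductOfRightModules}. In particular, viewed as a left $\mathbb{A}'_1\,\widetilde{\otimes}\,\mathbb{A}'_2$-module, $\mathbf{B}_1\,\widetilde{\otimes}\,\mathbf{B}_2$ is exactly the braided tensor product of the left modules underlying $\mathbf{B}_1$ and $\mathbf{B}_2$, and similarly on the right. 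Hence Lemma \ref{lemmaHalfBraidingOnModuleCompatibleWithTensorProductZC} applies to each of the two half-braidings $\mathrm{hbl}^{\mathbf{B}_1\,\widetilde{\otimes}\,\mathbf{B}_2}$ and $\mathrm{hbr}^{\mathbf{B}_1\,\widetilde{\otimes}\,\mathbf{B}_2}$ attached to $\mathbf{B}_1\,\widetilde{\otimes}\,\mathbf{B}_2$ via \eqref{halfBraidingInTermOfAction}, yielding for every $X \in \mathcal{C}$
\[
\mathrm{hbl}^{\mathbf{B}_1\,\widetilde{\otimes}\,\mathbf{B}_2}_X = \bigl(\mathrm{hbl}^{\mathbf{B}_1}_X \otimes \mathrm{id}_{B_2}\bigr)\circ\bigl(\mathrm{id}_{B_1}\otimes \mathrm{hbl}^{\mathbf{B}_2}_X\bigr),
\]
\[
\mathrm{hbr}^{\mathbf{B}_1\,\widetilde{\otimes}\,\mathbf{B}_2}_X = \bigl(\mathrm{hbr}^{\mathbf{B}_1}_X \otimes \mathrm{id}_{B_2}\bigr)\circ\bigl(\mathrm{id}_{B_1}\otimes \mathrm{hbr}^{\mathbf{B}_2}_X\bigr).
\]

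The hb-compatibility hypothesis on $\mathbf{B}_1$ and $\mathbf{B}_2$ (Definition \ref{defHbCoherentBimodule}) precisely says $\mathrm{hbl}^{\mathbf{B}_i} = \mathrm{hbr}^{\mathbf{B}_i} = \mathrm{hb}^{\mathbf{B}_i}$ for $i=1,2$. Substituting this into the two displays above, both right-hand sides coincide with $(\mathrm{hb}^{\mathbf{B}_1}_X \otimes \mathrm{id}_{B_2})\circ(\mathrm{id}_{B_1}\otimes \mathrm{hb}^{\mathbf{B}_2}_X)$, so $\mathrm{hbl}^{\mathbf{B}_1\,\widetilde{\otimes}\,\mathbf{B}_2} = \mathrm{hbr}^{\mathbf{B}_1\,\widetilde{\otimes}\,\mathbf{B}_2}$, which is the desired hb-compatibility.

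The only step worth double-checking is that Lemma \ref{lemmaHalfBraidingOnModuleCompatibleWithTensorProductZC} really applies here: one must make sure that the action-induced half-braiding $\mathrm{hbl}$ of the bimodule $\mathbf{B}_1\,\widetilde{\otimes}\,\mathbf{B}_2$ depends only on its underlying left-module structure (which it does by the very definition \eqref{halfBraidingInTermOfAction}), and symmetrically for $\mathrm{hbr}$. Once this is observed, the proof is a one-line application of the lemma and the hypothesis, and there is no real obstacle.
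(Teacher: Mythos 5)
Your proof is correct and is exactly the paper's argument: apply Lemma \ref{lemmaHalfBraidingOnModuleCompatibleWithTensorProductZC} to both $\mathrm{hbl}$ and $\mathrm{hbr}$ of the braided tensor product and use $\mathrm{hbl}^{\mathbf{B}_i}=\mathrm{hbr}^{\mathbf{B}_i}$ to identify the two resulting expressions. Nothing to add.
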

\begin{proof}
This follows from Lemma \ref{lemmaHalfBraidingOnModuleCompatibleWithTensorProductZC}:
\[ \mathrm{hbl}^{\mathbf{B}_1 \,\widetilde{\otimes}\, \mathbf{B}_2}_X = \bigl( \mathrm{hbl}^{\mathbf{B}_1}_X \otimes \mathrm{id}_{B_2} \bigr) \circ \bigl( \mathrm{id}_{B_1} \otimes \mathrm{hbl}^{\mathbf{B}_2}_X \bigr)= \bigl( \mathrm{hbr}^{\mathbf{B}_1}_X \otimes \mathrm{id}_{B_2} \bigr) \circ \bigl( \mathrm{id}_{B_1} \otimes \mathrm{hbr}^{\mathbf{B}_2}_X \bigr) = \mathrm{hbr}^{\mathbf{B}_1 \,\widetilde{\otimes}\, \mathbf{B}_2}_X \]
for all $X \in \mathcal{C}$.
\end{proof}

\indent Recall from \S\ref{subsecTwistingBimod} that bimodules can be twisted by morphisms of algebras.
\begin{lemma}\label{lemmaTwistingCohBim}
Let $f_1 : \mathbb{A}'_1 \to \mathbb{A}_1$ and $f_2 : \mathbb{A}'_2 \to \mathbb{A}_2$ be morphisms of half-braided algebras, and $\mathbf{B}$ be a hb-compatible $(\mathbb{A}_2, \mathbb{A}_1)$-bimodule. Then $f_2 \smallblacktriangleright \mathbf{B} \smallblacktriangleleft f_1$ is also hb-compatible.
\end{lemma}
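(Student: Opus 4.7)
The plan is to establish the two pointwise equalities
\[
\mathrm{hbl}^{f_2 \smallblacktriangleright \mathbf{B} \smallblacktriangleleft f_1} \,=\, \mathrm{hbl}^{\mathbf{B}},
\qquad
\mathrm{hbr}^{f_2 \smallblacktriangleright \mathbf{B} \smallblacktriangleleft f_1} \,=\, \mathrm{hbr}^{\mathbf{B}}
\]
as natural families of morphisms $B \otimes X \to X \otimes B$ in $\mathcal{C}$; the hb-compatibility of $f_2 \smallblacktriangleright \mathbf{B} \smallblacktriangleleft f_1$ then follows immediately from that of $\mathbf{B}$. The first observation is that the formula \eqref{halfBraidingInTermOfAction} for $\mathrm{hbl}^{\mathbf{M}}$ only involves the left-action data together with the half-braided-algebra structure $(t',\eta')$ of the left-acting algebra; hence the right twisting by $f_1$ is invisible to $\mathrm{hbl}$, and symmetrically the left twisting by $f_2$ is invisible to $\mathrm{hbr}$. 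This reduces the problem to $\mathrm{hbl}^{f_2 \smallblacktriangleright \mathbf{B}} = \mathrm{hbl}^{\mathbf{B}}$ and the symmetric statement for $\mathrm{hbr}$ and $f_1$.

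Writing $\mathbb{A}_i = (A_i, t^i, m_i, \eta_i)$ and $\mathbb{A}'_i = (A'_i, t'^i, m'_i, \eta'_i)$, the hypothesis that each $f_i$ is a morphism of half-braided algebras unpacks into two identities: $f_i \circ \eta'_i = \eta_i$ and $(\mathrm{id}_X \otimes f_i) \circ t'^i_X = t^i_X \circ (f_i \otimes \mathrm{id}_X)$ for every $X \in \mathcal{C}$. In the diagram defining $\mathrm{hbl}^{f_2 \smallblacktriangleright \mathbf{B}}_X$, the twisted action $\smallblacktriangleright \circ (f_2 \otimes \mathrm{id}_B)$ amounts to an $f_2$-box inserted at the top of the $A'_2$-strand just below the action node; the $A'_2$-strand itself runs from the unit $\eta'_2$ at the bottom up through the half-braiding node $t'^2_X$ to the action. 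I will slide this $f_2$-box downwards along the strand. Pushing $f_2$ through $t'^2_X$ via the intertwining identity turns the braiding into $t^2_X$ and places $f_2$ below it; continuing down, $f_2$ lands directly on top of $\eta'_2$, and the unit identity $f_2 \circ \eta'_2 = \eta_2$ absorbs the pair into a single $\eta_2$. What is left is literally the diagram of $\mathrm{hbl}^{\mathbf{B}}_X$. The argument for $\mathrm{hbr}^{\mathbf{B} \smallblacktriangleleft f_1} = \mathrm{hbr}^{\mathbf{B}}$ is word-for-word the same, performed on the right strand using $f_1$ in place of $f_2$.

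No step presents a real obstacle: the proof is a tidy two-move diagram chase, invoking exactly the two defining conditions of a morphism of half-braided algebras (preservation of the unit and commutation with the half-braiding). The only point worth emphasising explicitly is the orthogonality of the left and right twists with respect to the formulas for $\mathrm{hbl}$ and $\mathrm{hbr}$, which is what allows the two sides to be treated independently.
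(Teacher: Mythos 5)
Your proposal is correct and follows exactly the paper's route: the paper's proof consists precisely of the two identities $\mathrm{hbl}^{f_2 \smallblacktriangleright \mathbf{B} \smallblacktriangleleft f_1} = \mathrm{hbl}^{\mathbf{B}}$ and $\mathrm{hbr}^{f_2 \smallblacktriangleright \mathbf{B} \smallblacktriangleleft f_1} = \mathrm{hbr}^{\mathbf{B}}$, asserted as straightforward computations. Your diagram chase (sliding $f_i$ through the half-braiding via the intertwining identity and absorbing it into the unit) is exactly the computation the paper leaves implicit.
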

\begin{proof}
Straightforward computations reveal that
 \[ \mathrm{hbl}^{f_2 \smallblacktriangleright \mathbf{B} \smallblacktriangleleft f_1} = \mathrm{hbl}^{\mathbf{B}} \quad \text{ and } \quad \mathrm{hbr}^{f_2 \smallblacktriangleright \mathbf{B}  \smallblacktriangleleft f_1} = \mathrm{hbr}^{\mathbf{B}}.\qedhere \]
\end{proof}

\subsection{The monoidal category of hb-compatible bimodules}\label{sectionBraidingOnBimodules}
Until now we showed that hb-compatible bimodules can be tensored. In order to later be able to consider them as morphisms, we now need to define their ``composition'' which, as in Subsection \ref{sectionMoritaCategoryInGeneral}, is nothing but the tensor product over a middle half-braided algebra. We end this section defining the monoidal category $\mathrm{Bim}^{\mathrm{hb}}_{\mathcal{C}}$ of half-braided algebras and their hb-compatible bimodules. 

\smallskip

\indent Recall from \S\ref{sectionMoritaCategoryInGeneral} the operation $\circ$ on bimodules and the definition of the category $\mathrm{Bim}_{\mathcal{C}}$.

\begin{proposition}\label{propCompoOfHBCoherentBimodules}
If $\mathbf{B}_1$ is a hb-compatible $(\mathbb{A}_2, \mathbb{A}_1)$-bimodule and $\mathbf{B}_2$ is a hb-compatible $(\mathbb{A}_3, \mathbb{A}_2)$-bimodule then $\mathbf{B}_2 \circ \mathbf{B}_1$ is a hb-compatible $(\mathbb{A}_3, \mathbb{A}_1)$-bimodule.
\end{proposition}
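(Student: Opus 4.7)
The strategy is to exploit the universal property of the coequalizer. Since $-\otimes X$ preserves coequalizers, $\pi \otimes \mathrm{id}_X : B_2 \otimes B_1 \otimes X \to (\mathbf{B}_2 \circ \mathbf{B}_1) \otimes X$ is itself a coequalizer, and in particular an epimorphism. Hence, to prove $\mathrm{hbl}^{\mathbf{B}_2 \circ \mathbf{B}_1}_X = \mathrm{hbr}^{\mathbf{B}_2 \circ \mathbf{B}_1}_X$ for every $X \in \mathcal{C}$, it suffices to show these two morphisms agree after pre-composition with $\pi \otimes \mathrm{id}_X$.

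The plan is first to derive two explicit formulas for these pre-compositions. Unfolding the definition \eqref{halfBraidingInTermOfAction} of $\mathrm{hbl}^{\mathbf{B}_2 \circ \mathbf{B}_1}_X$ in terms of the induced left action $\lambda$ on $\mathbf{B}_2 \circ \mathbf{B}_1$, then invoking the factorization \eqref{defLeftActionOnCoequalizer}, the naturality of the braiding and of $t^3$, and the hexagon identity $c_{B_2 \otimes B_1, X} = (c_{B_2, X} \otimes \mathrm{id}_{B_1}) \circ (\mathrm{id}_{B_2} \otimes c_{B_1, X})$, one obtains
\[
\mathrm{hbl}^{\mathbf{B}_2 \circ \mathbf{B}_1}_X \circ (\pi \otimes \mathrm{id}_X) = (\mathrm{id}_X \otimes \pi) \circ (\mathrm{hbl}^{\mathbf{B}_2}_X \otimes \mathrm{id}_{B_1}) \circ (\mathrm{id}_{B_2} \otimes c_{B_1, X}).
\]
A parallel computation for $\mathrm{hbr}$, this time using the right action $\rho$ and its factorization \eqref{defRightActionOnCoequalizer}, gives
\[
\mathrm{hbr}^{\mathbf{B}_2 \circ \mathbf{B}_1}_X \circ (\pi \otimes \mathrm{id}_X) = (\mathrm{id}_X \otimes \pi) \circ (c_{B_2, X} \otimes \mathrm{id}_{B_1}) \circ (\mathrm{id}_{B_2} \otimes \mathrm{hbr}^{\mathbf{B}_1}_X).
\]

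The task then reduces to matching these two expressions, and this is where the hb-compatibilities of $\mathbf{B}_1$ and $\mathbf{B}_2$ come in, together with the coequalizer identity itself. Starting from the first formula, I would use hb-compatibility of $\mathbf{B}_2$ to rewrite $\mathrm{hbl}^{\mathbf{B}_2}_X$ as $\mathrm{hbr}^{\mathbf{B}_2}_X$ and expand the latter via \eqref{halfBraidingInTermOfAction} in terms of the right $\mathbb{A}_2$-action $\smallblacktriangleleft_{\!2}$. The outermost factor $\smallblacktriangleleft_{\!2}$ then sits adjacent to $\pi$, so the defining coequalizer identity $\pi \circ (\smallblacktriangleleft_{\!2} \otimes \mathrm{id}_{B_1}) = \pi \circ (\mathrm{id}_{B_2} \otimes \smallblacktriangleright_{\!1})$ from \eqref{defCompositionOfBimodules} applies and shifts the $\mathbb{A}_2$-action from $B_2$ onto $B_1$. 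Rearranging the tensor factors, what one reads on the $B_1$-side is precisely $\mathrm{hbl}^{\mathbf{B}_1}_X$ (the half-braiding built from the left $\mathbb{A}_2$-action $\smallblacktriangleright_{\!1}$), while the remaining braiding on the $B_2$-side is $c_{B_2,X}$; a last use of the hb-compatibility of $\mathbf{B}_1$ turns $\mathrm{hbl}^{\mathbf{B}_1}_X$ into $\mathrm{hbr}^{\mathbf{B}_1}_X$, producing exactly the formula derived above for $\mathrm{hbr}^{\mathbf{B}_2 \circ \mathbf{B}_1}_X \circ (\pi \otimes \mathrm{id}_X)$.

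The main obstacle is the inevitable bookkeeping: tracking the many tensor factors through the naturality moves, the hexagon split of $c_{B_2\otimes B_1,X}$, and the commutation of $\pi$ with braidings. The conceptual heart of the argument is, however, short and transparent: the coequalizer identity from \eqref{defCompositionOfBimodules} encodes the balance between the two $\mathbb{A}_2$-actions glued inside $\mathbf{B}_2 \circ \mathbf{B}_1$, and this balance is exactly what converts the half-braiding carried by the outer left $\mathbb{A}_3$-action into the one carried by the outer right $\mathbb{A}_1$-action.
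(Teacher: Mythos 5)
Your strategy is the same as the paper's: the two expressions you derive for $\mathrm{hbl}^{\mathbf{B}_2\circ\mathbf{B}_1}_X\circ(\pi\otimes\mathrm{id}_X)$ and $\mathrm{hbr}^{\mathbf{B}_2\circ\mathbf{B}_1}_X\circ(\pi\otimes\mathrm{id}_X)$ are (up to the point below) precisely the morphisms $\alpha$ and $\beta$ of \eqref{EqAlphaEqualsBeta}, and your matching step --- hb-compatibility of $\mathbf{B}_2$ to pass to $\mathrm{hbr}^{\mathbf{B}_2}$, the coequalizer relation to trade $\smallblacktriangleleft_{\!2}$ for $\smallblacktriangleright_{\!1}$, then hb-compatibility of $\mathbf{B}_1$ --- is exactly the paper's proof that $\alpha=\beta$, which rests on \eqref{axiomsHalfBraidedRightModule} and \eqref{axiomsHalfBraidedLeftModule}. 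The concluding appeal to $\pi\otimes\mathrm{id}_X$ being a coequalizer is also the paper's.

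One correction is needed in your first displayed formula: the crossing on the $B_1$--$X$ strands must be $c^{-1}_{X,B_1}$, not $c_{B_1,X}$. The half-braiding $\mathrm{hbl}$ is built in \eqref{halfBraidingInTermOfAction} from the \emph{inverse} braiding $c^{-1}_{X,-}$, so the decomposition you need is $c^{-1}_{X,B_2\otimes B_1}=(c^{-1}_{X,B_2}\otimes\mathrm{id}_{B_1})\circ(\mathrm{id}_{B_2}\otimes c^{-1}_{X,B_1})$ rather than the hexagon for $c_{B_2\otimes B_1,X}$ that you quote. In a non-symmetric $\mathcal{C}$ the two crossings differ, and with the positive crossing the pieces left on the $B_1$ strand after applying the coequalizer relation would not reassemble into $\mathrm{hbl}^{\mathbf{B}_1}_X$ (whose definition contains $c^{-1}_{X,B_1}$), so the final identification with the $\mathrm{hbr}$ expression would not close. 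Your second formula is correct as written, since $\mathrm{hbr}$ genuinely involves the positive braiding. With this one crossing fixed, your argument is complete and coincides with the one in the paper.
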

\begin{proof}
Write $\mathbf{A}_i = (A_i, t^i, m_i, \eta_i$) for $i=1,2,3$ and $\mathbf{B}_j = (B_j, \smallblacktriangleright_{\!j}, \smallblacktriangleleft_{\!j})$ for $j=1,2$. Let $\pi : B_2 \otimes B_1 \to \mathbf{B}_2 \circ \mathbf{B}_1$ be the coequalizer \eqref{defCompositionOfBimodules} and let $X$ be an object in $\mathcal{C}$. Consider the following morphisms
\begin{align}
\begin{split}\label{EqAlphaEqualsBeta}
&\alpha : B_2 \otimes B_1 \otimes X \xrightarrow{\mathrm{id}_{B_2} \otimes c_{X,B_1}^{-1}} B_2 \otimes X \otimes B_1 \xrightarrow{\mathrm{hb}^{\mathbf{B}_2}_X \otimes \mathrm{id}_{B_1}} X \otimes B_2 \otimes B_1 \xrightarrow{\mathrm{id}_X \otimes \pi} X \otimes (\mathbf{B}_2 \circ \mathbf{B}_1),\\
&\beta : B_2 \otimes B_1 \otimes X \xrightarrow{\mathrm{id}_{B_2} \otimes \mathrm{hb}^{\mathbf{B}_1}_X} B_2 \otimes X \otimes B_1 \xrightarrow{c_{B_2,X} \otimes \mathrm{id}_{B_1}} X \otimes B_2 \otimes B_1 \xrightarrow{\mathrm{id}_X \otimes \pi} X \otimes (\mathbf{B}_2 \circ \mathbf{B}_1).
\end{split}
\end{align}
We have
\begin{center}
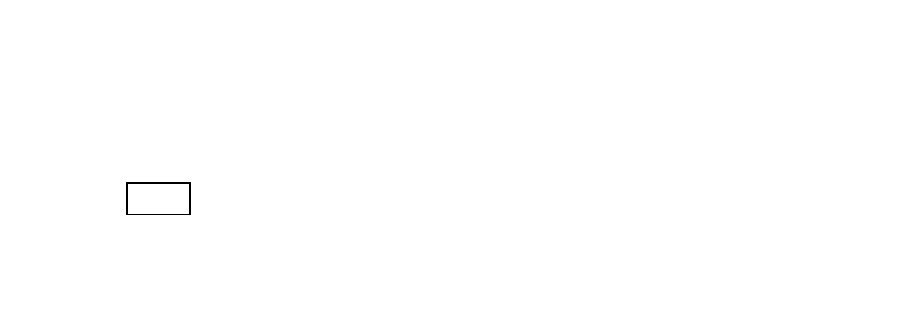
\end{center}
For the first equality we used the definition of $\alpha$, for the second equality we used \eqref{axiomsHalfBraidedRightModule}, for the third we used that $\pi$ coequalizes $\smallblacktriangleleft_{\!2} \otimes \mathrm{id}_{B_1}$ and $\mathrm{id}_{B_2} \otimes \smallblacktriangleright_{\!1}$, for the fourth we used \eqref{axiomsHalfBraidedLeftModule} and for the last we used the definition of $\beta$. Composing with $\mathrm{id}_{B_2} \otimes \eta_2 \otimes \mathrm{id}_{B_1 \otimes X}$ we get $\alpha = \beta$. Now let $\lambda$ and $\rho$ be the left and right actions on $\mathbf{B}_2 \circ \mathbf{B}_1$, as defined in \eqref{defLeftActionOnCoequalizer} and \eqref{defRightActionOnCoequalizer}. Then $\mathrm{hbl}^{\mathbf{B}_2 \circ \mathbf{B}_1}_X \circ (\pi \otimes \mathrm{id}_X)$ is equal to
\begin{center}
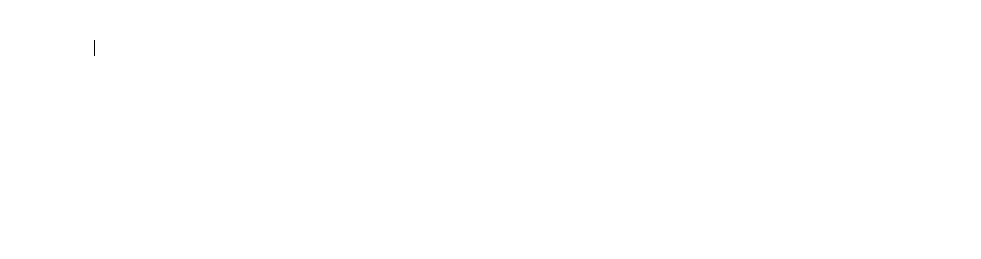
\end{center}
and the last term is $\mathrm{hbr}^{\mathbf{B}_2 \circ \mathbf{B}_1}_X \circ (\pi \otimes \mathrm{id}_X)$. The first equality is by definition of $\lambda$ and naturality of the braiding, the second is by the definition \eqref{halfBraidingInTermOfAction} of $\mathrm{hb}^{\mathbf{B}_2}_X = \mathrm{hbl}^{\mathbf{B}_2}_X$, the third uses the equality of the morphisms $\alpha$ and $\beta$ in \eqref{EqAlphaEqualsBeta}, the fourth is by the definition \eqref{halfBraidingInTermOfAction} of $\mathrm{hb}^{\mathbf{B}_1}_X = \mathrm{hbr}^{\mathbf{B}_1}_X$ and the fifth is by definition of $\rho$ and naturality of the braiding. Since $\pi \otimes \mathrm{id}_X$ is a coequalizer it follows that $\mathrm{hbl}^{\mathbf{B}_2 \circ \mathbf{B}_1}_X = \mathrm{hbr}^{\mathbf{B}_2 \circ \mathbf{B}_1}_X$, which means that $\mathbf{B}_2 \circ \mathbf{B}_1$ is hb-compatible.
\end{proof}

\noindent As a result we can make the following definition:
\begin{definition}\label{defBimChb}
Let $\mathrm{Bim}^{\mathrm{hb}}_{\mathcal{C}}$ be the subcategory of $\mathrm{Bim}_{\mathcal{C}}$ such that:
\begin{itemize}
\item its objects are the half-braided algebras in $\mathcal{C}$,
\item $\Hom_{\mathrm{Bim}^{\mathrm{hb}}_{\mathcal{C}}}(\mathbb{A}_1, \mathbb{A}_2)$ consists of the isomorphisms classes of hb-compatible $(\mathbb{A}_2, \mathbb{A}_1)$-bimodules.\footnote{Note the switch!}
\end{itemize}
\end{definition}
\noindent Recall from Corollary \ref{coroBimCMonoidal} that the category $\mathrm{Bim}_{\mathcal{C}}$ is strict monoidal, thanks to the braided tensor product $\widetilde{\otimes}$ of algebras and bimodules. By Propositions \ref{propBraidedTensorProductAlgebra} and \ref{propHbCoherenceStableByBraidedTensorProduct}, the category $\mathrm{Bim}^{\mathrm{hb}}_{\mathcal{C}}$ is stable under $\widetilde{\otimes}$ and hence is strict monoidal as well. Its unit object is
\[ \bigl(\boldsymbol{1}, \:\boldsymbol{1} \otimes - \overset{=}{\longrightarrow} - \otimes \boldsymbol{1},\: \boldsymbol{1} \otimes \boldsymbol{1} \overset{=}{\longrightarrow} \boldsymbol{1},\: \mathrm{id}_{\boldsymbol{1}}\bigr). \]

\smallskip

\subsection{The braiding on \texorpdfstring{$\mathrm{Bim}^{\mathrm{hb}}_{\mathcal{C}}$}{the compatible Morita category}}\label{sub:braiding}
\indent We now define a braiding on $\bigl(\mathrm{Bim}^{\mathrm{hb}}_{\mathcal{C}}, \widetilde{\otimes}\bigr)$. Let $\mathbb{A}_1 = (A_1, t^1, m_1, \eta_1)$ and $\mathbb{A}_2 = (A_2, t^2, m_2, \eta_2)$ be half-braided algebras. Recall from Proposition \ref{propIsoBetweenA1A2AndA2A1} the isomorphism of half-braided algebras given by the half-braiding of $\mathbb{A}_1$, \textit{i.e.}
\[ T_{\mathbb{A}_1, \mathbb{A}_2} = t^1_{A_2} : \mathbb{A}_1 \,\widetilde{\otimes}\, \mathbb{A}_2 \overset{\sim}{\longrightarrow} \mathbb{A}_2 \,\widetilde{\otimes}\, \mathbb{A}_1 \] 
Consider the $\bigl( \mathbb{A}_2 \,\widetilde{\otimes}\, \mathbb{A}_1, \mathbb{A}_1 \,\widetilde{\otimes}\, \mathbb{A}_2 \bigr)$-bimodule
\begin{equation}\label{eq:braidingbimodule}
 \mathcal{B}_{\mathbb{A}_1, \mathbb{A}_2} = \bigl( \mathbb{A}_2 \,\widetilde{\otimes}\, \mathbb{A}_1 \bigr) \smallblacktriangleleft T_{\mathbb{A}_1, \mathbb{A}_2}
\end{equation}
where we use the process of twisting a bimodule by algebra morphisms as explained in \S\ref{subsecTwistingBimod}. More precisely, here we twist the regular bimodule $\mathbb{A}_2 \,\widetilde{\otimes}\, \mathbb{A}_1$ on the right by the algebra isomorphism $T_{\mathbb{A}_1, \mathbb{A}_2}$. Explicitly, the left action on $\mathcal{B}_{\mathbb{A}_1, \mathbb{A}_2}$ is simply the multiplication in $\mathbb{A}_2 \,\widetilde{\otimes}\, \mathbb{A}_1$ while the right action is 
\[ \bigl( \mathbb{A}_2 \,\widetilde{\otimes}\, \mathbb{A}_1 \bigr) \otimes \bigl( \mathbb{A}_1 \,\widetilde{\otimes}\, \mathbb{A}_2 \bigr) \xrightarrow{\mathrm{id}_{\mathbb{A}_2 \,\widetilde{\otimes}\, \mathbb{A}_1} \,\otimes\, T_{\mathbb{A}_1, \mathbb{A}_2}} \bigl( \mathbb{A}_2 \,\widetilde{\otimes}\, \mathbb{A}_1 \bigr) \otimes \bigl( \mathbb{A}_2 \,\widetilde{\otimes}\, \mathbb{A}_1 \bigr) \xrightarrow{\mathrm{mult.}} \mathbb{A}_2 \,\widetilde{\otimes}\, \mathbb{A}_1. \]
It follows from Example \ref{coherentRegularBimod} and Lemma \ref{lemmaTwistingCohBim} that $\mathcal{B}_{\mathbb{A}_1, \mathbb{A}_2}$ is hb-compatible. In other words
\[ \mathcal{B}_{\mathbb{A}_1, \mathbb{A}_2} \in \Hom_{\mathrm{Bim}^{\mathrm{hb}}_{\mathcal{C}}}\bigl(\mathbb{A}_1 \,\widetilde{\otimes}\, \mathbb{A}_2, \mathbb{A}_2 \,\widetilde{\otimes}\, \mathbb{A}_1\bigr) \]
where we identify this bimodule with its isomorphism class. Thanks to \eqref{twistingLeftToRight} we also have
\begin{equation}\label{eq:braidingbimoduleBis}
\mathcal{B}_{\mathbb{A}_1,\mathbb{A}_2} = T_{\mathbb{A}_1, \mathbb{A}_2}^{-1} \smallblacktriangleright \bigl( \mathbb{A}_1 \,\widetilde{\otimes}\, \mathbb{A}_2 \bigr).
\end{equation}

\begin{theorem}\label{thBraided}
The family of morphisms $(\mathcal{B}_{\mathbb{A}_1, \mathbb{A}_2})_{\mathbb{A}_1, \mathbb{A}_2 \in \mathrm{Ob}(\mathrm{Bim}^{\mathrm{hb}}_{\mathcal{C}})}$ is a braiding on $\bigl(\mathrm{Bim}^{\mathrm{hb}}_{\mathcal{C}}, \widetilde{\otimes}\bigr)$.
\end{theorem}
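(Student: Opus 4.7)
The plan is to verify three things: (i) each $\mathcal{B}_{\mathbb{A}_1,\mathbb{A}_2}$ is an isomorphism in $\mathrm{Bim}^{\mathrm{hb}}_{\mathcal{C}}$; (ii) the family is natural in both arguments; and (iii) the two hexagon identities hold. Step (i) is immediate: Proposition \ref{propIsoBetweenA1A2AndA2A1} gives that $T_{\mathbb{A}_1,\mathbb{A}_2}$ is an isomorphism of half-braided algebras and hence in particular of algebras, so Lemma \ref{lemmaTwistingRegBimod} applied to the twist $T_{\mathbb{A}_1,\mathbb{A}_2}$ shows that $\mathcal{B}_{\mathbb{A}_1,\mathbb{A}_2} = (\mathbb{A}_2 \,\widetilde{\otimes}\, \mathbb{A}_1) \smallblacktriangleleft T_{\mathbb{A}_1,\mathbb{A}_2}$ is invertible in $\mathrm{Bim}_{\mathcal{C}}$ with inverse $T_{\mathbb{A}_1,\mathbb{A}_2} \smallblacktriangleright (\mathbb{A}_2 \,\widetilde{\otimes}\, \mathbb{A}_1)$; both bimodules are twists of the regular bimodule and hence hb-compatible by Example \ref{coherentRegularBimod} and Lemma \ref{lemmaTwistingCohBim}, so the inverse actually lives in $\mathrm{Bim}^{\mathrm{hb}}_{\mathcal{C}}$.

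For (ii), I take hb-compatible bimodules $\mathbf{B}_i : \mathbb{A}_i \to \mathbb{A}'_i$ for $i=1,2$ and analyze both sides of the naturality square. Using \eqref{eq:braidingbimoduleBis} to rewrite $\mathcal{B}_{\mathbb{A}'_1, \mathbb{A}'_2}$ as a left twist and \eqref{eq:braidingbimodule} to keep $\mathcal{B}_{\mathbb{A}_1, \mathbb{A}_2}$ as a right twist, together with the twist calculus \eqref{twistingCompBim} and the fact that the regular bimodule is neutral for $\circ$, the equation $\mathcal{B}_{\mathbb{A}'_1,\mathbb{A}'_2} \circ (\mathbf{B}_1 \,\widetilde{\otimes}\, \mathbf{B}_2) = (\mathbf{B}_2 \,\widetilde{\otimes}\, \mathbf{B}_1) \circ \mathcal{B}_{\mathbb{A}_1,\mathbb{A}_2}$ reduces to the assertion that the two $(\mathbb{A}'_2 \,\widetilde{\otimes}\, \mathbb{A}'_1, \mathbb{A}_1 \,\widetilde{\otimes}\, \mathbb{A}_2)$-bimodules
\[
T^{-1}_{\mathbb{A}'_1, \mathbb{A}'_2} \smallblacktriangleright (\mathbf{B}_1 \,\widetilde{\otimes}\, \mathbf{B}_2) \quad \text{and} \quad (\mathbf{B}_2 \,\widetilde{\otimes}\, \mathbf{B}_1) \smallblacktriangleleft T_{\mathbb{A}_1, \mathbb{A}_2},
\]
with underlying objects $B_1 \otimes B_2$ and $B_2 \otimes B_1$ respectively, are isomorphic. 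The natural candidate for the intertwiner is the half-braiding of the hb-compatible bimodule $\mathbf{B}_1$ evaluated at $B_2$, namely $\mathrm{hb}^{\mathbf{B}_1}_{B_2} : B_1 \otimes B_2 \overset{\sim}{\longrightarrow} B_2 \otimes B_1$. Checking that this map intertwines the left action of $\mathbb{A}'_2 \,\widetilde{\otimes}\, \mathbb{A}'_1$ is a diagrammatic computation combining the naturality of $\mathrm{hb}^{\mathbf{B}_1}$, Lemma \ref{lemmaHalfBraidingOnModuleCompatibleWithTensorProductZC}, and the relation \eqref{axiomsHalfBraidedLeftModule} describing how $\mathrm{hbl}^{\mathbf{B}_1}$ commutes with the left action $\smallblacktriangleright_1$; the analogous check for the right action of $\mathbb{A}_1 \,\widetilde{\otimes}\, \mathbb{A}_2$ uses \eqref{axiomsHalfBraidedRightModule} and, crucially, the hb-compatibility identity $\mathrm{hbl}^{\mathbf{B}_1} = \mathrm{hbr}^{\mathbf{B}_1}$, which is precisely where the hypothesis enters in an essential way.

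For (iii), I expand each side of a hexagon by merging tensor products via \eqref{twistingCompBrTens} and compositions via \eqref{twistingCompBim}, and then apply Lemma \ref{lemmaTwistingRegBimod} to collapse a composition $f \smallblacktriangleright \mathbf{A} \smallblacktriangleleft g$ of regular-bimodule twists into a single right twist $\mathbf{A}'' \smallblacktriangleleft (f^{-1} \circ g)$. The first hexagon for $\mathcal{B}_{\mathbb{A}_1 \,\widetilde{\otimes}\, \mathbb{A}_2, \mathbb{A}_3}$ then reduces to the identity of algebra morphisms
\[
T_{\mathbb{A}_1 \,\widetilde{\otimes}\, \mathbb{A}_2, \mathbb{A}_3} = (T_{\mathbb{A}_1, \mathbb{A}_3} \otimes \mathrm{id}_{A_2}) \circ (\mathrm{id}_{A_1} \otimes T_{\mathbb{A}_2, \mathbb{A}_3}),
\]
which is immediate from the definition \eqref{defTensorProductInZC} of the half-braiding on $\mathbb{A}_1 \,\widetilde{\otimes}\, \mathbb{A}_2$ evaluated at $A_3$, while the second hexagon reduces to
\[
T_{\mathbb{A}_1, \mathbb{A}_2 \,\widetilde{\otimes}\, \mathbb{A}_3} = (\mathrm{id}_{A_2} \otimes T_{\mathbb{A}_1, \mathbb{A}_3}) \circ (T_{\mathbb{A}_1, \mathbb{A}_2} \otimes \mathrm{id}_{A_3}),
\]
which is exactly the defining hexagon \eqref{axiomHalfBraiding} for the half-braiding $t^1$ of $\mathbb{A}_1$. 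The main obstacle in the whole proof is the naturality step (ii): unlike (i) and (iii), which follow formally from the twist calculus of Subsection \ref{subsecTwistingBimod} together with the properties of the braiding $T$ of $\mathcal{Z}(\mathcal{C})$, it genuinely exploits the structural condition that the $\mathbf{B}_i$ are hb-compatible and not merely bimodules over the underlying algebras.
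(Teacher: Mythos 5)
Your proposal is correct and follows essentially the same route as the paper's proof: invertibility via Lemma \ref{lemmaTwistingRegBimod}, naturality by reducing both sides to twists of $\mathbf{B}_1 \,\widetilde{\otimes}\, \mathbf{B}_2$ and $\mathbf{B}_2 \,\widetilde{\otimes}\, \mathbf{B}_1$ and exhibiting $\mathrm{hb}^{\mathbf{B}_1}_{B_2}$ as the intertwiner (with hb-compatibility entering exactly where you say it does), and the hexagons by collapsing the twist calculus to the identities $T_{\mathbb{A}_1,\mathbb{A}_2 \,\widetilde{\otimes}\, \mathbb{A}_3} = (\mathrm{id} \otimes T_{\mathbb{A}_1,\mathbb{A}_3}) \circ (T_{\mathbb{A}_1,\mathbb{A}_2} \otimes \mathrm{id})$ and $T_{\mathbb{A}_1 \,\widetilde{\otimes}\, \mathbb{A}_2,\mathbb{A}_3} = (T_{\mathbb{A}_1,\mathbb{A}_3} \otimes \mathrm{id}) \circ (\mathrm{id} \otimes T_{\mathbb{A}_2,\mathbb{A}_3})$, which hold because $T$ is the braiding of $\mathcal{Z}(\mathcal{C})$.
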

\begin{proof} We check the three axioms of a braiding:

\smallskip

\indent {\em Naturality of $\mathcal{B}$.} Let $\mathbf{B}_1 \in \Hom_{\mathrm{Bim}^{\mathrm{hb}}_{\mathcal{C}}}(\mathbb{A}_1, \mathbb{A}'_1)$ and $\mathbf{B}_2 \in \Hom_{\mathrm{Bim}^{\mathrm{hb}}_{\mathcal{C}}}(\mathbb{A}_2, \mathbb{A}'_2)$. Write $\mathbb{A}_i = (A_i, t^i, m_i,\eta_i)$, $\mathbb{A}'_i = (A'_i, {t'}{^i}, m'_i,\eta'_i)$ and $\mathbf{B}_i = (B_i,\smallblacktriangleright_i, \smallblacktriangleleft_i)$. Then
\begin{align*}
\mathcal{B}_{\mathbb{A}'_1, \mathbb{A}'_2} \circ (\mathbf{B}_1 \,\widetilde{\otimes}\, \mathbf{B}_2) &\overset{\eqref{eq:braidingbimoduleBis}}{=} \bigl[ T_{\mathbb{A}'_1, \mathbb{A}'_2}^{-1} \smallblacktriangleright ( \mathbb{A}'_1 \,\widetilde{\otimes}\, \mathbb{A}'_2 ) \bigr] \circ (\mathbf{B}_1 \,\widetilde{\otimes}\, \mathbf{B}_2)\\
&\overset{\eqref{twistingCompBim}}{=} T_{\mathbb{A}'_1, \mathbb{A}'_2}^{-1} \smallblacktriangleright \bigl[ ( \mathbb{A}'_1 \,\widetilde{\otimes}\, \mathbb{A}'_2 ) \circ (\mathbf{B}_1 \,\widetilde{\otimes}\, \mathbf{B}_2) \bigr] = T_{\mathbb{A}'_1, \mathbb{A}'_2}^{-1} \smallblacktriangleright (\mathbf{B}_1 \,\widetilde{\otimes}\, \mathbf{B}_2).
\end{align*}
Similarly, $(\mathbf{B}_2 \,\widetilde{\otimes}\, \mathbf{B}_1) \circ \mathcal{B}_{\mathbb{A}_1, \mathbb{A}_2} = (\mathbf{B}_2 \,\widetilde{\otimes}\, \mathbf{B}_1) \smallblacktriangleleft T_{\mathbb{A}_1,\mathbb{A}_2}$. Let $\mathrm{hb}^{\mathbf{B}_1} : B_1 \otimes - \overset{\sim}{\implies} - \otimes B_1$ be the half-braiding on the compatible bimodule $\mathbf{B}_1$ (Def.\,\ref{defHbCoherentBimodule}), which is indifferently $\mathrm{hbl}^{\mathbf{B}_1}$ or $\mathrm{hbr}^{\mathbf{B}_1}$ from \eqref{halfBraidingInTermOfAction}. We claim that $\mathrm{hb}^{\mathbf{B}_1}_{B_2} : B_1 \otimes B_2 \overset{\sim}{\longrightarrow} B_2 \otimes B_1$ is an isomorphism of $(\mathbb{A}'_2 \,\widetilde{\otimes}\, \mathbb{A}'_1, \mathbb{A}_1 \,\widetilde{\otimes}\, \mathbb{A}_2)$-bimodules $T_{\mathbb{A}'_1, \mathbb{A}'_2}^{-1} \smallblacktriangleright (\mathbf{B}_1 \,\widetilde{\otimes}\, \mathbf{B}_2) \overset{\sim}{\longrightarrow} (\mathbf{B}_2 \,\widetilde{\otimes}\, \mathbf{B}_1) \smallblacktriangleleft T_{\mathbb{A}_1,\mathbb{A}_2}$. Indeed, it intertwines the left action:
\begin{center}
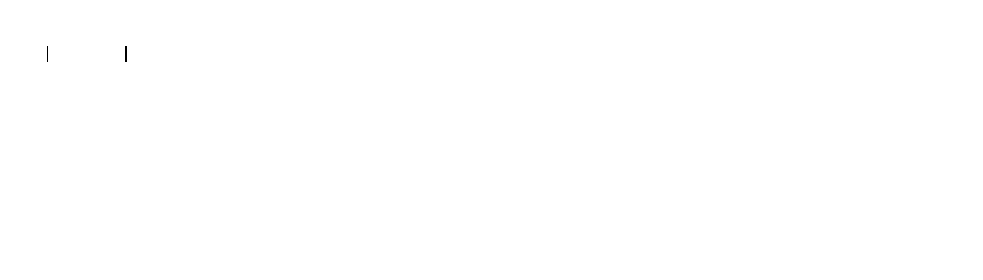
\end{center}
The bottom of the first term is the left action on $T_{\mathbb{A}'_1, \mathbb{A}'_2}^{-1} \smallblacktriangleright (\mathbf{B}_1 \,\widetilde{\otimes}\, \mathbf{B}_2)$ because $T_{\mathbb{A}'_1,\mathbb{A}'_2} = {t'}^1_{\!\!A'_2}$ by definition. For the first equality we used \eqref{axiomsHalfBraidedLeftModule}, for the second equality we used isotopy and naturality of ${t'}{^1}$, for the third we used the half-braiding property \eqref{axiomHalfBraiding} for ${t'}{^1}$ and for the fourth we used \eqref{axiomsHalfBraidedLeftModule}. The fact that $\mathrm{hb}^{\mathbf{B}_1}_{B_2}$ intertwines the right action is proven similarly, but now using $\mathrm{hbr}^{\mathbf{B}_1}$ instead of $\mathrm{hbl}^{\mathbf{B}_1}$ and \eqref{axiomsHalfBraidedRightModule} instead of \eqref{axiomsHalfBraidedLeftModule}.\footnote{Here we see that it is important for $\mathbf{B}_1$ to be hb-compatible, because for computations we need to use both $\mathrm{hbl}^{\mathbf{B}_1}$ and $\mathrm{hbr}^{\mathbf{B}_1}$. The hb-compatibility assumption is thus important to ensure that the claimed braiding is {\em natural}.}

\smallskip

\indent {\em Invertibility of $\mathcal{B}$.} By Lemma \ref{lemmaTwistingRegBimod}, the inverse of $\mathcal{B}_{\mathbb{A}_1, \mathbb{A}_2}$ in $\mathrm{Bim}^{\mathrm{hb}}_{\mathcal{C}}$ is $T_{\mathbb{A}_1, \mathbb{A}_2} \smallblacktriangleright \bigl( \mathbb{A}_2 \,\widetilde{\otimes}\, \mathbb{A}_1 \bigr)$.

\smallskip

\indent {\em Braiding property of $\mathcal{B}$.} We have
\begin{align*}
&(\mathbb{A}_2 \,\widetilde{\otimes}\, \mathcal{B}_{\mathbb{A}_1, \mathbb{A}_3}) \circ (\mathcal{B}_{\mathbb{A}_1, \mathbb{A}_2} \,\widetilde{\otimes}\, \mathbb{A}_3)\\
=\:\,& \bigl[ \mathbb{A}_2 \,\widetilde{\otimes}\, \bigl( T_{\mathbb{A}_1, \mathbb{A}_3}^{-1} \smallblacktriangleright (\mathbb{A}_1 \,\widetilde{\otimes}\, \mathbb{A}_3) \bigr) \bigr] \circ \bigl[ \bigl( ( \mathbb{A}_2 \,\widetilde{\otimes}\, \mathbb{A}_1 ) \smallblacktriangleleft T_{\mathbb{A}_1, \mathbb{A}_2} \bigr) \,\widetilde{\otimes}\, \mathbb{A}_3 \bigr] \quad \text{by \eqref{eq:braidingbimoduleBis} and \eqref{eq:braidingbimodule}}\\
=\:\,& \bigl[ \bigl( \mathrm{id}_{\mathbb{A}_2} \otimes T_{\mathbb{A}_1, \mathbb{A}_3}^{-1} \bigr) \smallblacktriangleright \bigl( \mathbb{A}_2 \,\widetilde{\otimes}\, \mathbb{A}_1 \,\widetilde{\otimes}\, \mathbb{A}_3 \bigr) \bigr] \circ \bigl[ \bigl( \mathbb{A}_2 \,\widetilde{\otimes}\, \mathbb{A}_1 \,\widetilde{\otimes}\, \mathbb{A}_3 \bigr) \smallblacktriangleleft \bigl( T_{\mathbb{A}_1, \mathbb{A}_2} \otimes \mathrm{id}_{\mathbb{A}_3} \bigr) \bigr] \quad \text{by \eqref{twistingCompBrTens}}\\
=\:\,& \bigl( \mathrm{id}_{\mathbb{A}_2} \otimes T_{\mathbb{A}_1, \mathbb{A}_3}^{-1} \bigr) \smallblacktriangleright \bigl[ \bigl( \mathbb{A}_2 \,\widetilde{\otimes}\, \mathbb{A}_1 \,\widetilde{\otimes}\, \mathbb{A}_3 \bigr) \circ \bigl( \mathbb{A}_2 \,\widetilde{\otimes}\, \mathbb{A}_1 \,\widetilde{\otimes}\, \mathbb{A}_3 \bigr) \bigr] \smallblacktriangleleft  \bigl( T_{\mathbb{A}_1, \mathbb{A}_2} \otimes \mathrm{id}_{\mathbb{A}_3} \bigr) \quad \text{by \eqref{twistingCompBim}}\\
=\:\,& \bigl( \mathrm{id}_{\mathbb{A}_2} \otimes T_{\mathbb{A}_1, \mathbb{A}_3}^{-1} \bigr) \smallblacktriangleright \bigl( \mathbb{A}_2 \,\widetilde{\otimes}\, \mathbb{A}_1 \,\widetilde{\otimes}\, \mathbb{A}_3 \bigr) \smallblacktriangleleft  \bigl( T_{\mathbb{A}_1, \mathbb{A}_2} \otimes \mathrm{id}_{\mathbb{A}_3} \bigr)\\
=\:\,& \bigl[ \bigl( \mathbb{A}_2 \,\widetilde{\otimes}\, \mathbb{A}_3 \,\widetilde{\otimes}\, \mathbb{A}_1 \bigr) \smallblacktriangleleft \bigl( \mathrm{id}_{\mathbb{A}_2} \otimes T_{\mathbb{A}_1, \mathbb{A}_3} \bigr) \bigr] \smallblacktriangleleft  \bigl( T_{\mathbb{A}_1, \mathbb{A}_2} \otimes \mathrm{id}_{\mathbb{A}_3} \bigr) \quad \text{by \eqref{twistingLeftToRight}}\\
=\:\,& \bigl( \mathbb{A}_2 \,\widetilde{\otimes}\, \mathbb{A}_3 \,\widetilde{\otimes}\, \mathbb{A}_1 \bigr) \smallblacktriangleleft \bigl[ \bigl( \mathrm{id}_{\mathbb{A}_2} \otimes T_{\mathbb{A}_1, \mathbb{A}_3} \bigr) \circ \bigl( T_{\mathbb{A}_1, \mathbb{A}_2} \otimes \mathrm{id}_{\mathbb{A}_3} \bigr) \bigr] \quad \text{by \eqref{twistingCompMor}}\\
=\:\,& \bigl( \mathbb{A}_2 \,\widetilde{\otimes}\, \mathbb{A}_3 \,\widetilde{\otimes}\, \mathbb{A}_1 \bigr) \smallblacktriangleleft T_{\mathbb{A}_1, \mathbb{A}_2 \,\widetilde{\otimes}\, \mathbb{A}_3} \quad \text{because $T$ is the braiding in }\mathcal{Z}(\mathcal{C})\\
=\:\,& \mathcal{B}_{\mathbb{A}_1, \mathbb{A}_2 \,\widetilde{\otimes}\, \mathbb{A}_3} \quad \text{by \eqref{eq:braidingbimodule}.}
\end{align*}
The equality $\mathcal{B}_{\mathbb{A}_1 \,\widetilde{\otimes}\, \mathbb{A}_2, \mathbb{A}_3} = (\mathcal{B}_{\mathbb{A}_1, \mathbb{A}_3} \,\widetilde{\otimes}\, \mathbb{A}_2) \circ (\mathbb{A}_1 \,\widetilde{\otimes}\, \mathcal{B}_{\mathbb{A}_2, \mathbb{A}_3})$ is obtained similarly.
\end{proof}

\subsection{A braided functor \texorpdfstring{$\overline{\mathcal{C}} \to \mathrm{Bim}_{\mathcal{C}}^{\mathrm{hb}}$}{to the Morita category}}\label{sub:end}
In this subsection we note that internal End and Hom objects in $\mathcal{C}$ are examples of half-braided algebras and hb-compatible bimodules respectively. We then prove that these objects assemble to define a functor $\overline{\mathcal{C}} \to \mathrm{Bim}_{\mathcal{C}}^{\mathrm{hb}}$, where $\overline{\mathcal{C}}$ is a full subcategory of $\mathcal{C}$ defined by a mild condition.

\smallskip

\indent Let $V, W \in \mathcal{C}$ and assume that $V$ has a left dual $V^*$. We recall the notations
\[ \underline{\Hom}(V,W) = W \otimes V^*, \qquad \underline{\End}(V) = \underline{\Hom}(V,V) = V \otimes V^*. \]
The next lemma is a slight adaptation of \cite[Ex.\,1.4]{schauenburg}, where it was given for coalgebras.
\begin{lemma}\label{lemmaMatrixAlgebra}
Let $V,W \in \mathcal{C}$ be objects which have left duals $V^*, W^*$.
\\1. Define
\begin{align*}
&m : \underline{\End}(V) \otimes \underline{\End}(V) \xrightarrow{\mathrm{id}_V \,\otimes\, \mathrm{ev}_V \,\otimes\, \mathrm{id}_{V^*}} \underline{\End}(V), \qquad \eta : \boldsymbol{1} \xrightarrow{\mathrm{coev}_V} \underline{\End}(V),\\
&t_X : \underline{\End}(V) \otimes X \xrightarrow{\mathrm{id}_V \,\otimes\, c_{X,V^*}^{-1}} V \otimes X \otimes V^* \xrightarrow{c_{V,X} \,\otimes\, \mathrm{id}_{V^*}} X \otimes \underline{\End}(V) \qquad (\forall \, X \in \mathcal{C}).
\end{align*}
Then $\bigl( \underline{\End}(V), t,m,\eta \bigr)$ is a half-braided algebra.

\noindent 2. Define
\begin{align*}
&\smallblacktriangleright : \underline{\End}(W) \otimes \underline{\Hom}(V,W) \xrightarrow{\mathrm{id}_W \,\otimes\, \mathrm{ev}_W \,\otimes\, \mathrm{id}_{V^*}} \underline{\Hom}(V,W),\\
&\smallblacktriangleleft : \underline{\Hom}(V,W) \otimes \underline{\End}(V) \xrightarrow{\mathrm{id}_W \,\otimes\, \mathrm{ev}_V \,\otimes\, \mathrm{id}_{V^*}} \underline{\Hom}(V,W).
\end{align*}
Then $\bigl( \underline{\Hom}(V,W), \smallblacktriangleright, \smallblacktriangleleft \bigr)$ is a hb-compatible $\bigl( \underline{\End}(W), \underline{\End}(V) \bigr)$bimodule.
\end{lemma}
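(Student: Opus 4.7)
The statement has two parts, and both are naturally proved by purely graphical manipulations using the two facts that (i) $\mathrm{ev}_V$, $\mathrm{coev}_V$ satisfy the snake identities, and (ii) the half-braidings $t, t', t^V = \dots$ are built entirely out of $c$ applied to the "unstarred" strand $V$ or $W$ and $c^{-1}$ applied to the "starred" strand $V^*$ or $W^*$. I would organize the proof as follows.

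\textbf{Part 1, algebra and half-braiding structure.} Associativity of $m$ is immediate because both $m\circ(m\otimes\mathrm{id})$ and $m\circ(\mathrm{id}\otimes m)$ contract the two inner $V^*\otimes V$ pairs in $V\otimes V^*\otimes V\otimes V^*\otimes V\otimes V^*$ via $\mathrm{ev}_V$, and these two contractions agree by the interchange law for $\otimes$. The unit axioms $m\circ(\eta\otimes\mathrm{id})=\mathrm{id}=m\circ(\mathrm{id}\otimes\eta)$ are exactly the two snake identities for $(\mathrm{ev}_V,\mathrm{coev}_V)$. For $t$ I would check naturality in $X$ (immediate from naturality of $c$ and $c^{-1}$), and the hexagon identity $t_{X\otimes Y}=(\mathrm{id}_X\otimes t_Y)\circ(t_X\otimes \mathrm{id}_Y)$, which splits as the hexagon for $c$ on the $V$ strand and the hexagon for $c^{-1}$ on the $V^*$ strand.

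\textbf{Part 1, the compatibility relations \eqref{axiomsHalfBraidedAlgebra}.} Drawing $\underline{\End}(V)$ as two vertical strings $V,V^*$ and $m$ as a cup joining the inner $V^*$ to the inner $V$, the morphism $t_X$ simply pulls an $X$-strand from the right to the left, passing \emph{over} $V$ and \emph{under} $V^*$. The left-hand side of the first relation in \eqref{axiomsHalfBraidedAlgebra} is the picture where the middle $V^*,V$ are capped by $\mathrm{ev}_V$ and then $X$ threads past the single remaining $V\otimes V^*$; the right-hand side is the picture where $X$ first threads past the left $V\otimes V^*$ (using $t$) and past the right $V\otimes V^*$ in the opposite way (using $c^{-1}$, as per the remark reformulating the axiom in $\mathcal{Z}(\mathcal{C})$), and is capped at the end. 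These are isotopic spaghetti diagrams, since the $X$-strand simply glides over the cap; dually for the second relation.

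\textbf{Part 2, bimodule and hb-compatibility.} The left action of $\underline{\End}(W)$ contracts its $W^*$ against the $W$ of $\underline{\Hom}(V,W)$ via $\mathrm{ev}_W$, the right action of $\underline{\End}(V)$ contracts its $V$ against the $V^*$ of $\underline{\Hom}(V,W)$ via $\mathrm{ev}_V$; the two actions commute since they touch disjoint tensor factors, and associativity on each side is verified as in Part 1. To establish hb-compatibility it suffices, by Definition \ref{defHbCoherentBimodule}, to check that both half-braidings coming from \eqref{halfBraidingInTermOfAction} equal the \emph{canonical} half-braiding
\[
\tau_X^{W\otimes V^*}=(c_{W,X}\otimes\mathrm{id}_{V^*})\circ(\mathrm{id}_W\otimes c^{-1}_{X,V^*}) : (W\otimes V^*)\otimes X \longrightarrow X\otimes(W\otimes V^*).
\]
For $\mathrm{hbl}^{\mathbf{B}}_X$, insert $\eta'=\mathrm{coev}_W$ to produce a $W\otimes W^*$ pair, thread $X$ past it via $t'_X$ (which by definition braids $W$ over $X$ and $W^*$ under $X$), and contract the created $W^*$ against the original $W$ of $\underline{\Hom}(V,W)$ using the left action, i.e.\ $\mathrm{ev}_W$. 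The snake identity on $W$ collapses the created zig-zag, leaving $c_{W,X}$ acting on the original $W$-strand while the $V^*$-strand remains untouched, which is precisely $\tau_X^{W\otimes V^*}$. The computation for $\mathrm{hbr}^{\mathbf{B}}_X$ is entirely symmetric: insert $\mathrm{coev}_V$, braid past $X$ via $t^{\underline{\End}(V)}_X$, contract via $\mathrm{ev}_V$, and obtain the same $\tau_X^{W\otimes V^*}$.

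\textbf{Main obstacle.} There is no conceptual difficulty: everything reduces to snake identities plus the "over/under" symmetry of the half-braidings on $V\otimes V^*$. The only delicate point is the bookkeeping of tensor positions and the consistent use of the bottom-to-top convention, so in practice the cleanest presentation is purely diagrammatic, with a single equation in formulas identifying the target morphism $\tau_X^{W\otimes V^*}$ against which both $\mathrm{hbl}^{\mathbf{B}}$ and $\mathrm{hbr}^{\mathbf{B}}$ are compared.
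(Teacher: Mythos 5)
Your proposal is correct and follows essentially the same route as the paper, which disposes of part 1 as a ``straightforward verification'' and proves part 2 by computing from \eqref{halfBraidingInTermOfAction} that $\mathrm{hbl}^{\underline{\Hom}(V,W)}_X = \mathrm{hbr}^{\underline{\Hom}(V,W)}_X = (c_{W,X} \otimes \mathrm{id}_{V^*}) \circ (\mathrm{id}_W \otimes c^{-1}_{X,V^*})$ --- exactly the morphism you call $\tau_X^{W\otimes V^*}$. You merely spell out the snake-identity and isotopy bookkeeping that the paper leaves implicit; the only cosmetic quibble is that your phrase ``the $V^*$-strand remains untouched'' is loose, since that strand does pass under $X$ via $c^{-1}_{X,V^*}$, as your own displayed formula correctly records.
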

\begin{proof}
1. Straightforward verification.
\\2. It is easy to compute the half-braidings $\mathrm{hbl}^{\underline{\Hom}(V,W)}$ and $\mathrm{hbr}^{\underline{\Hom}(V,W)}$ from \eqref{halfBraidingInTermOfAction}:
\[ \forall \, X \in \mathcal{C}, \quad \mathrm{hbl}^{\underline{\Hom}(V,W)}_X = \mathrm{hbr}^{\underline{\Hom}(V,W)}_X = (c_{W,X} \otimes \mathrm{id}_{V^*}) \circ (\mathrm{id}_W \otimes c^{-1}_{X,V^*}). \]
They agree, which is the definition of hb-compatibility. 
\end{proof}

The next lemma describes how the algebras $\underline{\End}$ and the bimodules $\underline{\Hom}$ behave with respect to the braided tensor product $\widetilde{\otimes}$.
\begin{lemma}\label{lemmaIntHomAndBrProd}
Let $U,V,W,Z \in \mathcal{C}$ be objects which have left duals.
\\1. There is an isomorphism of half-braided algebras
\[ I_{U,V} : \underline{\End}(U) \,\widetilde{\otimes}\, \underline{\End}(V) \overset{\sim}{\longrightarrow} \underline{\End}(U \otimes V) \]
given by $(U \otimes U^*) \otimes (V \otimes V^*) \xrightarrow{\mathrm{id}_U \otimes c^{-1}_{V \otimes V^*,U^*}} (U \otimes V) \otimes (V^* \otimes U^*)$.
\\2. There is an isomorphism of $\bigl( \underline{\End}(W) \,\widetilde{\otimes}\, \underline{\End}(Z), \underline{\End}(U) \,\widetilde{\otimes}\, \underline{\End}(V) \bigr)$-bimodules
\[ _{W,Z}J_{U,V} : \underline{\Hom}(U,W) \,\widetilde{\otimes}\, \underline{\Hom}(V,Z) \overset{\sim}{\longrightarrow} \bigl[ I_{W,Z} \smallblacktriangleright \underline{\Hom}(U \otimes V, W \otimes Z)\smallblacktriangleleft I_{U,V} \bigr] \]
given by $(W \otimes U^*) \otimes (Z \otimes V^*) \xrightarrow{\mathrm{id}_W \otimes c^{-1}_{Z \otimes V^*,U^*}} (W \otimes Z) \otimes (V^* \otimes U^*)$.
\end{lemma}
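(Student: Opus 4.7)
The content of this lemma is essentially that $\underline{\End}$ and $\underline{\Hom}$ are compatible with the tensor product of $\mathcal{C}$, and the proof is a bookkeeping exercise in the graphical calculus. The plan is as follows.

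For part 1, I would first note that $I_{U,V}$ is invertible in $\mathcal{C}$ because the braiding $c$ is. Three compatibilities then need to be checked. \emph{Unit}: the equality $I_{U,V} \circ (\mathrm{coev}_U \otimes \mathrm{coev}_V) = \mathrm{coev}_{U \otimes V}$ follows from the standard factorization $\mathrm{coev}_{U \otimes V} = (\mathrm{id}_U \otimes \mathrm{coev}_V \otimes \mathrm{id}_{U^*}) \circ \mathrm{coev}_U$ valid whenever $U$ and $V$ have left duals. \emph{Multiplication}: unfolding the braided-tensor-product multiplication on $\underline{\End}(U) \,\widetilde{\otimes}\, \underline{\End}(V)$ from \eqref{defBraidedTensorProductOfAlgebras} (which inserts a copy of $c_{\underline{\End}(V),\underline{\End}(U)}$ between the two middle factors) and comparing against the multiplication on $\underline{\End}(U \otimes V)$ (which uses $\mathrm{ev}_{U \otimes V} = \mathrm{ev}_U \circ (\mathrm{id}_{U^*} \otimes \mathrm{ev}_V \otimes \mathrm{id}_U)$), the two diagrams are seen to match after applying naturality of $c$ and isotopy. \emph{Half-braiding}: the left-hand side half-braiding at an object $X$ is computed from \eqref{defTensorProductInZC} applied to the two $t$'s in Lemma~\ref{lemmaMatrixAlgebra}.1, while the right-hand side is the corresponding $t$ on $\underline{\End}(U \otimes V)$; the desired equality reduces to combining the two $c^{-1}_{X,-}$ summands on one side and the two $c_{-,X}$ summands on the other via the hexagon axioms.

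For part 2, the strategy is identical. The morphism $_{W,Z}J_{U,V}$ is invertible in $\mathcal{C}$ since its only nontrivial ingredient is $c^{-1}_{Z \otimes V^*, U^*}$. What remains is to check that $_{W,Z}J_{U,V}$ intertwines the left actions and the right actions, where on the right-hand side these actions are those of $\underline{\End}(W \otimes Z)$ and $\underline{\End}(U \otimes V)$ pulled back along $I_{W,Z}$ and $I_{U,V}$ respectively (recall the twisting process of \S\ref{subsecTwistingBimod}). Each of the two verifications is formally identical to the multiplicativity check in part~1, with one of $\mathrm{ev}_W, \mathrm{ev}_Z$ or $\mathrm{ev}_U, \mathrm{ev}_V$ playing the role of a ``half-multiplication''. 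Since both sides are already hb-compatible bimodules, by Lemma~\ref{lemmaMatrixAlgebra}.2 together with Lemma~\ref{lemmaTwistingCohBim}, and morphisms in $\mathrm{Bim}^{\mathrm{hb}}_{\mathcal{C}}$ are merely isomorphism classes of bimodules, no separate verification involving the half-braidings on the bimodules themselves is required.

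The only real obstacle is the bookkeeping: tracking which braiding is which amid the several copies of $c$ and $c^{-1}$ introduced by the braided tensor product of algebras, the braided tensor product of modules, and the half-braiding formulas. Every elementary move, however, uses nothing beyond the hexagon axioms, naturality of $c$, and the snake identities for left duals, so no genuinely new ideas are needed beyond those already present in the proofs of Propositions~\ref{propBraidedTensorProductAlgebra} and \ref{propIsoBetweenA1A2AndA2A1}.
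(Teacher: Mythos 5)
Your plan is correct and matches the paper's intent exactly: the paper's proof of this lemma is literally ``Straightforward verifications (with diagrammatic calculus in $\mathcal{C}$) left to the reader,'' and your outline — invertibility, unit, multiplication, and half-braiding checks for part 1, and the two action-intertwining checks for part 2, all via naturality of $c$, the hexagon axioms, and the snake identities — is a faithful expansion of precisely those verifications. Your observation that intertwining the actions automatically handles the half-braidings on the bimodules (since $\mathrm{hbl}$ and $\mathrm{hbr}$ are defined from the actions) is also correct, though for hb-compatibility of the left-hand side of part 2 you would cite Proposition~\ref{propHbCoherenceStableByBraidedTensorProduct} alongside Lemma~\ref{lemmaMatrixAlgebra}.
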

\begin{proof}
Straightforward verifications (with diagrammatic calculus in $\mathcal{C}$) left to the reader.
\end{proof}
\noindent Thanks to the properties of the braiding $c$, the isomorphisms $I_{U,V}$ in Lemma \ref{lemmaIntHomAndBrProd} satisfy
\begin{equation}\label{monStructIsoEnd}
I_{U, V \otimes W} \circ \bigl( \mathrm{id}_{\underline{\End}(U)} \otimes I_{V,W} \bigr) = I_{U \otimes V, W} \circ \bigl( I_{U,V} \otimes \mathrm{id}_{\underline{\End}(W)} \bigr).
\end{equation}

\indent For the algebras $\underline{\End}(V)$, the braiding $\mathcal{B}$ on $\mathrm{Bim}^{\mathrm{hb}}_{\mathcal{C}}$ defined in \eqref{eq:braidingbimodule} takes a simple form:
\begin{lemma}\label{lemmaBraidingForInternalEnd}
The $\bigl( \underline{\End}(W) \,\widetilde{\otimes}\, \underline{\End}(V), \underline{\End}(V) \,\widetilde{\otimes}\, \underline{\End}(W) \bigr)$-bimodules $\mathcal{B}_{\underline{\End}(V), \underline{\End}(W)}$ and \\$\underline{\Hom}(V,W) \,\widetilde{\otimes}\, \underline{\Hom}(W,V)$ are isomorphic.
\end{lemma}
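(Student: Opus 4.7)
The underlying object of the left-hand bimodule $\mathcal{B}_{\underline{\End}(V),\underline{\End}(W)} = (\underline{\End}(W) \,\widetilde{\otimes}\, \underline{\End}(V)) \smallblacktriangleleft T_{\underline{\End}(V),\underline{\End}(W)}$ is $W \otimes W^* \otimes V \otimes V^*$, while that of the right-hand bimodule $\underline{\Hom}(V,W) \,\widetilde{\otimes}\, \underline{\Hom}(W,V)$ is $W \otimes V^* \otimes V \otimes W^*$. The strategy is to construct an explicit isomorphism between them in $\mathcal{C}$ given by a composition of braidings, and then check it intertwines the two bimodule structures.

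\textbf{Step 1 (candidate morphism).} Define $\Phi \colon W \otimes V^* \otimes V \otimes W^* \to W \otimes W^* \otimes V \otimes V^*$ by
\[
\Phi \,=\, \bigl(\mathrm{id}_{W\otimes W^*} \otimes c_{V^*,V}\bigr) \circ \bigl(\mathrm{id}_W \otimes c_{V^*,W^*} \otimes \mathrm{id}_V\bigr) \circ \bigl(\mathrm{id}_{W\otimes V^*} \otimes c_{V,W^*}\bigr).
\]
Each factor is an isomorphism, hence so is $\Phi$; its inverse is the analogous composition of $c^{-1}$'s. This will be most naturally drawn as a diagram in $\mathcal{C}$: a single ``triple-crossing'' which permutes the middle three strands.

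\textbf{Step 2 (intertwining the left action).} Unpacking Section~\ref{sectionPreliminariesModules}, the left action of $\underline{\End}(W) \,\widetilde{\otimes}\, \underline{\End}(V)$ on the right-hand bimodule is the braided tensor product of the two partial actions of Lemma~\ref{lemmaMatrixAlgebra}.2, built from $\mathrm{ev}_W$ and $\mathrm{ev}_V$ together with the braiding $c_{V\otimes V^*,W\otimes V^*}$. On the left-hand bimodule the left action is simply the braided tensor product multiplication of $\underline{\End}(W) \,\widetilde{\otimes}\, \underline{\End}(V)$, built from the two multiplications of Lemma~\ref{lemmaMatrixAlgebra}.1 and the braiding $c_{V\otimes V^*,W\otimes W^*}$. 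The compatibility $\Phi \circ (\text{act}_{\mathrm{RHS}}) = (\text{act}_{\mathrm{LHS}}) \circ (\mathrm{id} \otimes \Phi)$ is a diagrammatic computation: one slides the two evaluations past the braidings inside $\Phi$, using naturality of $c$ and the hexagon identities, to reorganise a braiding-then-evaluation configuration into an evaluation-then-braiding one.

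\textbf{Step 3 (intertwining the right action).} This is the main obstacle, because of the twist by the half-braiding
\[
T_{\underline{\End}(V),\underline{\End}(W)} \,=\, t^{V}_{\underline{\End}(W)} \,=\, \bigl(c_{V,W\otimes W^*} \otimes \mathrm{id}_{V^*}\bigr) \circ \bigl(\mathrm{id}_V \otimes c^{-1}_{W\otimes W^*,V^*}\bigr)
\]
of Lemma~\ref{lemmaMatrixAlgebra}.1. The right action on the left-hand side is the composition of $\mathrm{id} \otimes T_{\underline{\End}(V),\underline{\End}(W)}$ followed by the braided-tensor multiplication of $\underline{\End}(W) \,\widetilde{\otimes}\, \underline{\End}(V)$; on the right-hand side it is the braided tensor product of the right actions of Lemma~\ref{lemmaMatrixAlgebra}.2. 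I expect to verify the identity by computing both sides diagrammatically and recognising that the extra braidings $c_{V,W\otimes W^*}$ and $c^{-1}_{W\otimes W^*,V^*}$ coming from $T$ are precisely those needed to undo the permutation built into $\Phi$, so that, once combined with the evaluations $\mathrm{ev}_V$ and $\mathrm{ev}_W$ and the naturality of $c$, one recovers the right action on $\underline{\Hom}(V,W) \,\widetilde{\otimes}\, \underline{\Hom}(W,V)$. This is the heart of the argument: it is the braided analogue of the computation that identifies the conjugation action of $c_{V,W}$ with the canonical identification of $\underline{\End}(V \otimes W)$ with $\underline{\End}(W \otimes V)$, and it is where the half-braiding on $\underline{\End}(V)$ encodes exactly the braiding $c$ of $\mathcal{C}$.

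\textbf{Alternative route.} One could instead combine Lemma~\ref{lemmaIntHomAndBrProd} with Lemma~\ref{lemmaTwistingRegBimod}: transport both bimodules to bimodules over $\underline{\End}(W \otimes V)$ and $\underline{\End}(V \otimes W)$ via the algebra isomorphisms $I_{W,V}$ and $I_{V,W}$, and show that after this transport both bimodules become $\underline{\End}(W \otimes V)$ twisted by the algebra isomorphism induced by $c_{V,W} \colon V \otimes W \to W \otimes V$. This reduces the check to a single twisting, which by Lemma~\ref{lemmaTwistingRegBimod} is automatically a bimodule isomorphism; the remaining work is then to match the two resulting twists, which again boils down to the fact that $c_{V,W}$ is the ``universal'' expression of the braiding structure on $\underline{\End}$.
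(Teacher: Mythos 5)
Your overall strategy --- write down an explicit composite of braidings between the underlying objects $W \otimes W^* \otimes V \otimes V^*$ and $W \otimes V^* \otimes V \otimes W^*$ and check that it intertwines the two bimodule structures --- is exactly the paper's, which exhibits the isomorphism
\[ W \otimes W^* \otimes V \otimes V^* \xrightarrow{\mathrm{id}_W \otimes c^{-1}_{V,W^*} \otimes \mathrm{id}_{V^*}} W \otimes V \otimes W^* \otimes V^* \xrightarrow{\mathrm{id}_{W} \otimes c_{V \otimes W^*,V^*}} W \otimes V^* \otimes V \otimes W^* \]
and leaves the verification to the reader. The problem is that your candidate $\Phi$ is not this morphism, nor its inverse. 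The inverse of the paper's map, written in your direction, is $(\mathrm{id}_W \otimes c_{V,W^*} \otimes \mathrm{id}_{V^*}) \circ (\mathrm{id}_W \otimes c^{-1}_{V\otimes W^*,V^*})$: it moves the $V^*$ strand to the right past $V \otimes W^*$ using the \emph{inverse} braiding, i.e.\ the crossings $c^{-1}_{V,V^*}$ and $c^{-1}_{W^*,V^*}$. Your $\Phi$ moves it using the positive braidings $c_{V^*,V}$ and $c_{V^*,W^*}$. In a genuinely braided non-symmetric category these are different morphisms, and only one choice of crossings can intertwine the actions.

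To see that your choice fails, restrict the right action to $\underline{\End}(V) \,\widetilde{\otimes}\, \boldsymbol{1}$, i.e.\ precompose with $\mathrm{id} \otimes \eta_{\underline{\End}(W)}$. On $\underline{\Hom}(V,W) \,\widetilde{\otimes}\, \underline{\Hom}(W,V)$ this action, by \eqref{defBraidedTensorProductOfRightModules}, first transports $\underline{\End}(V)$ to the left past $N_2 = \underline{\Hom}(W,V) = V \otimes W^*$ via the \emph{positive} braiding $c_{N_2,\underline{\End}(V)}$ and then contracts with $\mathrm{ev}_V$; on $\mathcal{B}_{\underline{\End}(V),\underline{\End}(W)}$ it is just right multiplication on the $\underline{\End}(V)$ factor with no braiding at all, since $T_{\underline{\End}(V),\underline{\End}(W)} \circ (\mathrm{id} \otimes \eta_{\underline{\End}(W)}) = \eta_{\underline{\End}(W)} \otimes \mathrm{id}$ by naturality of the half-braiding. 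The intertwining identity therefore forces the $V^*$ strand of the isomorphism to cross $N_2$ via the inverse $c^{-1}_{N_2,V^*}$ of the positive crossing contributed by the action, so that the two cancel by a Reidemeister~II move; with your all-positive $\Phi$ one instead produces the monodromy $c_{V^*,N_2} \circ c_{N_2,V^*}$, which is not the identity unless $\mathcal{C}$ is symmetric or the objects are transparent. So $\Phi$ does not commute with the right actions, and the same sign issue would resurface in your alternative route at the point where you ``match the two resulting twists''. Beyond this, Steps~2 and~3 only announce the diagrammatic verification rather than carrying it out; once the crossings are corrected to those of the paper's formula, that verification still needs to be done for both the left and the right actions.
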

\begin{proof}
One can check that
\[ W \otimes W^* \otimes V \otimes V^* \xrightarrow{\mathrm{id}_W \otimes c^{-1}_{V,W^*} \otimes \mathrm{id}_{V^*}} W \otimes V \otimes W^* \otimes V^* \xrightarrow{\mathrm{id}_{W} \otimes c_{V \otimes W^*,V^*}} W \otimes V^* \otimes V \otimes W^* \]
is an isomorphism of bimodules $\mathcal{B}_{\underline{\End}(V), \underline{\End}(W)} \overset{\sim}{\to} \underline{\Hom}(V,W) \,\widetilde{\otimes}\, \underline{\Hom}(W,V)$.
\end{proof}

\indent The description of the composition rule for the bimodules $\underline{\Hom}$ requires an extra technical assumption. Let $S,V$ be objects in $\mathcal{C}$. Recall that $S$ is called a {\em direct summand} of $V$ if there exist morphisms $\iota : S \to V$, $\pi : V \to S$ such that $\pi \circ \iota = \mathrm{id}_S$.
\begin{lemma}\label{lemmaCompositionInternalHoms}
Let $V \in \mathcal{C}$ and assume that $\boldsymbol{1}$ is a direct summand of $V$. Then we have an isomorphism of $\bigl( \underline{\End}(W), \underline{\End}(U) \bigr)$-bimodules
\[ \underline{\Hom}(V,W) \circ \underline{\Hom}(U,V) \cong \underline{\Hom}(U,W). \]
\end{lemma}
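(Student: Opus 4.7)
The plan is to show that the contraction
\[
e \;=\; \mathrm{id}_W \otimes \mathrm{ev}_V \otimes \mathrm{id}_{U^*} \colon W\otimes V^*\otimes V\otimes U^* \longrightarrow W\otimes U^* \;=\; \underline{\Hom}(U,W)
\]
realizes the coequalizer defining $\underline{\Hom}(V,W)\circ\underline{\Hom}(U,V)$ in \eqref{defCompositionOfBimodules}. First I would verify that $e$ coequalizes the two arrows $\smallblacktriangleleft\otimes\mathrm{id}$ and $\mathrm{id}\otimes\smallblacktriangleright$: both composites on $W\otimes V^*\otimes V\otimes V^*\otimes V\otimes U^*$ equal $\mathrm{id}_W\otimes\mathrm{ev}_V\otimes\mathrm{ev}_V\otimes\mathrm{id}_{U^*}$, since the two $V^*\otimes V$ contractions act on disjoint tensor factors.

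For the universal property I exploit the direct-summand hypothesis. Fix $\iota\colon\boldsymbol{1}\to V$ and $\pi\colon V\to\boldsymbol{1}$ with $\pi\circ\iota=\mathrm{id}_{\boldsymbol{1}}$, and set
\[
s \;=\; \bigl((\pi\otimes\mathrm{id}_{V^*})\circ\mathrm{coev}_V\bigr)\otimes\iota \;\colon\; \boldsymbol{1}\longrightarrow V^*\otimes V.
\]
A snake-identity computation gives $\mathrm{ev}_V\circ s=\pi\circ\iota=\mathrm{id}_{\boldsymbol{1}}$, so
\[
\sigma \;=\; \mathrm{id}_W\otimes s\otimes\mathrm{id}_{U^*}\colon W\otimes U^*\longrightarrow W\otimes V^*\otimes V\otimes U^*
\]
is a section of $e$. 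Given $g\colon W\otimes V^*\otimes V\otimes U^*\to Z$ satisfying $g\circ(\smallblacktriangleleft\otimes\mathrm{id})=g\circ(\mathrm{id}\otimes\smallblacktriangleright)$, I would set $\tilde g=g\circ\sigma$. To see $\tilde g\circ e=g$, introduce the auxiliary morphism $h\colon W\otimes V^*\otimes V\otimes U^*\to W\otimes V^*\otimes V\otimes V^*\otimes V\otimes U^*$ that inserts $s$ between the factors $V$ and $U^*$; a direct check gives
\[
(\smallblacktriangleleft\otimes\mathrm{id})\circ h \;=\; \sigma\circ e, \qquad (\mathrm{id}\otimes\smallblacktriangleright)\circ h \;=\; \mathrm{id}_{W\otimes V^*\otimes V\otimes U^*},
\]
so applying the coequalizer condition for $g$ to $h$ yields $g\circ\sigma\circ e=g$. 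Uniqueness of $\tilde g$ follows since $e\circ\sigma=\mathrm{id}$ makes $e$ a split epimorphism.

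Hence $(W\otimes U^*,e)$ is a coequalizer, producing a canonical isomorphism $\phi\colon\underline{\Hom}(V,W)\circ\underline{\Hom}(U,V)\overset{\sim}{\longrightarrow}\underline{\Hom}(U,W)$ in $\mathcal{C}$. Finally I would check that $\phi$ intertwines the bimodule structures. Since $e$ is the identity on the outer $W$ and $U^*$ factors, a direct diagrammatic comparison using \eqref{defLeftActionOnCoequalizer} and \eqref{defRightActionOnCoequalizer} shows that the induced left $\underline{\End}(W)$-action and right $\underline{\End}(U)$-action on $W\otimes U^*$ coincide with the standard ones of Lemma \ref{lemmaMatrixAlgebra}, so $\phi$ is a bimodule isomorphism. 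The main technical obstacle is the identity $g\circ\sigma\circ e=g$; this is precisely where the direct-summand hypothesis is used, since without a splitting one cannot construct a section of $\mathrm{ev}_V$ and the coequalizer argument breaks down.
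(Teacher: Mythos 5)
Your proof is correct and follows essentially the same route as the paper: you verify that $\mathrm{id}_W\otimes\mathrm{ev}_V\otimes\mathrm{id}_{U^*}$ coequalizes the two arrows and then use the splitting $\pi\circ\iota=\mathrm{id}_{\boldsymbol{1}}$ to build the section $\pi^*\otimes\iota$ of $\mathrm{ev}_V$, which is exactly the paper's argument (your morphisms $\sigma$ and $h$ make explicit what the paper does graphically, and both treatments defer the bimodule-structure check to a routine verification). No gaps.
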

\begin{proof}
By definition, see \eqref{defCompositionOfBimodules}, $\underline{\Hom}(V,W) \circ \underline{\Hom}(U,V)$ is the coequalizer of
\begin{equation}\label{coeqForInternalHom}
\xymatrix@C=10em{
{ \underbrace{W \otimes V^*}_{\underline{\Hom}(V,W)} \otimes \underbrace{V \otimes V^*}_{\underline{\End}(V)} \otimes \underbrace{V \otimes U^*}_{\underline{\Hom}(U,V)} \ar@<.7ex>[r]^-{\mathrm{id}_W \,\otimes\, \mathrm{ev}_V \,\otimes\, \mathrm{id}_{V^* \otimes V \otimes U^*}} \ar@<-.7ex>[r]_-{\mathrm{id}_{W \otimes V^* \otimes V} \,\otimes\, \mathrm{ev}_V \,\otimes\, \mathrm{id}_{U^*}} }
& { \underbrace{W \otimes V^*}_{\underline{\Hom}(V,W)} \otimes \underbrace{V \otimes U^*}_{\underline{\Hom}(U,V)} }.}
\end{equation}
It is clear that the morphism $\mathrm{id}_W \otimes \mathrm{ev}_V \otimes \mathrm{id}_{U^*} : \underline{\Hom}(V,W) \otimes \underline{\Hom}(U,V) \to \underline{\Hom}(U,W)$ coequalizes these two arrows. We show that it is universal. Let $r : \underline{\Hom}(V,W) \otimes \underline{\Hom}(U,V) \to X$ be another morphism which coequalizes the two arrows in \eqref{coeqForInternalHom}. By assumption there exist morphisms $\iota : \boldsymbol{1} \to V$, $\pi : V \to \boldsymbol{1}$ such that $\pi \circ \iota = \mathrm{id}_{\boldsymbol{1}}$. Denote by $\pi^* : \boldsymbol{1} = \boldsymbol{1}^* \to V^*$ the transpose of $\pi$  and observe that $\mathrm{ev}_V \circ (\pi^* \otimes \iota) = \mathrm{id}_{\boldsymbol{1}}$. Then we have
\begin{center}
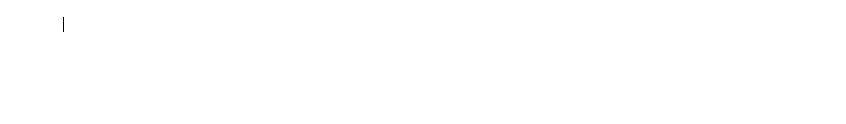
\end{center}
where $r' = r \circ \bigl( \mathrm{id}_W \otimes \pi^* \otimes \iota \otimes \mathrm{id}_{U^*} \bigr) : W \otimes U^* \to X$. The second equality uses the coequalizing property of $r$. Hence $r$ factorizes through $\mathrm{id}_W \otimes \mathrm{ev}_V \otimes \mathrm{id}_{U^*}$. Moreover, if there is another factorization $r = r'' \circ \bigl( \mathrm{id}_W \otimes \mathrm{ev}_V \otimes \mathrm{id}_{U^*} \bigr)$ then it suffices to precompose with $\mathrm{id}_W \otimes \pi^* \otimes \iota \otimes \mathrm{id}_{U^*}$ to get $r' = r''$. Finally, it is left as an exercise for the reader that the bimodule structure \eqref{defLeftActionOnCoequalizer}--\eqref{defRightActionOnCoequalizer} on the coequalizer of \eqref{coeqForInternalHom} agrees with the bimodule structure on $\underline{\Hom}(U,W)$.
\end{proof}

Let $\overline{\mathcal{C}}$ be the full (monoidal) subcategory of $\mathcal{C}$ consisting of those objects $V \in \mathcal{C}$ which have a left dual $V^*$ and have $\boldsymbol{1}$ as a direct summand. Thanks to Lemma \ref{lemmaCompositionInternalHoms}, we have a functor
\begin{equation}\label{internalEndFunctor}
F : \overline{\mathcal{C}} \to \mathrm{Bim}_{\mathcal{C}}^{\mathrm{hb}}, \qquad V \mapsto \underline{\End}(V), \qquad \bigl[ f : V \to W \bigr] \mapsto \underline{\Hom}(V,W).
\end{equation}
Note that $F$ has the same value on all morphisms in $\Hom_{\mathcal{C}}(V,W)$, namely the bimodule $\underline{\Hom}(V,W)$ which is an element of $\Hom_{\mathrm{Bim}_{\mathcal{C}}^{\mathrm{hb}}}\bigl( \underline{\End}(V), \underline{\End}(W) \bigr)$.

\smallskip

\indent In order to put a monoidal structure on $F$, we define
\begin{equation}\label{monStructEmbCinBim}
F^{(2)}_{V,W} = \bigl( \underline{\End}(V \otimes W) \smallblacktriangleleft I_{V,W} \bigr) \in \Hom_{\mathrm{Bim}_{\mathcal{C}}^{\mathrm{hb}}}\bigl( \underline{\End}(V) \,\widetilde{\otimes}\, \underline{\End}(W), \underline{\End}(V \otimes W) \bigr)
\end{equation}
where the half-braided algebra isomorphism $I_{V,W} : \underline{\End}(V) \,\widetilde{\otimes}\, \underline{\End}(W) \to \underline{\End}(V \otimes W)$ is used to endow $\underline{\End}(V \otimes W)$ with a right action of $\underline{\End}(V) \,\widetilde{\otimes}\, \underline{\End}(W)$, see \S\ref{subsecTwistingBimod}.

\begin{proposition}\label{propEmbedCIntoBim}
1. The pair $(F,F^{(2)})$ is a braided strong monoidal functor $\overline{\mathcal{C}} \to \mathrm{Bim}_{\mathcal{C}}^{\mathrm{hb}}$.
\\2. For any $V \in \overline{\mathcal{C}}$, the object $F(V)$ is isomorphic to $F(\boldsymbol{1}) = \boldsymbol{1}$ in $\mathrm{Bim}_{\mathcal{C}}^{\mathrm{hb}}$.
\end{proposition}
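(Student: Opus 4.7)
The functoriality of $F$ is automatic: on objects it is the map $V \mapsto \underline{\End}(V)$ (which is a half-braided algebra by Lemma \ref{lemmaMatrixAlgebra}.1); on morphisms the image $F(f) = \underline{\Hom}(V,W)$ does not depend on $f$, so we only need $F(\mathrm{id}_V) = \underline{\End}(V)$ (the identity $(\underline{\End}(V), \underline{\End}(V))$-bimodule of $\underline{\End}(V)$), which is clear, and $F(g \circ f) = F(g) \circ F(f)$ in $\mathrm{Bim}^{\mathrm{hb}}_{\mathcal{C}}$, which is exactly Lemma \ref{lemmaCompositionInternalHoms} applied with middle object in $\overline{\mathcal{C}}$. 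The serious work is to verify the monoidal and braided structure, and to exhibit invertibility.

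\textbf{Monoidal structure.} For naturality of $F^{(2)}$, given $f : V \to V'$ and $g : W \to W'$, one needs to show
\[ F(f \otimes g) \circ F^{(2)}_{V,W} = F^{(2)}_{V',W'} \circ (F(f) \,\widetilde{\otimes}\, F(g)) \]
as elements of $\Hom_{\mathrm{Bim}^{\mathrm{hb}}_{\mathcal{C}}}(\underline{\End}(V) \,\widetilde{\otimes}\, \underline{\End}(W), \underline{\End}(V' \otimes W'))$. The left-hand side, using \eqref{twistingCompBim} together with Lemma \ref{lemmaCompositionInternalHoms} on the regular factor, simplifies to $\underline{\Hom}(V \otimes W, V' \otimes W') \smallblacktriangleleft I_{V,W}$. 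The right-hand side, after applying Lemma \ref{lemmaIntHomAndBrProd}.2 to rewrite $\underline{\Hom}(V,V') \,\widetilde{\otimes}\, \underline{\Hom}(W,W')$ as $I_{V',W'} \smallblacktriangleright \underline{\Hom}(V \otimes W, V' \otimes W') \smallblacktriangleleft I_{V,W}$ and using \eqref{twistingCompBim} together with the fact that $I_{V',W'}$ is a half-braided algebra isomorphism, yields the same bimodule. The pentagon axiom reduces precisely to \eqref{monStructIsoEnd} (both composites amount to twisting $\underline{\End}(V \otimes W \otimes Z)$ on the right by the same algebra isomorphism). The unit axiom is immediate from $\underline{\End}(\boldsymbol{1}) = \boldsymbol{1}$ and $I_{\boldsymbol{1},V} = \mathrm{id}$.

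\textbf{Braided structure.} One must verify
\[ F^{(2)}_{W,V} \circ \mathcal{B}_{F(V), F(W)} \;=\; F(c_{V,W}) \circ F^{(2)}_{V,W} \]
in $\Hom_{\mathrm{Bim}^{\mathrm{hb}}_{\mathcal{C}}}(\underline{\End}(V) \,\widetilde{\otimes}\, \underline{\End}(W), \underline{\End}(W \otimes V))$. By Lemma \ref{lemmaBraidingForInternalEnd} the braiding $\mathcal{B}_{F(V),F(W)}$ is represented by $\underline{\Hom}(V,W) \,\widetilde{\otimes}\, \underline{\Hom}(W,V)$; applying Lemma \ref{lemmaIntHomAndBrProd}.2 this becomes $I_{W,V} \smallblacktriangleright \underline{\Hom}(V \otimes W, W \otimes V) \smallblacktriangleleft I_{V,W}$. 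Composing on the left with $F^{(2)}_{W,V} = \underline{\End}(W \otimes V) \smallblacktriangleleft I_{W,V}$ and using \eqref{twistingCompBim} so that the two $I_{W,V}$ twists cancel (via Lemma \ref{lemmaCompositionInternalHoms} applied to a regular factor), the left-hand side reduces to $\underline{\Hom}(V \otimes W, W \otimes V) \smallblacktriangleleft I_{V,W}$. The right-hand side equals $\underline{\Hom}(V \otimes W, W \otimes V) \circ (\underline{\End}(V \otimes W) \smallblacktriangleleft I_{V,W}) = \underline{\Hom}(V \otimes W, W \otimes V) \smallblacktriangleleft I_{V,W}$ by the same pair of lemmas. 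I expect this braided compatibility to be the main obstacle, as it requires correctly matching the combinatorics of $I_{V,W}$-twists against the description of $\mathcal{B}$ from \eqref{eq:braidingbimodule}.

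\textbf{Part 2 (invertibility).} For $V \in \overline{\mathcal{C}}$, set
\[ \mathbf{L}_V = \underline{\Hom}(\boldsymbol{1}, V) \in \Hom_{\mathrm{Bim}^{\mathrm{hb}}_{\mathcal{C}}}(\boldsymbol{1}, F(V)), \qquad \mathbf{R}_V = \underline{\Hom}(V, \boldsymbol{1}) \in \Hom_{\mathrm{Bim}^{\mathrm{hb}}_{\mathcal{C}}}(F(V), \boldsymbol{1}). \]
Apply Lemma \ref{lemmaCompositionInternalHoms} with middle object $V$ (which has $\boldsymbol{1}$ as direct summand since $V \in \overline{\mathcal{C}}$) to get $\mathbf{R}_V \circ \mathbf{L}_V \cong \underline{\End}(\boldsymbol{1}) = \boldsymbol{1}$, and with middle object $\boldsymbol{1}$ (trivially a direct summand of itself) to get $\mathbf{L}_V \circ \mathbf{R}_V \cong \underline{\End}(V) = F(V)$. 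Hence $\mathbf{L}_V$ realises an isomorphism $\boldsymbol{1} \xrightarrow{\sim} F(V)$ in $\mathrm{Bim}^{\mathrm{hb}}_{\mathcal{C}}$.
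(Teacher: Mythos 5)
Your proof is correct and follows essentially the same route as the paper: the braided compatibility is checked by the same chain of identities (Lemma \ref{lemmaBraidingForInternalEnd}, Lemma \ref{lemmaIntHomAndBrProd}, the twisting rules \eqref{twistingCompMor}--\eqref{twistingCompBrTens} and \eqref{twistingLeftToRight}), and part 2 is the same two applications of Lemma \ref{lemmaCompositionInternalHoms}. The extra checks you include (functoriality, naturality of $F^{(2)}$) are handled before the proposition or left implicit in the paper, and your arguments for them are sound.
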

\begin{proof}
1. The monoidal structure property reads
\[ F^{(2)}_{U \otimes V, W} \circ \bigl( F^{(2)}_{U,V} \,\widetilde{\otimes}\, \underline{\End}(W) \bigr) = F^{(2)}_{U, V \otimes W} \circ \bigl( \underline{\End}(U) \,\widetilde{\otimes}\, F^{(2)}_{V,W} \bigr) \]
and it easily follows from \eqref{monStructIsoEnd}, thanks to \eqref{twistingCompMor}, \eqref{twistingCompBim} and \eqref{twistingCompBrTens}. Let us prove the braided functor property (recall that isomorphic bimodules are {\em equal} as morphisms in $\mathrm{Bim}_{\mathcal{C}}^{\mathrm{hb}}$):
\begin{align*}
&F^{(2)}_{W,V} \circ \mathcal{B}_{F(V),F(W)} = \bigl[ \underline{\End}(W \otimes V) \smallblacktriangleleft I_{W,V} \bigr] \circ \mathcal{B}_{\underline{\End}(V), \End(W)}\\
=\:\,& \bigl[ I_{W,V}^{-1} \smallblacktriangleright \bigl( \underline{\End}(W) \,\widetilde{\otimes}\, \underline{\End}(V) \bigr) \bigr] \circ \mathcal{B}_{\underline{\End}(V), \End(W)} = I_{W,V}^{-1} \smallblacktriangleright \mathcal{B}_{\underline{\End}(V), \End(W)}\\
=\:\,&  I_{W,V}^{-1} \smallblacktriangleright \bigl[ \underline{\Hom}(V,W) \,\widetilde{\otimes}\, \underline{\Hom}(W,V) \bigr] = \underline{\Hom}(V \otimes W, W \otimes V) \smallblacktriangleleft I_{V,W}\\
=\:\,& \underline{\Hom}(V \otimes W, W \otimes V) \circ \bigl[ \underline{\End}(V \otimes W) \smallblacktriangleleft I_{V,W} \bigr] = F(c_{V,W}) \circ F^{(2)}_{V,W}.
\end{align*}
where the first equality is by definition of $F^{(2)}$ and $F$, the second uses \eqref{twistingLeftToRight}, the third uses \eqref{twistingCompBim} and the fact that $\underline{\End}(W) \,\widetilde{\otimes}\, \underline{\End}(V)$ is an identity morphism in $\mathrm{Bim}^{\mathrm{hb}}_{\mathcal{C}}$, the fourth uses Lemma \ref{lemmaBraidingForInternalEnd}, the fifth uses item 2 in Lemma \ref{lemmaIntHomAndBrProd} and \eqref{twistingCompMor}, the sixth uses \eqref{twistingCompBim} and the fact that $\underline{\End}(V \otimes W)$ is an identity morphism in $\mathrm{Bim}^{\mathrm{hb}}_{\mathcal{C}}$ and the seventh is by definition of $F^{(2)}$ and $F$.

2. We have the bimodules
\[ \underline{\Hom}(\boldsymbol{1},V) \in \Hom_{\mathrm{Bim}_{\mathcal{C}}^{\mathrm{hb}}}\bigl( \boldsymbol{1}, \underline{\End}(V) \bigr) \quad \text{and} \quad \underline{\Hom}(V, \boldsymbol{1}) \in \Hom_{\mathrm{Bim}_{\mathcal{C}}^{\mathrm{hb}}}\bigl(\underline{\End}(V), \boldsymbol{1} \bigr) \]
which are inverse to each other by Lemma \ref{lemmaCompositionInternalHoms}.
\end{proof}

\section{The \texorpdfstring{$\mathscr{L}$}{coend}-linear Morita category}\label{sectionLlinear}
In this section we give another definition, which might look simpler, of half-braided algebras and hb-compatible bimodules. However, these equivalent definitions make sense only when there is more structure on the ambient braided monoidal category $\mathcal{C}$. Thinking about the example $\mathcal{C} = \mathrm{Comod}\text{-}\OO$ (the category of {\em all} comodules over a Hopf $k$-algebra $\OO$ with $k$ a field), note that:
\begin{itemize}
\item every comodule is the union of its finite-dimensional subcomodules \cite[Th. 5.1.1]{Mon}.
\item every finite-dimensional comodule is rigid (\textit{i.e.} has left and right duals).
\end{itemize}
The notion of {\em locally finitely presentable} category generalizes the first feature, where the role of finite-dimensional comodules will be played by so-called {\em compact objects}. The notion of {\em cp-rigidity} generalizes the second feature.

\smallskip

We now recall precisely these notions; a reference for LFP categories is \cite[\S 1.A]{AR}.

\indent Let $\mathcal{C}$ be a category. We denote the colimit of a functor $F : \mathcal{I} \to \mathcal{C}$ by $\mathrm{colim}\,F$ or $\underset{X \in \mathcal{I}}{\mathrm{colim}}\:F(X)$, when it exists. By definition \cite[\S III.3]{MLCat}, it is a pair $(C, \phi)$ where $C \in \mathcal{C}$ and $\phi = \bigl( \phi_X : F(X) \to C \bigr)_{X \in \mathcal{I}}$ satisfies $\phi_{Y} \circ F(f) = \phi_X$ for all $f \in \Hom_{\mathcal{I}}(X,Y)$ and which is universal for this property, meaning that if $(D,\psi)$ is another such pair then there exists a unique $u \in \Hom_{\mathcal{C}}(C,D)$ such that $\psi_X = u \circ \phi_X$ for all $X \in \mathcal{I}$. In particular we have the following principle, often used in the sequel: for all $D \in \mathcal{C}$ and $u,u' \in \Hom_{\mathcal{C}}(C,D)$, if $u \circ \phi_X = u' \circ \phi_X$ for all $X \in \mathcal{I}$ then $u=u'$.

\indent A {\em filtered colimit} is the colimit of a functor $F : \mathcal{I} \to \mathcal{C}$ where the category $\mathcal{I}$ is filtered \cite[\S IX.1]{MLCat}. We do not recall the definition of filtered, as it will not appear explicitly in the sequel.

\begin{definition}\label{defCompact}
An object $K \in \mathcal{C}$ is called compact (a.k.a. finitely presentable) if the functor $\Hom_{\mathcal{C}}(K,-)$ preserves filtered colimits. We denote by $\mathcal{C}_{\mathrm{cp}}$ the full subcategory of compact objects.
\end{definition}

\noindent Compact objects satisfy almost by definition the following factorization property, which will be of great importance in later proofs:
\begin{lemma}\label{lemmaFactoCompactObjects}
Let $(C, \phi) = \mathrm{colim} \, \bigl( F : \mathcal{I} \to \mathcal{C} \bigr)$ be a filtered colimit, let $K \in \mathcal{C}$ be a compact object and $f \in \Hom_{\mathcal{C}}(K,C)$. Then there exist $X \in \mathcal{I}$ and $g \in \Hom_{\mathcal{C}}\bigl(K, F(X) \bigr)$ such that $f = \phi_X \circ g$.
\end{lemma}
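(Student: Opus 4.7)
The plan is to unwrap the definition of compactness (Definition \ref{defCompact}) at the object $K$ and combine it with the concrete description of filtered colimits in the category $\mathrm{Set}$. Concretely, I would first form the composed functor
\[ H := \Hom_{\mathcal{C}}(K, F(-)) : \mathcal{I} \to \mathrm{Set}. \]
Since $(C,\phi) = \mathrm{colim}\,F$ and $K$ is compact, applying $\Hom_{\mathcal{C}}(K,-)$ to the universal cocone $\phi$ produces a colimiting cocone
\[ \bigl( \phi_X \circ - : H(X) \to \Hom_{\mathcal{C}}(K,C) \bigr)_{X \in \mathcal{I}} \]
for $H$ in $\mathrm{Set}$. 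In particular $\Hom_{\mathcal{C}}(K,C) = \mathrm{colim}\, H$.

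Next I would invoke the standard explicit construction of filtered colimits in $\mathrm{Set}$ (see e.g.\ \cite[\S IX.1]{MLCat}): every element of $\mathrm{colim}\,H$ is represented by a pair $(X,g)$ with $X \in \mathcal{I}$ and $g \in H(X) = \Hom_{\mathcal{C}}(K, F(X))$, and the canonical map $H(X) \to \mathrm{colim}\,H$ sends $g$ to its equivalence class. Under the identification above, this canonical map is exactly postcomposition by $\phi_X$. So given $f \in \Hom_{\mathcal{C}}(K,C)$, there exist $X \in \mathcal{I}$ and $g \in \Hom_{\mathcal{C}}(K, F(X))$ whose class represents $f$, which translates into the desired equality $f = \phi_X \circ g$.

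There is no genuine obstacle here: the argument is a direct unpacking of the compactness definition together with a well-known property of $\mathrm{Set}$. The only point requiring a little care, and the only one I would expand on in writing, is to make sure that the cocone induced on $H$ by postcomposition with $\phi$ really is the colimiting cocone, which is the content of ``$\Hom_{\mathcal{C}}(K,-)$ preserves the filtered colimit $(C,\phi)$'' read in the correct direction.
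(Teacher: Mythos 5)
Your argument is correct and is essentially the paper's own proof: both unwrap compactness as the statement that postcomposition with $\phi$ exhibits $\Hom_{\mathcal{C}}(K,C)$ as the colimit of $\Hom_{\mathcal{C}}(K,F(-))$ in $\mathrm{Set}$, and then read off a representative pair $(X,g)$ from the explicit description of colimits of sets. The point you flag as needing care (that the induced cocone is the colimiting one) is exactly what the paper uses when it asserts the canonical map $(X,g)\mapsto \phi_X\circ g$ is a bijection.
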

\begin{proof}
Since the functor $\Hom_{\mathcal{C}}(K,-)$ takes values in the category of sets, we can use the explicit description of its colimit (see e.g. \cite[Th. 3.4.12]{riehl}):
\[ \underset{X \in \mathcal{I}}{\mathrm{colim}} \: \Hom_{\mathcal{C}}\bigl(K,F(X)\bigr) = \left(\coprod_{X \in \mathcal{I}} \Hom_{\mathcal{C}}\bigl(K,F(X)\bigr)\right)\big/\!\sim \]
where the elements of the disjoint union $\coprod$ are by definition pairs $(X,g)$ where $X \in \mathcal{I}$ and $g \in \Hom_{\mathcal{C}}\bigl(K, F(X)\bigr)$, and $\sim$ is the equivalence relation induced by $(X,g) \sim (Y, F(\alpha) \circ g)$ for all $X,Y \in \mathcal{I}$ and $\alpha \in \Hom_{\mathcal{I}}(X,Y)$. By definition of a compact object, the canonical morphism
\[ \fleche{\underset{X \in \mathcal{I}}{\mathrm{colim}} \: \Hom_{\mathcal{C}}\bigl(K,F(X)\bigr)}{\Hom_{\mathcal{C}}(K,\mathrm{colim}\,F) = \Hom_{\mathcal{C}}(K,C)}{(X,g)}{\phi_X \circ g} \]
is an isomorphism. But in the category of sets an isomorphism means a bijection and hence $f$ has a preimage in $\coprod_{X \in \mathcal{I}} \Hom_{\mathcal{C}}\bigl(K,F(X)\bigr)$, which is the desired couple $(X,g)$.
\end{proof}

\begin{definition}\label{def:lfp}
The category $\mathcal{C}$ is locally finitely presentable (LFP for short) if:
\begin{itemize}
\item $\mathcal{C}$ is cocomplete (\textit{i.e.} has all small colimits),
\item the subcategory $\mathcal{C}_{\mathrm{cp}}$ is essentially small (\textit{i.e.} is equivalent to a small category),
\item any object in $\mathcal{C}$ is a filtered colimit of objects in $\mathcal{C}_{\mathrm{cp}}$.
\end{itemize}
\end{definition}

\indent In all the sequel we work with a category $\mathcal{C}$ such that:
\begin{equation}\label{assumptionsCategoryC}
\left\{\!
\begin{array}{l}
\bullet\:\:\:(\mathcal{C}, \otimes, \boldsymbol{1}, c) \text{ is a braided monoidal category (assumed strict for simplicity),}\\
\bullet\:\:\:\mathcal{C} \text{ is a LFP category,}\\
\bullet\:\:\:\boldsymbol{1} \text{ is a compact object,}\\
\bullet\:\:\:\text{for any } V \in \mathcal{C} \text{ the functors } V \otimes - \text{ and } - \otimes \,V \text{ are cocontinuous,}\\
\bullet\:\:\:\text{any compact object } K \text{ is rigid, \textit{i.e} it has left and right duals } K^* \text{ and } ^*K.
\end{array}
\right.
\end{equation}
The definition of left and right duals may be found e.g. in \cite[\S 2.10]{EGNO}.
\begin{lemma}\label{lemmaCompactStableByMonoidalProduct}
Under the assumptions \eqref{assumptionsCategoryC}, any rigid object is compact. In particular, if $K$ and $Q$ are compact then $K \otimes Q$ is compact.
\end{lemma}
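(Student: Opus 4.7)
The plan is to reduce compactness of any rigid $V$ to the compactness of $\boldsymbol{1}$ by means of the adjunction supplied by a right dual. Since $V$ is rigid it admits a right dual ${}^*V$, and the evaluation/coevaluation pair $\widetilde{\mathrm{ev}}_V, \widetilde{\mathrm{coev}}_V$ of \eqref{diagramsForDuality} yields, for every $A, Y \in \mathcal{C}$, a natural isomorphism
\[ \Hom_{\mathcal{C}}(V \otimes A, Y) \cong \Hom_{\mathcal{C}}(A, {}^*V \otimes Y) \]
with explicit inverse bijections given by pre- and post-composition with $\widetilde{\mathrm{coev}}_V$ and $\widetilde{\mathrm{ev}}_V$ respectively. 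Specializing to $A = \boldsymbol{1}$ and using strictness, this gives a natural isomorphism $\Hom_{\mathcal{C}}(V, -) \cong \Hom_{\mathcal{C}}(\boldsymbol{1}, {}^*V \otimes -)$ of functors $\mathcal{C} \to \mathrm{Set}$.

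Now I can invoke the hypotheses \eqref{assumptionsCategoryC} in sequence. First, the functor ${}^*V \otimes -$ is cocontinuous, so in particular it preserves filtered colimits. Second, the unit $\boldsymbol{1}$ is compact, so $\Hom_{\mathcal{C}}(\boldsymbol{1}, -)$ preserves filtered colimits by Definition \ref{defCompact}. The composite $\Hom_{\mathcal{C}}(\boldsymbol{1}, {}^*V \otimes -)$ therefore preserves filtered colimits, and hence so does the naturally isomorphic functor $\Hom_{\mathcal{C}}(V, -)$. By Definition \ref{defCompact} again, $V$ is compact.

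For the "in particular" statement, I would argue that the tensor product of two rigid objects is again rigid and then reapply the first part. Explicitly, if $K$ and $Q$ are compact then by the last bullet of \eqref{assumptionsCategoryC} they admit right duals ${}^*K$ and ${}^*Q$; a direct check from the zig-zag identities shows that ${}^*Q \otimes {}^*K$ is a right dual of $K \otimes Q$ (with evaluation $(\mathrm{id} \otimes \widetilde{\mathrm{ev}}_K \otimes \mathrm{id}) \circ (\mathrm{id}_K \otimes \widetilde{\mathrm{ev}}_Q \otimes \mathrm{id}_{{}^*K})$ and analogous coevaluation), and symmetrically $K \otimes Q$ admits a left dual. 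Thus $K \otimes Q$ is rigid, and the first part gives its compactness.

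The proof is essentially formal, so there is no real obstacle; the only content is the observation that compactness is preserved under any functor of the form $\Hom_{\mathcal{C}}(\boldsymbol{1}, F(-))$ with $F$ cocontinuous, which is where the three hypotheses "$\boldsymbol{1}$ compact", "$\otimes$ cocontinuous in each variable" and "compact $\Rightarrow$ rigid" of \eqref{assumptionsCategoryC} come together.
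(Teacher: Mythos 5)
Your proof is correct and follows essentially the same route as the paper: reduce compactness of a rigid $V$ to compactness of $\boldsymbol{1}$ via the duality adjunction, use cocontinuity of tensoring to pull the filtered colimit through, and handle $K \otimes Q$ by noting that a tensor product of rigid objects is rigid. The only (immaterial) difference is that you use the right dual and the adjunction $\Hom_{\mathcal{C}}(V \otimes -, Y) \cong \Hom_{\mathcal{C}}(-, {}^*V \otimes Y)$, whereas the paper uses the left dual and $(- \otimes V) \dashv (- \otimes V^*)$, writing $\Hom_{\mathcal{C}}(V,Y) \cong \Hom_{\mathcal{C}}(\boldsymbol{1}, Y \otimes V^*)$.
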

\begin{proof}
Let $V \in \mathcal{C}$ be a rigid object. If $\underset{X \in \mathcal{I}}{\mathrm{colim}} \: F(X)$ is a filtered colimit in $\mathcal{C}$ we have
\begin{equation}\label{argumentDualizabilityImpliesCompact}
\begin{array}{l}
\underset{X \in \mathcal{I}}{\mathrm{colim}} \: \Hom_{\mathcal{C}}\bigl(V, F(X)\bigr) \cong \underset{X \in \mathcal{I}}{\mathrm{colim}} \: \Hom_{\mathcal{C}}\bigl(\boldsymbol{1}, F(X) \otimes V^* \bigr) \cong \Hom_{\mathcal{C}}\bigl(\boldsymbol{1}, \underset{X \in \mathcal{I}}{\mathrm{colim}} \: (F(X) \otimes V^*) \bigr)\\[.7em]
\qquad\qquad\qquad\qquad\qquad\cong \Hom_{\mathcal{C}}\bigl(\boldsymbol{1}, (\mathrm{colim}\,F) \otimes V^*\bigr) \cong \Hom_{\mathcal{C}}\bigl(V, \mathrm{colim}\,F \bigr).
\end{array}
\end{equation}
We used the adjunction $(- \otimes V) \dashv (- \otimes V^*)$ \cite[Prop. 2.10.8]{EGNO} and the hypotheses (\ref{assumptionsCategoryC}) on $\boldsymbol{1}$ being compact and $\cdot \otimes V^*$ to preserve colimits. The second claim in the lemma uses that a monoidal product of rigid objects is rigid and hence compact.
\end{proof}

\begin{example}\label{exampleVectIsLFP}
Let $\mathrm{Vect}_k$ be the category of vector spaces over a field $k$. The vector space $k$ is compact since $\Hom_k(k,-) \cong \mathrm{Id}$ and it follows from \eqref{argumentDualizabilityImpliesCompact} that any finite-dimensional $k$-vector space is a compact object (the converse is also true). As a result $\mathrm{Vect}_k$ is LFP since any vector space is the union of its finite-dimensional subspaces. Also it is easy to see from the explicit description of a colimit in $\mathrm{Vect}_k$ that the functors $V \otimes -$ and $- \otimes V$ are cocontinuous. Hence $\mathrm{Vect}_k$ satisfies \eqref{assumptionsCategoryC}.
More in general if $\OO$ is a Hopf algebra over $k$, as remarked in  \cite{BZBJ}, then the category of all right $\OO$-comodules is LFP : we will explore this example in detail in Section \ref{sec:Hcomod}.
\end{example}

\subsection{The coend \texorpdfstring{$\mathscr{L}$}{}}\label{sub:coend} Since a coend is a particular kind of colimit \cite[\S IX.6]{MLCat}, the axioms of a LFP category imply that
\begin{equation}\label{defCoendL}
\mathscr{L} = \int^{K \in \mathcal{C}_{\mathrm{cp}}} K^* \otimes K
\end{equation}
exists in $\mathcal{C}$. We denote by $i_K : K^* \otimes K \to \mathscr{L}$ the components of the universal dinatural transformation. In this subsection we discuss properties of $\mathscr{L}$ which will be used in \S \ref{sectionDefLlinearAlg}.

\smallskip

\indent There is a well-known bialgebra structure on the coend $\mathscr{L}$, given in \cite{majBrGr} and \cite[\S 2]{lyu} (the former reference uses a different definition of $\mathscr{L}$ from reconstruction theory, see \cite[\S 3.4]{FGR} for a detailed explanation of the equivalence between the two definitions). Namely, the unit $\eta_{\mathscr{L}} : \boldsymbol{1} \to \mathscr{L}$ is just $i_{\boldsymbol{1}}$ while the other structure morphisms $m_{\mathscr{L}} : \mathscr{L}^{ \otimes 2} \to \mathscr{L}$, $\Delta_{\mathscr{L}} : \mathscr{L} \to \mathscr{L}^{ \otimes 2}$, $\varepsilon_{\mathscr{L}} : \mathscr{L} \to \boldsymbol{1}$ are defined using the universal property of a coend:
\begin{equation}\label{defStructureCoend}
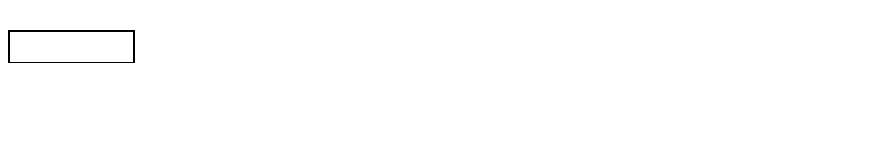
\end{equation}
\smallskip
for all $K, Q \in \mathcal{C}_{\mathrm{cp}}$, where we use the diagrams for duality morphisms defined in \eqref{diagramsForDuality}. Note that the definition of $\eta$ (resp. $m$) makes sense because $\boldsymbol{1} \in \mathcal{C}_{\mathrm{cp}}$ by assumption (resp. thanks to Lemma \ref{lemmaCompactStableByMonoidalProduct}).

\smallskip

\indent By \eqref{assumptionsCategoryC} the functor $- \otimes V$ is cocontinuous for all $V \in \mathcal{C}$, so it follows that the coend $\int^{K \in \mathcal{C}_{\mathrm{cp}}} (K^* \otimes K \otimes V)$ is equal to $\mathscr{L} \otimes V$. More precisely, the dinatural transformation $(i_K \otimes \mathrm{id}_V)_{K \in \mathcal{C}_{\mathrm{cp}}}$ is universal. Thus using the universal property of a coend we define a morphism
\begin{equation}\label{halfBrSigma}
\sigma_V : \mathscr{L} \otimes V \to V \otimes \mathscr{L} 
\end{equation}
by declaring that
\begin{equation}\label{defHalfBraidingSigmaOnCoend}
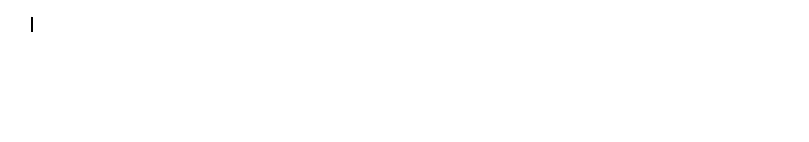
\end{equation}
for all $K \in \mathcal{C}_{\mathrm{cp}}$. The natural transformation $\sigma$ was studied in detail in \cite{NS} (take $\omega = \mathrm{Id}_{\mathcal{C}}$ there), except that they use the ``reconstruction theory formalism'' mentioned above \eqref{defStructureCoend}.

\begin{lemma}\label{lemmaPropertiesHalfBraidingSigma}
$\sigma$ is a natural isomorphism and has the following properties for all $X,Y \in \mathcal{C}$
\begin{enumerate}
\item braided commutativity: $m_{\mathscr{L}} \circ \sigma_{\mathscr{L}} = m_{\mathscr{L}}$ ,
\item half-braiding: $\sigma_{X \otimes Y} = (\mathrm{id}_X \otimes \sigma_Y) \circ (\sigma_X \otimes \mathrm{id}_Y)$,
\item $\sigma_X \circ (m_{\mathscr{L}} \otimes \mathrm{id}_X) = (\mathrm{id}_X \otimes m_{\mathscr{L}}) \circ (\sigma_X \otimes \mathrm{id}_{\mathscr{L}}) \circ (\mathrm{id}_{\mathscr{L}} \otimes \sigma_X)$ \\or in other words $m_{\mathscr{L}} \in \Hom_{\mathcal{Z}(\mathcal{C})}\bigl( (\mathscr{L}, \sigma)^{\otimes 2}, (\mathscr{L}, \sigma) \bigr)$,
\item \begin{enumerate}
\item $(\mathrm{id}_X \otimes \Delta_{\mathscr{L}}) \circ \sigma_X = (\sigma_X \otimes \mathrm{id}_{\mathscr{L}}) \circ (\mathrm{id}_{\mathscr{L}} \otimes c_{\mathscr{L},X}) \circ (\Delta_{\mathscr{L}} \otimes \mathrm{id}_X)$,
\item $(\mathrm{id}_X \otimes \Delta_{\mathscr{L}}) \circ \sigma_X = (c_{X,\mathscr{L}}^{-1} \otimes \mathrm{id}_{\mathscr{L}}) \circ (\mathrm{id}_{\mathscr{L}} \otimes \sigma_X) \circ (\Delta_{\mathscr{L}} \otimes \mathrm{id}_X)$,
\end{enumerate}
\end{enumerate}
where we recall that $c$ denotes the braiding in $\mathcal{C}$.
\end{lemma}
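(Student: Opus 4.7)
\medskip

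\noindent\emph{Proof plan.} The unifying idea is to exploit the universal property of the coend. Since $-\otimes V$ and $V\otimes -$ are cocontinuous by \eqref{assumptionsCategoryC}, every object of the form $\mathscr{L}\otimes X$, $\mathscr{L}\otimes\mathscr{L}\otimes X$, $\mathscr{L}\otimes\mathscr{L}$, etc.\ is itself a coend, with universal dinatural transformation obtained by tensoring the morphisms $i_K$ with identities (and, where two copies of $\mathscr{L}$ appear, forming the joint dinatural transformation $i_K\otimes i_Q$ indexed by pairs $(K,Q)\in\mathcal{C}_{\mathrm{cp}}\times\mathcal{C}_{\mathrm{cp}}$, which is universal because $-\otimes K^*\otimes K$ is cocontinuous). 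Thus each equality to be proved is an identity between two morphisms out of such an object, and it suffices to check it after precomposing with the appropriate universal dinatural, thereby reducing everything to a diagrammatic identity in $\mathcal{C}$ that only involves the braiding $c$ and the duality data of compact (hence rigid) objects.

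I would start with naturality and invertibility. For naturality in $V$, given $f:V\to W$, both $\sigma_W\circ(\mathrm{id}_{\mathscr{L}}\otimes f)\circ(i_K\otimes\mathrm{id}_V)$ and $(f\otimes\mathrm{id}_{\mathscr{L}})\circ\sigma_V\circ(i_K\otimes\mathrm{id}_V)$ unfold via \eqref{defHalfBraidingSigmaOnCoend} into expressions where $f$ slides through crossings, and naturality of $c$ in $\mathcal{C}$ identifies them. Invertibility is essentially already displayed in \eqref{defHalfBraidingSigmaOnCoend}: the claimed inverse is defined dinaturally by the second picture, and the two composites, evaluated on $i_K\otimes\mathrm{id}_V$, collapse to the identity by the snake identities for $K^*$ together with $c\circ c^{-1}=\mathrm{id}$. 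Item (3) is then immediate: precomposing with $i_K\otimes\mathrm{id}_{X\otimes Y}$, the left-hand side is the crossing of $K^*\otimes K$ past $X\otimes Y$ followed by $\mathrm{id}_{X\otimes Y}\otimes i_K$, while the right-hand side performs the same crossing in two steps, first past $X$ then past $Y$; the hexagon axioms in $\mathcal{C}$ equate them.

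For the algebraic compatibilities (2) and (4), I would precompose with $i_K\otimes i_Q$ (respectively $i_K\otimes i_Q\otimes\mathrm{id}_X$) and insert the definition of $m_{\mathscr{L}}$ from \eqref{defStructureCoend}, which rewrites $m_{\mathscr{L}}\circ(i_K\otimes i_Q)$ as $i_{K\otimes Q}$ composed with a crossing of $Q^*$ past $K$. Combined with the defining diagram of $\sigma$, both sides of (2) reduce to $i_{K\otimes Q}$ precomposed with the same tangle of duality cups and braidings, so the equality follows from the snake identities. The argument for (4) is the same with an extra factor $X$ present throughout: the $\sigma_X\circ(m_{\mathscr{L}}\otimes\mathrm{id}_X)$ side and the $(\mathrm{id}_X\otimes m_{\mathscr{L}})\circ(\sigma_X\otimes\mathrm{id}_{\mathscr{L}})\circ(\mathrm{id}_{\mathscr{L}}\otimes\sigma_X)$ side both become the crossing of $K^*\otimes K\otimes Q^*\otimes Q$ past $X$ followed by $\mathrm{id}_X\otimes i_{K\otimes Q}$, again by the hexagon axioms. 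Finally, (5a) and (5b) are obtained by precomposing with $i_K\otimes\mathrm{id}_X$ and invoking the definition of $\Delta_{\mathscr{L}}$ from \eqref{defStructureCoend}, namely $\Delta_{\mathscr{L}}\circ i_K=(i_K\otimes i_K)\circ(\mathrm{id}_{K^*}\otimes\widetilde{\mathrm{coev}}_K\otimes\mathrm{id}_K)$: one copy of $K$ must be transported past $X$ using $c$ and the other using $c^{-1}$, which produces respectively the formulas with $c_{\mathscr{L},X}$ and $c^{-1}_{X,\mathscr{L}}$. The only real obstacle throughout is bookkeeping; once the reduction via dinaturality is in place, every step is a direct use of the hexagon relations, naturality of $c$, and the rigidity snake identities.
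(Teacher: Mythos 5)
Your proposal is mathematically sound, but it takes a genuinely different route from the paper's proof, which is essentially a citation: the paper declares item (1) "easily checked" and quotes items (2)--(4) as special cases of Proposition 5, Theorem 8 and Lemma 6 of Neuchl--Schauenburg, who work in the reconstruction-theory formalism rather than with the coend \eqref{defCoendL}. The paper only remarks parenthetically that everything "can also be checked by straightforward diagrammatic computations with the coend definition of $\mathscr{L}$"; your proposal is precisely that computation carried out self-containedly, reducing each identity via the universal dinatural transformations $i_K$ (and $i_K \otimes i_Q$, universal by cocontinuity of the tensor functors) to hexagon, naturality and snake identities among compact objects. What this buys is independence from the translation between the two models of $\mathscr{L}$, at the cost of the bookkeeping you acknowledge. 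Three small corrections before you write it up: your item numbering is shifted by one relative to the lemma (your (2), (3), (4), (5a), (5b) are the lemma's (1), (2), (3), (4a), (4b)); the coproduct in \eqref{defStructureCoend} is $\Delta_{\mathscr{L}} \circ i_K = (i_K \otimes i_K) \circ (\mathrm{id}_{K^*} \otimes \mathrm{coev}_K \otimes \mathrm{id}_K)$ with the left coevaluation $\mathrm{coev}_K : \boldsymbol{1} \to K \otimes K^*$, not $\widetilde{\mathrm{coev}}_K$ (which lands in $^*K \otimes K$, does not type-check here, and in any case presupposes the cp-ribbon structure not assumed at this stage); and for the braided commutativity $m_{\mathscr{L}} \circ \sigma_{\mathscr{L}} = m_{\mathscr{L}}$ the two sides terminate in $i_{Q \otimes K}$ and $i_{K \otimes Q}$ respectively, so beyond isotopy you must invoke dinaturality of $i$ along the braiding $c_{K,Q}$ to identify them --- your opening paragraph covers this in principle, but it is the one step that is not pure tangle manipulation and deserves to be made explicit.
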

\begin{proof}
The first item is easily checked, while the next items are particular cases of respectively Proposition 5, Theorem 8 and Lemma 6 in \cite{NS} (and can also be checked by straightforward diagrammatic computations with the coend definition of $\mathscr{L}$ that we use here).
\end{proof}

\indent Since $\mathscr{L}$ is an algebra object in $\mathcal{C}$, we can consider (bi)modules over $\mathscr{L}$, as defined in \S\ref{sectionPreliminariesModules}. Given a left module $(V,\lambda)$, use $\sigma$ from \eqref{defHalfBraidingSigmaOnCoend} to define
\begin{equation}\label{defLambdaR}
\lambda^{\mathrm{R}} : V \otimes \mathscr{L} \xrightarrow{\sigma_V^{-1}} \mathscr{L} \otimes V \overset{\lambda}{\longrightarrow} V.
\end{equation}
Then $(V,\lambda,\lambda^{\mathrm{R}})$ is a $\mathscr{L}$-bimodule. This is not hard to check: prove first that $\lambda$ and $\lambda^{\mathrm{R}}$ commute and then that $\lambda^{\mathrm{R}}$ is a right action. Said differently, any $\mathscr{L}$-module is automatically a $\mathscr{L}$-bimodule.

\begin{remark}
The category $\mathscr{L}\text{-}\mathrm{mod}_\mathcal{C}$ is called the Harish-Chandra category in \cite{GJS} and the half-braiding $\sigma$ is exactly what is called the field-goal transform in \cite[\S 1.5]{GJS}.
\end{remark}

\subsection{\texorpdfstring{$\mathscr{L}$}{Coend}-linear algebras}
\label{sectionDefLlinearAlg}
We now reformulate the general theory of Section \ref{sectionMoritaCategoryHBAlgebras} under the isomorphism between the Drinfeld center $\mathcal{Z}(\mathcal{C})$ and the category $\mathscr{L}\text{-}\mathrm{mod}_{\mathcal{C}}$ of $\mathscr{L}$-modules in $\mathcal{C}$ (see Appendix \ref{subsectionIsoZCLModC}). This yields more natural-looking definitions: in this subsection we prove that half-braided algebras (Def.\,\ref{defHBAlgebra}) become algebras endowed with a morphism from $\mathscr{L}$ satisfying a ``braided-commutativity'' property. In the next subsection we will prove that hb-compatible bimodules (Def.\,\ref{defHbCoherentBimodule}) become bimodules which have an analogous ``braided-commutativity'' relating the left and right actions of $\mathscr{L}$.

\begin{definition}\label{defLlinearAlgebra}
1. A $\mathscr{L}$-linear algebra is a quadruple $(A,m,\eta,\mathfrak{d})$ where $(A,m,\eta)$ is an algebra in $\mathcal{C}$ and $\mathfrak{d} : \mathscr{L} \to A$ is a morphism of algebras such that the diagram
\begin{equation}\label{QMM_GJS}
\xymatrix@C=3em{
\mathscr{L} \otimes A \ar[rr]^-{\mathfrak{d} \, \otimes \mathrm{id}_A} \ar[d]_-{\sigma_A} && A \otimes A\ar[d]^-m\\
A \otimes \mathscr{L} \ar[r]_-{\mathrm{id}_A \otimes \, \mathfrak{d}} & A \otimes A \ar[r]_-{m} & A
} \end{equation}
commutes, with $\sigma : \mathscr{L} \otimes - \overset{\sim}{\implies} - \otimes \mathscr{L}$ the half-braiding defined in \eqref{halfBrSigma}.
\\2. A morphism of $\mathscr{L}$-linear algebras $f : (A,m,\eta,\mathfrak{d}) \to (A',m',\eta',\mathfrak{d}')$ is $f \in \Hom_{\mathcal{C}}(A,A')$ such that $f$ is a morphism of algebras $(A,m,\eta) \to (A',m',\eta')$ and $f \circ \mathfrak{d} = \mathfrak{d}'$.
\end{definition}
\begin{remark}
A morphism of algebras $\mathfrak{d}$ as in the previous definition is called a quantum moment map in \cite[Def.\,3.1]{safronov} and \cite[\S 2.6]{GJS}. The relation with an older definition of a quantum moment map will be discussed in detail in \S\ref{sub:LlinQMM}.
\end{remark}

\indent Given a $\mathscr{L}$-linear algebra $(A,m,\eta,\mathfrak{d})$ we can define $\lambda  = m \circ (\mathfrak{d} \otimes \mathrm{id}_A) : \mathscr{L} \otimes A \to A$. Then $(A,\lambda) \in \mathscr{L}\text{-}\mathrm{mod}_{\mathcal{C}}$, \textit{i.e} it is a left $\mathscr{L}$-module in $\mathcal{C}$, and it is an exercise to check that the diagrams 
\[ \xymatrix@C=4em{
\mathscr{L} \otimes A \otimes A \ar[r]^-{\lambda \,\otimes\, \mathrm{id}_A}\ar[d]_-{\mathrm{id}_{\mathscr{L}} \,\otimes\, m}& A \otimes A \ar[d]^-m\\
 \mathscr{L} \otimes A \ar[r]_-{\lambda}& A }
\qquad\qquad
\xymatrix@C=4em{
A \otimes A \otimes \mathscr{L} \ar[r]^-{\mathrm{id}_A \,\otimes\, \lambda^{\mathrm{R}}}\ar[d]_-{m \,\otimes\,\mathrm{id}_{\mathscr{L}}}& A \otimes A \ar[d]^-m\\
 A \otimes \mathscr{L} \ar[r]_-{\lambda^{\mathrm{R}}}& A }
 \]
commute\footnote{The fact that $\mathfrak{d}$ is a morphism of algebras is equivalent to the commutation of the first diagram, while the property \eqref{QMM_GJS} is equivalent to the commutation of the second diagram.}, where $\lambda^{\mathrm{R}} = \lambda \circ \sigma_A^{-1}$ is the right action of $\mathscr{L}$ associated to $\lambda$, see \eqref{defLambdaR}. Conversely, if $A$ is in $\mathscr{L}\text{-}\mathrm{mod}_{\mathcal{C}}$, with action $\lambda : \mathscr{L} \otimes A \to A$ such that these two diagrams commute, then $\mathfrak{d} = \lambda \circ (\mathrm{id}_{\mathscr{L}} \otimes \eta)$ is a morphism which satisfies the condition \eqref{QMM_GJS}. This justifies the name ``$\mathscr{L}$-linear algebra''.

\smallskip

\indent Using the monoidal isomorphism $\Upsilon : \mathcal{Z}(\mathcal{C}) \overset{\sim}{\longrightarrow} \mathscr{L}\text{-}\mathrm{mod}_{\mathcal{C}}$ provided in Appendix \ref{subsectionIsoZCLModC} (Prop.\,\ref{propIsoZCandLModC}) we can prove the equivalence between half-braided algebras (Def. \ref{defHBAlgebra}) and $\mathscr{L}$-linear algebras (Def.\,\ref{defLlinearAlgebra}):

\begin{proposition}\label{propLLinearAlgIntoHBAlg}
1. Let $(A,t,m,\eta)$ be a half-braided algebra and define $\mathfrak{d}(t) : \mathscr{L} \to A$ by
\begin{equation}\label{QMMinTermsOfHB}
\mathfrak{d}(t) \circ i_K : K^* \otimes K \xrightarrow{\mathrm{id}_{K^*} \,\otimes\, \eta \,\otimes \,\mathrm{id}_K} K^* \otimes A \otimes K \xrightarrow{\mathrm{id}_{K^*}\, \otimes \, t_K} K^* \otimes\, K \otimes A \xrightarrow{\mathrm{ev}_K \,\otimes\, \mathrm{id}_A} A
\end{equation}
for all $K \in \mathcal{C}_{\mathrm{cp}}$. Then $\bigl (A,m,\eta,\mathfrak{d}(t) \bigr)$ is a $\mathscr{L}$-linear algebra.
\\2. Let $(A,m,\eta,\mathfrak{d})$ be a $\mathscr{L}$-linear algebra and define $t(\mathfrak{d}) : A \otimes - \overset{\sim}{\implies} - \otimes A$ by
\[ t(\mathfrak{d})_K : A \otimes K \xrightarrow{\mathrm{coev}_K \,\otimes\, c_{K,A}^{-1}} K \otimes K^* \otimes K \otimes A \xrightarrow{\mathrm{id}_K \,\otimes\, (\mathfrak{d} \,\circ\, i_K) \,\otimes\, \mathrm{id}_A} K \otimes A \otimes A \xrightarrow{\mathrm{id}_K \,\otimes\,m} K \otimes A \]
for all $K \in \mathcal{C}_{\mathrm{cp}}$. Then $\bigl(A,t(\mathfrak{d}),m,\eta \bigr)$ is a half-braided algebra.
\\3. These two constructions are inverse each other.
\\4. Morphisms of half-braided algebras are morphisms of $\mathscr{L}$-linear algebras and conversely.
\end{proposition}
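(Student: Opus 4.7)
The key tool is the monoidal equivalence $\Upsilon : \mathcal{Z}(\mathcal{C}) \overset{\sim}{\to} \mathscr{L}\text{-}\mathrm{mod}_{\mathcal{C}}$ from Appendix~\ref{subsectionIsoZCLModC}, which translates a half-braiding $t$ on an object $A$ into a left $\mathscr{L}$-action $\lambda^{t} : \mathscr{L} \otimes A \to A$ and vice versa. The plan is to push the algebraic conditions on the two sides through this equivalence. First I would verify that the explicit formula \eqref{QMMinTermsOfHB} for $\mathfrak{d}(t)$ is exactly $\lambda^{t} \circ (\mathrm{id}_{\mathscr{L}} \otimes \eta)$, and dually that the formula for $t(\mathfrak{d})_K$ in item~2 is what $\Upsilon^{-1}$ produces from the action $\lambda^{\mathfrak{d}} = m \circ (\mathfrak{d} \otimes \mathrm{id}_A)$. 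Both checks are immediate from the universal property of $\mathscr{L}$ and the zig-zag identities for duals. Once these formulas are matched with $\Upsilon^{\pm 1}$, the correspondence $t \leftrightarrow \mathfrak{d}$ at the level of data is automatic from the familiar bijection between algebra morphisms $\mathfrak{d} : \mathscr{L} \to A$ and $\mathscr{L}$-actions on $A$ that are compatible with $(m,\eta)$ (the bijection sending $\mathfrak{d}$ to $m \circ (\mathfrak{d} \otimes \mathrm{id}_A)$, with inverse $\lambda \mapsto \lambda \circ (\mathrm{id}_{\mathscr{L}} \otimes \eta)$).

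With the dictionary in place, items~1 and~2 reduce to matching the remaining axioms. I would test each side of the two equalities in \eqref{axiomsHalfBraidedAlgebra} by precomposing with $i_K \otimes \mathrm{id}_X$ (or $i_K \otimes i_Q \otimes \mathrm{id}_X$) and unfolding the definitions \eqref{defStructureCoend} of $m_{\mathscr{L}}$ and $\eta_{\mathscr{L}} = i_{\boldsymbol{1}}$, as well as the definition \eqref{defHalfBraidingSigmaOnCoend} of $\sigma_A$. The first equality in \eqref{axiomsHalfBraidedAlgebra}, which governs how $t$ interacts with $m$ placed on the left, should unfold to $\mathfrak{d}(t) \circ m_{\mathscr{L}} = m \circ (\mathfrak{d}(t) \otimes \mathfrak{d}(t))$; the second, which inserts a braiding before the multiplication, should unfold precisely to the commutativity of \eqref{QMM_GJS}. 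Unitality $\mathfrak{d}(t) \circ \eta_{\mathscr{L}} = \eta$ is trivial from $\eta_{\mathscr{L}} = i_{\boldsymbol{1}}$ and $t_{\boldsymbol{1}} = \mathrm{id}_A$. Running the same dictionary backwards proves both items in both directions.

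Item~3 is then a formal consequence of $\Upsilon$ being an equivalence combined with the bijection $\lambda \leftrightarrow \mathfrak{d}$: explicitly, one inserts \eqref{QMMinTermsOfHB} into the formula for $t(\mathfrak{d})_K$ and collapses two zig-zags against a single $t_K$ to see $t(\mathfrak{d}(t))_K = t_K$, and conversely $\mathfrak{d}(t(\mathfrak{d})) = \mathfrak{d}$ follows from the unit axiom $m \circ (\eta \otimes \mathrm{id}_A) = \mathrm{id}_A$ together with a zig-zag. For item~4, a morphism $f : A \to A'$ commutes with $t$ and $t'$ iff it commutes with $\lambda^{t}$ and $\lambda^{t'}$ (this is $\Upsilon$ on morphisms), and assuming $f$ is an algebra map, this is in turn equivalent to $f \circ \mathfrak{d}(t) = \mathfrak{d}(t')$ via the two one-sided recipes $\mathfrak{d} = \lambda \circ (\mathrm{id}_{\mathscr{L}} \otimes \eta)$ and $\lambda = m \circ (\mathfrak{d} \otimes \mathrm{id}_A)$.

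I expect the main obstacle to be the bidirectional translation of the first condition of \eqref{axiomsHalfBraidedAlgebra} into multiplicativity of $\mathfrak{d}(t)$, because matching the two requires simultaneously unfolding the universal property used to define $m_{\mathscr{L}}$ in \eqref{defStructureCoend}, the naturality of $t$ in its test object, and two separate $\mathrm{coev}/\mathrm{ev}$ zig-zags. This is best handled by a diagrammatic string computation testing both sides against $i_K \otimes i_Q \otimes \mathrm{id}_X$ and then invoking the universal property of $\mathscr{L} \otimes \mathscr{L}$ as a coend (using that $- \otimes -$ is cocontinuous in each variable) to conclude equality of the ambient morphisms.
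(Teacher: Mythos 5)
Your proposal is correct and follows essentially the same route as the paper: both pass through the isomorphism $\Upsilon : \mathcal{Z}(\mathcal{C}) \to \mathscr{L}\text{-}\mathrm{mod}_{\mathcal{C}}$, identify $\mathfrak{d}(t) = \lambda(t) \circ (\mathrm{id}_{\mathscr{L}} \otimes \eta)$ and $\lambda(\mathfrak{d}) = m \circ (\mathfrak{d} \otimes \mathrm{id}_A)$, and match the first condition of \eqref{axiomsHalfBraidedAlgebra} with multiplicativity of $\mathfrak{d}$ and the second with \eqref{QMM_GJS}, so that items 3 and 4 become formal. The only cosmetic difference is that the paper obtains multiplicativity of $\mathfrak{d}(t)$ from the one-sided identity $\lambda(t) = m \circ (\mathfrak{d}(t) \otimes \mathrm{id}_A)$ combined with associativity of the action, rather than your direct check against $i_K \otimes i_Q$.
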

\begin{proof}
Recall from Appendix \ref{subsectionIsoZCLModC} that a half-braiding is uniquely characterized by its values on $\mathcal{C}_{\mathrm{cp}}$. We use the isomorphism $\Upsilon : \mathcal{Z}(\mathcal{C}) \to \mathscr{L}\text{-}\mathrm{mod}_{\mathcal{C}}$ from Prop.\,\ref{propIsoZCandLModC}.

1. Write $\Upsilon(A,t) = \bigl( A,\lambda(t) \bigr)$ with $\lambda(t) : \mathscr{L} \otimes A \to A$ the left action defined by \eqref{actionFromHalfBraiding}. The definition is such that $\mathfrak{d}(t) = \lambda(t) \circ (\mathrm{id}_{\mathscr{L}} \otimes \eta)$. We have
\begin{center}
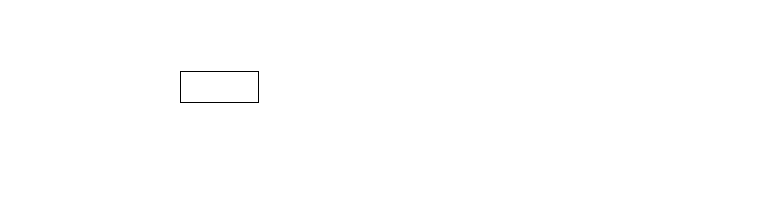
\end{center}
where the first equality is by definition of $\lambda(t)$, the second is by unitality of $m$, the third is by the first equality in \eqref{axiomsHalfBraidedAlgebra} and the fourth is by definition of $\lambda(t)$ and naturality of the braiding. Hence
\begin{equation}\label{QMMfromLambdaT}
\lambda(t) = m \circ \bigl( \lambda(t) \otimes \mathrm{id}_A \bigr) \circ (\mathrm{id}_{\mathscr{L}} \otimes \eta \otimes \mathrm{id}_A) = m \circ (\mathfrak{d}(t) \otimes \mathrm{id}_A)
\end{equation}
and this together with the fact that $\lambda(t)$ is a left action easily implies that $\mathfrak{d}$ is a morphism of algebras. To obtain the property \eqref{QMM_GJS}, note that
\begin{center}
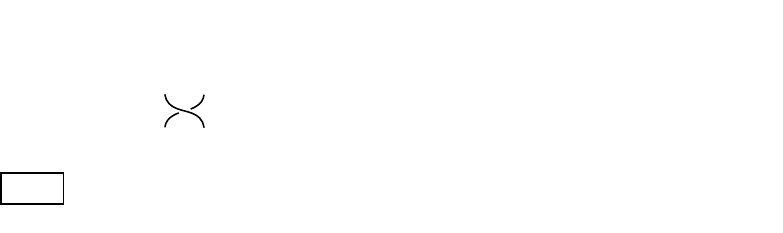
\end{center}
where the first equality is just by definition of $\sigma$ in \eqref{defHalfBraidingSigmaOnCoend} and of $\mathfrak{d}(t)$, the second is by isotopy, the third uses the second equality in \eqref{axiomsHalfBraidedAlgebra} and the fourth uses that $m$ is unital. By definition of $\lambda(t)$, it follows that
\[ m \circ \bigl( \mathrm{id}_A \otimes \mathfrak{d}(t) \bigr) \circ \sigma_A = \lambda(t) \overset{\eqref{QMMfromLambdaT}}{=} m \circ (\mathfrak{d}(t) \otimes \mathrm{id}_A). \]

2. Let $\lambda(\mathfrak{d}) = m \circ (\mathfrak{d} \otimes \mathrm{id}_A) : \mathscr{L} \otimes A \to A$, which is a left action. The definition is such that $\Upsilon^{-1}\bigl( A,\lambda(\mathfrak{d}) \bigr) = \bigl( A,t(\mathfrak{d}) \bigr)$ and hence $t(\mathfrak{d})$ is a half-braiding. The properties \eqref{axiomsHalfBraidedAlgebra} are proved by diagrammatic computations similar to the first item, and thus left to the reader.

3. Obvious, as we have already explained that the two constructions are related by the isomorphism $\Upsilon$.

4. Easily seen from the formulas in items 1 and 2.
\end{proof}

\begin{example}\label{LlinearStructEndV}
Recall the algebra $\underline{\End}(V) = V \otimes V^*$ from Lemma \ref{lemmaMatrixAlgebra}. For all $K \in \mathcal{C}_{\mathrm{cp}}$, let
\begin{align*}
d_K : K^* \otimes K \xrightarrow{\mathrm{id}_{K^* \otimes K} \otimes \mathrm{coev}_V} K^* \otimes K \otimes V \otimes V^* &\xrightarrow{\mathrm{id}_{K^*} \otimes (c_{V,K} \circ c_{K,V}) \otimes \mathrm{id}_{V^*}} K^* \otimes K \otimes V \otimes V^*\\
&\xrightarrow{\mathrm{ev}_K \otimes \mathrm{id}_{V\otimes V^*}} V \otimes V^*.
\end{align*}
This is a dinatural family, hence there exists a morphism $\mathfrak{d} : \mathscr{L} \to \underline{\End}(V)$ such that $d_K = \mathfrak{d} \circ i_K$ for all $K \in \mathcal{C}_{\mathrm{cp}}$. It is straightforward to check that $\mathfrak{d}$ satisfies the diagram \eqref{QMM_GJS} and thus defines a structure of $\mathscr{L}$-linear algebra on $\underline{\End}(V)$. Through the correspondence of Prop.\,\ref{propLLinearAlgIntoHBAlg}, we recover the half-braided structure described in Lemma \ref{lemmaMatrixAlgebra}. We note that the dinatural family $d_K$ is the one used to define a categorical version of the Drinfeld morphism, see \cite[\S 4.4]{FGR}.
\end{example}

\begin{example}
The braided commutativity of the product in $\mathscr{L}$ (Lemma \ref{lemmaPropertiesHalfBraidingSigma}) is equivalent to the commutation of \eqref{QMM_GJS} with $\mathfrak{d} = \mathrm{id}_{\mathscr{L}}$. Hence $\mathscr{L}$ is a $\mathscr{L}$-linear algebra. Through the correspondence of Prop.\,\ref{propLLinearAlgIntoHBAlg}, we get a half-braiding $\tau = t(\mathrm{id}_{\mathscr{L}}) : \mathscr{L} \otimes - \overset{\sim}{\implies} - \otimes \mathscr{L}$ which is given on a compact object $K$ by
\begin{equation}\label{defTauOnCompacts}
\forall \, Q \in \mathcal{C}_{\mathrm{cp}}, \quad \tau_K \circ (i_Q \otimes \mathrm{id}_K) = (\mathrm{id}_K \otimes i_{Q \otimes K}) \circ (\mathrm{coev}_K \otimes \mathrm{id}_{Q^* \otimes Q \otimes K})
\end{equation}
Here we use that the dinatural transformation $\bigl( i_Q \otimes \mathrm{id}_K : Q^* \otimes Q \otimes K \to \mathscr{L} \otimes K \bigr)_{Q \in \mathcal{C}_{\mathrm{cp}}}$ is universal and hence the values \eqref{defTauOnCompacts} uniquely define $\tau_K$. This half-braiding has interesting properties regarding the coproduct in $\mathscr{L}$, namely $\tau_{\mathscr{L}} \circ \Delta_{\mathscr{L}} = \Delta_{\mathscr{L}}$ (``braided co-commutativity'') and $\Delta_{\mathscr{L}} \in \Hom_{\mathcal{Z}(\mathcal{C})}\bigl( (\mathscr{L}, \tau), (\mathscr{L}, \tau)^{\otimes 2} \bigr)$. Proofs are left to the reader.
\end{example}

\indent For two $\mathscr{L}$-linear algebras $\mathscr{A}_1 = (A_1,m_1,\eta_1,\mathfrak{d}_1)$ and $\mathscr{A}_2 = (A_2,m_2,\eta_2,\mathfrak{d}_2)$ we define
\begin{equation}\label{defBraidedProductLlinAlg}
\mathscr{A}_1 \,\widetilde{\otimes}\, \mathscr{A}_2 = \bigl( A_1 \otimes A_2, \: (m_1 \otimes m_2) \circ (\mathrm{id}_{A_1} \otimes c_{A_2,A_1} \otimes \mathrm{id}_{A_2}), \: \eta_1 \otimes \eta_2, (\mathfrak{d}_1 \otimes \mathfrak{d}_2) \circ \Delta_{\mathscr{L}} \bigr)
\end{equation}
where $\Delta_{\mathscr{L}} : \mathscr{L} \to \mathscr{L} \otimes \mathscr{L}$ is the coproduct defined in \eqref{defStructureCoend}.

\begin{proposition}\label{propBraidedTensorProductOfLlinearAlg}
$\mathscr{A}_1 \,\widetilde{\otimes}\, \mathscr{A}_2$ is a $\mathscr{L}$-linear algebra.
\end{proposition}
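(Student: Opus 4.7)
The plan is to verify the two conditions of Definition \ref{defLlinearAlgebra} for $\mathscr{A}_1 \,\widetilde{\otimes}\, \mathscr{A}_2$: (a) that $\mathfrak{d} := (\mathfrak{d}_1 \otimes \mathfrak{d}_2) \circ \Delta_{\mathscr{L}}$ is a morphism of algebras from $\mathscr{L}$ to $A_1 \otimes A_2$ equipped with the braided multiplication of \eqref{defBraidedTensorProductOfAlgebras}, and (b) that the diagram \eqref{QMM_GJS} commutes for this $\mathfrak{d}$.

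Condition (a) is essentially formal. Unitality follows from $\Delta_{\mathscr{L}} \circ \eta_{\mathscr{L}} = \eta_{\mathscr{L}} \otimes \eta_{\mathscr{L}}$ (immediate from \eqref{defStructureCoend}) together with $\mathfrak{d}_i \circ \eta_{\mathscr{L}} = \eta_i$. Multiplicativity is a composition of two standard algebra morphisms: the bialgebra structure on $\mathscr{L}$ asserts that $\Delta_{\mathscr{L}}\colon \mathscr{L} \to \mathscr{L} \,\widetilde{\otimes}\, \mathscr{L}$ is an algebra morphism, and naturality of $c$ together with the multiplicativity of each $\mathfrak{d}_i$ gives that $\mathfrak{d}_1 \otimes \mathfrak{d}_2 \colon \mathscr{L} \,\widetilde{\otimes}\, \mathscr{L} \to A_1 \,\widetilde{\otimes}\, A_2$ is an algebra morphism.

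For (b), the cleanest route is through the equivalence of Proposition \ref{propLLinearAlgIntoHBAlg}. I would set $\mathbb{A}_i := (A_i, t(\mathfrak{d}_i), m_i, \eta_i)$, the half-braided algebras corresponding to $\mathscr{A}_i$ under that proposition. By Proposition \ref{propBraidedTensorProductAlgebra}, their braided tensor product $\mathbb{A}_1 \,\widetilde{\otimes}\, \mathbb{A}_2$ of \eqref{defBraidedTensorProductOfAlgebras2} is again a half-braided algebra, with half-braiding $t^{1,2}_X = (t(\mathfrak{d}_1)_X \otimes \mathrm{id}_{A_2}) \circ (\mathrm{id}_{A_1} \otimes t(\mathfrak{d}_2)_X)$ and with the same braided multiplication as in \eqref{defBraidedTensorProductOfAlgebras}. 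Feeding this back into Proposition \ref{propLLinearAlgIntoHBAlg}(1) produces on $A_1 \otimes A_2$ a $\mathscr{L}$-linear structure whose quantum moment map is $\mathfrak{d}(t^{1,2})$; in particular, the diagram \eqref{QMM_GJS} holds automatically for $\mathfrak{d}(t^{1,2})$. The proof will therefore be complete once I identify $\mathfrak{d}(t^{1,2}) = \mathfrak{d}$.

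The hard part will be precisely this identification $\mathfrak{d}(t^{1,2}) = (\mathfrak{d}_1 \otimes \mathfrak{d}_2) \circ \Delta_{\mathscr{L}}$. By the coend universal property it suffices to verify equality of the two morphisms after precomposition with $i_K$ for each $K \in \mathcal{C}_{\mathrm{cp}}$. Using \eqref{QMMinTermsOfHB} and the explicit formula for $t^{1,2}_K$, the composite $\mathfrak{d}(t^{1,2}) \circ i_K$ unfolds into the string diagram on $K^* \otimes K$ obtained by inserting $\eta_1 \otimes \eta_2$ between $K^*$ and $K$, applying $t(\mathfrak{d}_2)_K$ and then $t(\mathfrak{d}_1)_K$, and closing with a single $\mathrm{ev}_K$. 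On the other side, combining the formula $\Delta_{\mathscr{L}} \circ i_K = (i_K \otimes i_K) \circ (\mathrm{id}_{K^*} \otimes \mathrm{coev}_K \otimes \mathrm{id}_K)$ from \eqref{defStructureCoend} with the expression \eqref{QMMinTermsOfHB} for each $\mathfrak{d}_i \circ i_K$ unfolds $\mathfrak{d} \circ i_K$ into the same diagram, except that the single $K$-strand is replaced by an $S$-shape made from the extra $\mathrm{coev}_K$ together with the two $\mathrm{ev}_K$'s, while the crossings $t(\mathfrak{d}_1)_K$ and $t(\mathfrak{d}_2)_K$ sit on the two legs of the $S$. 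The snake identity for the duality on $K$ straightens this $S$-shape into a single strand, along which the two crossings with $A_1$ and $A_2$ are preserved in the correct vertical order, producing exactly the diagram on the left-hand side.
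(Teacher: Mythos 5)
Your proposal is correct and follows essentially the same route as the paper, which also reduces the statement to Proposition \ref{propBraidedTensorProductAlgebra} via the equivalence of Proposition \ref{propLLinearAlgIntoHBAlg}. The only difference is that you spell out the key identification $\mathfrak{d}(t^{1,2}) = (\mathfrak{d}_1 \otimes \mathfrak{d}_2) \circ \Delta_{\mathscr{L}}$ (via the coend universal property and the snake identity), which the paper merely asserts as the statement that the two braided tensor products correspond under $\widetilde{\Upsilon}$.
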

\begin{proof}
Through the equivalence of Proposition \ref{propLLinearAlgIntoHBAlg}, the braided tensor product defined in \eqref{defBraidedProductLlinAlg} corresponds to the braided tensor product of half-braided algebras defined in \eqref{defBraidedTensorProductOfAlgebras2}. Hence, by Proposition \ref{propBraidedTensorProductAlgebra}, $\mathscr{A}_1 \,\widetilde{\otimes}\, \mathscr{A}_2$ is $\mathscr{L}$-linear. A direct proof is also easy.
\end{proof}

\subsection{Bimodules over \texorpdfstring{$\mathscr{L}$}{coend}-linear algebras}\label{sub:equivalence}
In the previous subsection we saw thanks to the isomorphism $\Upsilon : \mathcal{Z}(\mathcal{C}) \to \mathscr{L}\text{-}\mathrm{mod}_{\mathcal{C}}$ that a half-braided algebra becomes a $\mathscr{L}$-linear algebra, whose definition looks more familiar. Now we will define the notion of $\mathscr{L}$-compatible bimodule and show that it correponds to the notion of hb-compatible bimodule from \S\ref{subsectionHbCohBimod}.

\smallskip

\indent Let $\mathscr{A} = (A,m,\eta,\mathfrak{d})$ and $\mathscr{A}' = (A',m',\eta',\mathfrak{d}')$ be $\mathscr{L}$-linear algebras and consider bimodules over them in the usual sense (\S \ref{sectionPreliminariesModules}). Thanks to the morphisms of algebras $\mathfrak{d} : \mathscr{L} \to A$ and $\mathfrak{d}' : \mathscr{L} \to A'$, they are in particular $\mathscr{L}$-bimodules.

\begin{definition}\label{defLcoherentBimodule}
We say that an $(\mathscr{A}',\mathscr{A})$-bimodule $\mathbf{B} = (B, \smallblacktriangleright, \smallblacktriangleleft)$ is $\mathscr{L}$-compatible if the diagram
\begin{equation*}
\xymatrix@C=3em{
\mathscr{L} \otimes B \ar[rr]^-{\mathfrak{d}' \, \otimes \mathrm{id}_B} \ar[d]_-{\sigma_B} && A' \otimes B\ar[d]^-{\smallblacktriangleright}\\
B \otimes \mathscr{L} \ar[r]_-{\mathrm{id}_B \otimes \, \mathfrak{d}} & B \otimes A \ar[r]_-{\smallblacktriangleleft} & B
} \end{equation*}
commutes, with $\sigma : \mathscr{L} \otimes - \overset{\sim}{\implies} - \otimes \mathscr{L}$ the half-braiding defined in \eqref{halfBrSigma}.
\end{definition}
\noindent This can be seen as a generalization of Definition \ref{defLlinearAlgebra}, because a $\mathscr{L}$-linear algebra is a $\mathscr{L}$-compatible bimodule over itself.

\smallskip

In Proposition \ref{propLLinearAlgIntoHBAlg} we have established a bijection
\begin{equation}\label{bijectionHBAlgLlinAlg}
\widetilde{\Upsilon} : \bigl\{ \text{half-braided algebras in } \mathcal{C} \bigr\} \overset{\sim}{\longrightarrow} \bigl\{ \mathscr{L}\text{-linear algebras in } \mathcal{C} \bigr\}
\end{equation}
which is based on the monoidal isomorphism $\Upsilon : \mathcal{Z}(\mathcal{C}) \to \mathscr{L}\text{-}\mathrm{mod}_{\mathcal{C}}$ from Appendix \ref{subsectionIsoZCLModC}. Note that as {\em algebras} in $\mathcal{C},$ $\mathbb{A}$ and $\widetilde{\Upsilon}(\mathbb{A})$ are the same.

\begin{proposition}\label{propRelatingHBCoherentAndLCoherent}
Let $\mathbb{A}$ and $\mathbb{A}'$ be half-braided algebras in $\mathcal{C}$.
An $(\mathbb{A}', \mathbb{A})$-bimodule is hb-compatible (Def.\,\ref{defHbCoherentBimodule}) if and only if it is $\mathscr{L}$-compatible as an $\bigl(\widetilde{\Upsilon}(\mathbb{A}'), \widetilde{\Upsilon}(\mathbb{A})\bigr)$-bimodule.
\end{proposition}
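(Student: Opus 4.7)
The plan is to invoke the monoidal isomorphism $\Upsilon: \mathcal{Z}(\mathcal{C}) \to \mathscr{L}\text{-}\mathrm{mod}_\mathcal{C}$ of Proposition \ref{propIsoZCandLModC}, which assigns to each half-braiding $t$ on an object $V$ a left $\mathscr{L}$-action $\lambda_t$ on $V$ via \eqref{actionFromHalfBraiding}, and to translate the equality $\mathrm{hbl}^{\mathbf{B}} = \mathrm{hbr}^{\mathbf{B}}$ into an equality of $\mathscr{L}$-actions on the underlying object $B$ of the bimodule $\mathbf{B} = (B,\smallblacktriangleright,\smallblacktriangleleft)$. Setting $\lambda^{(l)} := \Upsilon(B,\mathrm{hbl}^{\mathbf{B}})$ and $\lambda^{(r)} := \Upsilon(B,\mathrm{hbr}^{\mathbf{B}})$, and writing $\mathfrak{d}$, $\mathfrak{d}'$ for the quantum moment maps associated to $\widetilde{\Upsilon}(\mathbb{A})$ and $\widetilde{\Upsilon}(\mathbb{A}')$ via \eqref{QMMinTermsOfHB}, the entire proposition reduces to establishing the two identities
\[
(\mathrm{I}) \quad \lambda^{(l)} = \smallblacktriangleright \circ (\mathfrak{d}' \otimes \mathrm{id}_B), \qquad (\mathrm{II}) \quad \lambda^{(r)} = \smallblacktriangleleft \circ (\mathrm{id}_B \otimes \mathfrak{d}) \circ \sigma_B.
\]
Indeed, since $\Upsilon$ restricts to a bijection between half-braidings on the fixed object $B$ and left $\mathscr{L}$-actions on $B$, identities (I) and (II) imply that $\mathrm{hbl}^{\mathbf{B}} = \mathrm{hbr}^{\mathbf{B}}$ holds iff $\smallblacktriangleright \circ (\mathfrak{d}' \otimes \mathrm{id}_B) = \smallblacktriangleleft \circ (\mathrm{id}_B \otimes \mathfrak{d}) \circ \sigma_B$, which is exactly the commutativity required by Definition \ref{defLcoherentBimodule}. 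As a sanity check, for the regular bimodule over a half-braided algebra the two sides of (I) and (II) coincide precisely by the $\mathscr{L}$-linearity relation \eqref{QMM_GJS}, matching Example \ref{coherentRegularBimod}.

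For (I), my plan is to observe that the formula \eqref{halfBraidingInTermOfAction} for $\mathrm{hbl}^{\mathbf{B}}$ is built from $\eta'$, the half-braiding $t'$ of $\mathbb{A}'$ and the left action $\smallblacktriangleright$ in exactly the same pattern as $\mathrm{hbl}^{\mathbb{A}'} = t'$ is built from $\eta'$, $t'$ and the multiplication $m'$ on the regular $\mathbb{A}'$-module. Consequently the pictorial argument used in item 1 of the proof of Proposition \ref{propLLinearAlgIntoHBAlg} — a sequence of moves relying on unitality of the action and the first equality of \eqref{axiomsHalfBraidedAlgebra} — transfers verbatim with $m'$ replaced by $\smallblacktriangleright$, and produces the componentwise identity $\lambda^{(l)} \circ (i_K \otimes \mathrm{id}_B) = \smallblacktriangleright \circ \bigl((\mathfrak{d}' \circ i_K) \otimes \mathrm{id}_B\bigr)$ for every compact $K$; the universal property of the coend $\mathscr{L}$ then upgrades this to the sought global identity.

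Identity (II) is where the real work lies and which I expect to be the main obstacle. The half-braiding $\mathrm{hbr}^{\mathbf{B}}$ places the half-braiding $t$ of $\mathbb{A}$ on the right of the diagram in \eqref{halfBraidingInTermOfAction}, so converting it into a \emph{left} $\mathscr{L}$-action via $\Upsilon$ necessarily pulls in the canonical half-braiding $\sigma_B$ of $\mathscr{L}$. Concretely, I would expand $\lambda^{(r)} \circ (i_K \otimes \mathrm{id}_B)$ by unravelling $\Upsilon$ and the formula for $\mathrm{hbr}^{\mathbf{B}}$, insert the defining equation \eqref{defHalfBraidingSigmaOnCoend} of $\sigma_B$ on the $K^* \otimes K$-component, apply the second equality of \eqref{axiomsHalfBraidedAlgebra} together with unitality of $\smallblacktriangleleft$, and recognise the result as $\smallblacktriangleleft \circ \bigl(\mathrm{id}_B \otimes (\mathfrak{d} \circ i_K)\bigr) \circ \bigl(\sigma_B \circ (i_K \otimes \mathrm{id}_B)\bigr)$. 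The computation is the dinatural analogue of the argument establishing \eqref{QMMfromLambdaT} in the proof of Proposition \ref{propLLinearAlgIntoHBAlg}; the only delicate bookkeeping is in tracking the braidings of $\mathcal{C}$ introduced by $t_K$ versus those absorbed by $\sigma$, and this is controlled by the half-braiding axiom \eqref{axiomHalfBraiding} applied to $t$ together with Lemma \ref{lemmaPropertiesHalfBraidingSigma}(2) applied to $\sigma$. Once (I) and (II) are both secured, the equivalence follows by the one-line argument of the first paragraph.
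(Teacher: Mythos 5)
Your proposal is correct and follows essentially the same route as the paper: both reduce the statement, via the isomorphism $\Upsilon$, to the two identities $\lambda(\mathrm{hbl}^{\mathbf{B}}) = \smallblacktriangleright \circ (\mathfrak{d}(t') \otimes \mathrm{id}_B)$ and $\lambda(\mathrm{hbr}^{\mathbf{B}}) = \smallblacktriangleleft \circ (\mathrm{id}_B \otimes \mathfrak{d}(t)) \circ \sigma_B$, verified componentwise on compact objects and then globalized by universality of the coend. The paper's verification of these identities is a direct unravelling of \eqref{actionFromHalfBraiding}, \eqref{halfBraidingInTermOfAction}, \eqref{QMMinTermsOfHB} and \eqref{defHalfBraidingSigmaOnCoend} (plus isotopy), rather than the slightly more roundabout appeal to \eqref{axiomsHalfBraidedAlgebra} you sketch, but this is an inessential difference.
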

\begin{proof}
Write $\mathbb{A} = (A,t,m,\eta)$, $\mathbb{A}' = (A',t',m',\eta')$ and $\widetilde{\Upsilon}(\mathbb{A}) = \bigl(A, m, \eta, \mathfrak{d}(t) \bigr)$, $\widetilde{\Upsilon}(\mathbb{A}') = \bigl(A', m', \eta',$ $\mathfrak{d}(t') \bigr)$, employing the same notations as in Prop.\,\ref{propLLinearAlgIntoHBAlg}. Let $\mathbf{B} = (B, \smallblacktriangleright, \smallblacktriangleleft)$ be a $(\mathbb{A}, \mathbb{A}')$-bimodule. Then we have the half-braidings $\mathrm{hbl}^{\mathbf{B}}, \mathrm{hbr}^{\mathbf{B}} : B \otimes - \overset{\sim}{\implies} - \otimes B$ defined in \eqref{halfBraidingInTermOfAction} and this gives objects $(B, \mathrm{hbl}^{\mathbf{B}})$, $(B, \mathrm{hbr}^{\mathbf{B}})$ in $\mathcal{Z}(\mathcal{C})$. Through the isomorphism $\Upsilon : \mathcal{Z}(\mathcal{C}) \to \mathscr{L}\text{-}\mathrm{mod}_{\mathcal{C}}$, write $\Upsilon(B, \mathrm{hbl}^{\mathbf{B}}) = \bigl( B, \lambda(\mathrm{hbl}^{\mathbf{B}}) \bigr)$, $\Upsilon(B, \mathrm{hbr}^{\mathbf{B}}) = \bigl( B, \lambda(\mathrm{hbr}^{\mathbf{B}}) \bigr)$. For any $K \in \mathcal{C}_{\mathrm{cp}}$ we have
\begin{center}
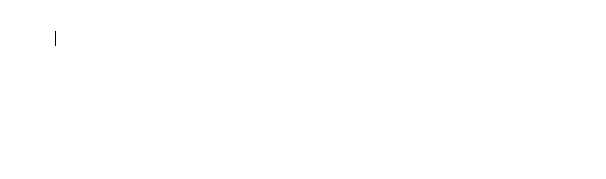
\end{center}
where the first equality uses \eqref{actionFromHalfBraiding}, the second uses \eqref{halfBraidingInTermOfAction} and the third uses \eqref{QMMinTermsOfHB}. On the other hand,
\begin{center}
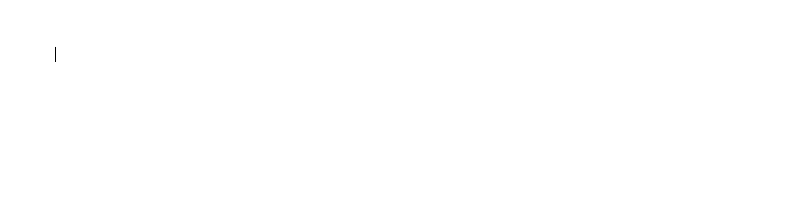
\end{center}
where the first equality uses \eqref{actionFromHalfBraiding}, the second uses \eqref{halfBraidingInTermOfAction}, the third is by isotopy and the fourth uses \eqref{QMMinTermsOfHB} and \eqref{defHalfBraidingSigmaOnCoend}. It follows from universality of $i$ that $\lambda(\mathrm{hbl}^{\mathbf{B}}) = \smallblacktriangleright \circ \bigl( \mathfrak{d}(t') \otimes \mathrm{id}_B \bigr)$ and $\lambda(\mathrm{hbr}^{\mathbf{B}}) = \smallblacktriangleleft \circ \bigl( \mathrm{id}_B \otimes \mathfrak{d}(t) \bigr) \circ \sigma_B$, which proves the Proposition.
\end{proof}

\indent Recall the composition of bimodules from \eqref{defCompositionOfBimodules} and their braided tensor product from \S \ref{sectionPreliminariesModules}.
\begin{proposition}\label{propMonoidalProductOfLCoherentBimodules}
1. Let $\mathbf{B}_1$ be a $\mathscr{L}$-compatible $(\mathscr{A}_2, \mathscr{A}_1)$-bimodule and $\mathbf{B}_2$ be a $\mathscr{L}$-compatible $(\mathscr{A}_3, \mathscr{A}_2)$-bimodule. Then $\mathbf{B}_2 \circ \mathbf{B}_1$ is a $\mathscr{L}$-compatible $(\mathscr{A}_3, \mathscr{A}_1)$-bimodule.
\\2. Let $\mathbf{B}_1$ be a $\mathscr{L}$-compatible $(\mathscr{A}'_1, \mathscr{A}_1)$-bimodule and $\mathbf{B}_2$ be a $\mathscr{L}$-compatible $(\mathscr{A}'_2, \mathscr{A}_2)$-bimodule. Then $\mathbf{B}_1 \,\widetilde{\otimes}\, \mathbf{B}_2$ is a $\mathscr{L}$-compatible $(\mathscr{A}'_1 \,\widetilde{\otimes}\, \mathscr{A}'_2, \mathscr{A}_1 \,\widetilde{\otimes}\,\mathscr{A}_2)$-bimodule.
\end{proposition}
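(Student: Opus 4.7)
The plan is to deduce both statements from their half-braided analogues (Propositions \ref{propHbCoherenceStableByBraidedTensorProduct} and \ref{propCompoOfHBCoherentBimodules}) via the bijection $\widetilde{\Upsilon}$ of Proposition \ref{propLLinearAlgIntoHBAlg} combined with the equivalence between hb-compatibility and $\mathscr{L}$-compatibility established in Proposition \ref{propRelatingHBCoherentAndLCoherent}. Concretely, given $\mathscr{L}$-linear algebras $\mathscr{A}_i$, set $\mathbb{A}_i = \widetilde{\Upsilon}^{-1}(\mathscr{A}_i)$; since $\widetilde{\Upsilon}$ leaves the underlying algebra object and its multiplication/unit unchanged, any $(\mathscr{A}',\mathscr{A})$-bimodule in the sense of \S\ref{sectionPreliminariesModules} is {\em the same data} as an $(\mathbb{A}',\mathbb{A})$-bimodule, and by Proposition \ref{propRelatingHBCoherentAndLCoherent} the $\mathscr{L}$-compatibility condition is equivalent to hb-compatibility.

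For item~1, the composition $\mathbf{B}_2 \circ \mathbf{B}_1$ defined by the coequalizer \eqref{defCompositionOfBimodules} depends only on the underlying bimodule structure over the middle algebra, so it coincides whether we view the inputs as bimodules over half-braided or $\mathscr{L}$-linear algebras. Proposition \ref{propCompoOfHBCoherentBimodules} shows that the composition is hb-compatible, and translating back via Proposition \ref{propRelatingHBCoherentAndLCoherent} gives $\mathscr{L}$-compatibility of $\mathbf{B}_2 \circ \mathbf{B}_1$ as a bimodule over $\mathscr{A}_3$ and $\mathscr{A}_1$.

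For item~2, the only subtlety is verifying that the braided tensor product of $\mathscr{L}$-linear algebras defined in \eqref{defBraidedProductLlinAlg} is intertwined with the braided tensor product of half-braided algebras of \eqref{defBraidedTensorProductOfAlgebras2} through $\widetilde{\Upsilon}$, i.e. that $\widetilde{\Upsilon}(\mathbb{A}_1 \,\widetilde{\otimes}\, \mathbb{A}_2) = \widetilde{\Upsilon}(\mathbb{A}_1) \,\widetilde{\otimes}\, \widetilde{\Upsilon}(\mathbb{A}_2)$. This is precisely the monoidality statement of Proposition \ref{propIsoZCandLModC} specialized to the objects $(A_i, t^i)$ together with the unit morphism $\eta_i$, since $\mathfrak{d}(t)=\lambda(t)\circ(\mathrm{id}_{\mathscr{L}}\otimes\eta)$ from the proof of Proposition \ref{propLLinearAlgIntoHBAlg}, and $\Delta_{\mathscr{L}}$ is the morphism that implements the tensor of half-braidings on $\mathscr{L}$-modules. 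Once this identification is in place, the braided tensor product of the underlying bimodules $\mathbf{B}_1 \,\widetilde{\otimes}\, \mathbf{B}_2$ from \S\ref{sectionPreliminariesModules} is literally the same object in both pictures, so Proposition \ref{propHbCoherenceStableByBraidedTensorProduct} yields hb-compatibility, whence $\mathscr{L}$-compatibility via Proposition \ref{propRelatingHBCoherentAndLCoherent}.

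The only delicate point in the proof is the verification of the monoidality identity $\widetilde{\Upsilon}(\mathbb{A}_1 \,\widetilde{\otimes}\, \mathbb{A}_2) = \widetilde{\Upsilon}(\mathbb{A}_1) \,\widetilde{\otimes}\, \widetilde{\Upsilon}(\mathbb{A}_2)$, which amounts to checking that $\mathfrak{d}(t^{1,2}) = (\mathfrak{d}(t^1) \otimes \mathfrak{d}(t^2)) \circ \Delta_{\mathscr{L}}$, where $t^{1,2}$ is the product half-braiding from \eqref{defTensorProductInZC}. This is a straightforward diagrammatic computation using the definitions \eqref{QMMinTermsOfHB} and \eqref{defStructureCoend} of $\mathfrak{d}(-)$ and $\Delta_{\mathscr{L}}$, together with the universality of the dinatural family $(i_K)$; I would carry it out by evaluating both sides against $i_K$ and using the defining properties of the evaluation/coevaluation. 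Alternatively, a direct proof of both items 1 and 2 from the definitions is possible: for item~1 one uses the half-braiding property of $\sigma$ and the universal property of the coequalizer $\pi$; for item~2 one uses the third assertion of Lemma \ref{lemmaPropertiesHalfBraidingSigma} (the half-braiding axiom $\sigma_{X \otimes Y} = (\mathrm{id}_X \otimes \sigma_Y)(\sigma_X \otimes \mathrm{id}_Y)$) together with item~4 of that lemma describing how $\sigma$ interacts with $\Delta_{\mathscr{L}}$.
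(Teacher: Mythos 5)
Your proposal is correct and follows essentially the same route as the paper: the authors deduce both items immediately from Propositions \ref{propCompoOfHBCoherentBimodules} and \ref{propHbCoherenceStableByBraidedTensorProduct} via the equivalence of Proposition \ref{propRelatingHBCoherentAndLCoherent}. The monoidality point you flag as the only delicate step (that $\widetilde{\Upsilon}$ intertwines the two braided tensor products of algebras) is exactly what the paper records separately in Proposition \ref{propBraidedTensorProductOfLlinearAlg}, so your extra care there matches the paper's treatment.
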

\begin{proof}
This follows immediately from Proposition \ref{propCompoOfHBCoherentBimodules} and Proposition \ref{propHbCoherenceStableByBraidedTensorProduct} thanks to the equivalence obtained in Proposition \ref{propRelatingHBCoherentAndLCoherent}. A direct proof of item 2 which does not resort on the equivalence is also easy.
\end{proof}

We denoted by $\mathrm{Bim}_{\mathcal{C}}$ the category whose objects are algebras in $\mathcal{C}$ and morphisms are isomorphism classes of bimodules, see Definition \ref{defBimC}. The subcategory $\mathrm{Bim}^{\mathrm{hb}}_{\mathcal{C}}$ of $\mathrm{Bim}_{\mathcal{C}}$ consisting of half-braided algebras as objects and isomorphism classes of hb-compatible bimodules as morphisms (Definition \ref{defBimChb}) can now be restated in $\mathscr{L}$-linear terms:
\begin{corollary}\label{cor:correspondenceLmodvshalfbraided}
There is a subcategory $\mathrm{Bim}^{\mathscr{L}}_{\mathcal{C}}$ of $\mathrm{Bim}_{\mathcal{C}}$ such that
\begin{itemize}
\item its objects are the $\mathscr{L}$-linear algebras
\item $\Hom_{\mathrm{Bim}^{\mathscr{L}}_{\mathcal{C}}}(\mathscr{A}_1, \mathscr{A}_2)$ consists of the isomorphism classes of $\mathscr{L}$-compatible $(\mathscr{A}_2, \mathscr{A}_1)$-bimodules.\footnote{Note the swicth!}
\end{itemize}
It is a strict monoidal category with the braided tensor product $\widetilde{\otimes}$. Moreover the bijection \eqref{bijectionHBAlgLlinAlg} extends to a functor
\[ \widetilde{\Upsilon} : \mathrm{Bim}^{\mathrm{hb}}_{\mathcal{C}} \to \mathrm{Bim}^{\mathscr{L}}_{\mathcal{C}}, \quad \mathbb{A} \mapsto \widetilde{\Upsilon}(\mathbb{A}), \quad \mathbf{B} \mapsto \mathbf{B} \]
which is a strict monoidal isomorphism. It follows that $\mathrm{Bim}^{\mathscr{L}}_{\mathcal{C}}$ is braided.
\end{corollary}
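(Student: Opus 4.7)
The plan is to assemble the statement directly from the preparatory results already proved in Sections \ref{sectionMoritaCategoryHBAlgebras} and \ref{sectionLlinear}, letting the equivalence with half-braided algebras do the heavy lifting and then only occasionally verifying things directly in $\mathscr{L}$-linear language.

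First I would check that $\mathrm{Bim}^{\mathscr{L}}_{\mathcal{C}}$ really is a subcategory of $\mathrm{Bim}_{\mathcal{C}}$. For this I need an identity at each object and closure under composition. The identity at a $\mathscr{L}$-linear algebra $\mathscr{A}=(A,m,\eta,\mathfrak{d})$ is the regular bimodule $\mathbf{A}$; the compatibility diagram of Definition \ref{defLcoherentBimodule} for this bimodule is exactly the diagram \eqref{QMM_GJS} defining $\mathscr{L}$-linearity, so the identity morphism of $\mathscr{A}$ in $\mathrm{Bim}_{\mathcal{C}}$ lies in $\mathrm{Bim}^{\mathscr{L}}_{\mathcal{C}}$. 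Closure under $\circ$ is precisely item 1 of Proposition \ref{propMonoidalProductOfLCoherentBimodules}.

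Next I would install the monoidal structure. On objects, Proposition \ref{propBraidedTensorProductOfLlinearAlg} gives that $\mathscr{A}_1\,\widetilde{\otimes}\,\mathscr{A}_2$ is $\mathscr{L}$-linear; on morphisms, item 2 of Proposition \ref{propMonoidalProductOfLCoherentBimodules} gives that $\mathbf{B}_1\,\widetilde{\otimes}\,\mathbf{B}_2$ is $\mathscr{L}$-compatible. Since $\widetilde{\otimes}$ is already a bifunctor on $\mathrm{Bim}_{\mathcal{C}}$ by Lemma \ref{theoCompatibilityCompositionAndBraidedTensorProduct} and since $\mathrm{Bim}_{\mathcal{C}}$ is strict monoidal by Corollary \ref{coroBimCMonoidal}, restricting to $\mathrm{Bim}^{\mathscr{L}}_{\mathcal{C}}$ gives a strict monoidal subcategory. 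Its unit is $\boldsymbol{1}$ endowed with the trivial $\mathscr{L}$-linear structure $\mathfrak{d}=\varepsilon_{\mathscr{L}}$.

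Then I would define the functor $\widetilde{\Upsilon}:\mathrm{Bim}^{\mathrm{hb}}_{\mathcal{C}}\to\mathrm{Bim}^{\mathscr{L}}_{\mathcal{C}}$. On objects it is the bijection of Proposition \ref{propLLinearAlgIntoHBAlg}. On morphisms it is literally the identity: Proposition \ref{propRelatingHBCoherentAndLCoherent} says that the underlying $(\mathbb{A}',\mathbb{A})$-bimodule $\mathbf{B}$ is hb-compatible if and only if it is $\mathscr{L}$-compatible as an $(\widetilde{\Upsilon}(\mathbb{A}'),\widetilde{\Upsilon}(\mathbb{A}))$-bimodule, and isomorphism classes of bimodules in $\mathcal{C}$ are the same in either category. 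Functoriality (preservation of identities and composition) is automatic since both sides inherit their $\circ$ and identities from $\mathrm{Bim}_{\mathcal{C}}$. The strict monoidal structure property $\widetilde{\Upsilon}(\mathbb{A}_1\,\widetilde{\otimes}\,\mathbb{A}_2)=\widetilde{\Upsilon}(\mathbb{A}_1)\,\widetilde{\otimes}\,\widetilde{\Upsilon}(\mathbb{A}_2)$ amounts to comparing \eqref{defBraidedTensorProductOfAlgebras2} with \eqref{defBraidedProductLlinAlg}, and this is exactly the content of (the proof of) Proposition \ref{propBraidedTensorProductOfLlinearAlg}; on morphisms nothing needs checking since $\widetilde{\Upsilon}$ is the identity on bimodules.

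Finally, bijectivity of $\widetilde{\Upsilon}$ on objects (Proposition \ref{propLLinearAlgIntoHBAlg}) together with identity on morphisms makes $\widetilde{\Upsilon}$ an isomorphism of categories, so in particular an equivalence of strict monoidal categories. Transporting the braiding $\mathcal{B}$ of Theorem \ref{thBraided} through $\widetilde{\Upsilon}$ then endows $\mathrm{Bim}^{\mathscr{L}}_{\mathcal{C}}$ with a braiding, proving the last claim. The only mildly delicate step is the verification that $\widetilde{\Upsilon}$ is strict monoidal on objects, but since the braided tensor product on half-braided algebras was specifically built so that its half-braiding component corresponds under $\Upsilon$ to the coproduct-twisted moment map $(\mathfrak{d}_1\otimes\mathfrak{d}_2)\circ\Delta_{\mathscr{L}}$, this reduces to a direct unpacking of definitions already carried out implicitly in \S\ref{sectionDefLlinearAlg}.
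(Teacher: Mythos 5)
Your proof is correct and follows essentially the same route as the paper's own (very terse) argument: monoidal subcategory via Propositions \ref{propBraidedTensorProductOfLlinearAlg} and \ref{propMonoidalProductOfLCoherentBimodules}, the functor $\widetilde{\Upsilon}$ as the identity on bimodules via Propositions \ref{propLLinearAlgIntoHBAlg} and \ref{propRelatingHBCoherentAndLCoherent}, and the braiding transported through the isomorphism using Theorem \ref{thBraided}. Your write-up merely makes explicit some details (identity morphisms, the unit object) that the paper leaves implicit.
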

\begin{proof}
By Proposition \ref{propMonoidalProductOfLCoherentBimodules}, $\mathrm{Bim}^{\mathscr{L}}_{\mathcal{C}}$ is a monoidal subcategory of $\mathrm{Bim}_{\mathcal{C}}$. The functor $\widetilde{\Upsilon}$ is a strict monoidal isomorphism because so is $\Upsilon$. The last claim is due to Theorem \ref{thBraided}.
\end{proof}

\subsection{Balance on \texorpdfstring{$\mathrm{Bim}_{\mathcal{C}}^{\mathscr{L}}$}{the coend-linear Morita category}}\label{sec:balanceBim}
Recall from Lemma \ref{lemmaCompactStableByMonoidalProduct} that under the assumptions \eqref{assumptionsCategoryC}, the subcategory $\mathcal{C}_{\mathrm{cp}}$ of compact objects coincides with the subcategory of rigid (\textit{i.e.} dualizable) objects. In this section we assume that $\mathcal{C}$ is {\em cp-ribbon}, which means that the subcategory $\mathcal{C}_{\mathrm{cp}}$ is ribbon. Said explicitly, we assume that there is a natural isomorphism $\theta : \mathrm{Id}_{\mathcal{C}_{\mathrm{cp}}} \overset{\sim}{\implies} \mathrm{Id}_{\mathcal{C}_{\mathrm{cp}}}$ called {\em twist} such that
\begin{equation}\label{axiomsTwist}
\forall \, K, Q \in \mathcal{C}_{\mathrm{cp}}, \quad \theta_{K \otimes Q} = c_{Q,K} \circ c_{K,Q} \circ (\theta_K \otimes \theta_Q) \quad \text{ and } \quad \theta_{K^*} = (\theta_K)^*.
\end{equation}
In this case the right dual $^*\!K$ can be realized as the left dual $K^*$ by letting (see e.g. \cite[\S XIV.3]{kassel})
\begin{equation}\label{dualityMorphByMeansOfTwist}
\widetilde{\mathrm{ev}}_K = \mathrm{ev}_K \circ c_{K,K^*} \circ (\theta_K \otimes \mathrm{id}_{K^*}), \qquad \widetilde{\mathrm{coev}}_K = (\mathrm{id}_{K^*} \otimes \theta_K) \circ c_{K,K^*} \circ \mathrm{coev}_K.
\end{equation}
With these definitions we have
\begin{equation}\label{twistAsLoop}
\forall \, K \in \mathcal{C}_{\mathrm{cp}}, \quad \theta_K = \bigl( \mathrm{ev}_K \otimes \mathrm{id}_K \bigr) \circ \bigl( \mathrm{id}_{K^*} \otimes c_{K,K} \bigr) \circ \bigl( \widetilde{\mathrm{coev}}_K \otimes \mathrm{id}_K \bigr)
\end{equation}
Moreover, thanks to Lemma \ref{lemmaExtensionNat}, we can extend $\theta$ to a natural isomorphism $\mathrm{Id}_{\mathcal{C}} \overset{\sim}{\implies} \mathrm{Id}_{\mathcal{C}}$, still denoted by $\theta$. Recall that this uses a filtered colimit presentation: if $V = \mathrm{colim} \bigl( K: \mathcal{I} \to \mathcal{C}_{\mathrm{cp}} \bigr)$ with universal cocone $\phi = \bigl( \phi_X : K(X) \to V \bigr)_{X \in \mathcal{I}}$ , there exists a unique isomorphism $\theta_V : V \to V$ characterized by
\begin{equation}\label{defExtensionTwist}
\forall \, X \in \mathcal{I}, \quad \theta_V \circ \phi_X = \phi_X \circ \theta_{K(X)}.
\end{equation}
Then $\theta$ is a {\em balance} on $\mathcal{C}$, meaning that
\[ \forall \, V,W \in \mathcal{C}, \quad \theta_{V \otimes W} = c_{W,V} \circ c_{V,W} \circ (\theta_V \otimes \theta_W). \]
The proof is similar to the one of item 1 in Lemma \ref{lemmaHBonCompacts} and is thus left to the reader.

\smallskip

For any half-braided algebra $\mathbb{A} = (A,t,m,\eta)$ in $\mathcal{C}$, define
\begin{equation}\label{defBalOnHBAlg}
\mathrm{bal}_{\mathbb{A}} : A \xrightarrow{\: \eta \,\otimes\, \theta_A \:} A \otimes A \xrightarrow{\: t_A \:} A \otimes A \xrightarrow{\: m \:} A.
\end{equation}
This morphism has striking properties:
\begin{proposition} \label{propPropertiesBalA}
1. $\mathrm{bal}_{\mathbb{A}}$ is an automorphism of half-braided algebra, whose inverse is
\[ \mathrm{bal}_{\mathbb{A}}^{-1} : A \xrightarrow{\: \theta_A \,\otimes\, \eta \:} A \otimes A \xrightarrow{\: t_A^{-1} \:} A \otimes A \xrightarrow{\: m \:} A. \]
2. For any half-braided algebras $\mathbb{A}_1 = (A_1,t^1,m_1,\eta_1)$ and $\mathbb{A}_2 = (A_2, t^2, m_2, \eta_2)$ we have
\[ \mathrm{bal}_{\mathbb{A}_1 \, \widetilde{\otimes}\, \mathbb{A}_2} = T_{\mathbb{A}_2, \mathbb{A}_1} \circ T_{\mathbb{A}_1, \mathbb{A}_2} \circ \bigl( \mathrm{bal}_{\mathbb{A}_1} \otimes \mathrm{bal}_{\mathbb{A}_2} \bigr) \]
where we recall from \eqref{isoBrZCforHBAlg} that $T$ is the braiding in $\mathcal{Z}(\mathcal{C})$, \textit{i.e.} $T_{\mathbb{A}_2, \mathbb{A}_1} = t^2_{A_1}$ and $T_{\mathbb{A}_1, \mathbb{A}_2} = t^1_{A_2}$.
\\3. Let $\Theta : \mathrm{Id}_{\mathcal{Z}(\mathcal{C})} \overset{\sim}{\implies} \mathrm{Id}_{\mathcal{Z}(\mathcal{C})}$ be the balance on $\mathcal{Z}(\mathcal{C})$ defined in Appendix \ref{appBalanceZC}. Then $\mathrm{bal}_{\mathbb{A}} = \Theta_{(A,t)}$ where $\mathbb{A} = (A,t,m,\eta)$.
\end{proposition}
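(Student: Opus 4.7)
The plan is to treat the three items essentially independently, each reducing to diagrammatic calculations in $\mathcal{C}$ using the half-braided algebra axioms \eqref{axiomsHalfBraidedAlgebra}, the half-braiding property \eqref{axiomHalfBraiding}, and the twist axioms \eqref{axiomsTwist} together with their extension \eqref{defExtensionTwist} of $\theta$ to all of $\mathcal{C}$.

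For part 1, I would first verify that the morphism $g = m \circ t_A^{-1} \circ (\theta_A \otimes \eta)$ is a two-sided inverse of $\mathrm{bal}_{\mathbb{A}}$. Expanding $\mathrm{bal}_{\mathbb{A}} \circ g$ graphically (reading bottom-to-top), one produces a diagram with $\theta_A$ on the input and two half-braidings $t_A$, $t_A^{-1}$ nested around a multiplication; using associativity of $m$, the second axiom of \eqref{axiomsHalfBraidedAlgebra} to slide $m$ past $t_A$, and the unitality to cancel the $\eta$-strand, this telescopes to $\mathrm{id}_A$; the symmetric computation handles $g \circ \mathrm{bal}_{\mathbb{A}}$. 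To see that $\mathrm{bal}_{\mathbb{A}}$ is a morphism of half-braided algebras, I would check compatibility with $m$, $\eta$ and $t$ separately. The unit is preserved by naturality of $\theta$ at $\eta$ (so $\theta_A \circ \eta = \eta$ since $\theta_{\boldsymbol{1}} = \mathrm{id}$) together with unitality. Compatibility with $m$ is the main graphical identity: write out $\mathrm{bal}_{\mathbb{A}} \circ m$ and $m \circ (\mathrm{bal}_{\mathbb{A}} \otimes \mathrm{bal}_{\mathbb{A}})$, then use the second equality of \eqref{axiomsHalfBraidedAlgebra} (to commute $m$ with the half-braiding on the $A$-strand), the half-braiding property \eqref{axiomHalfBraiding} applied to $t_{A \otimes A}$, and the balance axiom $\theta_{A \otimes A} = c_{A,A} \circ c_{A,A} \circ (\theta_A \otimes \theta_A)$ to match the two sides. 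Finally, compatibility with the half-braiding $t$ follows from naturality of $t$ in its index (applied to $\mathrm{bal}_{\mathbb{A}}: A \to A$) combined with the half-braiding property \eqref{axiomHalfBraiding}.

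For part 2, I would expand $\mathrm{bal}_{\mathbb{A}_1 \,\widetilde{\otimes}\, \mathbb{A}_2}$ using the explicit formulas \eqref{defBraidedTensorProductOfAlgebras2} for the multiplication, unit and half-braiding $t^{1,2}$ of $\mathbb{A}_1 \,\widetilde{\otimes}\, \mathbb{A}_2$. One inserts $\eta_1 \otimes \eta_2$ on the left and $\theta_{A_1 \otimes A_2}$ on the right, then applies $t^{1,2}_{A_1 \otimes A_2}$ and the twisted multiplication. Using the balance axiom to rewrite $\theta_{A_1 \otimes A_2}$ as $c_{A_2,A_1} \circ c_{A_1,A_2} \circ (\theta_{A_1} \otimes \theta_{A_2})$, then using the half-braiding property \eqref{axiomHalfBraiding} to split $t^{1,2}$ into $t^1$ and $t^2$ pieces, and finally sliding the independent $A_1$ and $A_2$ factors past each other via naturality of $c$ and $t^1$, the right-hand side reorganizes into $T_{\mathbb{A}_2,\mathbb{A}_1} \circ T_{\mathbb{A}_1,\mathbb{A}_2} \circ (\mathrm{bal}_{\mathbb{A}_1} \otimes \mathrm{bal}_{\mathbb{A}_2})$ after recognizing that the two remaining half-braidings crossing the two $A$-strands compose into $T_{\mathbb{A}_2,\mathbb{A}_1} \circ T_{\mathbb{A}_1,\mathbb{A}_2} = t^2_{A_1} \circ t^1_{A_2}$.

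For part 3, I would start from the explicit formula for $\Theta_{(A,t)}$ given in the appendix (expressible, for rigid objects, as in \eqref{twistAsLoop} with the half-braiding $t$ inserted in place of the braiding $c$, and then extended to all of $\mathcal{C}$ via filtered colimits as in \eqref{defExtensionTwist}). To transport this formula onto the algebra $A$ I would use $\eta$ to write $\mathrm{id}_A$ as $m \circ (\eta \otimes \mathrm{id}_A)$ and then apply the half-braided algebra axioms to recognize the duality loop as the composite $m \circ t_A \circ (\eta \otimes \theta_A)$; alternatively, since both $\mathrm{bal}_{\mathbb{A}}$ (by parts 1 and 2) and $\Theta_{(A,t)}$ (by construction) satisfy the defining properties of a balance on $\mathcal{Z}(\mathcal{C})$ restricted to the object $(A,t)$, one can invoke uniqueness of the balance once it is pinned down on the generating rigid subcategory. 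The main technical obstacle is bookkeeping in part 1: the computation of the inverse involves four half-braidings interacting with two multiplications, and careful use of both equalities in \eqref{axiomsHalfBraidedAlgebra} (not just one) is required to collapse the diagram.
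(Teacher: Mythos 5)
Your overall architecture inverts the paper's: the paper first shows $\mathrm{bal}_{\mathbb{A}}$ is an algebra morphism by a direct graphical computation, then proves item 3 ($\mathrm{bal}_{\mathbb{A}} = \Theta_{(A,t)}$) via the filtered-colimit presentation of $A$ by compact objects, and only then deduces items 1 and 2 wholesale from the properties of $\Theta$ established in Appendix \ref{appBalanceZC}. Your plan to verify the inverse formula, the compatibility with $m$ and $\eta$, and the balance identity of part 2 by direct diagrammatics is legitimate (if laborious), and your primary route for part 3 is essentially the paper's computation.

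There is, however, a genuine gap in your part 1: the claim that compatibility of $\mathrm{bal}_{\mathbb{A}}$ with the half-braiding $t$ "follows from naturality of $t$ in its index (applied to $\mathrm{bal}_{\mathbb{A}}: A \to A$)". Naturality of $t : A \otimes - \Rightarrow - \otimes A$ is in the second tensor factor: for $f : X \to Y$ it gives $(f \otimes \mathrm{id}_A) \circ t_X = t_Y \circ (\mathrm{id}_A \otimes f)$. Applying it to $\mathrm{bal}_{\mathbb{A}}$ only tells you how $t_A$ interacts with $\mathrm{bal}_{\mathbb{A}}$ acting on the \emph{index} copy of $A$; it says nothing about the required identity $(\mathrm{id}_X \otimes \mathrm{bal}_{\mathbb{A}}) \circ t_X = t_X \circ (\mathrm{bal}_{\mathbb{A}} \otimes \mathrm{id}_X)$, where $\mathrm{bal}_{\mathbb{A}}$ acts on the algebra copy. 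Indeed $\theta_A$ alone does \emph{not} commute with $t_X$ (naturality of $\theta$ plus the balance axiom only gives commutation up to conjugation by double braidings), and neither does $m \circ t_A \circ (\eta \otimes \mathrm{id}_A)$; only their composite does, and this is precisely the nontrivial content of the Drinfeld-center twist. The paper's proof of this fact (Prop.\,\ref{propBalanceZC}, item 1) genuinely uses the rigidity of compact objects, the identity \eqref{inverseHBwithDuals} expressing $t_{C^*}$ via duals, and the factorization Lemma \ref{lemmaFactoCompactObjects} through the filtered colimit; it cannot be dispatched by naturality and \eqref{axiomHalfBraiding} alone. You would need either to reproduce this compact-object argument or to follow the paper's order and obtain the $\mathcal{Z}(\mathcal{C})$-morphism property as a corollary of item 3. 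Relatedly, your fallback for part 3 via "uniqueness of the balance" does not work: balances on a braided category are not unique, and $\mathrm{bal}$ is only defined on half-braided algebras rather than on all of $\mathcal{Z}(\mathcal{C})$, so the naturality and balance identities from parts 1 and 2 do not pin down its value on a given object.
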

\begin{proof}
Let us prove that $\mathrm{bal}_{\mathbb{A}}$ is a morphism of algebras: $m \circ \bigl( \mathrm{bal}_{\mathbb{A}} \otimes \mathrm{bal}_{\mathbb{A}} \bigr)$ is equal to
\begin{center}
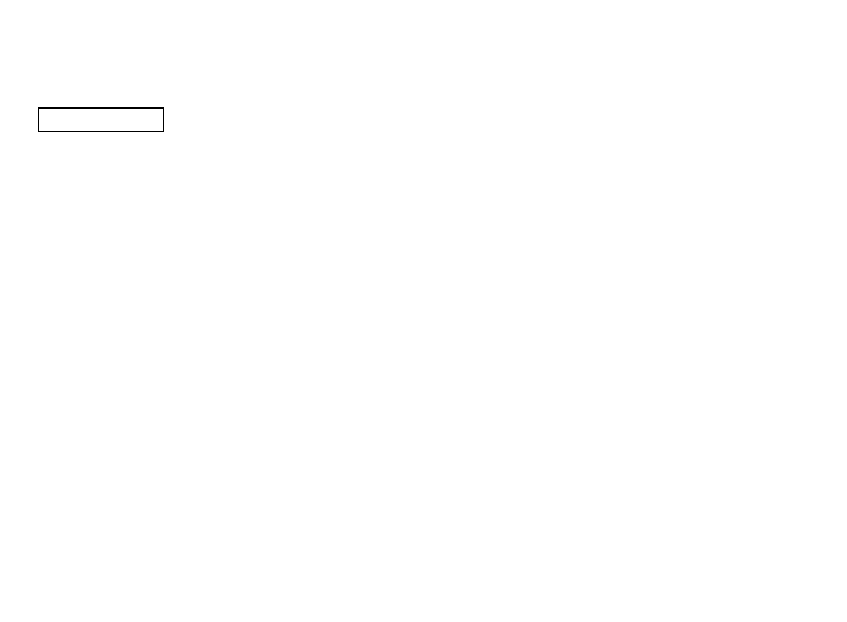
\end{center}
and the last term is $\mathrm{bal}_{\mathbb{A}} \circ m$, as desired. The first equality is by associativity of $m$, the second uses the half-braided algebra axioms \eqref{axiomsHalfBraidedAlgebra}, the third is by unitality of $m$ and naturality of the braiding, the fourth uses \eqref{axiomsHalfBraidedAlgebra}, the fifth uses the half-braiding axiom \eqref{axiomHalfBraiding} and associativity of $m$, the sixth is by naturality of $t$, the seventh is by the balancing property of $\theta$ and the eighth is by naturality of $\theta$. Now let us prove item 3. Let $V \in \mathcal{C}$ and write $V = \mathrm{colim}\bigl( K : \mathcal{I} \to \mathcal{C}_{\mathrm{cp}} \bigr)$ with universal cocone $\phi$. Then for all $X \in \mathcal{I}$,
\begin{center}
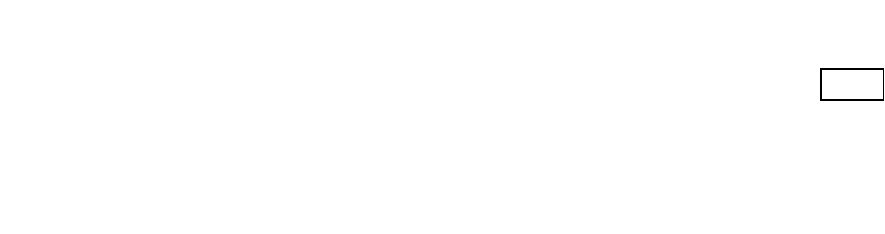
\end{center}
which implies that $\mathrm{bal}_{\mathbb{A}} = \Theta_{(A,t)}$ by the universal property of $\phi$. The first equality uses the definition of $\mathrm{bal}_{\mathbb{A}}$ together with \eqref{defExtensionTwist} and \eqref{twistAsLoop}, the second equality is by naturality of the braiding, the third equality is by naturality of $t$, the fourth equality is by \eqref{axiomsHalfBraidedAlgebra} and the last equality uses the unitality of $m$ and the definition of $\Theta$ in \eqref{defBalZC}. Hence all the other claimed properties of $\mathrm{bal}_{\mathbb{A}}$ immediately follow from the properties of $\Theta$ given in Prop.\,\ref{propBalanceZC}.
\end{proof}
\indent We continue to denote by $\mathbb{A} = (A,t,m,\eta)$ a half-braided algebra in $\mathcal{C}$. Recall from \eqref{subsecTwistingBimod} that an algebra morphism can be used to change the action on a bimodule. Consider the $(\mathbb{A}, \mathbb{A})$-bimodule
\begin{equation}\label{defBALbimod}
\mathrm{BAL}_{\mathbb{A}} = \mathbb{A} \smallblacktriangleleft \mathrm{bal}_{\mathbb{A}}
\end{equation}
which is the object $A$ endowed with the left action of $\mathbb{A}$ simply given by the multiplication $m : A \otimes A \to A$ while the right action is $m \circ \mathrm{bal}_{\mathbb{A}}$. By Lemma \ref{lemmaTwistingCohBim} and item 1 in Prop.\,\ref{propPropertiesBalA} the bimodule $\mathrm{BAL}_{\mathbb{A}}$ is hb-compatible, or in other words $\mathrm{BAL}_{\mathbb{A}} \in \Hom_{\mathrm{Bim}_{\mathcal{C}}^{\mathrm{hb}}}(\mathbb{A}, \mathbb{A})$.

\begin{theorem}\label{thmBalanceBim}
Assume that the category $\mathcal{C}$ satisfies \eqref{assumptionsCategoryC} and is moreover cp-ribbon. Then the family $\bigl( \mathrm{BAL}_{\mathbb{A}} \bigr)_{\mathbb{A} \in \mathrm{Ob}(\mathrm{Bim}_{\mathcal{C}}^{\mathrm{hb}})}$ is a balance on $\mathrm{Bim}_{\mathcal{C}}^{\mathrm{hb}}$, meaning that it is natural and
\[ \forall \, \mathbb{A}_1, \mathbb{A}_2, \quad  \mathrm{BAL}_{\mathbb{A}_1 \,\widetilde{\otimes}\, \mathbb{A}_2} = \mathcal{B}_{\mathbb{A}_2, \mathbb{A}_1} \circ \mathcal{B}_{\mathbb{A}_1, \mathbb{A}_2} \circ \bigl( \mathrm{BAL}_{\mathbb{A}_1} \,\widetilde{\otimes}\, \mathrm{BAL}_{\mathbb{A}_2} \bigr) \]
where $\mathcal{B}$ is the braiding \eqref{eq:braidingbimodule} on $\mathrm{Bim}_{\mathcal{C}}^{\mathrm{hb}}$.
\end{theorem}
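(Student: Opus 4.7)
The plan is to dispatch the two defining axioms of a balance separately. The balancing equation reduces cleanly to Proposition \ref{propPropertiesBalA}(2) via the twisting calculus of \S\ref{subsecTwistingBimod}, while naturality, which is the main technical point, requires producing an explicit bimodule isomorphism.

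For the balancing equation, I will unfold both sides as twisted regular bimodules. Using the definition \eqref{eq:braidingbimodule} together with \eqref{twistingLeftToRight}, \eqref{twistingCompMor} and \eqref{twistingCompBim}, a routine calculation gives
\[
\mathcal{B}_{\mathbb{A}_2,\mathbb{A}_1} \circ \mathcal{B}_{\mathbb{A}_1,\mathbb{A}_2} = (\mathbb{A}_1 \,\widetilde{\otimes}\, \mathbb{A}_2) \smallblacktriangleleft (T_{\mathbb{A}_2,\mathbb{A}_1} \circ T_{\mathbb{A}_1,\mathbb{A}_2}),
\]
while \eqref{twistingCompBrTens} yields $\mathrm{BAL}_{\mathbb{A}_1} \,\widetilde{\otimes}\, \mathrm{BAL}_{\mathbb{A}_2} = (\mathbb{A}_1 \,\widetilde{\otimes}\, \mathbb{A}_2) \smallblacktriangleleft (\mathrm{bal}_{\mathbb{A}_1} \otimes \mathrm{bal}_{\mathbb{A}_2})$. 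Composing these and applying \eqref{twistingCompMor}, \eqref{twistingCompBim}, \eqref{twistingLeftToRight} once more produces
\[
(\mathbb{A}_1 \,\widetilde{\otimes}\, \mathbb{A}_2) \smallblacktriangleleft \bigl[T_{\mathbb{A}_2,\mathbb{A}_1} \circ T_{\mathbb{A}_1,\mathbb{A}_2} \circ (\mathrm{bal}_{\mathbb{A}_1} \otimes \mathrm{bal}_{\mathbb{A}_2})\bigr],
\]
which by Proposition \ref{propPropertiesBalA}(2) equals $(\mathbb{A}_1 \,\widetilde{\otimes}\, \mathbb{A}_2) \smallblacktriangleleft \mathrm{bal}_{\mathbb{A}_1 \widetilde{\otimes} \mathbb{A}_2} = \mathrm{BAL}_{\mathbb{A}_1 \widetilde{\otimes} \mathbb{A}_2}$.

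For naturality, let $\mathbf{B} = (B,\smallblacktriangleright,\smallblacktriangleleft)$ be a hb-compatible $(\mathbb{A}_2,\mathbb{A}_1)$-bimodule. The same twisting manipulations combined with $\mathbb{A}_i \circ \mathbf{B} = \mathbf{B} = \mathbf{B} \circ \mathbb{A}_j$ give
\[
\mathrm{BAL}_{\mathbb{A}_2} \circ \mathbf{B} = \mathrm{bal}_{\mathbb{A}_2}^{-1} \smallblacktriangleright \mathbf{B}, \qquad \mathbf{B} \circ \mathrm{BAL}_{\mathbb{A}_1} = \mathbf{B} \smallblacktriangleleft \mathrm{bal}_{\mathbb{A}_1},
\]
so the task reduces to exhibiting an isomorphism $\varphi_{\mathbf{B}}\colon B \to B$ in $\mathcal{C}$ satisfying $\varphi_{\mathbf{B}} \circ \smallblacktriangleright = \smallblacktriangleright \circ (\mathrm{bal}_{\mathbb{A}_2} \otimes \varphi_{\mathbf{B}})$ and $\varphi_{\mathbf{B}} \circ \smallblacktriangleleft = \smallblacktriangleleft \circ (\varphi_{\mathbf{B}} \otimes \mathrm{bal}_{\mathbb{A}_1})$. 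Proposition \ref{propPropertiesBalA}(3) identifies $\mathrm{bal}_{\mathbb{A}}$ with the Drinfeld-center twist $\Theta_{(A,t)}$, suggesting the candidate $\varphi_{\mathbf{B}} := \Theta_{(B,\mathrm{hb}^{\mathbf{B}})}$. This is natural because, by Proposition \ref{propPropertiesOfModulesOverHBAlgebras}(2), both actions on $\mathbf{B}$ are morphisms in $\mathcal{Z}(\mathcal{C})$, where hb-compatibility is essential as it allows one to simultaneously invoke axioms \eqref{axiomsHalfBraidedLeftModule} and \eqref{axiomsHalfBraidedRightModule}.

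Applying naturality of $\Theta$ and the balance identity $\Theta_{X \otimes Y} = T_{Y,X} \circ T_{X,Y} \circ (\Theta_X \otimes \Theta_Y)$ in $\mathcal{Z}(\mathcal{C})$, the left-action intertwining becomes
\[
\varphi_{\mathbf{B}} \circ \smallblacktriangleright = \smallblacktriangleright \circ \mathrm{hb}^{\mathbf{B}}_{A_2} \circ t^2_B \circ (\mathrm{bal}_{\mathbb{A}_2} \otimes \varphi_{\mathbf{B}}),
\]
and the hard part I anticipate will be verifying that the parasitic double-braiding $\mathrm{hb}^{\mathbf{B}}_{A_2} \circ t^2_B$ disappears upon precomposition with $\smallblacktriangleright$, with the analogous check on the right side. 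I expect this to follow from a graphical manipulation using \eqref{axiomsHalfBraidedLeftModule}--\eqref{axiomsHalfBraidedRightModule} together with hb-compatibility, in the same spirit as the diagrammatic proof of naturality of $\mathcal{B}$ in Theorem \ref{thBraided} (which also pivoted on switching between $\mathrm{hbl}^{\mathbf{B}}$ and $\mathrm{hbr}^{\mathbf{B}}$ at the critical step). An alternative route is to perform the same verification in the $\mathscr{L}$-linear picture of \S\ref{sub:equivalence}, where the absorption can be read directly from the $\mathscr{L}$-compatibility square of $\mathbf{B}$. Invertibility of $\varphi_{\mathbf{B}}$ is automatic since $\Theta$ is a natural automorphism, completing the proof.
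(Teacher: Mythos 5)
Your treatment of the balance axiom is correct and is exactly the paper's argument: unfold everything as twisted regular bimodules via \eqref{eq:braidingbimodule}, \eqref{twistingLeftToRight}, \eqref{twistingCompMor}, \eqref{twistingCompBim}, \eqref{twistingCompBrTens}, and conclude by Proposition \ref{propPropertiesBalA}(2). For naturality you also land on the paper's candidate isomorphism $\Theta_{\mathbf{B}}$ and correctly identify hb-compatibility as the hypothesis that makes both the left- and right-action checks possible. However, the step you yourself flag as the hard part is not only unproved but the route you sketch for it does not work as stated. Your displayed identity is obtained by applying naturality of $\Theta$ to $\smallblacktriangleright$ together with the balance identity for $\Theta_{(A_2,t^2)\otimes(B,\mathrm{hb}^{\mathbf{B}})}$; this presupposes that $\smallblacktriangleright$ is a morphism $(A_2,t^2)\otimes(B,\mathrm{hb}^{\mathbf{B}})\to(B,\mathrm{hb}^{\mathbf{B}})$ in $\mathcal{Z}(\mathcal{C})$. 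But \eqref{axiomsHalfBraidedLeftModule} says something weaker: exactly as for the multiplication of a half-braided algebra (see the Remark after Definition \ref{defHBAlgebra}), the action is a $\mathcal{Z}(\mathcal{C})$-morphism only when one of the two tensor factors carries $c^{\pm 1}$ rather than its own half-braiding. So the ``parasitic double-braiding'' term is not a correct intermediate expression to be cancelled afterwards; the two defects have to be traded against each other inside a single computation, which is what the paper's chain of nine equalities does by alternating between \eqref{defBalZC}, \eqref{axiomsHalfBraidedLeftModule}, the half-braiding axiom \eqref{axiomHalfBraiding} and Proposition \ref{propPropertiesBalA}(3).

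There is a second, independent omission: $\Theta$ is only \emph{defined} through filtered colimit presentations by compact objects (equation \eqref{defBalZC}), so any identity involving $\Theta_{\mathbf{B}}$ and the action $\smallblacktriangleright$ must be tested against a universal cocone $\phi\otimes\psi$ for $A_2\otimes B$, and the composite $\smallblacktriangleright\circ(\phi_X\otimes\psi_Y)$ must first be factored through a compact object using Lemma \ref{lemmaFactoCompactObjects} before the graphical manipulation can even begin. This colimit bookkeeping is a genuine ingredient of the proof, not a formality, and your sketch does not account for it. Your suggested alternative of working in the $\mathscr{L}$-linear picture of \S\ref{sub:equivalence} is a reasonable way to sidestep the colimit issue when $\mathcal{C}=\mathrm{Comod}\text{-}\OO$ (compare the explicit formula \eqref{eq:balanceBimodHComod}), but for a general $\mathcal{C}$ satisfying \eqref{assumptionsCategoryC} the compactness argument cannot be avoided. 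In summary: right strategy, complete proof of the balance axiom, but the naturality verification is missing and the path proposed for it contains a misapplication of naturality of $\Theta$ that would need to be repaired.
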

\begin{proof}
Recall that equality of morphisms in $\mathrm{Bim}_{\mathcal{C}}^{\mathrm{hb}}$ means that the bimodules are isomorphic. Let us first prove naturality. So let $\mathbf{B} \in \Hom_{\mathrm{Bim}_{\mathcal{C}}^{\mathrm{hb}}}(\mathbb{A}_1, \mathbb{A}_2)$, \textit{i.e.} $\mathbf{B}$ is a hb-compatible $(\mathbb{A}_2, \mathbb{A}_1)$-bimodule. Write $\mathbf{B} = (B, \smallblacktriangleright, \smallblacktriangleleft)$. By \eqref{twistingLeftToRight} and \eqref{twistingCompBim} we have
\[ \mathrm{BAL}_{\mathbb{A}_2} \circ \mathbf{B} = \bigl( \mathbb{A}_2 \smallblacktriangleleft \mathrm{bal}_{\mathbb{A}_2} \bigr) \circ \mathbf{B} = \bigl( \mathrm{bal}_{\mathbb{A}_2}^{-1} \smallblacktriangleright \mathbb{A}_2 \bigr) \circ \mathbf{B} = \mathrm{bal}_{\mathbb{A}_2}^{-1} \smallblacktriangleright \bigl( \mathbb{A}_2 \circ \mathbf{B} \bigr) = \mathrm{bal}_{\mathbb{A}_2}^{-1} \smallblacktriangleright \mathbf{B} \]
and similarly $\mathbf{B} \circ \mathrm{BAL}_{\mathbb{A}_1} = \mathbf{B} \smallblacktriangleleft \mathrm{bal}_{\mathbb{A}_1}$. We thus have to show that $\mathbf{B}$ is isomorphic to $\mathrm{bal}_{\mathbb{A}_2} \smallblacktriangleright \mathbf{B} \smallblacktriangleleft \mathrm{bal}_{\mathbb{A}_1}$ as bimodules. Since $\mathbf{B}$ is a hb-compatible bimodule, it is endowed with the half-braiding $\mathrm{hb}_{\mathbf{B}} : B \otimes - \Rightarrow - \otimes B$ which is indifferently $\mathrm{hbl}_{\mathbf{B}}$ or $\mathrm{hbr}_{\mathbf{B}}$, recall \eqref{halfBraidingInTermOfAction}. Hence $\mathbf{B}$ can be seen as an object in $\mathcal{Z}(\mathcal{C})$ and we have the isomorphism $\Theta_{\mathbf{B}} \in \Hom_{\mathcal{Z}(\mathcal{C})}(\mathbf{B},\mathbf{B})$ defined in \eqref{defBalZC} in App.\,\ref{appBalanceZC}. We claim that $\Theta_{\mathbf{B}}$ is an isomorphism of bimodules $\mathbf{B} \overset{\sim}{\to} \mathrm{bal}_{\mathbb{A}_2} \smallblacktriangleright \mathbf{B} \smallblacktriangleleft \mathrm{bal}_{\mathbb{A}_1}$. A bit of preparation is in order to show this. Write as usual $\mathbb{A}_2 = (A_2, t^2, m_2, \eta_2)$; we can in particular see $\mathbb{A}_2$ as an object in $\mathcal{Z}(\mathcal{C})$ and thus we have the morphism $\Theta_{\mathbb{A}_2} \in \Hom_{\mathcal{Z}(\mathcal{C})}(\mathbb{A}_2,\mathbb{A}_2)$. Take filtered colimit presentations $A_2 = \mathrm{colim}\bigl( K : \mathcal{I} \to \mathcal{C}_{\mathrm{cp}} \bigr)$, $B = \mathrm{colim}\bigl( Q : \mathcal{J} \to \mathcal{C}_{\mathrm{cp}} \bigr)$ with universal cocones $\phi = \bigl( \phi_X : K(X) \to A_2 \bigr)_{X \in \mathcal{I}}$ and $\psi = \bigl( \psi_Y : Q(Y) \to B \bigr)_{Y \in \mathcal{J}}$ respectively. Then by \eqref{colimPresOfMonProd} below, $A_2 \otimes B$ is the colimit of the functor $(X,Y) \mapsto K(X) \otimes Q(Y)$ with universal cocone $\phi \otimes \psi$. Let $(X,Y) \in \mathcal{I} \times \mathcal{J}$ be arbitrary; applying Lemma \ref{lemmaFactoCompactObjects} to the morphism $\smallblacktriangleright \circ (\phi_X \otimes \psi_Y) : K(X) \otimes Q(Y) \to B$, we get that there exists $Z \in \mathcal{J}$ and $g \in \Hom_{\mathcal{C}}\bigl( K(X) \otimes Q(Y), Q(Z) \bigr)$ such that $\smallblacktriangleright \circ (\phi_X \otimes \psi_Y) = \psi_Z \circ g$. We can finally make the following computation:
\begin{center}
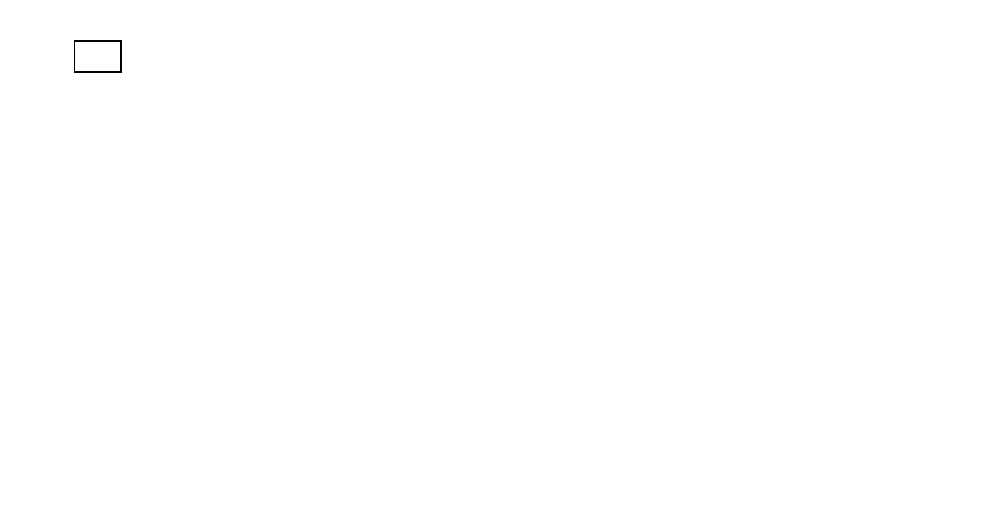
\end{center}
where the first equality is by definition of $g$, the second is by definition of $\Theta_{\mathbf{B}}$ in \eqref{defBalZC}, the third is by naturality of $\mathrm{hb}^{\mathbf{B}}$ and cups/caps, the fourth is by definition of $g$, the fifth is by \eqref{axiomsHalfBraidedLeftModule}, the sixth is by the half-braiding axiom \eqref{axiomHalfBraiding}, the seventh is by definition of $\Theta_{\mathbb{A}_2}$ in \eqref{defBalZC} and naturality of the braiding, the eighth is by \eqref{axiomsHalfBraidedLeftModule} and the last is by naturality of the braiding and definition of $\Theta_{\mathbf{B}}$. By the universal property of $\phi \otimes \psi$ and by item 3 in Prop.\,\ref{propPropertiesBalA} we conclude that $\Theta_{\mathbf{B}} \circ \smallblacktriangleright = \smallblacktriangleright \circ \bigl( \mathrm{bal}_{\mathbb{A}_2} \otimes \mathrm{id}_B \bigr) \circ \bigl( \mathrm{id}_{A_2} \otimes \Theta_{\mathbf{B}} \bigr)$, which proves that $\Theta_{\mathbf{B}}$ intertwines the left actions. The proof for the right actions follows the same lines, except that it uses \eqref{axiomsHalfBraidedRightModule} (so in particular we see that it is important to assume that $\mathbf{B}$ is a hb-compatible bimodule).
\\ The balance property is easily deduced from the balance property of $\mathrm{bal}_{\mathbb{A}}$ (item 2 in Prop.\,\ref{propPropertiesBalA}):
\begin{align*}
\mathrm{BAL}_{\mathbb{A}_1 \,\widetilde{\otimes}\, \mathbb{A}_2} &= \bigl( \mathbb{A}_1 \,\widetilde{\otimes}\, \mathbb{A}_2 \bigr) \smallblacktriangleleft \mathrm{bal}_{\mathbb{A}_1 \,\widetilde{\otimes}\, \mathbb{A}_2} = \bigl( \mathbb{A}_1 \,\widetilde{\otimes}\, \mathbb{A}_2 \bigr) \smallblacktriangleleft \bigl( T_{\mathbb{A}_2, \mathbb{A}_1} \circ T_{\mathbb{A}_1, \mathbb{A}_2} \circ ( \mathrm{bal}_{\mathbb{A}_1} \otimes \mathrm{bal}_{\mathbb{A}_2} ) \bigr)\\
&= \mathcal{B}_{\mathbb{A}_2, \mathbb{A}_1} \smallblacktriangleleft \bigl( T_{\mathbb{A}_1, \mathbb{A}_2} \circ ( \mathrm{bal}_{\mathbb{A}_1} \otimes \mathrm{bal}_{\mathbb{A}_2} ) \bigr) \quad \text{ \footnotesize by \eqref{twistingCompMor} and \eqref{eq:braidingbimodule}}\\
&= \bigl( \mathcal{B}_{\mathbb{A}_2, \mathbb{A}_1} \circ (\mathbb{A}_2 \,\widetilde{\otimes}\, \mathbb{A}_1) \bigr) \smallblacktriangleleft \bigl( T_{\mathbb{A}_1, \mathbb{A}_2} \circ ( \mathrm{bal}_{\mathbb{A}_1} \otimes \mathrm{bal}_{\mathbb{A}_2} ) \bigr) \quad \text{ \footnotesize (trick, nothing is changed)}\\
&= \bigl( \mathcal{B}_{\mathbb{A}_2, \mathbb{A}_1} \circ \mathcal{B}_{\mathbb{A}_1, \mathbb{A}_2} \bigr) \smallblacktriangleleft \bigl(\mathrm{bal}_{\mathbb{A}_1} \otimes \mathrm{bal}_{\mathbb{A}_2}\bigr) \quad \text{\footnotesize by \eqref{twistingCompMor}, \eqref{twistingCompBim} and \eqref{eq:braidingbimodule}}\\
&= \bigl( \mathcal{B}_{\mathbb{A}_2, \mathbb{A}_1} \circ \mathcal{B}_{\mathbb{A}_1, \mathbb{A}_2} \circ (\mathbb{A}_1 \,\widetilde{\otimes}\, \mathbb{A}_2) \bigr) \smallblacktriangleleft \bigl(\mathrm{bal}_{\mathbb{A}_1} \otimes \mathrm{bal}_{\mathbb{A}_2}\bigr) \quad \text{ \footnotesize (trick, nothing is changed)}\\
&= \mathcal{B}_{\mathbb{A}_2, \mathbb{A}_1} \circ \mathcal{B}_{\mathbb{A}_1, \mathbb{A}_2} \circ \bigl( (\mathbb{A}_1 \smallblacktriangleleft \mathrm{bal}_{\mathbb{A}_1}) \,\widetilde{\otimes}\, (\mathbb{A}_2 \smallblacktriangleleft \mathrm{bal}_{\mathbb{A}_2}) \bigr) \quad \text{\footnotesize by \eqref{twistingCompBim} and \eqref{twistingCompBrTens}}\\
&=\mathcal{B}_{\mathbb{A}_2, \mathbb{A}_1} \circ \mathcal{B}_{\mathbb{A}_1, \mathbb{A}_2} \circ \bigl( \mathrm{BAL}_{\mathbb{A}_1} \,\widetilde{\otimes}\, \mathrm{BAL}_{\mathbb{A}_2} \bigr) \quad \text{\footnotesize by definition.} \qedhere
\end{align*}
\end{proof}

\section{The case \texorpdfstring{$\mathcal{C} = \mathrm{Comod}\text{-}\OO$}{of categories of comodules}}\label{sec:Hcomod}

We now specialize the concepts and results of the previous section (\S\ref{sectionLlinear}) to the case where $\mathcal{C}$ is the category of right comodules over a coquasitriangular Hopf algebra $\OO$. It is an important example of a category which satisfies the assumptions \eqref{assumptionsCategoryC}, as already noted e.g in \cite{BZBJ}. This choice of ambient category $\mathcal{C}$ is motivated by the topological examples of $\mathscr{L}$-linear algebras and $\mathscr{L}$-compatible bimodules obtained through the stated skein functor which will be presented in \S\ref{sectionStatedSkein}.

\subsection{The category \texorpdfstring{$\mathrm{Comod}\text{-}\OO$}{of comodules} and its coend}\label{subsecComodH}
Let $k$ be a field. If $C = (C,\Delta,\varepsilon)$ is a $k$-coalgebra we write
\[ \Delta(c) = c_{(1)} \otimes c_{(2)} \]
and then $c_{(1)} \otimes\ldots \otimes c_{(n)}$ means iterated coproduct (Sweedler's notation). Recall that a right $C$-comodule is a $k$-vector space $V$ endowed with a linear map $\delta_V : V \to V \otimes C$ satisfying the usual coaction conditions. We write
\[ \delta_V(v) = v_{[0]} \otimes v_{[1]} \]
and then $v_{[0]} \otimes \ldots \otimes v_{[n]}$ means iterated coaction:
\[ v_{[0]} \otimes v_{[1]} \otimes v_{[2]} = v_{[0][0]} \otimes v_{[0][1]} \otimes v_{[1]} = v_{[0]} \otimes v_{[1](1)} \otimes v_{[1](2)}, \quad \textit{etc} \]
We denote by $\mathrm{Comod}\text{-}C$ the category of right $C$-comodules (not necessarily finite-dimensional). The Hom spaces $\Hom^C(V,W)$ consist of $k$-linear maps $f : V \to W$ such that
\[ \forall\, v \in V, \quad f(v)_{[0]} \otimes f(v)_{[1]} = f(v_{[0]}) \otimes v_{[1]}. \]
\indent The next properties are well-known (see e.g. \cite[Cor.\,26]{wis}, \cite[Prop.\,1]{porst}). However it seems hard to find self-contained proofs in the literature, especially for item 3, so we provide a detailed proof for convenience.
\begin{lemma}\label{propLFPComod}
1. $\mathrm{Comod}\text{-}C$ is cocomplete.
\\2. For any $V \in \mathrm{Comod}\text{-}C$ the endofunctors $V \otimes -$ and $- \otimes V$ are cocontinuous.
\\3. A $C$-comodule is a compact object in $\mathrm{Comod}\text{-}\,C$ if and only if it is finite-dimensional.
\\4. $\mathrm{Comod}\text{-}C$ is a LFP category.
\end{lemma}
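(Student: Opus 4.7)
The plan is to treat items 1, 2, and 4 as categorical consequences of two inputs: that the forgetful functor $U : \mathrm{Comod}\text{-}C \to \mathrm{Vect}_k$ creates colimits, and the \emph{fundamental theorem on comodules}, namely that every $W \in \mathrm{Comod}\text{-}C$ is the directed union of its finite-dimensional subcomodules. Item 3 is the substantive point.

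For item 1, given $F : \mathcal{I} \to \mathrm{Comod}\text{-}C$, I would form the colimit $(L, \phi)$ of $U \circ F$ in $\mathrm{Vect}_k$ and note that the composites $(\phi_X \otimes \mathrm{id}_C) \circ \delta_{F(X)}$ constitute a cocone over $U \circ F$, yielding a unique linear map $\delta_L : L \to L \otimes C$ such that $\delta_L \circ \phi_X = (\phi_X \otimes \mathrm{id}_C) \circ \delta_{F(X)}$ for every $X$. Coassociativity and counitality of $\delta_L$ then follow from the corresponding axioms for each $\delta_{F(X)}$ by uniqueness in the universal property, applied to cocones landing in $L \otimes C \otimes C$ and in $L$ respectively. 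This makes $L$ the colimit in $\mathrm{Comod}\text{-}C$ and shows that $U$ preserves (hence creates) colimits. Item 2 follows: the tensor product in $\mathrm{Comod}\text{-}C$ is computed on underlying vector spaces, and in $\mathrm{Vect}_k$ the endofunctors $U(V) \otimes -$ and $- \otimes U(V)$ are cocontinuous as they admit right adjoints $\Hom_k(U(V), -)$.

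For the fundamental theorem, the key computation is that for $w \in W$ with $\delta_W(w) = \sum_{i=1}^n w_i \otimes c_i$ (the $c_i$ chosen linearly independent in $C$), the subspace $\mathrm{span}(w_1, \ldots, w_n)$ is a subcomodule of $W$ containing $w$: applying coassociativity $(\delta_W \otimes \mathrm{id}_C) \circ \delta_W = (\mathrm{id}_W \otimes \Delta) \circ \delta_W$ to $w$ and using linear independence of the $c_i$ identifies each $\delta_W(w_i)$ as an element of $\mathrm{span}(w_j) \otimes C$, while the counit axiom gives $w \in \mathrm{span}(w_i)$. Any finite subset of $W$ is therefore contained in a finite-dimensional subcomodule, and these form a filtered poset under inclusion whose colimit is $W$.

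For item 3, if $\dim V < \infty$ I would represent $\Hom^C(V, -)$ as the equalizer
\[ \Hom^C(V,-) = \mathrm{eq}\bigl( \Hom_k(V,-) \rightrightarrows \Hom_k(V, - \otimes C) \bigr) \]
with the two maps given by $f \mapsto (f \otimes \mathrm{id}_C) \circ \delta_V$ and $f \mapsto \delta_{(-)} \circ f$. Both outer functors are cocontinuous (via $\Hom_k(V, -) \cong V^* \otimes -$), and finite limits commute with filtered colimits in $\mathrm{Set}$, hence in $\mathrm{Vect}_k$, so $\Hom^C(V, -)$ preserves filtered colimits. Conversely, if $V$ is compact, write $V$ as the filtered colimit of its finite-dimensional subcomodules $(V_i)$ via inclusions; by Lemma \ref{lemmaFactoCompactObjects}, $\mathrm{id}_V$ factors through some inclusion $V_i \hookrightarrow V$, so $V$ is a retract of $V_i$ and hence finite-dimensional. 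Item 4 is now immediate: cocompleteness is item 1; the category of finite-dimensional $C$-comodules is essentially small (up to isomorphism the underlying space is some $k^n$, and coactions $k^n \to k^n \otimes C$ form a set); and the fundamental theorem furnishes each object as a filtered colimit of compact ones. The only place where the internal structure of $C$-comodules enters nontrivially is the fundamental theorem; everything else is formal.
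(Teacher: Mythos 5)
Your proposal is correct and follows essentially the same strategy as the paper: the substantive item 3 is handled in both cases by exhibiting $\Hom^C(V,-)$ as a finite limit of cocontinuous functors and invoking the commutation of finite limits with filtered colimits, and the converse is the identical retract argument via Lemma \ref{lemmaFactoCompactObjects} applied to $\mathrm{id}_V$. The only differences are cosmetic: you package the finite limit as a single basis-free equalizer $\Hom_k(V,-) \rightrightarrows \Hom_k(V,-\otimes C)$ where the paper uses a basis-indexed multiple coequalizer into $Y \otimes C$, you obtain cocompleteness by having the forgetful functor create colimits rather than by exhibiting coproducts and cokernels, and you prove the fundamental theorem on comodules that the paper simply cites from \cite[Th.\,5.1.1]{Mon}.
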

\begin{proof}
1. $\mathrm{Comod}\text{-}C$ has coproducts (\textit{i.e.} direct sums of any family of comodules indexed by any set) and cokernels (\textit{i.e.} quotients), hence it has small colimits.
\\2. These functors commute with direct sums indexed by any set and commute with cokernels, hence they commute with small colimits.
\\3. Let $V$ be a finite-dimensional $C$-comodule. Fix a basis $v_1, \ldots, v_n$ of $V$. For any $C$-comodule $Y$, we can describe $\Hom^C(V,Y)$ as a finite limit in $\mathrm{Vect}_k$ thanks to the following construction which describes it as a ``multiple coequalizer''. Let $\mathcal{J}$ be the following finite category
\[ \xymatrix{
& \lozenge \ar@<0.7ex>[dl]^{\beta_1} \ar@<-0.7ex>[dl]_{\alpha_1} \ar@<0.7ex>[dr]^{\beta_n} \ar@<-0.7ex>[dr]_{\alpha_n}  &\\
\square_1 & \ldots & \square_n
} \]
which has $n+1$ ``formal'' objects denoted by $\lozenge$, $\square_1, \ldots, \square_n$, has exactly two ``formal'' arrows $\alpha_i,\beta_i : \lozenge \to \square_i$ for each $i \in \{1, \ldots, n\}$ and we did not draw the identity morphisms. Consider the functor $D^Y : \mathcal{J} \to \mathrm{Vect}_k$ given by $D^Y(\lozenge) = \mathrm{Hom}_k(V,Y)$, $D^Y(\square_i) = Y \otimes C$ for all $i$ and
\begin{align*}
&D^Y(\alpha_i) : \Hom_k(V,Y) \to Y \otimes C, \quad f \mapsto f(v_i)_{[0]} \otimes f(v_i)_{[1]}\\
&D^Y(\beta_i) : \Hom_k(V,Y) \to Y \otimes C, \quad f \mapsto f\bigl( (v_i)_{[0]} \bigr) \otimes (v_i)_{[1]}
\end{align*}
Note that a $k$-linear map $f : V \to Y$ is in $\Hom^C(V,Y)$ if and only if $D^Y(\alpha_i)(f) = D^Y(\beta_i)(f)$ for all $i$. It follows that $\Hom^C(V,Y) = \mathrm{lim}_{J \in \mathcal{J}} \, D^Y(J)$, the universal cone $\bigl( e_J : \Hom^C(V,Y) \to D^Y(J) \bigr)_{J \in \mathcal{J}}$ being given by $e_{\lozenge} : \Hom^C(V,Y) \hookrightarrow \Hom_k(V,Y)$ the inclusion and $e_{\square_i} = \alpha_i = \beta_i : \Hom^C(V,Y) \to Y \otimes C$. Now let $W = \mathrm{colim}_{I \in \mathcal{I}} \, F(I)$ be a filtered colimit in $\mathrm{Comod}\text{-}C$ and consider the bifunctor $\mathcal{I} \otimes \mathcal{J} \to \mathrm{Vect}_k$ given by $(I,J) \mapsto D^{F(I)}(J)$. Since filtered colimits commute with finite limits in $\mathrm{Vect}_k$ (see e.g. \cite[\S IX.2]{MLCat}) we obtain 
\[ \underset{I \in \mathcal{I}}{\mathrm{colim}} \, \Hom^C(V, F(I)) \cong \underset{I \in \mathcal{I}}{\mathrm{colim}} \, \underset{J \in \mathcal{J}}{\mathrm{lim}} \, D^{F(I)}(J) \cong \underset{J \in \mathcal{J}}{\mathrm{lim}} \, \underset{I \in \mathcal{I}}{\mathrm{colim}} \, D^{F(I)}(J) \cong \underset{J \in \mathcal{J}}{\mathrm{lim}} \, D^W(J) \cong \Hom^C(V,W). \]
For the third isomorphism we used that for all $J \in \mathcal{J}$ the functor $\mathrm{Comod}\text{-}C \to \mathrm{Vect}_k$ given by $Y \mapsto D^Y(J)$ is cocontinuous. Indeed for $J = \lozenge$ it is the functor $Y \mapsto \Hom_k(V, Y)$ which is cocontinuous by Example \ref{exampleVectIsLFP} while for $J = \square_i$ it is the functor $Y \mapsto Y \otimes C$ which is also cocontinuous beacuse the tensor product commutes with cokernels and arbitrary direct sums in $\mathrm{Vect}_k$.
\\For the converse, recall that any comodule is the union of its finite-dimensional subcomodules \cite[Th. 5.1.1]{Mon}. It follows that for any $V \in \mathrm{Comod}\text{-}C$, if we let $\mathcal{S}_V$ be the category whose objects are finite-dimensional subcomodules of $V$ and whose morphisms are inclusions of subcomodules, we have $V = \underset{X \in \mathcal{S}_V}{\mathrm{colim}} \, X$. Let $K$ be a compact comodule. Then $\Hom^C(K,K) = \underset{X \in \mathcal{S}_K}{\mathrm{colim}} \, \Hom^C(K,X)$. It follows from Lemma \ref{lemmaFactoCompactObjects} that there exists $X \in \mathcal{S}_K$ and $g : K \to X$ such that $\mathrm{id}_K = \iota_X \circ g$, where $\iota_X : X \to K$ is the inclusion. As a result $K = \iota_X(\mathrm{im} \, g)$ is finite-dimensional.
\\4. This follows from items 1 and 3 and the fundamental theorem on comodules \cite[Th. 5.1.1]{Mon}.
\end{proof}

\indent Now let $\OO = (\OO,m,1,\Delta,\varepsilon,S)$ be a Hopf $k$-algebra with invertible antipode $S$, where $k$ is a field. The category $\mathrm{Comod}\text{-}\OO$ is monoidal: if $V$ and $W$ are right $\OO$-comodules we define
\begin{equation}\label{monoidalProductOfComodules}
(v \otimes w)_{[0]} \otimes (v \otimes w)_{[1]} = v_{[0]} \otimes w_{[0]} \otimes v_{[1]}w_{[1]}.
\end{equation}
If $V$ is a {\em finite-dimensional} $\OO$-comodule, then it has a left (resp. right) dual, which is the dual vector space $V^*$ with the coaction
\begin{equation*}
\textstyle f_{[0]} \otimes f_{[1]} = f\bigl( v_{i \, [0]} \bigr) \: v^i \otimes S\bigl(v_{i\,[1]}\bigr) \qquad (\text{resp. } f_{[0]} \otimes f_{[1]} = f\bigl( v_{i \, [0]} \bigr) \: v^i \otimes S^{-1}\bigl(v_{i\,[1]}\bigr)\:)
\end{equation*}
where $(v_i)$ is a basis of $V$ with dual basis $(v^i)$ and summation on $i$ is understood. In other words, $f_{[0]}(v)\,f_{[1]} = f(v_{[0]})\,S(v_{[1]})
$ (resp. $f_{[0]}(v)\,f_{[1]} = f(v_{[0]})\,S^{-1}(v_{[1]})$) for all $v \in V$.

\smallskip

\indent If $\OO$ is {\em coquasitriangular}, which means that it is endowed with a convolution invertible element $\mathcal{R} : \OO \otimes \OO \to k$
satisfying the usual axioms which can be found in \cite[\S 2.2]{Majid}, then the category $\mathrm{Comod}\text{-}\OO$ inherits a braiding:
\[ \foncIso{c_{X,Y}}{X \otimes Y}{Y \otimes X}{x \otimes y}{\mathcal{R}\bigl(x_{[1]} \otimes y_{[1]}\bigr) \, y_{[0]} \otimes x_{[0]}}. \]
Note that $c^{-1}_{X,Y}(y \otimes x) = \mathcal{R}\bigl(S(x_{[1]}) \otimes y_{[1]}\bigr) \, x_{[0]} \otimes y_{[0]}$ because $\mathcal{R}^{-1} = \mathcal{R} \circ (S \otimes \mathrm{id})$.
\begin{lemma}
If $\OO$ is a coquasitriangular Hopf algebra, then $\mathrm{Comod}\text{-}\OO$ satisfies the properties \eqref{assumptionsCategoryC}.
\end{lemma}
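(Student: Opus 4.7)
The proof will be a direct verification of the five bullet points of \eqref{assumptionsCategoryC} by assembling the facts already established in the excerpt. The plan is essentially bookkeeping: each of the required properties has either been established in Lemma \ref{propLFPComod} or described explicitly in the preceding paragraphs, so no new argument is needed beyond checking that every hypothesis is in place.

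First I would recall that $\mathrm{Comod}\text{-}\OO$ is monoidal with monoidal product given by \eqref{monoidalProductOfComodules} and unit object the trivial comodule $\boldsymbol{1} = k$, and that the coquasitriangular structure $\mathcal{R}$ endows it with the braiding $c_{X,Y}$ described just before the statement; by MacLane's coherence we may pass to a strict braided equivalent, which takes care of the strictness clause. Next, items 4, 2, and 3 of Lemma \ref{propLFPComod} directly give respectively that $\mathrm{Comod}\text{-}\OO$ is LFP, that the endofunctors $V \otimes -$ and $- \otimes V$ are cocontinuous for any $V$, and that the subcategory of compact objects is exactly that of finite-dimensional comodules.

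It remains to dispatch the two pointwise conditions. The unit $\boldsymbol{1} = k$ is one-dimensional, hence finite-dimensional, hence compact by item 3 of Lemma \ref{propLFPComod}. For the last bullet, a compact object $K$ is a finite-dimensional $\OO$-comodule again by item 3, and the paragraph above the lemma exhibits explicit left and right dual structures on $K^*$ using respectively $S$ and $S^{-1}$ of the antipode (which was assumed invertible in the setup of \S\ref{subsecComodH}); this provides the required rigidity.

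I expect no genuine obstacle here, since the statement is almost a summary of the preceding preparatory work. The only mildly substantive point to flag is that invertibility of the antipode is what provides \emph{right} duals (and not merely left duals), so the proof should make explicit that both $K^*$ and $^*K$ are realized as the underlying dual vector space with coactions twisted by $S$ and $S^{-1}$ respectively, as recalled just before the lemma.
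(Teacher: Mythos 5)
Your proof is correct and follows exactly the route the paper takes: the paper's own proof is the one-line remark that the claim is immediate from Lemma \ref{propLFPComod} together with the facts on duals of finite-dimensional comodules recalled just before the statement. Your write-up simply makes that bookkeeping explicit, including the correct observation that invertibility of the antipode is what supplies right duals.
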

\begin{proof}
Immediate from Lemma \ref{propLFPComod} and the facts recalled above on comodules over Hopf algebras.
\end{proof}

\indent We denote by $\mathrm{comod}\text{-}\OO$ the full subcategory of finite-dimensional comodules, \textit{i.e} the subcategory $\mathcal{C}_{\mathrm{cp}}$ of compact objects in $\mathcal{C} = \mathrm{Comod}\text{-}\OO$. The description of the coend \eqref{defCoendL} and of its structure \eqref{defStructureCoend} in this case is due to Majid, who called it the {\em transmutation} of $\OO$:

\begin{proposition}{\em \cite[Th.\,4.1]{majBrGr}}\label{descriptionCoendComodH}~~The coend $\mathscr{L} = \int^{X \in \mathrm{comod}\text{-}\OO} X^* \otimes X$ is the vector space $\OO$ endowed with the adjoint coaction defined by
\begin{equation}\label{defAdjointCoaction}
\mathrm{cad} : \mathscr{L} \to \mathscr{L} \otimes \OO, \quad \varphi \mapsto \varphi_{(2)} \otimes S(\varphi_{(1)})\varphi_{(3)}.
\end{equation}
The universal dinatural transformation is
\begin{equation}\label{univDinatTransfoTransmutation}
i_X : X^*  \otimes X \to \mathscr{L}, \quad f \otimes x \mapsto f(x_{[0]})x_{[1]}
\end{equation}
for any finite-dimensional $\OO$-comodule $X$. The product $m_{\mathscr{L}}$, which we denote by $\odot$, is given by
\begin{equation}\label{braidedProductCoend}
\forall \, \varphi,\psi \in \mathscr{L}, \quad \varphi \odot \psi = \mathcal{R}\bigl( S(\varphi_{(1)})\varphi_{(3)} \otimes S(\psi_{(1)}) \bigr) \, \varphi_{(2)}\psi_{(2)}.
\end{equation}
The unit $1_{\mathscr{L}}$, coproduct $\Delta_{\mathscr{L}}$ and counit $\varepsilon_{\mathscr{L}}$ are just $1_{\OO}$, $\Delta_{\OO}$ and $\varepsilon_{\OO}$. There is also an antipode $S_{\mathscr{L}}$ given by $S_{\mathscr{L}}(\varphi) = \mathcal{R}\bigl( S^2(\varphi_{(3)}) S(\varphi_{(1)}) \otimes \varphi_{(4)} \bigr) \, S(\varphi_{(2)})$.
\end{proposition}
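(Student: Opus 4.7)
The strategy is first to verify that $\OO$ with the adjoint coaction \eqref{defAdjointCoaction} and the transformation \eqref{univDinatTransfoTransmutation} realizes the coend $\int^{X \in \mathrm{comod}\text{-}\OO} X^* \otimes X$, and then to extract the structure morphisms from the defining diagrams \eqref{defStructureCoend}. I would begin by checking that $i_X$ is a morphism in $\mathrm{Comod}\text{-}\OO$: unfolding the adjoint coaction on $f(x_{[0]})x_{[1]}$ via iterated coaction gives $f(x_{[0]}) x_{[2]} \otimes S(x_{[1]}) x_{[3]}$, which coincides with the image under the tensor product coaction on $X^* \otimes X$ (the dual coaction being characterized by $f_{[0]}(y)\, f_{[1]} = f(y_{[0]})\,S(y_{[1]})$ from \S\ref{subsecComodH}). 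Dinaturality for a comodule map $g : X \to Y$ is then a short calculation using $g(x)_{[0]} \otimes g(x)_{[1]} = g(x_{[0]}) \otimes x_{[1]}$.

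The crucial step is universality, for which the tool is the fundamental theorem of comodules: every $\varphi \in \OO$ lies in a finite-dimensional subcomodule $Y \subset (\OO, \Delta)$. Given a dinatural family $j_X : X^* \otimes X \to Z$ in $\mathrm{Comod}\text{-}\OO$, I would define $u : \OO \to Z$ by $u(\varphi) := j_Y(\varepsilon|_Y \otimes \varphi)$; well-definedness follows from dinaturality of $j$ applied to inclusions $Y \hookrightarrow Y'$ of such subcomodules. To verify $u \circ i_X = j_X$ for an arbitrary $X \in \mathrm{comod}\text{-}\OO$ and $f \in X^*$, the key observation is that $\phi_f : X \to \OO$, $x \mapsto f(x_{[0]}) x_{[1]}$, is a comodule map into a finite-dimensional subcomodule $Y' := \phi_f(X) \subset \OO$ satisfying $\phi_f^*(\varepsilon|_{Y'}) = f$; dinaturality of $j$ for $\phi_f$ then yields $j_X(f \otimes x) = j_{Y'}(\varepsilon|_{Y'} \otimes \phi_f(x)) = u(\phi_f(x)) = u(i_X(f \otimes x))$. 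Uniqueness of $u$ is immediate because every element of $\OO$ is in the image of some $i_Y$, and compatibility of $u$ with coactions follows since both $i_Y$ and $j_Y$ are comodule maps.

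Having identified the coend, the structure formulas come from plugging \eqref{univDinatTransfoTransmutation} into the defining diagrams \eqref{defStructureCoend}. The unit $1_{\mathscr{L}} = i_{\boldsymbol{1}}(1 \otimes 1) = 1_{\OO}$ since the trivial comodule $\boldsymbol{1} = k$ carries the coaction $1 \mapsto 1 \otimes 1_{\OO}$, while $\varepsilon_{\mathscr{L}} = \varepsilon_{\OO}$ and $\Delta_{\mathscr{L}} = \Delta_{\OO}$ drop out by applying the corresponding defining diagrams to rank-one elements $f \otimes x$ and using counitality and coassociativity of the coaction on $X$. The main computation is the product: the braiding appearing in the defining diagram of $m_{\mathscr{L}}$ translates in $\mathrm{Comod}\text{-}\OO$ to the co-R-matrix $\mathcal{R}$, and evaluating on rank-one elements $i_X(f \otimes x) \otimes i_Y(g \otimes y)$ produces, after unpacking the dual coaction, precisely the formula \eqref{braidedProductCoend}. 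The antipode is then obtained by combining $\Delta_{\mathscr{L}}$ with the right duality morphisms $\widetilde{\mathrm{ev}}, \widetilde{\mathrm{coev}}$ in $\mathrm{Comod}\text{-}\OO$, which involve $S^{-1}$ as recorded in \S\ref{subsecComodH}.

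The main obstacle I anticipate is the bookkeeping in the product formula: the braiding hidden in the pictorial definition of $m_{\mathscr{L}}$ must be converted to an explicit application of $\mathcal{R}$, and one must then keep track of four intertwined Sweedler indices coming simultaneously from the dual coaction (which carries an $S$) and the iterated coaction on $X \otimes Y$. Once this calculation is carried out, the remaining structure morphisms reduce to analogous (and shorter) bookkeeping, and the dinatural/universality verifications are a clean application of the fundamental theorem of comodules.
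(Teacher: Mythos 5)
The paper offers no proof of this proposition at all: it is quoted verbatim from Majid's transmutation theorem \cite[Th.\,4.1]{majBrGr}, so there is nothing internal to compare your argument against. Your self-contained verification is sound in outline and is, in fact, the ``right'' proof in this setting: the check that $i_X$ intertwines the tensor coaction on $X^* \otimes X$ with $\mathrm{cad}$ is correct (your unfolding $f(x_{[0]})\,x_{[2]} \otimes S(x_{[1]})x_{[3]}$ matches both sides), dinaturality is immediate, and your universality argument via the fundamental theorem of comodules together with the comodule map $\phi_f : X \to \OO$, $x \mapsto f(x_{[0]})x_{[1]}$ with $\phi_f^*(\varepsilon|_{\phi_f(X)}) = f$ is exactly the device the paper itself deploys later (see \eqref{dinatSpecialMonogon} and the proof of Lemma \ref{lemmaAlekseevIso}), so you have independently reconstructed the key trick. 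Two small caveats. First, the product computation --- the only genuinely laborious part --- is described but not executed; that is acceptable for a sketch, but it is where all the content of \eqref{braidedProductCoend} lives, and the reader should be warned that the crossing in \eqref{defStructureCoend} acts on the $K$-strand and the $Q^*$-strand, which is what produces the particular combination $\mathcal{R}\bigl(S(\varphi_{(1)})\varphi_{(3)} \otimes S(\psi_{(1)})\bigr)$. Second, your description of the antipode is imprecise: $S_{\mathscr{L}}$ is not obtained by ``combining $\Delta_{\mathscr{L}}$ with the right duality morphisms''; the categorical antipode of the coend is defined on each component by dualizing the rank-one element, i.e.\ $S_{\mathscr{L}} \circ i_X = i_{X^*} \circ (\text{duality morphisms and braiding})$, with no coproduct involved --- alternatively one can simply verify that the stated formula is the convolution inverse of $\mathrm{id}_{\mathscr{L}}$. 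Since the antipode is never used elsewhere in the paper this is a cosmetic defect, not a gap.
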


\subsection{\texorpdfstring{$\mathscr{L}$}{Coend}-linear algebras and their bimodules for \texorpdfstring{$\mathrm{Comod}\text{-}\OO$}{categories of comodules}}\label{subsec:BimodHcomod}
\indent Recall the description of the coend $\mathscr{L}$ in Prop.\,\ref{descriptionCoendComodH}, and especially the fact that $\mathscr{L}$ is identified with $\OO$ as a coalgebra.

\smallskip

\indent A straightforward computation using \eqref{univDinatTransfoTransmutation} reveals that for any $V \in \mathrm{Comod}\text{-}\OO$, the half-braiding $\sigma_V : \mathscr{L} \otimes V \to V \otimes \mathscr{L}$ from \eqref{defHalfBraidingSigmaOnCoend} and its inverse are given by
\begin{align}
\begin{split}\label{halfBraidingSigmaComodH}
\sigma_V(\varphi \otimes v) &= \mathcal{R}\bigl(v_{[1]} \otimes \varphi_{(1)}\bigr) \, \mathcal{R}\bigl(\varphi_{(3)} \otimes v_{[2]}\bigr) \, v_{[0]} \otimes \varphi_{(2)}\\
\!\!\sigma_V^{-1}(v \otimes \varphi) &=\mathcal{R}\bigl( v_{[2]} \otimes S(\varphi_{(1)}) \bigr) \, \mathcal{R}^{-1}\bigl( \varphi_{(3)} \otimes v_{[1]} \bigr) \, \varphi_{(2)} \otimes v_{[0]}
\end{split}
\end{align}
for all $\varphi \in \mathscr{L}$ and $v \in V$. Hence, a {\em $\mathscr{L}$-linear algebra in $\mathrm{Comod}\text{-}\OO$} (Def. \ref{defLlinearAlgebra}) is a pair $\mathscr{A} = (A, \mathfrak{d})$ with:
\begin{itemize}
\item $A$ is both a unital associative $k$-algebra and a $\OO$-comodule, and we have
\begin{equation}\label{axiomComodAlg}
\forall \, x,y \in A, \quad (x y)_{[0]} \otimes (x y)_{[1]} = x_{[0]} y_{[0]} \otimes x_{[1]}y_{[1]} \quad \text{ and } \quad (1_A)_{[0]} \otimes (1_A)_{[1]} = 1_A \otimes 1_{\OO},
\end{equation}
\textit{i.e.} $A$ is a right {\em $\OO$-comodule-algebra} (which means algebra object in $\mathrm{Comod}\text{-}\OO$).
\item $\mathfrak{d} : \mathscr{L} \to A$ is a morphism of $\OO$-comodule-algebras which satisfies
\begin{equation}\label{LlinearAxiomComodH}
\forall \,\varphi \in \mathscr{L}, \:\: \forall \, a \in A, \quad \mathfrak{d}(\varphi)a = \mathcal{R}\bigl(a_{[1]} \otimes \varphi_{(1)}\bigr) \, \mathcal{R}\bigl(\varphi_{(3)} \otimes a_{[2]}\bigr) \, a_{[0]} \mathfrak{d}(\varphi_{(2)}).
\end{equation}
This can also be written as $\mathcal{R}^{-1}\bigl( \varphi_{(2)} \otimes a_{[1]} \bigr) \mathfrak{d}(\varphi_{(1)})a_{[0]} = \mathcal{R}\bigl(a_{[1]} \otimes \varphi_{(1)} \bigr) a_{[0]} \mathfrak{d}(\varphi_{(2)})$.
\end{itemize}
Explicitly, the requirement that $\mathfrak{d}$ is a morphism of $\OO$-comodule algebras means
\[ \forall \, \varphi,\psi \in \mathscr{L}, \quad \mathfrak{d}(\varphi)_{[0]} \otimes \mathfrak{d}(\varphi)_{[1]} = \mathfrak{d}(\varphi_{(2)}) \otimes S(\varphi_{(1)}) \varphi_{(3)} \quad \text{ and } \quad \mathfrak{d}(\varphi)\mathfrak{d}(\psi) = \mathfrak{d}(\varphi \odot \psi) \]
because of the $\OO$-coaction \eqref{defAdjointCoaction} on $\mathscr{L}$, and where $\odot$ is the product \eqref{braidedProductCoend} in $\mathscr{L}$.

\begin{remark}
Other point of views on $\mathscr{L}$-linear algebras for $\mathcal{C} = \mathrm{Comod}\text{-}\OO$ are given in \S\ref{sub:LlinQMM} (module-algebras endowed with a quantum moment map in the sense of \cite{Lu,VV}) and in Appendix \ref{appendixLlinearComodH} (algebras which have a Yetter--Drinfeld module structure).
\end{remark}

\indent Let us spell out the equivalence between half-braided algebras and $\mathscr{L}$-linear algebras (Prop.\,\ref{propLLinearAlgIntoHBAlg}) in the present case of $\mathcal{C} = \mathrm{Comod}\text{-}\OO$. The fastest way is to use Lemma \ref{lemmaIsoZCvsLModCForComodH}, with the left action $\smallblacksquare$ of $\mathscr{L}$ on $A$ given by $\varphi \smallblacksquare a = \mathfrak{d}(\varphi)a$. Thus, if $(A,t)$ is a half-braided algebra in $\mathrm{Comod}\text{-}\OO$\footnote{By Def.\,\ref{defHBAlgebra}, it means that $A$ is a right $\OO$-comodule-algebra and $t : A \otimes - \overset{\sim}{\implies} - \otimes A$ is a half-braiding in $\mathrm{Comod}\text{-}\OO$ which satisfies \eqref{axiomsHalfBraidedAlgebra}.} then we define
\[ \mathfrak{d} : \mathscr{L} \to A, \quad \varphi \mapsto (\varepsilon_{\OO} \otimes \mathrm{id}_A) \circ t_{\OO}(1 \otimes \varphi) \]
with  $\varepsilon_{\OO}$ the counit of $\OO$ and $\OO$ is viewed as a comodule over itself thanks to coproduct; then $(A,\mathfrak{d})$ is a $\mathscr{L}$-linear algebra. Conversely, if $(A,\mathfrak{d})$ is an $\mathscr{L}$-linear algebra, then for all $X \in \mathrm{Comod}\text{-}\OO$ we define
\begin{equation}\label{HBfromLlinComodH}
t_X : A \otimes X \to X \otimes A, \quad a \otimes x \mapsto \mathcal{R}^{-1}(x_{[2]} \otimes a_{[1]}) \, x_{[0]} \otimes \mathfrak{d}(x_{[1]}) a_{[0]}
\end{equation}
and then $(A,t)$ is a half-braided algebra.

\smallskip

\indent For $i=1,2$, let $\mathscr{A}_i = (A_i, \mathfrak{d}_i)$ be a $\mathscr{L}$-linear algebra. An {\em $(\mathscr{A}_2, \mathscr{A}_1)$-bimodule in $\mathrm{Comod}\text{-}\OO$} is a triple $\mathbf{B} = (B, \smallblacktriangleright, \smallblacktriangleleft)$ such that $B$ is a right $\OO$-comodule and the actions $\smallblacktriangleright : A_2 \otimes B \to B$, $\smallblacktriangleleft : B \otimes A_1 \to B$ commute and are morphisms of $\OO$-comodules. The bimodule $\mathbf{B}$ is {\em $\mathscr{L}$-compatible} (Def. \ref{defLcoherentBimodule}) if
\begin{equation}\label{LCoherenceForComodH}
\forall \,\varphi \in \mathscr{L}, \:\: \forall \, b \in B, \quad \mathfrak{d}_2(\varphi) \smallblacktriangleright b = \mathcal{R}\bigl(b_{[1]} \otimes \varphi_{(1)}\bigr) \, \mathcal{R}\bigl(\varphi_{(3)} \otimes b_{[2]}\bigr) \, b_{[0]} \smallblacktriangleleft \mathfrak{d}_1(\varphi_{(2)})
\end{equation}
which can also be written as $\mathcal{R}^{-1}\bigl( \varphi_{(2)} \otimes b_{[1]} \bigr) \mathfrak{d}_2(\varphi_{(1)}) \smallblacktriangleright b_{[0]} = \mathcal{R}\bigl(b_{[1]} \otimes \varphi_{(1)} \bigr) b_{[0]} \smallblacktriangleleft \mathfrak{d}_1(\varphi_{(2)})$.

\smallskip

\indent We are ready to describe the structure of the braided monoidal category $\mathrm{Bim}^{\mathscr{L}}_{\mathcal{C}}$ from Corollary \ref{cor:correspondenceLmodvshalfbraided} in the case $\mathcal{C} = \mathrm{Comod}\text{-}\OO$. This category is moreover balanced (\S\ref{sec:balanceBim}) if $\OO$ is coribbon (see Equation \eqref{eq:coribbon}).
\smallskip

\indent \textbullet ~ \textbf{Objects} are $\mathscr{L}$-linear algebras in  $\mathrm{Comod}\text{-}\OO$.

\smallskip

\indent \textbullet ~ \textbf{Morphisms} $\mathscr{A}_1 \to \mathscr{A}_2$ are $\mathscr{L}$-compatible $(\mathscr{A}_2, \mathscr{A}_1)$-bimodules in $\mathrm{Comod}\text{-}\OO$, considered up to isomorphism.

\smallskip

\indent \textbullet ~ \textbf{Composition of morphisms.} Recall that the composition of bimodules was defined in general in \eqref{defCompositionOfBimodules} as a coequalizer in $\mathcal{C}$. For $\mathcal{C} = \mathrm{Comod}\text{-}\OO$, coequalizers are usual quotients and we recover the tensor product of bimodules over the ``middle algebra''. More precisely, if $\mathbf{B}_1$ is a $(\mathscr{A}_2,\mathscr{A}_1)$-bimodule and  $\mathbf{B}_2$ is a $(\mathscr{A}_3,\mathscr{A}_2)$-bimodule in $\mathcal{C}$ then
\[ \mathbf{B}_2 \circ \mathbf{B}_1 = \mathbf{B}_2 \underset{\mathscr{A}_2}{\otimes} \mathbf{B}_1 = \frac{B_2 \otimes B_1}{\mathrm{span}_k\bigl\{ (w \smallblacktriangleleft a) \otimes v - w \otimes (a \smallblacktriangleright v) \, \big| \, v \in B_1, \: w \in B_2, \: a \in A_2 \bigr\}} \]
where $B_1$, $B_2$ and $A_2$ are the underlying $\OO$-comodules.

\smallskip

\indent \textbullet ~ \textbf{Monoidal product of objects.} Let $\mathscr{A}_1 = (A_1,\mathfrak{d}_1)$ and $\mathscr{A}_2 = (A_2,\mathfrak{d}_2)$ be $\mathscr{L}$-linear algebras. Then $\mathscr{A}_1 \,\widetilde{\otimes}\, \mathscr{A}_2$ is the comodule $A_1 \otimes A_2$ whose product and morphism $\mathfrak{d}_{1,2} : \mathscr{L} \to A_1 \otimes A_2$ are given by
\begin{align}
(x \otimes b)(a \otimes y) &= \mathcal{R}(b_{[1]} \otimes a_{[1]}) \,xa_{[0]} \otimes b_{[0]}y\label{brProdComodH}\\
\mathfrak{d}_{1,2}(\varphi) &= \mathfrak{d}_1(\varphi_{(1)}) \otimes \mathfrak{d}_2(\varphi_{(2)})\nonumber
\end{align}
for all $x \otimes b, a \otimes y \in A_1 \otimes A_2$ and $\varphi \in \mathscr{L}$.

\smallskip

\indent \textbullet ~ \textbf{Monoidal product of morphisms.} Let $\mathbf{B}_i$ be a $(\mathscr{A}'_i,\mathscr{A}_i)$-bimodule for $i=1,2$ with underlying comodules $B_i$, $A_i$, $A'_i$. Then the $(\mathscr{A}'_1 \,\widetilde{\otimes}\, \mathscr{A}'_2, \mathscr{A}_1 \,\widetilde{\otimes}\, \mathscr{A}_2)$-bimodule $\mathbf{B}_1 \,\widetilde{\otimes}\, \mathbf{B}_2$ is the comodule $B_1 \otimes B_2$ endowed with the actions
\begin{align*}
(a' \otimes b') \smallblacktriangleright (v \otimes w) &= \mathcal{R}(b'_{[1]} \otimes v_{[1]}) \, (a' \smallblacktriangleright v_{[0]}) \otimes (b'_{[0]} \smallblacktriangleright w),\\
(v \otimes w) \smallblacktriangleleft (a \otimes b) &= \mathcal{R}(w_{[1]} \otimes a_{[1]}) \, (v \smallblacktriangleleft a_{[0]}) \otimes (w_{[0]} \smallblacktriangleleft b)
\end{align*}
for all $a \otimes b \in A_1 \otimes A_2$, $a' \otimes b' \in A'_1 \otimes A'_2$ and $v \otimes w \in B_1 \otimes B_2$.

\smallskip

\indent \textbullet ~ \textbf{Braiding.} Let $\mathscr{A}_1 = (A_1,\mathfrak{d}_1)$, $\mathscr{A}_2 = (A_2,\mathfrak{d}_2)$ be $\mathscr{L}$-linear algebras in $\mathrm{Comod}\text{-}\OO$. Thanks to \eqref{HBfromLlinComodH}, we see that the isomorphism $T_{\mathscr{A}_1,\mathscr{A}_2}$ defined in \eqref{isoBrZCforHBAlg} for general half-braided algebras is

\begin{equation}\label{eq:twist} \foncIso{T_{\mathscr{A}_1,\mathscr{A}_2}}{\mathscr{A}_1 \,\widetilde{\otimes}\, \mathscr{A}_2}{\mathscr{A}_2 \,\widetilde{\otimes}\, \mathscr{A}_1}{a \otimes b}{\mathcal{R}^{-1}(b_{[2]} \otimes a_{[1]}) \, b_{[0]} \otimes \mathfrak{d}_1(b_{[1]}) a_{[0]}} 
\end{equation}
Other expressions are
\[ T_{\mathscr{A}_1,\mathscr{A}_2}(a \otimes b) \overset{\eqref{LlinearAxiomComodH}}{=} \mathcal{R}\bigl(a_{[1]} \otimes b_{[1]} \bigr) b_{[0]} \otimes a_{[0]} \mathfrak{d}_1(b_{[2]}) \overset{\eqref{brProdComodH}}{=} (1 \otimes a) \bigl( b_{[0]} \otimes \mathfrak{d}_1(b_{[1]})  \bigr). \]
According to \S\ref{sub:braiding}, the braiding $\mathcal{B}_{\mathscr{A}_1, \mathscr{A}_2} \in \mathrm{Hom}_{\mathrm{Bim}^{\mathscr{L}}_{\mathcal{C}}}\bigl( \mathscr{A}_1 \,\widetilde{\otimes}\, \mathscr{A}_2, \mathscr{A}_2 \,\widetilde{\otimes}\, \mathscr{A}_1 \bigr)$ is $A_2 \otimes A_1$ as a $\OO$-comodule. The left action of $b \otimes a \in \mathscr{A}_2 \,\widetilde{\otimes}\, \mathscr{A}_1$ on $y \otimes x \in A_2 \otimes A_1$ is the multiplication \eqref{brProdComodH} in $\mathscr{A}_2 \,\widetilde{\otimes}\, \mathscr{A}_1$:
\[ (b \otimes a) \smallblacktriangleright (y \otimes x) = (b \otimes a)(y \otimes x). \]
The right action of $a \otimes b \in \mathscr{A}_1 \,\widetilde{\otimes}\, \mathscr{A}_2$ on $y \otimes x \in A_2 \otimes A_1$ is
\[ (x \otimes y) \smallblacktriangleleft (a \otimes b) = (x \otimes y)\,T_{\mathscr{A}_1,\mathscr{A}_2}(a \otimes b) \]
where again we use the multiplication \eqref{brProdComodH} in $\mathscr{A}_2 \,\widetilde{\otimes}\, \mathscr{A}_1$. Note that the vector $1_{\mathscr{A}_2} \otimes 1_{\mathscr{A}_1}$ is free for the left action, and thus the bimodule structure is fully summarized by the formula
\begin{equation}\label{eq:braidingtwist} (1_{\mathscr{A}_2} \otimes 1_{\mathscr{A}_1}) \smallblacktriangleleft (a \otimes b) = \mathcal{R}\bigl(a_{[1]} \otimes b_{[1]} \bigr) \, \bigl( b_{[0]} \otimes a_{[0]} \mathfrak{d}_1(b_{[2]}) \bigr) \smallblacktriangleright (1_{\mathscr{A}_2} \otimes 1_{\mathscr{A}_1}). 
\end{equation}

\smallskip

\indent \textbullet ~ \textbf{Balance.} Assume that $\OO$ is moreover {\em coribbon}, \textit{i.e.} there is a linear form $\mathsf{v} : \OO \to k$ which is convolution-invertible\footnote{Meaning that there exists $\mathsf{v}^{-1} : \OO \to k$ such that $\mathsf{v}(\varphi_{(1)})\,\mathsf{v}^{-1}(\varphi_{(2)}) = \mathsf{v}(\varphi_{(2)})\,\mathsf{v}^{-1}(\varphi_{(1)}) = \varepsilon_{\OO}(\varphi)$ for all $\varphi \in \OO$.} and satisfies
\begin{equation}\label{eq:coribbon}
 \begin{array}{c}
\mathsf{v}(\varphi_{(1)})\varphi_{(2)} = \varphi_{(1)}\mathsf{v}(\varphi_{(2)}), \qquad \mathsf{v} \circ S = \mathsf{v},\\[.5em]
\mathsf{v}(\varphi\psi) = \mathcal{R}^{-1}\bigl( \varphi_{(3)} \otimes \psi_{(3)} \bigr) \mathcal{R}^{-1}\bigl( \psi_{(2)} \otimes \varphi_{(2)} \bigr) \mathsf{v}(\varphi_{(1)}) \, \mathsf{v}(\psi_{(1)})
\end{array} 
\end{equation}
for all $\varphi,\psi \in \OO$. Then $\mathrm{Comod}\text{-}\OO$ is cp-ribbon in the sense of \S\ref{sec:balanceBim}, for the twist $\theta$ given by
\[ \forall \, X \in \mathrm{Comod}\text{-}\OO, \:\: \forall \, x \in X, \quad \theta_X(x) = x_ {[0]} \, \mathsf{v}^{-1}(x_{[1]}). \]
Let $\mathscr{A} = (A, \mathfrak{d})$ be a $\mathscr{L}$-linear algebra. We easily compute the automorphism $\mathrm{bal}_{\mathscr{A}} : \mathscr{A} \to \mathscr{A}$ from \eqref{defBalOnHBAlg} thanks to the canonical half-braiding on $\mathscr{A}$ given in \eqref{HBfromLlinComodH}:
\begin{align*}
\mathrm{bal}_{\mathscr{A}}(a) &= m \circ t_A\bigl( 1_A \otimes a_{[0]} \bigr) \, \mathsf{v}^{-1}(a_{[1]})\\
 &= \mathcal{R}^{-1}\bigl( a_{[2]} \otimes (1_A)_{[1]} \bigr) \, a_{[0]} \mathfrak{d}(a_{[1]}) \, (1_A)_{[0]} \, \mathsf{v}^{-1}(a_{[3]}) = a_{[0]} \mathfrak{d}(a_{[1]}) \, \mathsf{v}^{-1}(a_{[2]})
\end{align*}
for all $a \in \mathscr{A}$. As a result, the $(\mathscr{A}, \mathscr{A})$-bimodule $\mathrm{BAL}_{\mathscr{A}}$ defined in general in \eqref{defBALbimod} is the $\OO$-comodule $A$ endowed with the following actions:
\[ \forall \, a \in \mathscr{A}, \:\: \forall \, x \in A, \quad a \smallblacktriangleright x = ax, \quad x \smallblacktriangleleft a = xa_{[0]} \mathfrak{d}(a_{[1]}) \, \mathsf{v}^{-1}(a_{[2]}). \]
Note that the vector $1_{\mathscr{A}}$ is free for the left action, and thus the bimodule structure is fully summarized by the formula
\begin{equation}\label{eq:balanceBimodHComod}
1_{\mathscr{A}} \smallblacktriangleleft a = \mathsf{v}^{-1}(a_{[2]})\, a_{[0]} \mathfrak{d}(a_{[1]}) \smallblacktriangleright 1_{\mathscr{A}}.
\end{equation}

\subsection{Right comodules vs. left modules}\label{sub:ComodVsMod}
We have seen in \S\ref{subsecComodH} that the category $\mathcal{C} = \mathrm{Comod}\text{-}\OO$ is well-behaved: it is LFP, compact objects are finite-dimensional comodules and hence it is cp-rigid. In contrast, the category of left modules over a Hopf algebra does not have such good categorical properties. This is why we have decided to work with comodules instead of modules. However the reader is probably more familiar with modules. In this short subsection we clarify the relations between the two settings, assuming that $\OO$ is dual to a given Hopf algebra.

\indent So, let $\mathcal{U}$ be a Hopf algebra over the field $k$. For the Hopf algebra $\mathcal{O}$, we take here the {\em restricted dual} (a.k.a. {\em finite dual}) of $\UU$, which can be defined as the subspace of $\mathcal{U}^*$ generated by the matrix coefficients of finite-dimensional $\mathcal{U}$-modules. It is a Hopf algebra whose structure morphisms are dual to those of $\mathcal{U}$ \cite[\S 9.1]{Mon}:
\begin{equation}\label{usualProductCoproductOnRestrictedDual}
\forall \, \varphi, \psi \in \OO, \:\: \forall \, x,y \in \mathcal{U}, \quad (\varphi \psi)(x) = \varphi(x_{(1)})\,\psi(x_{(2)}), \quad \langle \Delta_{\OO}(\varphi), x \otimes y \rangle = \varphi(xy).
\end{equation}
If $\mathcal{U} = U_q(\mathfrak{g})$ one usually takes for $\mathcal{O}$ the subspace spanned by matrix coefficients of type $1$ modules (instead of all finite-dimensional modules), which is denoted by $\mathcal{O}_q(G)$ and called quantized algebra of functions on $G$.

\smallskip

\indent Every right $\OO$-comodule $V$ is automatically a left $\UU$-module as follows:
\begin{equation}\label{moduleFromComodule}
\forall \, h \in \UU, \:\: \forall \, v \in V, \quad h \cdot v = v_{[0]} \, v_{[1]}(h).
\end{equation}
Recall that a $\UU$-module is called {\em locally finite} if the $\UU$-orbit of each vector is finite-dimensional. We denote by $\UU\text{-}\mathrm{Mod}^{\mathrm{lf}}$ the full subcategory of such modules; it is actually a monoidal subcategory.
\begin{lemma}\label{corrComodMod}
The construction \eqref{moduleFromComodule} defines an isomorphism of monoidal categories
\[ F : \mathrm{Comod}\text{-}\mathcal{O} \,\overset{\sim}{\longrightarrow}\, \mathcal{U}\text{-}\mathrm{Mod}^{\mathrm{lf}}. \]
In particular it restricts to an isomorphism $\mathrm{comod}\text{-}\mathcal{O} \,\overset{\sim}{\longrightarrow}\, \mathcal{U}\text{-}\mathrm{mod}$ between full subcategories of finite-dimensional objects.
\end{lemma}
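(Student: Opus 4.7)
The plan is to construct an explicit inverse $G: \mathcal{U}\text{-}\mathrm{Mod}^{\mathrm{lf}} \to \mathrm{Comod}\text{-}\mathcal{O}$ using matrix coefficients, verify that $F$ and $G$ are mutually inverse functors, and then check compatibility of $F$ with the monoidal product. All the verifications reduce to the duality relations \eqref{usualProductCoproductOnRestrictedDual}, the coassociativity and counitality of the coaction, and standard properties of matrix coefficients.

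First I would check that \eqref{moduleFromComodule} indeed defines a left $\mathcal{U}$-action on any $V \in \mathrm{Comod}\text{-}\mathcal{O}$. Unitality follows from $v_{[0]}\,v_{[1]}(1) = v_{[0]}\,\varepsilon_{\mathcal{O}}(v_{[1]}) = v$ (counit of the coaction, using that the pairing sends $1_{\mathcal{U}}$ to $\varepsilon_{\mathcal{O}}$), while associativity uses coassociativity: by \eqref{usualProductCoproductOnRestrictedDual} one has $v_{[1]}(hk) = v_{[1]}(h)\,v_{[2]}(k)$ after unpacking $\Delta_{\mathcal{O}}(v_{[1]}) = v_{[1]} \otimes v_{[2]}$ via iterated coaction, so $(hk)\cdot v = h\cdot(k \cdot v)$. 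Local finiteness is automatic: if $\delta_V(v) = \sum_{i=1}^n v_{[0]}^{(i)} \otimes v_{[1]}^{(i)}$ is the finite sum giving the coaction of $v$, then $\mathcal{U}\cdot v \subseteq \mathrm{span}_k\{v_{[0]}^{(i)}\}$. Functoriality of $F$ is a one-line verification using the defining property of a comodule morphism.

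For the inverse, given $(V,\cdot) \in \mathcal{U}\text{-}\mathrm{Mod}^{\mathrm{lf}}$, pick $v \in V$ and let $W_v = \mathcal{U}\cdot v$, which is a finite-dimensional submodule. Choosing a basis $\{w_1,\dots,w_m\}$ of $W_v$ with matrix coefficients $h\cdot w_j = \sum_i w_i\,a_{ij}(h)$, the functionals $a_{ij}$ lie in $\mathcal{O}$ by the very definition of the restricted dual, and writing $v = \sum_j \lambda_j w_j$ we set $\delta_V(v) = \sum_{i,j}\lambda_j\,w_i \otimes a_{ij}$. This is independent of the basis (a standard argument: two such expressions agree when paired with any $h \in \mathcal{U}$, and $\mathcal{O}$ separates points of $\mathcal{U}$ on any finite-dimensional module). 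The coaction axioms follow immediately from the module axioms by the duality \eqref{usualProductCoproductOnRestrictedDual}, and the two constructions are mutually inverse by construction: evaluating the coaction against $h$ gives back the original action, and conversely, recovering matrix coefficients from the action defined by a coaction returns the original coaction.

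Monoidality of $F$ amounts to the identity $F(V \otimes W) = F(V) \otimes F(W)$ as $\mathcal{U}$-modules, where on the right the action is $h \cdot (v \otimes w) = (h_{(1)}\cdot v) \otimes (h_{(2)}\cdot w)$. Unfolding the definitions,
\[
h \cdot (v \otimes w) = (v \otimes w)_{[0]}\,(v \otimes w)_{[1]}(h) = v_{[0]} \otimes w_{[0]} \cdot (v_{[1]}w_{[1]})(h),
\]
and by \eqref{usualProductCoproductOnRestrictedDual} the scalar equals $v_{[1]}(h_{(1)})\,w_{[1]}(h_{(2)})$, giving exactly $(h_{(1)}\cdot v) \otimes (h_{(2)}\cdot w)$. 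Compatibility with unit objects is trivial since the coaction on $k$ is $1 \mapsto 1 \otimes 1_{\mathcal{O}}$, yielding the trivial $\mathcal{U}$-action via $\varepsilon_{\mathcal{O}}$. The restriction to finite-dimensional objects is automatic since on either side ``finite-dimensional'' refers to the underlying $k$-vector space, which $F$ preserves. The main subtlety, and likely the only place where care is needed, is verifying that the inverse construction $G$ is well-defined independently of the choice of finite-dimensional submodule and basis used to extract matrix coefficients; this is handled by the separation property of $\mathcal{O}$ noted above.
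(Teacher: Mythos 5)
Your proposal is correct and follows essentially the same route as the paper's proof: define the inverse via matrix coefficients $v^i(?\cdot v)$ of the finite-dimensional submodule $\mathcal{U}\cdot v$, check basis-independence, and verify the coaction axioms and strict monoidality by duality. The only step you leave implicit that the paper spells out is that the inverse construction is functorial, i.e.\ that a $\mathcal{U}$-linear map between locally finite modules is automatically a morphism of the resulting $\mathcal{O}$-comodules (which follows from $v^i \circ f$ expanding in the dual basis of the target).
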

\begin{proof}
First it is readily seen that a $k$-linear map which is a morphism of $\OO$-comodules is a morphism of $\UU$-modules for \eqref{moduleFromComodule}. Hence we can define $F(f) = f$ on morphisms and this gives a functor $\mathrm{Comod}\text{-}\mathcal{O} \to \mathcal{U}\text{-}\mathrm{Mod}$. Because of the fundamental theorem on comodules \cite[Th.\,5.1.1]{Mon}, this functor actually takes values in $\mathcal{U}\text{-}\mathrm{Mod}^{\mathrm{lf}}$ and we denote it by $F$. It is readily seen to be strict monoidal. We have to construct its inverse $\overline{F}$.  So let $V$ be a locally finite $\UU$-module with basis $(v_i)$ and dual basis $(v^i)$. For any $v \in V$ define
\[ \textstyle \delta(v) = \sum_i v_i \otimes v^i(? \cdot v) \]
where $v^i(? \cdot v)$ is the linear form $\UU \to k$ defined by $h \mapsto v^i(h \cdot v)$. The sum is finite because of local finiteness. Moreover $v^i(? \cdot v) \in \OO$ by definition (it is a matrix coefficient of the finite-dimensional module $\UU\cdot v$). Finally, $\delta(v)$ is independent of the choice of the basis (repeated index in covariant and contravariant positions). Hence we get a $k$-linear map $\delta : V \to V \otimes \OO$. It is easy to check that $\delta$ is a coaction, using that the coproduct in $\mathcal{O}$ is given by $f(?\cdot v) \mapsto \sum_i f(? \cdot v_i) \otimes v^i(? \cdot v)$; note again that the sum is finite by local finiteness. We have also to check that $\overline{F}$ is a functor, \textit{i.e.} a $k$-linear map $f : V \to V'$ which is a morphism of $\UU$-modules is a morphism of $\OO$-comodules $(V,\delta) \to (V',\delta')$; but this is because $f(v_i) \otimes v^i(? \cdot v) = v_i \otimes (v^i \circ f)(?\cdot v) = v_i \otimes v^i\bigl(? \cdot f(v)\bigr)$ by $\UU$-linearity. Finally, it is readily seen that $F$ and $\overline{F}$ are inverse each other. The last claim in the Lemma is obvious.
\end{proof}

\noindent In particular, when $\mathcal{U}$ is finite dimensional then every module is locally finite and hence
\[ \dim(\mathcal{U}) < \infty \:\: \implies \:\: \quad \mathrm{Comod}\text{-}\mathcal{O} \,\cong\, \mathcal{U}\text{-}\mathrm{Mod}. \]

\indent If $\UU$ is quasitriangular (resp. ribbon), with $R$-matrix $R \in \UU \otimes \UU$ (resp. ribbon element $\nu \in \UU$), then $\OO$ is coquasitriangular (resp. coribbon) with the co-$R$-matrix $\mathcal{R} : \OO \otimes \OO \to k$ (resp. coribbon element $\mathsf{v} : \OO \to k$) given by
\begin{equation}\label{coRmatrixFromRmatrix}
\forall \, \varphi, \psi \in \OO, \quad \mathcal{R}(\varphi \otimes \psi) = (\varphi \otimes \psi)(R) \quad \bigl(\text{resp.}\:\: \mathsf{v}(\varphi) = \varphi(\nu)\bigr).
\end{equation}
With these choices of coquasitriangular (resp. corribon) structure, the isomorphism in Lemma \ref{corrComodMod} becomes a braided (resp. ribbon) functor. Indeed, recall that the braiding and twist in $\UU\text{-}\mathrm{Mod}$ are defined by $c_{X,Y}(x \otimes y) = R''_i \cdot y \otimes R'_i \cdot x$ and $\theta_X(x) = \nu^{-1} \cdot x$, where we write $R = R'_i \otimes R''_i$ with implicit summation on $i$.

\smallskip

\indent Assume that $(\UU,R)$ is quasitriangular with  $R = R'_i \otimes R''_i$. Introduce the left and right coregular actions $\triangleright$, $\triangleleft$ of $\UU$ on $\OO$ defined by
\begin{equation}\label{coregActions}
\forall \, h,x \in \UU, \:\: \forall \, \varphi \in \OO, \quad (h \triangleright \varphi)(x) = \varphi(xh), \quad (\varphi \triangleleft h)(x) = \varphi(hx).
\end{equation}
Recall from Proposition \ref{descriptionCoendComodH} that $\mathscr{L} = \int^{X \in \mathrm{comod}\text{-}\OO} X^* \otimes X$ is $\OO$ as a vector space. Through formula \eqref{moduleFromComodule}, the $\UU$-module structure on $\mathscr{L}$ obtained from the $\OO$-comodule structure \eqref{defAdjointCoaction} is the coadjoint action:
\begin{equation}\label{coadActionOnCoend}
\forall \, h \in \UU, \:\: \forall \, \varphi \in \mathscr{L}, \quad \mathrm{coad}(h)(\varphi) = h_{(2)} \triangleright \varphi \triangleleft S(h_{(1)}). 
\end{equation}
If the co-$R$-matrix $\mathcal{R}$ on $\OO$ is dual to $R$ as in \eqref{coRmatrixFromRmatrix}, the product in $\mathscr{L}$ can be written as
\begin{equation}\label{productCoendMod}
\forall \, \varphi, \psi \in \mathscr{L}, \quad \varphi \odot \psi = \bigl(R'_i \triangleright \varphi \triangleleft R'_j\bigr) \, \bigl(\psi \triangleleft S(R''_i)R''_j\bigr)
\end{equation}
and the coproduct of $\mathscr{L}$ is simply $\Delta_{\mathscr{L}} = \Delta_{\OO}$. By Lemma \ref{corrComodMod} any $V \in \UU\text{-}\mathrm{mod}$ (finite-dimensional) is the same thing as an $\OO$-comodule, and through this identification the universal dinatural transformation $i$ of $\mathscr{L}$ is given by matrix coefficients: 
\[ i_V : V^* \otimes V \to \mathscr{L}, \quad f \otimes v \mapsto f(? \cdot v) \in \OO. \]
In this way $\mathscr{L}$ is identified with the more familiar coend $\int^{X \in \UU\text{-}\mathrm{mod}} X^* \otimes X \in \UU\text{-}\mathrm{Mod}^{\mathrm{lf}}$ of the category of finite-dimensional $\UU$-modules, as appearing e.g. in \cite[\S 3.3]{lyuMCG}.

\subsection{Module-algebras and quantum moment maps}\label{sub:LlinQMM}
Here we explain the relation between $\mathscr{L}$-linear algebras in $\mathrm{Comod}\text{-}\OO$ (see \eqref{axiomComodAlg}--\eqref{LlinearAxiomComodH}) and module-algebras endowed with a so-called quantum moment map. The notion of $\mathscr{L}$-compatible bimodule is also rephrased in terms of quantum moment maps.

\smallskip

\indent As in the previous section, we assume that $\mathcal{O}$ arises as the restricted dual of a quasitriangular Hopf algebra $(\UU,R)$. Recall that a {\em $\UU$-module-algebra} is a left $\UU$-module $A$ endowed with an associative product with unit $1_A$ such that
\[ \forall \, h \in \UU, \:\: \forall \, a,b \in A, \qquad h\cdot(ab) = (h_{(1)}\cdot a)(h_{(2)} \cdot b) \quad \text{and}\quad h \cdot 1_A = \varepsilon_{\UU}(h)1_A. \]
Through the correspondence in Lemma \ref{corrComodMod}, $\OO$-comodule-algebras are equivalent to $\UU$-module-algebras which are in $\UU\text{-}\mathrm{Mod}^{\mathrm{lf}}$. In particular, when $\UU$ is finite-dimensional, $\OO$-comodule-algebras and $\UU$-module-algebras are the same thing.

\smallskip

\indent Item 1 in the following definition is taken from \cite[Def.\,1.2]{Lu} and \cite[\S 1.5]{VV}.
\begin{definition} 1. If $A$ is an $\UU$-module-algebra, a quantum moment map (QMM) for $A$ is an algebra morphism $\mu : \UU \to A$ such that
\begin{equation}\label{axiomeQMM}
\forall \, h \in \UU, \:\: \forall \,a \in A, \quad \mu(h)a = (h_{(1)} \cdot a)\mu(h_{(2)}).
\end{equation}
2. Let $(A_1, \mu_1)$, $(A_2,\mu_2)$ be $\UU$-module-algebras endowed with QMMs $\mu_1$, $\mu_2$ and $\mathbf{B} = (B, \smallblacktriangleright, \smallblacktriangleleft)$ be a $(A_2, A_1)$-bimodule in $\UU\text{-}\mathrm{Mod}$. We say that $\mathbf{B}$ is QMM-compatible if 
\begin{equation}\label{QMMcoherence}
\forall \, h \in \UU, \:\: \forall \,b \in B, \quad \mu_2(h) \smallblacktriangleright b = (h_{(1)} \cdot b)\smallblacktriangleleft \mu_1(h_{(2)}).
\end{equation}
\end{definition}
\noindent The condition \eqref{QMMcoherence} can equivalently be expressed as
\begin{equation}\label{axiomeQMM2}
\forall \, h \in \UU, \:\: \forall \,b \in B, \quad h \cdot b = \mu_2(h_{(1)}) \smallblacktriangleright b \smallblacktriangleleft \mu_1\bigl( S(h_{(2)}) \bigr)
\end{equation}
which means that the $\UU$-module structure on $B$ is entirely determined by $\mu_1, \mu_2$ and the $(A_2,A_1)$-bimodule structure. This applies in particular to $A$, viewed as a bimodule over itself. It follows that $\mu$ is $\UU$-linear when $\UU$ is endowed with the adjoint action:
\[ \forall \, h,x \in \UU, \quad \mu\bigl( h_{(1)}x S(h_{(2)}) \bigr) = h \cdot \mu(x). \]

\smallskip

\indent Recall the description \eqref{coadActionOnCoend}--\eqref{productCoendMod} of the coend $\mathscr{L}$ when it is viewed in $\UU\text{-}\mathrm{Mod}$. The notion of {\em $\mathscr{L}$-linear algebra} makes sense in $\UU\text{-}\mathrm{mod}$: it is a pair $(A,\mathfrak{d})$ where $A$ is a $\UU$-module-algebra and $\mathfrak{d} : \mathscr{L} \to A$ is a morphism of $\UU$-module-algebras such that
\begin{equation}\label{QMMGJSforComodAlg}
\forall \, \varphi \in \mathscr{L}, \:\: \forall \, a \in A, \quad \mathfrak{d}(\varphi)\,a = (R'_iR''_j \cdot a)\,\mathfrak{d}\bigl( R'_j \triangleright \varphi \triangleleft R''_i \bigr)
\end{equation}
with the coregular actions $\triangleright, \triangleleft$ of $\UU$ on $\OO$ as defined in \eqref{coregActions}. This last condition is just obtained from \eqref{LlinearAxiomComodH} thanks to \eqref{coRmatrixFromRmatrix}.
In particular the $\UU$-linearity of $\mathfrak{d}$ means that
\begin{equation}\label{dIsULinear}
\forall \, h \in \UU, \:\: \forall \, \varphi \in \mathscr{L}, \quad h \cdot \mathfrak{d}(\varphi) = \mathfrak{d}\bigl( h_{(2)} \triangleright \varphi \triangleleft S(h_{(1)}) \bigr)
\end{equation}
by definition of the coadjoint action of $\UU$ on $\mathscr{L}$ in \eqref{coadActionOnCoend}.

\indent If $(A_1,\mathfrak{d}_1)$ and $(A_2,\mathfrak{d}_2)$ are $\mathscr{L}$-linear algebras in $\UU\text{-}\mathrm{Mod}$ and $\mathbf{B} = (B, \smallblacktriangleright, \smallblacktriangleleft)$ is an $(A_2,A_1)$-bimodule in $\UU\text{-}\mathrm{Mod}$, the notion of {\em $\mathscr{L}$-compatibility} makes sense in $\UU\text{-}\mathrm{Mod}$. Using \eqref{LCoherenceForComodH} and \eqref{coRmatrixFromRmatrix}, it reads
\begin{equation}\label{LcoherenceUmod}
\forall \, \varphi \in \mathscr{L}, \:\: \forall \, b \in B, \quad \mathfrak{d}_2(\varphi) \smallblacktriangleright b = (R'_iR''_j \cdot b)\smallblacktriangleleft \mathfrak{d}_1\bigl( R'_j \triangleright \varphi \triangleleft R''_i \bigr).
\end{equation}

\indent Given an $\UU$-module-algebra $A$, our goal is to relate quantum moment maps for $A$ with $\mathscr{L}$-linear structures on $A$ and $\mathscr{L}$-compatible bimodules with QMM-compatible bimodules. This will use the following map, known as {\em Drinfeld map}:
\begin{equation}\label{DrinfeldMap}
\Phi : \mathscr{L} \to \UU, \quad \varphi \mapsto (\varphi \otimes \mathrm{id}_{\UU})(R^{fl}R)
\end{equation}
where $R^{fl} = R''_i \otimes R'_i$ is the flip of $R$. The $R$-matrix property $R\Delta = \Delta^{\mathrm{op}}R$ easily implies that $\Phi$ is $\UU$-linear when $\mathscr{L}$ is endowed with the coadjoint action \eqref{coadActionOnCoend} and $\UU$ is endowed with the adjoint action:
\begin{equation}\label{equivarianceDrinfeld}
\forall \, h \in \UU, \:\: \forall \, \varphi \in \mathscr{L}, \quad h_{(1)}\Phi(\varphi)S(h_{(2)}) = \Phi\bigl( h_{(2)} \triangleright \varphi \triangleleft S(h_{(1)}) \bigr).
\end{equation}
A computation using the Yang--Baxter equation for $R$ reveals that the linear map $\Phi$ is a morphism of algebras. If moreover we define
\[ \widetilde{\Delta} : \UU \to \UU \otimes \UU, \quad h \mapsto  h_{(1)}R''_iS(R''_j) \otimes R'_j h_{(2)} R'_i \]
then another straightforward computation based on the Yang--Baxter equation proves that
\begin{equation}\label{PhiMorphismBialgebras}
(\Phi \otimes \Phi) \circ \Delta_{\mathscr{L}} = \widetilde{\Delta} \circ \Phi.
\end{equation}

\begin{remark}
The structures $(\mathscr{L},m_{\mathscr{L}}, \Delta_{\mathscr{L}})$ and $(\UU,m_{\UU},\widetilde{\Delta})$ are not bialgebras in the usual sense (\textit{i.e.} in Vect) but in $\mathcal{C} = \UU\text{-}\mathrm{Mod}$. It means that $\Delta_{\mathscr{L}} : \mathscr{L} \to \mathscr{L} \otimes \mathscr{L}$ and $\widetilde{\Delta} : \UU \to \UU \otimes \UU$ are coassociative but are morphisms of algebras when the target is endowed with the braided product structure \eqref{defBraidedTensorProductOfAlgebras} in $\UU\text{-}\mathrm{Mod}$. The map $\Phi$ is then a morphism of bialgebras in $\UU\text{-}\mathrm{Mod}$.
\end{remark}

\indent We say that the Hopf algebra $\UU$ is {\em factorizable} if the Drinfeld map $\Phi$ is an isomorphism. 
\begin{proposition}\label{prop:momentmap}
Let $A_1,A_2$ be a $\UU$-module-algebras and $\mathbf{B} = (B, \smallblacktriangleright, \smallblacktriangleleft)$ be an $(A_2,A_1)$-bimodule in  $\UU\text{-}\mathrm{Mod}$.
\\ 1. Assume that we have QMMs $\mu_i : \UU \to A_i$ for $i=1,2$ and define $\mathfrak{d}_i = \mu_i \circ \Phi$. Then $\mathfrak{d}_i$ gives $A_i$ a structure of $\mathscr{L}$-linear algebra. Moreover, if $\mathbf{B}$ is QMM-compatible then it is $\mathscr{L}$-compatible for these structures.
\\2. Conversely assume that $\UU$ is factorizable and that we have $\mathscr{L}$-linear structures $\mathfrak{d}_i : \mathscr{L} \to A_i$ for $i=1,2$. Then $\mu_i = \mathfrak{d}_i \circ \Phi^{-1}$ is a QMM for $A$. Moreover, if $\mathbf{B}$ is $\mathscr{L}$-compatible then it is QMM-compatible for these QMMs.
\end{proposition}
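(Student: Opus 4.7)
The proof hinges on a single identity in $\UU\otimes\UU$, valid for every $\varphi\in\mathscr{L}$:
\[ \Delta_\UU(\Phi(\varphi)) = \sum_{i,j} R'_iR''_j \otimes \Phi(R'_j \triangleright \varphi \triangleleft R''_i). \quad (\star) \]
I would first establish $(\star)$ by a direct $R$-matrix computation. Writing $R = \sum_s A_s \otimes B_s$ so that $\Phi(\varphi) = \sum_{s,t}\varphi(B_s A_t)\, A_s B_t$, the left-hand side expands as $\sum_{s,t}\varphi(B_sA_t)\, A_{s(1)}B_{t(1)} \otimes A_{s(2)}B_{t(2)}$. The quasitriangular axioms $(\Delta\otimes\mathrm{id})(R) = R_{13}R_{23}$ and $(\mathrm{id}\otimes\Delta)(R) = R_{13}R_{12}$ allow one to replace $\sum_s A_{s(1)}\otimes A_{s(2)}\otimes B_s$ by $\sum_{u,v}A_u\otimes A_v\otimes B_uB_v$ and $\sum_t A_t\otimes B_{t(1)}\otimes B_{t(2)}$ by $\sum_{u',v'}A_{u'}A_{v'}\otimes B_{v'}\otimes B_{u'}$; after relabelling, the result matches the unpacked right-hand side of $(\star)$ term by term.

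With $(\star)$ in hand, part (1) is essentially formal. $\mathfrak{d}_i = \mu_i\circ\Phi$ is a composite of algebra morphisms, and its $\UU$-linearity follows from \eqref{equivarianceDrinfeld} together with the $\mathrm{ad}$-equivariance $\mu_i(h_{(1)}xS(h_{(2)})) = h\cdot\mu_i(x)$ noted just after \eqref{axiomeQMM2}. For the braided commutativity \eqref{QMMGJSforComodAlg}, apply the QMM axiom \eqref{axiomeQMM} to $\mu_i(\Phi(\varphi))\, a$ and then substitute $(\star)$:
\[ \mathfrak{d}_i(\varphi)\, a = (\Phi(\varphi)_{(1)} \cdot a)\,\mu_i(\Phi(\varphi)_{(2)}) \;\stackrel{(\star)}{=}\; \sum_{i,j}(R'_iR''_j \cdot a)\,\mathfrak{d}_i(R'_j \triangleright \varphi \triangleleft R''_i). \]
The bimodule statement is the word-for-word analogue: each multiplication is replaced by the corresponding action $\smallblacktriangleright$ or $\smallblacktriangleleft$, and the QMM axiom by the compatibility \eqref{QMMcoherence}; the same invocation of $(\star)$ then yields the $\mathscr{L}$-compatibility \eqref{LcoherenceUmod}.

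Part (2) is the reverse reading. Factorizability makes $\Phi^{-1}$ an algebra isomorphism, so $\mu_i = \mathfrak{d}_i\circ\Phi^{-1}$ is an algebra morphism. Given $h\in\UU$, set $\varphi = \Phi^{-1}(h)$; applying the $\mathscr{L}$-linearity \eqref{QMMGJSforComodAlg} and then reading $(\star)$ backwards, the sum $\sum_{i,j}(R'_iR''_j \cdot a)\,\mathfrak{d}_i(R'_j\triangleright\varphi\triangleleft R''_i)$ equals $(h_{(1)}\cdot a)\,\mu_i(h_{(2)})$, which is precisely the QMM axiom \eqref{axiomeQMM}. The bimodule implication is identical with actions in place of multiplications. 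The only genuine content is $(\star)$; both directions are then immediate rewritings, so the main obstacle is just carrying out the dense but entirely mechanical four-index manipulation of the $R$-matrix that proves it.
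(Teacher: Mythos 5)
Your proposal is correct; I checked the identity $(\star)$ directly from $(\Delta\otimes\mathrm{id})(R)=R_{13}R_{23}$, $(\mathrm{id}\otimes\Delta)(R)=R_{13}R_{12}$ and the definitions \eqref{DrinfeldMap}, \eqref{coregActions}, and it does hold; once it is available, both implications reduce to the one-line substitutions you describe. The route is essentially the paper's, but with the $R$-matrix work packaged differently: the paper does not state $(\star)$ as such, but instead invokes the braided-bialgebra-morphism property $(\Phi\otimes\Phi)\circ\Delta_{\mathscr{L}}=\widetilde{\Delta}\circ\Phi$ (eq.\ \eqref{PhiMorphismBialgebras}, rewritten as \eqref{MorphDrinfeldBialgebraMorphism}) and then, inside the proof, combines it with the auxiliary identity $\Phi(\varphi_{(1)})\otimes\varphi_{(2)}=R'_kR''_l\otimes(\varphi\triangleleft R''_kR'_l)$ and further quasitriangularity manipulations to arrive at what is, in effect, your $(\star)$. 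Your version front-loads all of that into a single self-contained lemma, which makes the two directions visibly symmetric (substitute $(\star)$ forwards into \eqref{QMMcoherence}, backwards into \eqref{LcoherenceUmod}); the paper's version has the advantage of isolating a statement of independent interest, namely that $\Phi$ is a morphism of bialgebras internal to $\UU\text{-}\mathrm{Mod}$. Your treatment of the remaining points (algebra-morphism and $\UU$-equivariance of $\mathfrak{d}_i$ via \eqref{equivarianceDrinfeld} and the $\mathrm{ad}$-equivariance of $\mu_i$; the algebra case as the special case $\mathbf{B}=A$ of the bimodule case) matches the paper.
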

\begin{proof}
1. First $\mathfrak{d}_i$ is a morphism of $\UU$-module-algebras because so are $\Phi : (\mathscr{L},\mathrm{coad}) \to (\UU,\mathrm{ad})$ and $\mu_i : (\UU,\mathrm{ad}) \to (A,\cdot)$. Let us directly prove the $\mathscr{L}$-compatibility condition for bimodules \eqref{LcoherenceUmod}, as the $\mathscr{L}$-linearity condition for algebras \eqref{QMMGJSforComodAlg} is a particular case of it (considering an algebra as a bimodule over itself). For any $\varphi \in \mathscr{L}$, note that \eqref{PhiMorphismBialgebras} can be rewritten as
\begin{equation}\label{MorphDrinfeldBialgebraMorphism}
\Phi(\varphi)_{(1)} \otimes \Phi(\varphi)_{(2)} = \Phi(\varphi_{(1)})R''_iR''_j \otimes R'_i \Phi(\varphi_{(2)}) S(R'_j)
\end{equation}
where $\Phi(\varphi)_{(1)} \otimes \Phi(\varphi)_{(2)} = \Delta_{\UU}\bigl( \Phi(\varphi) \bigr)$, $\varphi_{(1)} \otimes \varphi_{(2)} = \Delta_{\mathscr{L}}(\varphi) = \Delta_{\OO}(\varphi)$ and we use that $(S \otimes \mathrm{id})(R) = (\mathrm{id} \otimes S^{-1})(R) = R^{-1}$. Now for all $b \in B$, we compute
\begin{align*}
&\mathfrak{d}_2(\varphi) \smallblacktriangleright b = \mu_2\bigl( \Phi(\varphi) \bigr) \smallblacktriangleright b \overset{\eqref{QMMcoherence}}{=} \bigl( \Phi(\varphi)_{(1)} \cdot b \bigr) \smallblacktriangleleft \mu_1\bigl( \Phi(\varphi)_{(2)} \bigr) \\
\overset{\eqref{MorphDrinfeldBialgebraMorphism}}{=}\:& \bigl( \Phi(\varphi_{(1)})R''_iR''_j \cdot b \bigr) \smallblacktriangleleft \mu_1\bigl( R'_i \Phi(\varphi_{(2)}) S(R'_j) \bigr)
= \bigl( \Phi(\varphi_{(1)})R''_i \cdot b \bigr) \smallblacktriangleleft \mu_1\bigl( R'_{i\,(1)} \Phi(\varphi_{(2)}) S(R'_{i\,(2)}) \bigr)\\
\overset{\eqref{equivarianceDrinfeld}}{=}\:\:&\bigl( \Phi(\varphi_{(1)})R''_i \cdot b \bigr) \smallblacktriangleleft \mathfrak{d}_1\bigl( R'_{i\,(2)} \triangleright \varphi_{(2)} \triangleleft S(R'_{i\,(1)}) \bigr) = \bigl( \Phi(\varphi_{(1)})R''_iR''_j \cdot b \bigr) \smallblacktriangleleft \mathfrak{d}_1\bigl( R'_j \triangleright \varphi_{(2)} \triangleleft S(R'_i) \bigr).
\end{align*}
In the unlabelled equalities we used that $(\Delta \otimes \mathrm{id})(R) = R_{13}R_{23}$. Now note by definition of $\Phi$ in \eqref{DrinfeldMap} and of the coproduct $\Delta_{\mathscr{L}}(\varphi) = \Delta_{\OO}(\varphi) = \varphi_{(1)} \otimes \varphi_{(2)}$ in \eqref{usualProductCoproductOnRestrictedDual} that
\[ \Phi(\varphi_{(1)}) \otimes \varphi_{(2)} = \varphi_{(1)}\bigl( R''_k R'_l \bigr) \,R'_k R''_l \otimes \varphi_{(2)} = R'_k R''_l \otimes \bigl( \varphi \triangleleft R''_k R'_l \bigr). \]
Applying this equality in the result of the above computation we get
\begin{align*}
&\mathfrak{d}_2(\varphi) \smallblacktriangleright b = \bigl( R'_k R''_lR''_iR''_j \cdot b \bigr) \smallblacktriangleleft \mathfrak{d}_1\bigl( R'_j \triangleright \varphi \triangleleft R''_k R'_lS(R'_i) \bigr) = \bigl( R'_k R''_j \cdot b \bigr) \smallblacktriangleleft \mathfrak{d}_1\bigl( R'_j \triangleright \varphi \triangleleft R''_k  \bigr)
\end{align*}
because $(S \otimes \mathrm{id})(R) = R^{-1}$. This is the desired property.
\\2. Since $\Phi$ and $\mathfrak{d}_i$ are algebra morphisms, so is $\mu_i$. We directly prove the QMM-compatibility condition for bimodules \eqref{QMMcoherence}, as the QMM axiom for algebras \eqref{axiomeQMM} is a particular case of it (considering an algebra as a bimodule over itself). For any $\varphi \in \mathscr{L}$ and $b \in B$ we have
\begin{align*}
&\mu_2\bigl( \Phi(\varphi) \bigr) \smallblacktriangleright b = \mathfrak{d}_2(\varphi) \smallblacktriangleright b \overset{\eqref{LcoherenceUmod}}{=} (R'_iR''_j \cdot b) \smallblacktriangleleft \mathfrak{d}_1\bigl( R'_j \triangleright \varphi \triangleleft R''_i \bigr) = (R'_iR''_j \cdot b) \smallblacktriangleleft \mu_1\bigl( \Phi(R'_j \triangleright \varphi \triangleleft R''_i) \bigr)\\
\overset{\eqref{DrinfeldMap}}{=}\:& (R'_iR''_j \cdot b) \smallblacktriangleleft \mu_1(R'_k R''_l) \, \varphi\bigl( R''_iR''_kR'_lR'_j \bigr) = \bigl(R'_{i\,(1)} R''_{j\,(1)} \cdot b\bigr) \smallblacktriangleleft \mu_1\bigl(R'_{i\,(2)} R''_{j\,(2)}\bigr) \, \varphi\bigl( R''_iR'_j \bigr)\\
\overset{\eqref{DrinfeldMap}}{=}\:&\bigl( \Phi(\varphi)_{(1)} \cdot b \bigr) \smallblacktriangleleft \mu_1\bigl( \Phi(\varphi)_{(2)} \bigr).
\end{align*}
For the last unlabelled equality we used $(\Delta \otimes \mathrm{id})(R) = R_{13}R_{23}$ and $(\mathrm{id} \otimes \Delta)(R) = R_{13}R_{12}$. Since $\Phi$ is an isomorphism, any $h \in \UU$ can be written as $h = \Phi(\varphi)$ and thus the QMM-compatibility condition \eqref{QMMcoherence} holds.
\end{proof}

To finish this section, we discuss an example of $\UU$-module-algebras which will be important in \S\ref{subsec:StSkVsKL} below, namely internal End algebras (\S\ref{sub:end}). For finite-dimensional $\UU$-modules $V,W$, the internal Hom space $\underline{\Hom}(V,W) = W \otimes V^*$ is identified to the space $\Hom_k(V,W)$ of all $k$-linear maps $f : V \to W$, endowed with the $\UU$-action defined by
\begin{equation}\label{UactionOnIntHom}
\forall \, h \in \UU, \:\: \forall \, v \in V, \quad (h \cdot f)(v) = h_{(1)} \cdot f\bigl( S(h_{(2)}) \cdot v \bigr).
\end{equation}
Let us compute the algebra morphism $\mathfrak{d} : \mathscr{L} \to V \otimes V^*$ defined in Example \ref{LlinearStructEndV} for the present choice $\mathcal{C} = \mathrm{Comod}\text{-}\OO = \UU\text{-}\mathrm{Mod}^{\mathrm{lf}}$. Let $(\OO,\triangleright)$ be the $\UU$-module given by coregular action; note that it is in $\UU\text{-}\mathrm{Mod}^{\mathrm{lf}}$ because the subspace of matrix coefficients of a given module is stable under $\triangleright$. For any $\varphi \in \OO$, let $F_{\varphi} = \UU \triangleright \varphi$ be the submodule generated by $\varphi$ and note that $i_{F_{\varphi}}\bigl( \langle -,1_{\UU} \rangle \otimes \varphi \bigr)(h) = \langle h \triangleright \varphi, 1_{\UU} \rangle = \varphi(h)$ for all $h \in \UU$. Hence, using implicit summation,
\begin{align*}
\mathfrak{d}(\varphi) &= \mathfrak{d}\bigl( i_{F_\varphi}\bigl( \langle -,1_{\UU} \rangle \otimes \varphi\bigr) \bigr)\\
&= \bigl( \mathrm{ev}_{F_\varphi} \otimes \mathrm{id}_{V \otimes V^*} \bigr) \circ \bigl( \mathrm{id}_{F_\varphi^*} \otimes (c_{V,F_\varphi} \circ c_{F_\varphi,V}) \otimes \mathrm{id}_{V^*} \bigr)(\langle -,1_{\UU} \rangle \otimes \varphi \otimes v_i \otimes v^i)\\
&= \bigl\langle R''_lR'_j \triangleright \varphi, 1_{\UU} \bigr\rangle \, R'_lR''_j \cdot v_i \otimes v^i = \varphi\bigl( R''_lR'_j \bigr) \, R'_lR''_j \cdot v_i \otimes v^i = \Phi(\varphi)\cdot v_i \otimes v^i.
\end{align*}
where $(v_i)$ and $(v^i)$ are dual bases for $V$. Thus through the identification $V \otimes V^* \cong \End_k(V)$, we get the algebra morphism $\mathfrak{d} : \mathscr{L} \to \End_k(V)$ defined by
\begin{equation}\label{LlinearStructIntEnd}
\forall \, \varphi \in \mathscr{L}, \:\: \forall \, v \in V, \quad \mathfrak{d}(\varphi)(v) = \Phi(\varphi) \cdot v.
\end{equation}
In this way $\End_k(V)$ becomes a $\mathscr{L}$-linear algebra in $\UU\text{-}\mathrm{mod}$. If $\UU$ is factorizable, the QMM associated to $\mathfrak{d}$ (item 2 in Prop.\,\ref{prop:momentmap}) $\mu_V : \UU \to \underline{\End}(V)$ is simply the representation morphism:
\begin{equation}\label{QMMonIntEnd}
\forall \, h \in \UU, \:\: \forall \, v \in V, \quad \mu_V(h)(v) = h \cdot v.
\end{equation}
Although it could be deduced from Lemma \ref{lemmaMatrixAlgebra}(2) and Prop.\,\ref{prop:momentmap}(2) that the $\bigl( \underline{\End}(W), \underline{\End}(V)\bigr)$-bimodule $\underline{\Hom}(V,W)$ is QMM-compatible, this fact is actually obvious from the definitions. Indeed, the definition of the $\UU$-action on $\underline{\Hom}(V,W)$ in \eqref{UactionOnIntHom} can be rewritten as $h \cdot f = \mu_W(h_{(1)}) \circ f \circ \mu_V\bigl( S(h_{(2)}) \bigr)$, which is exactly the QMM-compatibility condition \eqref{axiomeQMM2}.

\smallskip

\indent From the relation between $\mathscr{L}$-compatibility and QMM-compatibility, we deduce the following fact, which will be a key-point in the proof of Theorem \ref{teo:commutativediagram}.

\begin{lemma}\label{lemmaUniqueBimodCoh}
Assume that $\UU$ is factorizable. Let $V,W$ be finite-dimensional $\UU$-modules and let $\mathbf{B} = (B, \smallblacktriangleright, \smallblacktriangleleft)$ be a non-zero finite-dimensional $\bigl(\underline{\End}(W), \underline{\End}(V)\bigr)$-bimodule in $\UU\text{-}\mathrm{mod}$ which is $\mathscr{L}$-compatible. Then $\mathbf{B}$ is isomorphic as a bimodule in $\UU\text{-}\mathrm{mod}$ to a direct sum of copies of $\underline{\Hom}(V,W)$.
\end{lemma}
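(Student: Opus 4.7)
The overall strategy is to reduce to classical Morita theory by showing that the $\UU$-module structure on $\mathbf{B}$ is entirely determined by its underlying bimodule structure over the matrix algebras $\End_k(V)$ and $\End_k(W)$.

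First I would translate $\mathscr{L}$-compatibility into QMM-compatibility. Since $\UU$ is factorizable, I can apply Proposition \ref{prop:momentmap}(2) to convert the $\mathscr{L}$-linear structures $\mathfrak{d}_V$, $\mathfrak{d}_W$ on $\underline{\End}(V)$, $\underline{\End}(W)$ (given by \eqref{LlinearStructIntEnd}) into quantum moment maps $\mu_V = \mathfrak{d}_V \circ \Phi^{-1}$ and $\mu_W = \mathfrak{d}_W \circ \Phi^{-1}$. By \eqref{QMMonIntEnd}, these are precisely the representation morphisms $\mu_V(h) = h\cdot -$ and $\mu_W(h) = h \cdot -$, independent of $\mathbf{B}$. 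The $\mathscr{L}$-compatibility of $\mathbf{B}$ then becomes QMM-compatibility in the sense of \eqref{QMMcoherence}, which I can rewrite (using the antipode axiom, as sketched just below \eqref{QMMcoherence}) in the form \eqref{axiomeQMM2}:
\[
\forall\, h \in \UU,\ \forall\, b \in B, \qquad h \cdot b = \mu_W(h_{(1)}) \smallblacktriangleright b \smallblacktriangleleft \mu_V(S(h_{(2)})).
\]
This formula shows that the $\UU$-action on $B$ is entirely reconstructed from the bimodule actions and the fixed QMMs.

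Next I would forget the $\UU$-actions and analyze $B$ as a finite-dimensional $(\End_k(W), \End_k(V))$-bimodule in $\mathrm{Vect}_k$. Such a bimodule is the same as a left module over $\End_k(W) \otimes \End_k(V)^{\mathrm{op}} \cong \End_k(W) \otimes \End_k(V^*) \cong \End_k(W \otimes V^*)$, which is a simple $k$-algebra. By classical Artin--Wedderburn theory its unique simple module (up to isomorphism) is $W \otimes V^*$ with the tautological bimodule action, so $B \cong (W \otimes V^*)^{\oplus n} = \underline{\Hom}(V,W)^{\oplus n}$ as bimodules in $\mathrm{Vect}_k$, for some $n \geq 1$ (non-zero since $B \neq 0$). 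Let $\psi$ be such an isomorphism.

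Finally I would promote $\psi$ to a $\UU$-linear map. By Lemma \ref{lemmaMatrixAlgebra}(2) the bimodule $\underline{\Hom}(V,W)$ is hb-compatible, hence $\mathscr{L}$-compatible by Proposition \ref{propRelatingHBCoherentAndLCoherent}, so its $\UU$-module structure is also governed by the displayed formula above (with the same $\mu_V, \mu_W$). Then for $h \in \UU$ and $b \in B$, using that $\psi$ is a bimodule morphism in $\mathrm{Vect}_k$:
\[
\psi(h \cdot b) = \psi\bigl(\mu_W(h_{(1)}) \smallblacktriangleright b \smallblacktriangleleft \mu_V(S(h_{(2)}))\bigr) = \mu_W(h_{(1)}) \smallblacktriangleright \psi(b) \smallblacktriangleleft \mu_V(S(h_{(2)})) = h \cdot \psi(b).
\]
Hence $\psi$ is an isomorphism in $\UU\text{-}\mathrm{mod}$ of bimodules, as required. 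None of the steps is technically hard; the key conceptual point, which makes the proof essentially a one-line consequence, is that the QMMs for the internal End algebras are just the representation morphisms and therefore do not depend on $\mathbf{B}$, so the $\UU$-action is rigidly determined by the purely algebraic bimodule structure.
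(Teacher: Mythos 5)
Your proposal is correct and follows essentially the same route as the paper's proof: reduce to classical Morita theory for bimodules over matrix algebras in $\mathrm{Vect}_k$, then use the fact that QMM-compatibility (obtained from $\mathscr{L}$-compatibility via factorizability) forces the $\UU$-action to be determined by the bimodule structure and the fixed representation morphisms $\mu_V,\mu_W$, so that the linear isomorphism is automatically $\UU$-linear. The only cosmetic difference is that you invoke Lemma \ref{lemmaMatrixAlgebra} and Proposition \ref{propRelatingHBCoherentAndLCoherent} to see that $\underline{\Hom}(V,W)$ satisfies the same compatibility, where the paper simply observes that \eqref{UactionOnIntHom} is literally the formula \eqref{axiomeQMM2}.
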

\begin{proof}
Let us forget the ambient category $\UU\text{-}\mathrm{mod}$ for an instant, and work in $\mathrm{vect}_k$. Then $\underline{\End}(V)$ is simply the endomorphism algebra $\End_k(V)$ and similarly for  $\underline{\End}(W)$. There are isomorphism of categories
\[ \bigl(\End_k(W), \End_k(V)\bigr)\text{-}\mathrm{bimod} \cong \bigl( \End_k(W) \otimes \End_k(V)^{\mathrm{op}} \bigr)\text{-}\mathrm{mod} \cong \End_k(W \otimes V^*)\text{-}\mathrm{mod} \]
where the first is by definition of a bimodule while the second uses the transpose isomorphism $\End_k(V)^{\mathrm{op}} \cong \End_k(V^*)$. It is a basic fact that modules over $\End_k(X)$ are direct sums of copies of $X$ for any finite-dimensional vector space $X$, see e.g. \cite[Th.\,3.3.1]{RepTh}. Since $W \otimes V^* \cong \Hom_k(V,W)$, we conclude that there is an isomorphism of bimodules $\omega : \mathbf{B} \overset{\sim}{\longrightarrow} \Hom_k(V,W)^{\oplus N}$ for some $N \geq 1$. It now remains to show that $\omega$ is actually a morphism in $\UU\text{-}\mathrm{mod}$. Let $\mu_V$ and $\mu_W$ be the QMMs \eqref{QMMonIntEnd}. Since $\mathbf{B}$ is $\mathscr{L}$-compatible, it is also QMM-compatible by Prop.\,\ref{prop:momentmap}(2). As a result, for all $h \in \UU$, 
\[ \omega(h \cdot b) \overset{\eqref{axiomeQMM2}}{=} \omega\bigl[ \mu_W(h_{(1)}) \smallblacktriangleright b \smallblacktriangleleft \mu_V\bigl( S(h_{(2)}) \bigr) \bigr] = \mu_W(h_{(1)}) \circ \omega(b) \circ \mu_V(S(h_{(2)})) \bigr) \]
which shows that $\omega$ is $\UU$-linear when each summand $\Hom_k(V,W)$ is endowed with the $\UU$-action \eqref{UactionOnIntHom}; but this exactly the definition of $\underline{\Hom}(V,W)$.
\end{proof}

\section{Main example: stated skein algebras and modules}\label{sectionStatedSkein}
Let $\OO = (\OO,\cdot,1,\Delta,\varepsilon,S, \mathcal{R})$ be a Hopf algebra over a field $k$, and endowed with a dual quasitriangular structure $\mathcal{R} : \OO \otimes \OO \to k$, and a coribbon element $\mathsf{v}: \OO \to k$ as in Equation \eqref{eq:coribbon}. 
We denote by $\mathcal{C}= \mathrm{Comod}\text{-}\OO$ the category of $\OO$-comodules and by $\mathcal{C}^{\mathrm{fin}} = \mathrm{comod}\text{-}\OO$ the subcategory of finite dimensional $\OO$-comodules. See  \S\ref{subsecComodH} for our notations regarding comodules.
If the reader is more familiar with left modules over an algebra than right comodules over a coalgebra, we refer to Subsection \ref{sub:ComodVsMod} for a discussion of this point. 

\smallskip

\indent For each oriented and connected compact $2$-dimensional surface $S$ with one boundary component and a marked point on it, we will define a ``stated skein algebra'' $\mathcal{S}_{\OO}(S)$ associated to $\mathcal{C}$. This definition is a special case of the general definition provided in \cite{CKL} and coincides with that given in \cite{BFR}. 
Then we consider the category of cobordisms $\mathrm{Cob}$ between the above surfaces and we show that to each such cobordism is associated a bimodule over the stated skein algebras of its boundary components. Also this construction is a special case of that given in \cite{CKL}.

\smallskip

\indent We then exhibit a monoidal product on $\mathrm{Cob}$ and a braiding on it: these structures where first defined by Kerler and Lyubashenko in a slightly different context (\textit{i.e.} for unmarked surfaces) \cite{KL, kerler}. The application of the stated skein functor $\mathcal{S}_{\OO}$ to these structures is actually the initial motivation for the previous sections: one of the main results of this section (Thm.\,\ref{teo:monoidalfunctor}) is that the stated skein functor is a braided and balanced monoidal functor from $\mathrm{Cob}$ to the category $\mathrm{Bim}^{\mathscr{L}}_{\mathcal{C}}$ of half-braided algebras in $\mathcal{C}$ and their $\mathscr{L}$-compatible bimodules (\S\ref{sectionLlinear}).

\smallskip

We finally relate this result to the TQFT built by Kerler and Lyubashenko and show that when $\OO$ is a co-factorizable Hopf algebra, the stated skein functor is basically the functor associating to a surface the algebra of linear endomorphisms of the state space of that surface for the Kerler-Lyubashenko TQFT (Thm.\,\ref{teo:commutativediagram}). 
 
\subsection{A category of cobordisms}\label{subsec:categoryCob}
By a marked surface we shall mean a connected, compact, oriented $2$-dimensional manifold $S$ with a single boundary component denoted $\partial S$, oriented as induced by $S$ and endowed with a marked point $*\in \partial S$. A diffeomorphism of marked surfaces is an orientation preserving diffeomorphism sending the marked point to the marked point. 

A marked $3$-manifold is an oriented, connected $3$-manifold $M$ with non empty boundary decomposed as $\partial M=\partial^- M\cup \partial^{\mathrm{s}} M\cup \partial^+ M$ so that $\partial^-M\cap \partial^+M=\varnothing$, and the ``side boundary'' $\partial^{\mathrm{s}}M$ is diffeomorphic to $S^1\times [-1,1]$ so that $\partial^\pm M\cap \partial^{\mathrm{s}}M=S^1\times\{\pm 1\}$, i.e. the boundary of $\partial^{\mathrm{s}}M$ is the union of the boundaries of $\partial^\pm M$ which are two circles; furthermore $\partial^{\mathrm{s}} M$ is endowed with the datum of an embedded oriented arc $\mathcal{N}$ intersecting $\partial^- M$ and $\partial^+ M$ exactly in its endpoints, which are then forming the  markings of $\partial^\pm M$ and considered up to isotopy relative to its boundary.

\begin{definition}
$ \mathrm{Cob}$ is the category whose objects are marked surfaces. If $S_-,S_+\in Ob(\mathrm{Cob})$, a morphism from $S_-$ to $S_+$ is a diffeomorphism class of a marked $3$-manifolds with parametrized boundary : $(M,\mathcal{N},\phi_{\pm})$ where 
$\phi_{\pm}:S_\pm\to \partial^{\pm} M$ is a diffeomorphism whose sign is $\pm$ and sending the marked points to $\partial \mathcal{N}$. 
A diffeomorphism of such tuple is an orientation preserving diffeomorphism which is the identity on $\partial^\pm M$ and sends the marking to the marking. 
The composition of $(M_2,\mathcal{N}_2,\psi_\pm)\in \Hom_{\mathrm{Cob}}(S_0,S_+)$ and $(M_1,\mathcal{N}_1,\phi_\pm)\in  \Hom_{\mathrm{Cob}}(S_-,S_0)$ is the marked $3$-manifold obtained by identifying the two copies of $S_0$ via the diffeomorphism given by $\phi_0\circ \psi^{-1}_0$ and whose marking is given by the oriented segment obtained by gluing $\mathcal{N}_1$ and $\mathcal{N}_2$.  
\end{definition}

\begin{example}
If $(S,*)$ is a marked surface then the identity morphism on $S$ is $\mathrm{Id}_S=(S\times[-1,1], \{*\} \times [-1,1],\mathrm{id},\mathrm{id})$. 
\end{example}

\begin{remark}\label{remarkKL}
The above notion of marked manifold is a special case of the notion introduced in \cite{CL3Man}. 
Also the category $\mathrm{Cob}$ is a subcategory of the category defined in \cite{CL3Man}; furthermore it is basically the same as the category $\mathrm{Cob}_0$ of \cite{KL} and the 3Cob of \cite{BBDP} with the only difference that our manifolds carry a marking in their boundary which shall be used in the next subsection. Since we consider cobordisms up to diffeomorphisms which are not the identity on the side boundary but are only required to preserve the marking, our category is indeed equivalent to that considered in \cite{BBDP}. For instance in Figure \ref{fig:balancingcob} we exhibit two examples of cobordisms which are equivalent. 
\end{remark}

\indent We now discuss the main features of $\mathrm{Cob}$:

\smallskip

\indent \textbullet ~ {\bf The category $\mathrm{Cob}$ is monoidal:} The tensor product of two marked surfaces is obtained by glueing them to the complement of two open discs in a marked disc along the unmarked components. Similarly the tensor product of two marked $3$-manifolds yielding morphisms in $\mathrm{Cob}$ is given by glueing them with the thickening of the complement of two open discs in a marked disc by identifying their side boundaries to the thickenings of the boundaries of the two discs. See the left part of Figure \ref{fig:cobmonoidal}. 
\begin{figure}[htbp]
\centering
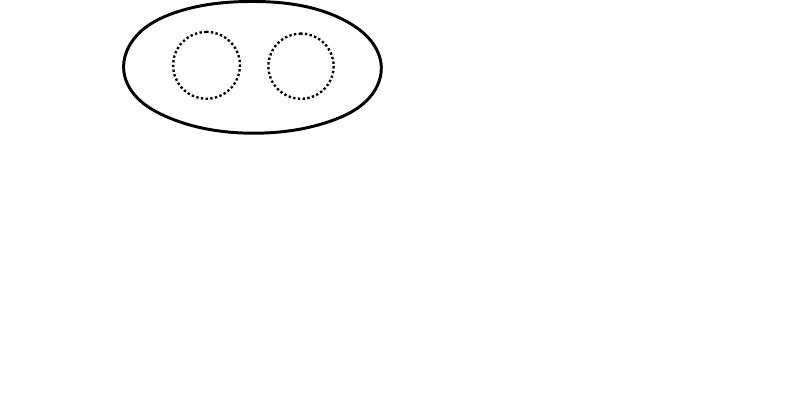
\caption{On the left we schematize the tensor product of two marked surfaces $S_1$, $S_2$ and of two marked manifolds $M_1$, $M_2$; the oriented arc depicted in front is the marking $\mathcal{N}$ of $M_1 \otimes M_2$ going from the marked point of the bottom surface to the marked point of the top surface. On the right we exhibit the braiding  $c_{S_1,S_2}$; the marking of the two surfaces is running vertical on the dotted cylinders and that of the cobordism is depicted in front. }\label{fig:cobmonoidal}
\end{figure}

\smallskip

\indent \textbullet ~ {\bf The category $\mathrm{Cob}$ is braided:} if $S_1,S_2$ are marked surfaces, then $c_{S_1,S_2}$ is the marked three manifold $(S_1\otimes S_2)\times [-1,1]$ endowed with parametrisations $\phi_-=\mathrm{id} : S_1\otimes S_2\to (S_1\otimes S_2)\times \{-1\}$ and $\phi_+:S_2\otimes S_1\to (S_1\otimes S_2)\times \{1\}$ obtained by exchanging the two surfaces via a half twist along $\partial (S_2\otimes S_1)$ as shown in the right-hand side of Figure \ref{fig:cobmonoidal}. 
This structure is identical to that already detailed in \cite{KL}.

\smallskip

\indent \textbullet ~ {\bf The category $\mathrm{Cob}$ is balanced:}
For each surface $S$ let $\tau_S$ be the cobordism whose underlying manifold is $S\times[-1,1]$ but whose marking performs a full  positive twist of $\partial S$ while going from $*\times \{-1\}$ to $*\times \{1\}$ (see Figure \ref{fig:balancingcob}). Equivalently, $\tau_S$ is the $3$-manifold $S\times[-1,1]$ marked with $*\times [-1,1]$ (where $*\in \partial S$ is the marked point) and with a surgery along a knot parallel to $\partial S$ at height $\{0\}$ and with framing $+1$ with respect to the vertical framing. It is easy to verify that these two cobordisms are the same in the category and, using the latter presentation that $\tau$ is indeed a balancing {\it i.e.} that $\tau_{S_1\otimes S_2}=(c_{S_2,S_1}\circ c_{S_1,S_2})\circ (\tau_{S_1}\otimes \tau_{S_2})$: this is explained in Figure \ref{fig:balancingtwist}.
\begin{figure}[htbp]
\centering
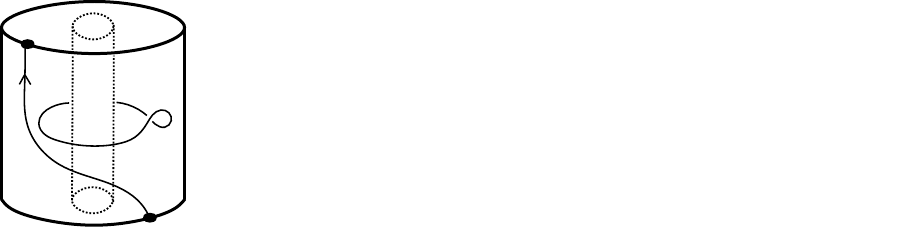
\caption{On the left we represent a neighbourhood of the side boundary of a $3$-manifold $M$, and make a $+1$ surgery along the knot parallel to the core of the side boundary. This gives a manifold diffeomorphic to the one depicted at its right. Similarly the last two cobordisms are diffeomorphic. When the manifold $M$ is $S\times [-1,1]$ then the cobordism depicted on the left is the balancing $\tau_S$, the one on the right is $\tau_S^{-1}$. }\label{fig:balancingcob}
\end{figure}
\begin{figure}[htbp]
\centering
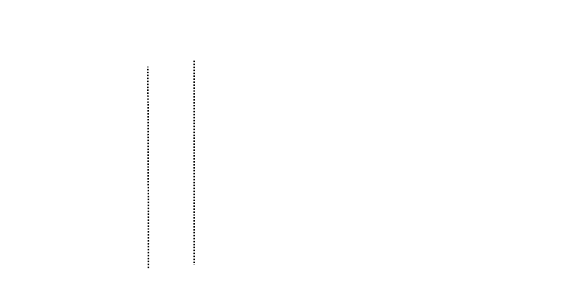
\caption{Graphical proof of the fact that $\tau$ is a balancing: on the left the manifold contains a $+1$-surgery knot around the core of the side boundary. On the right a slam-dunk move is applied; equivalently the whole cylinders representing $S_1$ and $S_2$ are slid over the knot so that the knot becomes contained in a ball and can be removed. During the sliding $S_1$ and $S_2$ acquire a full twist encoded by the two unknots with framing $+1$ in the bottom of the right figure. }\label{fig:balancingtwist}
\end{figure}

\subsection{Stated skein modules of marked manifolds}\label{subsecStatedSkMod}
The purpose of this section is to exhibit a braided monoidal functor $\mathrm{Cob} \to \mathrm{Bim}_{\mathcal{C}}^{\mathrm{hb}}$ from the notion of stated skein modules of $3$-manifolds, which we now recall. 

A {\em ribbon graph} is an oriented surface decomposed into finitely many {\em coupons} and {\em strips} (or {\em bands}, or {\em edges}) with disjoint interior so that: 
\begin{enumerate}
\item Both coupons and bands are homeomorphic to rectangles $]-1,1[\times [-1,1]$, and we say that $]-1,1[\times \{1\}$ (resp. $]-1,1[\times \{-1\}$) is the ``top base'' (resp. ``bottom base''). 
\item The interiors of two coupons or strips do not intersect. A strip can be glued to one or two coupons along its two bases. The ``cores'' of the strips, namely the arcs $\{0\}\times [-1,1]$ are oriented (arbitrarily). 
\item Each connected component of the surface contains at least one strip but we allow components containing no coupons: they are necessarily either a single strip or of the additional form $]-1,1[\times S^1$. 
\end{enumerate}
The ``boundary'' of a ribbon graph is the set of boundary components of the strips which are not glued to the coupons; they are open oriented arcs. We will think of a ribbon graph as the oriented graph formed by the cores of the strips connected to the coupons considered as vertices.

\indent Let $\mathcal{C}^{\mathrm{fin}}$ be the full subcategory of finite-dimensional right $\OO$-comodules. A $\mathcal{C}^{\mathrm{fin}}$-ribbon graph $\Gamma$ in a marked $3$-manifold $(M,\mathcal{N})$ is an embedded ribbon graph $\Gamma\subset M$ whose edges are marked (``colored'') by objects of $\mathcal{C}^{\mathrm{fin}}$ and whose coupons are decorated by morphisms of $\mathcal{C}$ (in the usual way: see \cite{RT}), and such that $\partial \Gamma\subset \mathcal{N}$ as oriented arcs. By a slight abuse of language we shall say that $\Gamma\cap \mathcal{N}$ is formed by points (corresponding to the intersection of the cores of the strips of $\Gamma$ with $\mathcal{N}$). 
\begin{definition} 
Suppose that $\Gamma\cap \mathcal{N}=\{p_1,\ldots, p_k\}$ so that $p_1>p_2>\ldots >p_k$ (in the order induced by the orientation of $\mathcal{N}$, i.e. the orientation goes from $p_k$ to $p_1$) and let the sign $\epsilon_i$ of $p_i$ be $+$ if the core of $\Gamma$ at $p_i$ is outgoing $M$ and $-$ else. Recall that each $p_i$ belongs to a strip which is carrying a color $V_i\in \mathcal{C}^{\mathrm{fin}}$. 
A state for $\Gamma$ is the datum of a vector $v_1\otimes \cdots \otimes v_k\in  V_1^{\epsilon_i}\otimes \cdots \otimes V_k^{\epsilon_k}$ where we denote $V^+=V$ and $V^-=V^{*}$. A stated ribbon graph is a pair $(\Gamma,s)$ where $s$ is a state for $\Gamma$. 
\end{definition}

\begin{definition}\label{defStSkMod}
The stated skein module $\mathcal{S}_{\OO}(M,\mathcal{N})$ is the quotient of the ${k}$-vector space spanned by all stated $\mathcal{C}^{\mathrm{fin}}$-ribbon graphs in $(M,\mathcal{N})$ by the subvector space generated by the relations:
\begin{enumerate}
\item $(\Gamma,s)-(\Gamma',s')$ if $\Gamma$ and $\Gamma'$ are isotopic via an isotopy of framed ribbon graphs (so that their boundaries are embedded in the marking $\mathcal{N}$ of $M$) and $s=s'$.
\item $(\Gamma,s)$ is multilinear in the state vector, i.e. if the $i^{th}$ vector of $s$ is $\lambda'v'_i+\lambda'' v''_i$ then $(\Gamma,s)=\lambda'(\Gamma,s')+\lambda''(\Gamma,s'')$ where $s'$ is identical to $s$ except for the $i^{th}$ vector which is $v'_i$ and similarly for $s''$, for all $\lambda',\lambda'' \in k$. 
\item $(\Gamma,s)-(\Gamma',f(s))$ where $\Gamma$ and $\Gamma'$ are almost identical except that $\Gamma'$ is the ribbon graph obtained from $\Gamma$ by ``pushing off'' a $\mathcal{C}^{\mathrm{fin}}$-ribbon tangle $T$ whose image through the Reshetikhin-Turaev functor $\mathrm{RT}$ is a morphism $f\in \Hom_\mathcal{C}\bigl( V_1^{\epsilon_1}\otimes\cdots  \otimes V_l^{\epsilon_l}, W_1^{\eta_1}\otimes\cdots \otimes W_k^{\eta_k} \bigr)$ and the state $f(s)$ on it is given by $f(v_1\otimes \cdots \otimes v_l)$, see Figures \ref{fig:SkeinRelation} and \ref{skeinRmatrix}. 
\end{enumerate}
\end{definition}
\begin{figure}[htbp]
\centering
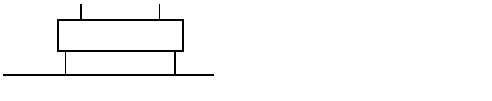
\caption{The basic skein relation: in the drawing we picked a ribbon graph represented by a single coupon; in general one can push off the surface any graph $G$ and use $f=\mathrm{RT}(G)$ to compute the state of the resulting graph. Here the strands are decorated by objects of $\mathcal{C}^{\mathrm{fin}}$ and oriented (not indicated on the picture). The horizontal oriented segment represents the marking $\mathcal{N}$ of $M$. We use the convention that marking each boundary point by a state $s_i$ (as on the left-hand side) is the same thing as globally marking the boundary points by one state $s_1 \otimes \ldots \otimes s_k$ (as on the right-hand side); this can be formalized by the use of identity coupons. 
}\label{fig:SkeinRelation}
\end{figure}

\begin{figure}[htbp]
\centering
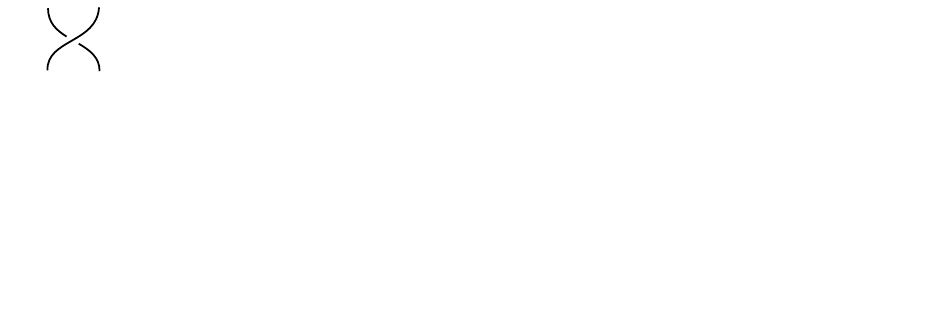
\caption{Some useful skein relations, obtained by applying the RT functor for $\mathrm{comod}\text{-}\OO$. The co-pivotal element $\mathsf{g} : \OO \to k$ is defined by $\mathsf{g}(\varphi) = \mathsf{u}(\varphi_{(1)})\mathsf{v}^{-1}(\varphi_{(2)})$, where $\mathsf{u} : \OO \to k$ is the co-Drinfeld element given by $\mathsf{u}(\varphi) = \mathcal{R}\bigl( \varphi_{(2)} \otimes S(\varphi_{(1)}) \bigr)$ as defined e.g. in \cite[Prop.\,2.2.4]{Majid}.}
\label{skeinRmatrix}
\end{figure}

\begin{remark}
The dimension of $\mathcal{S}_{\OO}(M,\mathcal{N})$ might be infinite in general. 
\end{remark}

Let $(\Gamma,s)$ be a stated $\mathcal{C}^{\mathrm{fin}}$-ribbon graph in $(M,\mathcal{N})$ and write the state $s$ as an implicit sum $s^1 \otimes \ldots \otimes s^l$ (we use upper indices for notational convenience below). Then we define $\Delta(\Gamma,s) \in \mathcal{S}_{\OO}(M,\mathcal{N})\otimes \OO$ by
\begin{equation}\label{HComodStatedSkein}
\Delta(\Gamma,s) = \bigl( \Gamma, s^1_{[0]} \otimes s^2_{[0]} \otimes \ldots \otimes s^l_{[0]} \bigr) \otimes s^1_{[1]}s^2_{[1]} \ldots s^l_{[1]}
\end{equation}
where $s^i_{[0]} \otimes s^i_{[1]} \in V_i^{\epsilon_i} \otimes \OO$ is the coaction in the comodule $V_i^{\epsilon_i}$, written in Sweedler's notation. In the special case where $\Gamma$ has no boundary (so no state vector) then we define $\Delta(\Gamma,\varnothing)=(\Gamma,\varnothing)\otimes 1$.
\begin{lemma}
Given a marked $3$-manifold  $(M,\mathcal{N})$, the above defined $k$-linear map $$\Delta:\mathcal{S}_{\OO}(M,\mathcal{N})\to \mathcal{S}_{\OO}(M,\mathcal{N})\otimes \OO$$  endows $\mathcal{S}_{\OO}(M,\mathcal{N})$ with the structure of a right $\OO$-comodule (not necessarily finite dimensional). 
 \end{lemma}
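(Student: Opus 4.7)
The plan is to first verify that the linear map $\Delta$ is well-defined on the quotient $\mathcal{S}_{\OO}(M,\mathcal{N})$, i.e.\ that it respects the three defining relations of Definition \ref{defStSkMod}, and then to check the two axioms of a right $\OO$-comodule (coassociativity and counitality). The formula \eqref{HComodStatedSkein} is manifestly well-defined at the level of stated ribbon graphs, so the content is to check compatibility with the skein relations and then an essentially formal calculation.

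For well-definedness, isotopy invariance (relation 1) is immediate because $\Delta(\Gamma,s)$ only depends on $s$ and on the isotopy class of $\Gamma$, not on the specific embedding. Multilinearity in the state (relation 2) is immediate from the linearity of the coaction on each factor $V_i^{\epsilon_i}$. The nontrivial point is the skein relation (relation 3). Here we must check that if $T$ is a ribbon tangle pushed off the surface with $f=\mathrm{RT}(T)\in \Hom_{\mathcal{C}}\bigl(V_1^{\epsilon_1}\otimes\cdots\otimes V_l^{\epsilon_l}, W_1^{\eta_1}\otimes\cdots\otimes W_k^{\eta_k}\bigr)$, then $\Delta(\Gamma,s)=\Delta(\Gamma',f(s))$. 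Since $f$ is a morphism in $\mathcal{C}^{\mathrm{fin}}$, it is by definition $\OO$-colinear, meaning $f(s)_{[0]}\otimes f(s)_{[1]}=f(s_{[0]})\otimes s_{[1]}$, where the coaction on the (iterated) tensor product is given by \eqref{monoidalProductOfComodules} and its analogues using duals. Substituting this into \eqref{HComodStatedSkein} applied to $(\Gamma',f(s))$ and invoking relation 3 (applied to the first tensor factor, which lies in $\mathcal{S}_{\OO}(M,\mathcal{N})$) yields exactly $\Delta(\Gamma,s)$. This is the key step and is really just an unpacking of the definition of morphism in $\mathcal{C}$, but one must be careful with the signs $\epsilon_i$: for a negative boundary point the state sits in $V_i^*$ whose coaction involves the antipode $S$, and the order of the factors in the product $s^1_{[1]}\cdots s^l_{[1]}$ matters.

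For the comodule axioms, coassociativity
\[
(\Delta\otimes \mathrm{id}_{\OO})\circ \Delta=(\mathrm{id}_{\mathcal{S}_{\OO}(M,\mathcal{N})}\otimes \Delta_{\OO})\circ \Delta
\]
follows by iterating the coaction on each state vector to obtain $s^i_{[0]}\otimes s^i_{[1]}\otimes s^i_{[2]}$, using the coassociativity of the coaction on each $V_i^{\epsilon_i}$ together with the fact that $\Delta_{\OO}$ is an algebra morphism (so that the coproduct of the product $s^1_{[1]}\cdots s^l_{[1]}$ splits into the corresponding product of coproducts). Counitality $(\mathrm{id}\otimes \varepsilon_{\OO})\circ \Delta=\mathrm{id}$ is immediate from $\varepsilon_{\OO}(s^i_{[1]})\,s^i_{[0]}=s^i$ combined with the multiplicativity of $\varepsilon_{\OO}$.

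The main (and really only) conceptual obstacle is the third skein relation: one must recognize that colinearity of morphisms produced by the Reshetikhin--Turaev functor is exactly the property that lets one ``pull'' the coaction through the pushed-off tangle. Once this is granted, the remainder is a direct calculation using the Hopf algebra axioms for $\OO$ and the comodule axioms for the colors $V_i$.
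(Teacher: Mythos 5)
Your proof is correct and follows the same route as the paper: the paper's own argument is a one-line observation that relations (1) and (3) are respected because coupons are decorated by morphisms of right $\OO$-comodules, i.e.\ exactly the colinearity $f(s)_{[0]}\otimes f(s)_{[1]}=f(s_{[0]})\otimes s_{[1]}$ that you identify as the key step. Your additional verification of coassociativity and counitality is the routine check the paper leaves implicit.
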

 \begin{proof}
 The relations (1) and (3) of Definition \ref{defStSkMod} yield equal values for $\Delta$ by the requirement that the coupons are decorated by morphisms of right $\OO$-comodules. 
\end{proof}

\indent Stated skein modules naturally have interesting algebraic properties:

\smallskip

\indent \textbf{Stated skein algebras:} The stated skein module of a thickened marked surface $(S\times [-1,1],\{*\}\times[-1,1])$ can be naturally endowed with the structure of an associative unital algebra as follows. If $\Gamma_1$ and $\Gamma_2$ are stated skeins in $(S\times [-1,1],\{*\}\times[-1,1])$ then $\Gamma_1\cdot \Gamma_2$ is the stated ribbon graph $\Gamma_1\sqcup \Gamma_2$ where $\Gamma_1$ is suitably isotoped in $S\times ]0,1[$ and $\Gamma_2$ in $S\times ]-1,0[$. This algebra structure is compatible with that of right $\OO$-comodule described above so that $\mathcal{S}_{\OO}(S)$ is a right $\OO$-comodule algebra, i.e. an algebra object in $\mathcal{C}$. 

\smallskip

\indent \textbf{Stated skein bimodules:} The stated skein module of a marked $3$-manifold $\mathbf{M} = (M,\mathcal{N})$ is naturally a right module over $\mathcal{S}_{\OO}(\partial^-M)$ and a left module over $\mathcal{S}_{\OO}(\partial^+M)$ and, again, this structure is compatible with that of right $\OO$-comodule in the sense of Subsection \ref{sectionPreliminariesModules}.
These actions are defined exactly as above by ``pushing skeins'' from the bottom or from the top respectively, so they commute.
Said briefly, if ${\bf M}=(M,\mathcal{N},\psi_\pm)\in \Hom_{\mathrm{Cob}}(S_-,S_+)$ then $\mathcal{S}_{\OO}({\bf M})$ is a $\bigl(\mathcal{S}_{\OO}(S_-),\mathcal{S}_{\OO}(S_+) \bigr)$-bimodule in $\mathrm{Comod}\text{-}\OO$. We do not detail these statements here as all the arguments are identical to those given in \cite{CL3Man}.

\smallskip

There is yet another structure on stated skein algebras. Recall the coend $\mathscr{L} = \int^{X \in \mathcal{C}^{\mathrm{fin}}}X^* \otimes X$ described in \S\ref{sub:coend} and \S\ref{subsecComodH} and its universal dinatural transformation $\bigl( i_X : X^* \otimes X \to \mathscr{L} \bigr)_{X \in \mathcal{C}^{\mathrm{fin}}}$. For any marked surface $S$, there is a morphism of $\OO$-comodules $\mathfrak{d} : \mathscr{L} \to \mathcal{S}_{\OO}(S)$ defined in Fig.\,\ref{QMMstatedSkein}.
\begin{figure}[h!]
\centering
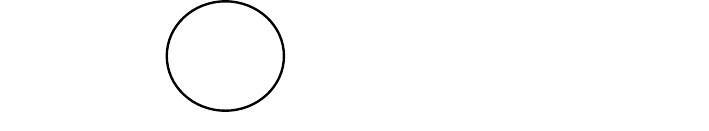
\caption{The $\mathscr{L}$-linear structure $\mathfrak{d} : \mathscr{L} \to \mathcal{S}_{\OO}(S)$ on stated skein algebras (as proved in Prop.\,\ref{prop:skeinLlinear}). The two points visible in the boundary are the extremities of the single marking $\mathcal{N}$ ending to the marked points of $S \times \{-1\}$ and $S \times \{1\}$. We slightly bend $\mathcal{N}$ and project on $S$ in order to be able to draw diagrams.}\label{QMMstatedSkein}
\end{figure}
For the second equality in Fig.\,\ref{QMMstatedSkein}, we use that $\mathscr{L}$ is $\OO$ as a vector space and $F_{\varphi}$ denotes any finite-dimensional subcomodule of the regular comodule $\OO$ (defined by the coproduct) which contains $\varphi$. To establish the equivalence between the two definitions in Fig.\,\ref{QMMstatedSkein}, consider the morphism of comodules $\delta_f : X \to \OO$ given by $\delta_f(x) = i_X(f \otimes x) = f(x_{[0]})x_{[1]}$. Its transpose $\delta^*_f : \mathrm{im}(\delta_f)^* \to X^*$ satisfies $\delta_f^*(\varepsilon_{\mathcal{O}}) = f$, where the counit $\varepsilon_{\OO}$ is implicitly restricted to the image of $\delta_f$. Hence
\begin{equation}\label{dinatSpecialMonogon}
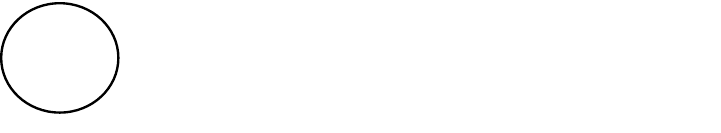
\end{equation}
where the unlabelled strand can be colored by $\mathrm{im}(\delta_f)$ or any finite-dimensional subcomodule of $\OO$ which contains $\delta_f(x)$.

\begin{proposition}\label{prop:skeinLlinear}
1. Let $S$ be a marked surface and endow $\mathcal{S}_{\OO}(S)$ with the morphism $\mathfrak{d}$ defined in Fig.\,\ref{QMMstatedSkein}. Then $\mathcal{S}_{\OO}(S)$ is a $\mathscr{L}$-linear algebra in the sense of Definition \ref{defLlinearAlgebra}.
\\2. Let ${\bf M}=(M,\mathcal{N},\psi_\pm)\in \Hom_{\mathrm{Cob}}(S_-,S_+)$. The $\bigl(\mathcal{S}_{\OO}(S_+),\mathcal{S}_{\OO}(S_-)\bigr)$-bimodule $\mathcal{S}_{\OO}({\bf M})$ is $\mathscr{L}$-compatible in the sense of Definition \ref{defLcoherentBimodule}.
\end{proposition}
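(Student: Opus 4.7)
First I would verify that $\mathfrak{d}$ is well-defined, i.e.\ that the family of morphisms $\mathfrak{d}\circ i_X : X^*\otimes X \to \mathcal{S}_{\OO}(S)$ depicted in Fig.\,\ref{QMMstatedSkein} is dinatural in $X\in \mathcal{C}^{\mathrm{fin}}$. This reduces to the standard observation that a $\mathcal{C}^{\mathrm{fin}}$-colored monogon is compatible with pushing an endomorphism along its supporting strand: for a morphism $u:X\to Y$ in $\mathcal{C}^{\mathrm{fin}}$, sliding $u$ around the loop and using the ribbon structure of the RT functor yields the dinaturality identity. The alternative description in terms of any subcomodule $F_\varphi\ni\varphi$ then follows from the computation \eqref{dinatSpecialMonogon}. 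Next I would show that $\mathfrak{d}$ is a morphism of $\OO$-comodules: the only state appearing in $\mathfrak{d}\circ i_X(f\otimes x)$ is $f\otimes x\in X^*\otimes X$, whose coaction in $\mathrm{Comod}\text{-}\OO$ is precisely $f_{[0]}\otimes x_{[0]}\otimes f_{[1]} x_{[1]}=f_{[0]}\otimes x_{[0]}\otimes S(x_{[1]})x_{[2]}$; comparing with \eqref{HComodStatedSkein} and \eqref{defAdjointCoaction} this is exactly the adjoint coaction of $\mathscr{L}$.

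The next step is to check that $\mathfrak{d}$ is an algebra morphism. The product $\mathfrak{d}(\varphi)\cdot\mathfrak{d}(\psi)$ is represented by two concentric loops stacked in the thickening $S\times[-1,1]$, and by applying the RT skein relation (Fig.\,\ref{skeinRmatrix}) to the two braidings one picks up $R$-matrix factors which match exactly the product \eqref{braidedProductCoend} on $\mathscr{L}$; equivalently, one can regard the two stacked loops as the image of a single $\mathcal{C}$-morphism $(F_\varphi\otimes F_\psi)^*\otimes(F_\varphi\otimes F_\psi)\to\mathscr{L}$ and invoke universality. The unit is sent to the empty ribbon graph, giving $1_S$.

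For the $\mathscr{L}$-linearity property \eqref{QMM_GJS}, I would argue topologically: the loop representing $\mathfrak{d}(\varphi)$ may be freely isotoped to any height in $S\times[-1,1]$, so in particular it may be slid from above to below any skein $a\in \mathcal{S}_{\OO}(S)$. As the strands of $a$ cross the loop, each intersection produces an over- and an under-crossing with the color of the loop; resolving these crossings by the $\mathcal{R}$-matrix relations of Fig.\,\ref{skeinRmatrix} and interpreting the result via the coend description of \S\ref{subsecComodH} yields precisely the formula \eqref{LlinearAxiomComodH}, equivalently \eqref{QMM_GJS}. This is the technically most involved step, but it is local (a single strand at a time suffices), and it can either be carried out in a single diagrammatic computation for $a$ of the form $i_Y(g\otimes y)$, or derived by combining \eqref{halfBraidingSigmaComodH} with the explicit formula $\mathfrak{d}\circ i_X(f\otimes x)=f(x_{[0]})x_{[1]}$ viewed as an element of $\mathscr{L}\subset \mathcal{S}_{\OO}(S)$. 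This concludes item~1.

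Item~2 is proved by an essentially identical argument applied in the $3$-manifold $(M,\mathcal{N})$. Given a stated skein $b\in \mathcal{S}_{\OO}(\mathbf{M})$, the loop representing $\mathfrak{d}(\varphi)$ can be placed either near $\partial^+M$ (acting from the left through $\mathfrak{d}_+:\mathscr{L}\to\mathcal{S}_{\OO}(S_+)$) or near $\partial^-M$ (acting from the right through $\mathfrak{d}_-:\mathscr{L}\to\mathcal{S}_{\OO}(S_-)$), and the isotopy moving it from one side to the other along a neighbourhood of $\mathcal{N}$ passes the loop through $b$. Each crossing of the loop with a strand of $b$ is resolved exactly as in the previous paragraph, producing the $\mathcal{R}$-matrix factors in \eqref{LCoherenceForComodH}. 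The main obstacle is the careful bookkeeping of these braidings, but once it is done for a generic strand the result extends to arbitrary $b$ by the skein relations. The connectedness of $\mathcal{N}$ and the fact that it joins the two markings ensures that the sliding isotopy is unobstructed, which is precisely why this construction is sensitive to the category $\mathrm{Cob}$ and not only to a symmetric disjoint-union cobordism category.
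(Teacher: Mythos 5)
Your proposal is correct and follows essentially the same route as the paper: the algebra-morphism property is checked by stacking two monogons and resolving the resulting braiding via the skein relations to recover the product $\odot$ of $\mathscr{L}$, and the compatibility conditions \eqref{QMM_GJS} and \eqref{LCoherenceForComodH} are obtained by sliding the $\mathfrak{d}$-loop through a skein represented near the boundary and matching the crossing factors with the half-braiding $\sigma$ from \eqref{defHalfBraidingSigmaOnCoend}. The only organizational difference is that the paper deduces the $\mathscr{L}$-linearity in item 1 as the special case of item 2 for the identity cobordism, whereas you carry out that sliding argument directly in $S\times[-1,1]$; the underlying computation is identical.
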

\begin{proof}
1. We must first show that $\mathfrak{d}$ is a morphism of algebras:
\begin{center}
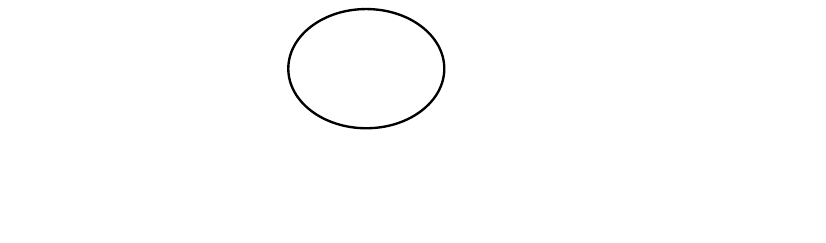
\end{center}
The first equality is by definition of $\mathfrak{d}$ and of the product in $\mathcal{S}_{\OO}(S)$, the second equality uses a skein relation (Fig.\,\ref{fig:SkeinRelation}, recall that $c$ denotes the braiding in $\mathcal{C}$ and in this picture we omit the subscripts of $c$ to save space), the third equality is by definition of $\mathfrak{d}$ and the last equality is by definition of the product $\odot$ in $\mathscr{L}$ as defined in \eqref{defStructureCoend}. Next we must show the $\mathscr{L}$-linear property \eqref{QMM_GJS} for $\mathfrak{d}$, but it is a particular case of the $\mathscr{L}$-compatibility proved in the next item (for the identity cobordism $M = S \times [0,1]$, or said differently by regarding $\mathcal{S}_{\OO}(S)$ as a bimodule over itself).
\\\noindent 2. Let $\mathfrak{d}_{\pm}$ be the $\mathscr{L}$-linear structure on $\mathcal{S}_{\OO}(S_{\pm})$ respectively. We must show that the diagram \eqref{defLcoherentBimodule} commutes for $B = \mathcal{S}_{\OO}(\mathbf{M})$:
\begin{center}
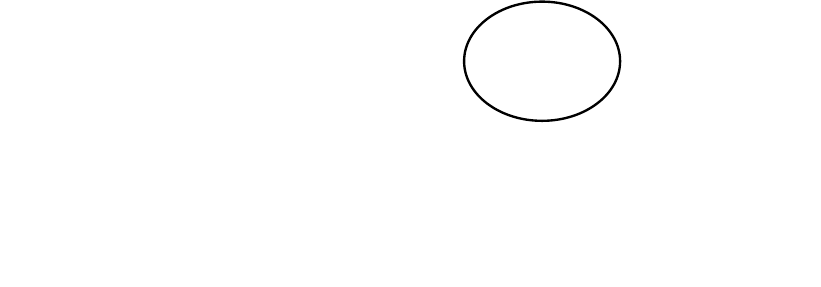
\end{center}
In the first term we use an element in $\mathscr{L}$ presented as the image of the universal transformation $i$ and a generic element in $\mathcal{S}_{\OO}(\mathbf{M})$ which is only represented in a the tubular neighborhood of the boundary of $M$ (what happens in $M$ has no importance). The first equality simply uses the definitions of $\mathfrak{d}_+$ and the left action of $\mathcal{S}_{\OO}(S_+)$ on $\mathcal{S}_{\OO}(\mathbf{M})$, the second equality is a trick (nothing is changed), the third equality uses a skein relation (Fig.\,\ref{fig:SkeinRelation}) and the last equality uses the definition of $\sigma$ in \eqref{defHalfBraidingSigmaOnCoend} together with the definitions of $\mathfrak{d}_-$ and the right action of $\mathcal{S}_{\OO}(S_-)$ on $\mathcal{S}_{\OO}(\mathbf{M})$.
\end{proof}

\begin{remark}
The above proposition shows that if $(\OO,\mathcal{R})$ is {co-factorizable} (i.e. it is the dual of a factorizable Hopf algebra) then stated skein algebras carry a quantum moment map, as explained in Section \ref{sub:LlinQMM} (see Proposition \ref{prop:momentmap}).
\end{remark}
The following fact is a preparation for the proof of Thm.\,\ref{teo:monoidalfunctor} below:
\begin{lemma}\label{lem:cutting}
Let $M$ be a compact $3$-manifold and $\mathcal{N}\subset \partial M$ be an oriented segment. Let also $S\subset M$ be a properly embedded oriented surface intersecting $\mathcal{N}$ transversally once and disconnecting $(M,\mathcal{N})$ into the disjoint union $(M_1,\mathcal{N}_1)\sqcup (M_2,\mathcal{N}_2)$. Then the inclusions $i_j:M_j\hookrightarrow M, j=1,2$ induce an isomorphism of $\OO$-comodules: 
$$(i_1\sqcup i_2)_*:\mathcal{S}_{\OO}(M_2)\otimes_{\mathcal{S}_{\OO}(S)}  \mathcal{S}_{\OO}(M_1) \to \mathcal{S}_{\OO}(M).$$
\end{lemma}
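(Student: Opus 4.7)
The plan is to exhibit an inverse to $(i_1\sqcup i_2)_*$ obtained by the standard cutting construction. First I would check that $(i_1\sqcup i_2)_*$ is well-defined: sending $\Gamma_2\otimes \Gamma_1$ to the disjoint union $i_2(\Gamma_2)\sqcup i_1(\Gamma_1)$ respects skein relations in each factor by functoriality, and it coequalizes the two actions of $\mathcal{S}_{\OO}(S)$ because for $a\in\mathcal{S}_{\OO}(S)$, placing $a$ in a bicollar $S\times[-\epsilon,\epsilon]\subset M$ yields the same skein in $M$ whether we isotope $a$ into $M_1$ or $M_2$; this is exactly the definition of the two actions of $\mathcal{S}_{\OO}(S)$ on $\mathcal{S}_{\OO}(M_j)$ obtained by pushing from a collar. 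The $\OO$-comodule compatibility is immediate from the formula \eqref{HComodStatedSkein}, since states remain attached to the same boundary points throughout the construction.

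Next I would construct the inverse $\Phi:\mathcal{S}_{\OO}(M)\to \mathcal{S}_{\OO}(M_2)\otimes_{\mathcal{S}_{\OO}(S)}\mathcal{S}_{\OO}(M_1)$. Given a stated $\mathcal{C}^{\mathrm{fin}}$-ribbon graph $(\Gamma,s)$ in $M$, I would first isotope $\Gamma$ to be transverse to $S$, then further isotope in $M$ so that every intersection point of $\Gamma$ with $S$ lies in an arbitrarily small neighborhood of the point $p=S\cap\mathcal{N}$ in $\partial S$ and can be pushed to $\partial S$ so as to lie along $\mathcal{N}$ arranged in a sequence. At each intersection (carrying color $V$) I replace the strand by applying the duality identity $\mathrm{id}_V=(\mathrm{ev}_V\otimes\mathrm{id}_V)\circ(\mathrm{id}_V\otimes\mathrm{coev}_V)$, which at the level of stated skeins means cutting the strand and summing over a basis: inserting a state $v_i$ on the $M_2$-endpoint and the dual state $v^i$ on the $M_1$-endpoint (with $i$ summed). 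This yields a finite sum of pairs $(\Gamma_2^\alpha,s_2^\alpha)\sqcup(\Gamma_1^\alpha,s_1^\alpha)$, and I define $\Phi(\Gamma,s)=\sum_\alpha \Gamma_2^\alpha\otimes\Gamma_1^\alpha$ in the tensor product.

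The bulk of the work is proving $\Phi$ is well-defined and is inverse to $(i_1\sqcup i_2)_*$. Independence of the cabling choice (i.e.\ change of basis) follows from naturality of $\mathrm{coev}_V$, and skein relations of $\mathcal{S}_{\OO}(M)$ supported away from $S$ are handled tautologically since they take place inside one $M_j$. The nontrivial relation to check is invariance under isotopies of $\Gamma$ within $M$ that change the intersection pattern $\Gamma\cap S$: these are generated by finger moves that push a sub-arc of $\Gamma$ across $S$, creating or destroying a pair of intersection points. Such a finger move becomes, after cabling, a local configuration in $\mathcal{S}_{\OO}(S\times[-\epsilon,\epsilon])=\mathcal{S}_{\OO}(S)$ that is transferred from one side to the other; the coequalizer relation $(x\smallblacktriangleleft a)\otimes y=x\otimes(a\smallblacktriangleright y)$ in the tensor product is precisely what makes $\Phi$ invariant. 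Finally $(i_1\sqcup i_2)_*\circ\Phi=\mathrm{id}$ by the zig-zag identity $(\mathrm{ev}_V\otimes\mathrm{id}_V)\circ(\mathrm{id}_V\otimes\mathrm{coev}_V)=\mathrm{id}_V$ which glues the cut strand back to its original form, while $\Phi\circ(i_1\sqcup i_2)_*=\mathrm{id}$ is obvious because a skein already transverse to $S$ and disjoint from a neighborhood of $p$ needs no cabling at all.

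The main obstacle is the well-definedness of $\Phi$ with respect to the finger-move isotopies across $S$: one must carefully analyze the local picture where two new intersection points are created, show that the cabled stated skein factorizes as an element $a\in\mathcal{S}_{\OO}(S)$ acting on one side, and verify via the coev-ev duality and the definition of the $\mathcal{S}_{\OO}(S)$-action by pushing from a collar that the corresponding element in the tensor product is unchanged after transferring $a$ across. This computation is essentially diagrammatic and uses only the Reshetikhin--Turaev skein relations together with the definitions of the algebra structure on $\mathcal{S}_{\OO}(S)$ and of its bimodule action on $\mathcal{S}_{\OO}(M_j)$, both given by vertical stacking in the collar.
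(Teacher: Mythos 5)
Your proposal is correct and follows essentially the same route as the paper's proof, which is itself only a sketch deferring to the slicing operation of \cite{CL3Man} (and \cite{CKL}): the core in both cases is isotoping $\Gamma\cap S$ to the marked point, cutting strands there via the $\mathrm{ev}/\mathrm{coev}$ resolution summed over dual bases, and observing that isotopies crossing $S$ are absorbed by the relation defining $\otimes_{\mathcal{S}_{\OO}(S)}$. The only organizational difference is that you package this as an explicit inverse $\Phi$ whereas the paper argues surjectivity and injectivity of $(i_1\sqcup i_2)_*$ directly; the verifications required are the same.
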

\begin{proof}
It easy to see that the morphism of $\OO$-comodules $(i_1)_*\otimes_k (i_2)_*$ descends to a morphism $(i_1\sqcup i_2)_*:\mathcal{S}_{\OO}({M_1})\otimes_{\mathcal{S}_{\OO}(S)}\mathcal{S}_{\OO}({M_2})\to \mathcal{S}_{\OO}({M})$.
We need to prove that it is surjective and injective. 
This has been proved in \cite{CL3Man} for $\OO=\mathcal{O}_q(SL_2)$ and is proved in an even more general framework in \cite{CKL}, therefore we will limit ourserlves to sketch the proof here. Roughly, the key idea is to show that each stated skein in 
$\mathcal{S}_{\OO}(M)$ is equivalent to a linear combination of tensor products of stated skeins in ${ M_1}$ or in ${M_2}$ and that each skein relation in ${M}$ is equivalent to a linear combination of skein relations in ${M_1}$ or in ${M_2}$. 
These statements can be proved by the ``slicing" operation, defined in \cite{CL3Man} which applies to the general case of stated skeins colored by finite-dimensional $\OO$-comodules. Namely if $(\Gamma,s)\subset M$ is a stated skein which intersects transversally $S$ into finitely many points $p_1,\ldots, p_k$ then consider an arc $\alpha\subset S$ with one endpoint in $\mathcal{N}\cap S$ and the other in $\partial S\setminus \mathcal{N}$ and containing all of $p_1,\ldots p_k$. Then one can ``push the skein $\Gamma$'' along $\alpha$ out of $S$ by the slicing operation described in \cite{CL3Man} (Theorem 2.1: remark that the operation holds for any Hopf algebra $\OO$, not only $\mathcal{O}_q(SL_2)$) to exhibit $s$ as an equivalent linear combination of stated skeins not intersecting $S$. This shows immediately $(i_1\sqcup i_2)_*$ is surjective. Similarly, each isotopy in $M$ can be decomposed into ``small'' isotopies whose support is contained entirely in one of ${M_1}$ or ${M_2}$; finally by isotopying skeins locally around $\mathcal{N}$ into ${M_1}$ (or ${M_2}$) and applying the above slicing operation, one can reduce each skein relation in ${ M}$ to a linear combination of skein relations in ${M_1}$ (or $M_2$). This proves injectivity of $(i_1\sqcup i_2)_*$.
\end{proof}

We explained in \S\ref{subsec:BimodHcomod} that the category $\mathrm{Bim}^{\mathscr{L}}_{\mathcal{C}}$ is monoidal, braided and balanced, as a particular case of more general results. In \S\ref{subsec:categoryCob} we also noted that the category $\mathrm{Cob}$ is monoidal, braided and balanced. Finally in Prop.\,\ref{prop:skeinLlinear} we saw that $\mathcal{S}_{\OO}$ produces $\mathscr{L}$-linear algebras and $\mathscr{L}$-compatible bimodules. One of the main contributions of this paper is the following:
\begin{theorem}\label{teo:monoidalfunctor}
Let $\OO$ be a coribbon Hopf algebra and $\mathcal{C}^{}$ be the category of right $\OO$-comodules.
The stated skein functor $\mathcal{S}_{\OO} :(\mathrm{Cob},\otimes)\to (\mathrm{Bim}^{\mathscr{L}}_{\mathcal{C}},\,\widetilde{\otimes})$ is a strict monoidal functor which is compatible with the braiding and balance on both categories, \textit{i.e.} for each marked surfaces $S, S_1,S_2$ it holds
\[ \mathcal{S}_{\OO}(S_1 \otimes S_2) = \mathcal{S}_{\OO}(S_1) \,\widetilde{\otimes}\, \mathcal{S}_{\OO}(S_2), \quad  \mathcal{S}_{\OO}(c_{S_1,S_2})=\mathcal{B}_{\mathcal{S}_{\OO}(S_1), \mathcal{S}_{\OO}(S_2)}, \quad \mathcal{S}_{\OO}(\tau_S) = \mathrm{BAL}_{\mathcal{S}_{\OO}(S)} \]
with the notations introduced in \S\ref{subsec:BimodHcomod} and \S\ref{subsec:categoryCob}. 
\end{theorem}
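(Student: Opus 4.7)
The proof breaks into four pieces: functoriality, strict monoidality, compatibility with braiding, compatibility with balance. For functoriality, the assignments on objects and morphisms land in $\mathrm{Bim}^{\mathscr{L}}_\mathcal{C}$ by Proposition~\ref{prop:skeinLlinear}; the identity cobordism $\mathrm{Id}_S = S \times [-1,1]$ is sent to the regular bimodule $\mathcal{S}_\OO(S)$ on itself (the identity in $\mathrm{Bim}^\mathscr{L}_\mathcal{C}$), and respect of composition is exactly Lemma~\ref{lem:cutting} applied to the separating surface $S_0 \subset \mathbf{M}_2 \circ \mathbf{M}_1$ in a composition of cobordisms through $S_0$. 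For strict monoidality on objects, apply Lemma~\ref{lem:cutting} again to the disc separating the two pieces in the thickening $(S_1 \otimes S_2) \times [-1,1]$; this disc carries only one point of the marking, so its stated skein algebra is $k$, giving $\mathcal{S}_\OO(S_1 \otimes S_2) \cong \mathcal{S}_\OO(S_1) \otimes_k \mathcal{S}_\OO(S_2)$ as $\OO$-comodules. To identify the product with the braided tensor product \eqref{brProdComodH}, take a skein $a \otimes 1$ representing an element of $\mathcal{S}_\OO(S_1)$ placed at height $-1 < h_1 < 0$ and $1 \otimes b$ representing an element of $\mathcal{S}_\OO(S_2)$ placed at height $0 < h_2 < 1$; isotoping so that both elements reach the side boundary forces one to pass the $S_2$-strands in front of the $S_1$-strands, and the skein relation of Figure~\ref{skeinRmatrix} produces exactly the factor $\mathcal{R}(b_{[1]} \otimes a_{[1]})$. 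The identical cutting argument handles strict monoidality on cobordisms.

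\textbf{Braiding.} The cobordism $c_{S_1,S_2}$ is the cylinder $(S_1 \otimes S_2) \times [-1,1]$ with identity bottom parametrisation but whose top parametrisation $\phi_+$ swaps the two discs of $\partial(S_1 \otimes S_2)$ via a half-twist; the marking $\mathcal{N}$ drawn in Figure~\ref{fig:cobmonoidal} is the associated half-braid of the two marked points. The underlying $\OO$-comodule of $\mathcal{S}_\OO(c_{S_1,S_2})$ is therefore $\mathcal{S}_\OO(S_1) \otimes \mathcal{S}_\OO(S_2)$ as for $\mathrm{Id}_{S_1 \otimes S_2}$, and the left action of $\mathcal{S}_\OO(S_2) \,\widetilde{\otimes}\, \mathcal{S}_\OO(S_1)$ (through $\phi_+$) is the braided product after undoing the top half-twist. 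The vector $1 \otimes 1$ is free for the left action, so by~\eqref{eq:braidingtwist} it suffices to compute $(1 \otimes 1) \smallblacktriangleleft (a \otimes b)$: stack $(a \otimes b)$ on top of $(1 \otimes 1)$ and follow the skein through the half-twisted marking at the top. Pushing the $b$-strands across the twisted marking produces a monogon in the $S_1$-region whose strand sits in a small neighbourhood of $\mathcal{N}$ and so contributes $\mathfrak{d}_1(b_{[1]})$ via Figure~\ref{QMMstatedSkein}; pushing the $a$-strands across the $b$-strands produces a crossing that yields $\mathcal{R}(a_{[1]} \otimes b_{[1]})$ via Figure~\ref{skeinRmatrix}; the remaining strands leave $b_{[0]}$ in the $S_2$-slot and $a_{[0]}$ in the $S_1$-slot, so the skein is $\mathcal{R}(a_{[1]} \otimes b_{[1]}) \,(b_{[0]} \otimes a_{[0]} \mathfrak{d}_1(b_{[2]})) \smallblacktriangleright (1 \otimes 1)$, matching exactly~\eqref{eq:braidingtwist}.

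\textbf{Balance and main obstacle.} For the balance cobordism $\tau_S$, use the surgery presentation of Figure~\ref{fig:balancingcob}: $\tau_S$ is $S \times [-1,1]$ with vertical marking $\{*\} \times [-1,1]$ and a $+1$-framed surgery circle $\gamma$ parallel to $\partial S$ at height $0$. As for the braiding, the left action is the algebra multiplication, and it suffices to compute $1 \smallblacktriangleleft a$. Stacking $a$ above the surgery circle $\gamma$, I push it to the bottom: each strand of $a$ crossing $\gamma$ slides over it, and the $+1$-framed Kirby move produces, on each crossing strand, the coribbon twist (a factor $\mathsf{v}^{-1}$) together with a little encircling loop around $\gamma$. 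Using the skein relations of Figure~\ref{skeinRmatrix} (evaluating encircling loops via the coend dinatural $i_X$), this encircling loop becomes a small monogon sitting near $\mathcal{N}$, which by Figure~\ref{QMMstatedSkein} equals $\mathfrak{d}(a_{[1]})$, while the original skein becomes $a_{[0]}$ and the twist contributes $\mathsf{v}^{-1}(a_{[2]})$. This gives $1 \smallblacktriangleleft a = \mathsf{v}^{-1}(a_{[2]})\, a_{[0]} \mathfrak{d}(a_{[1]}) \smallblacktriangleright 1$, which is exactly~\eqref{eq:balanceBimodHComod}. The main obstacle in the whole proof is the careful diagrammatic bookkeeping in these last two computations: one must convincingly argue that pushing skeins past a twisted portion of the marking splits into exactly one $\mathcal{R}$-factor (for a half-twist) or one coribbon-factor plus one $\mathfrak{d}$-factor (for the $+1$ surgery corresponding to a full twist). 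This requires a precise normal-form for stated skeins in a thickened surface (placing them in generic position relative to the twist or the surgery circle) and a systematic use of the dinaturality Equation~\eqref{dinatSpecialMonogon} to turn any stray monogon into a $\mathfrak{d}$-morphism.
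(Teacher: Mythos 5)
Your proposal is correct and, for three of the four parts (functoriality via Lemma \ref{lem:cutting}, strict monoidality via the splitting disc and the crossing relation of Fig.\,\ref{skeinRmatrix}, and the braiding via the ``$1\otimes 1$ is free for the left action'' reduction to \eqref{eq:braidingtwist}), it follows essentially the same route as the paper, including the identification of the monogon near the marking with $\mathfrak{d}$ via Fig.\,\ref{QMMstatedSkein} and \eqref{dinatSpecialMonogon}. The one genuine divergence is the balance computation: you work with the surgery presentation of $\tau_S$ (the $+1$-framed circle $\gamma$ parallel to $\partial S$, Fig.\,\ref{fig:balancingcob}) and invoke a Kirby/Rolfsen-twist argument, whereas the paper works directly with the model of $\tau_S$ as $S\times[-1,1]$ whose marking performs a full positive twist of $\partial S$, and simply isotopes the skein up through that twisted marking (Fig.\,\ref{figProofBal}); the curl then gives $\mathsf{v}^{-1}(a_{[2]})$ and the loop parallel to $\partial S$ gives $\mathfrak{d}(a_{[1]})$, exactly as in your final formula \eqref{eq:balanceBimodHComod}. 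Your route is legitimate but costs an extra justification that the paper avoids: one must argue that the diffeomorphism removing the $+1$-framed unknot acts on stated skeins by the claimed local modification (curl plus encircling loop on each strand passing through), i.e.\ that handle slides over surgery circles are valid moves for stated skeins in the surgered manifold -- this is true but is not among the defining relations of Definition \ref{defStSkMod}, so it should be reduced to the isotopy-plus-skein-relation argument anyway, at which point the two proofs coincide. Two small points of care: your phrase ``stacking $a$ above the surgery circle and pushing it to the bottom'' has the direction reversed relative to computing $1\smallblacktriangleleft a$ (the right action inserts $a$ at $S\times\{-1\}$ and one pushes it \emph{up}); getting the direction wrong would produce $\mathrm{bal}^{-1}$ and $\mathsf{v}$ instead of $\mathsf{v}^{-1}$, though the formula you state is the correct one. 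Also, in the braiding computation the paper is careful (Fig.\,\ref{fig:braidedfunctor}) to use the trick of inserting a dual basis and \eqref{dinatSpecialMonogon} to produce $\mathfrak{d}_1(b_{[1]})$ rigorously; your ``normal form'' remark at the end correctly identifies this as the point needing the most bookkeeping.
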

\begin{proof}
There are four properties to be checked:

\textbullet~ {\em $\mathcal{S}_{\OO}$ is a functor.} We need to show that if $\mathbf{M}_2=(M_2,\mathcal{N}_2,\psi_\pm)\in \Hom_{\mathrm{Cob}}(S_0,S_+)$ and $\mathbf{M}_1=(M_1,\mathcal{N}_1,\psi'_\pm)\in \Hom_{\mathrm{Cob}}(S_-,S_0)$ then $\mathcal{S}_{\OO}(\mathbf{M}_2 \circ \mathbf{M}_1)$ is isomorphic as a $(\mathcal{S}_{\OO}({ S_+}),\mathcal{S}_{\OO}({S_-}))$-bimodule to $\mathcal{S}_{\OO}( \mathbf{M}_2)\otimes_{\mathcal{S}_{\OO}(S_0)}\mathcal{S}_{\OO}(\mathbf{M}_1)$.
This is basically the content of Lemma \ref{lem:cutting}, one only needs to remark further that the actions of $\mathcal{S}_{\OO}({ S_\pm})$ commute with the isomorphism in the lemma. 

\smallskip

\indent \textbullet~ {\em $\mathcal{S}_{\OO}$ is strict monoidal.} We start by observing that for each morphisms $M_i \in \Hom_{\mathrm{Cob}}(S_j, S_j'),j=1,2$, the $\OO$-comodule $\mathcal{S}_{\OO}(M_1\otimes M_2)$ is isomorphic to $\mathcal{S}_{\OO}(M_1)\otimes_k\mathcal{S}_{\OO}(M_2)$, thanks to the inclusion isomorphism $(i_1 \sqcup i_2)_*$ obtained by applying Lemma \ref{lem:cutting} to a properly embedded disc splitting $M_1\otimes M_2$ into $M_1\sqcup M_2$ and using that $\mathcal{S}_{\OO}(D^2)=k$. See Figure \ref{fig:monProdSkeins}. In Figure \ref{fig:braidedtensorproduct} we prove that the isomorphism $(i_1\sqcup i_2)_*:\mathcal{S}_{\OO}(S_1) \otimes \mathcal{S}_{\OO}(S_2)\to \mathcal{S}_{\OO}(S_1\otimes S_2)$ is moreover an algebra morphism when the source is equipped with the braided tensor product structure $\mathcal{S}_{\OO}(S_1) \,\widetilde{\otimes}\, \mathcal{S}_{\OO}(S_2)$ of $\OO$-comodule algebras in \eqref{brProdComodH}; note that it suffices to check \eqref{brProdComodH} with $x = 1_{A_1}$ and $y = 1_{A_2}$. This shows monoidality of $\mathcal{S}_{\OO}$ on the level of objects. To prove monoidality for morphisms, replace $S_i$, $i=1,2$ in Figure \ref{fig:braidedtensorproduct} by two $3$-manifolds $M_i$ and remark that the isomorphism of $\OO$-comodules $(i_1\sqcup i_2)_*$ is actually an isomorphism of bimodules because each $M_i$ is a cobordism $S_i\to S'_i$ and the splitting along the vertical disc used to prove that $(i_1\sqcup i_2)_*$ is an isomorphism preserves the splitting of $\partial_+ (M_1\otimes M_2)$ (resp.  $\partial_- (M_1\otimes M_2)$)  into $S_1\sqcup S_2$ (resp. $(S'_1\sqcup S'_2)$).

\begin{figure}[h!]
\centering
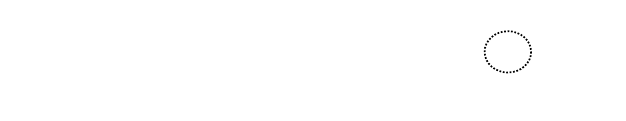
\caption{Tensor product of skeins $a \in \mathcal{S}_{\OO}(M_1)$ and $b \in \mathcal{S}_{\OO}(M_2)$. The skein $a$, which has some state $v$, is represented only in a tubular neighborhood of the side boundary $\partial^{\mathrm{s}}(M_1)$ and similarly for $b$ and $a \otimes b$. Here the cobordisms are seen from above, as in Fig.\,\ref{QMMstatedSkein}. The resulting view of the disc which splits $M_1 \otimes M_2$ into $M_1 \sqcup M_2$ is represented by the dashed grey line.}
\label{fig:monProdSkeins}
\end{figure}

\begin{figure}[h!]
\centering
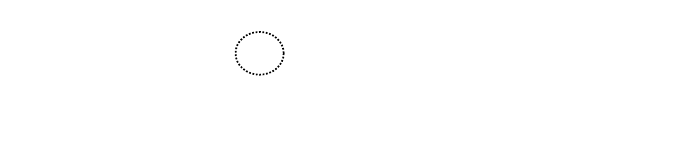
\caption{Verification of eq.\,\eqref{brProdComodH} for stated skein algebras. We denote by $\emptyset_{S_i}$ the empty skein, which is the unit element of $\mathcal{S}_{\OO}(S_i)$. We use the definition of the product in $\mathcal{S}_{\OO}(S_1 \otimes S_2)$, the crossing skein relation in Fig.\,\ref{skeinRmatrix}, the $\OO$-comodule structure \eqref{HComodStatedSkein} of stated skein algebras and the tensor product of skeins in Fig.\,\ref{fig:monProdSkeins}.}\label{fig:braidedtensorproduct}
\end{figure}

\medskip

\textbullet~ {\em $\mathcal{S}_{\OO}$ is braided.} The cobordism $c_{S_1,S_2}$ is diffeomorphic to the thickening of $S_2\otimes S_1$ via a diffeomorphism which is the identity on the positive (i.e. top) boundary: we can then identify $\mathcal{S}_{\OO}(c_{S_1,S_2})$ with $\mathcal{S}_{\OO}(S_2\otimes S_1)=\mathcal{S}_{\OO}(S_2)\,\widetilde{\otimes}\, \mathcal{S}_{\OO}(S_1)$ via this diffeomorphism. It thus suffices to check \eqref{eq:braidingtwist}. Take elements $a \in \mathcal{S}_{\OO}(S_1)$ and $b \in \mathcal{S}_{\OO}(S_2)$, represented as in Fig.\,\ref{fig:monProdSkeins} (thus with $S_i \otimes [-1,1]$ in place of $M_i$). Denote by $\emptyset \in \mathcal{S}_{\OO}(c_{S_1,S_2})$ the empty skein, which through the above identification is the unit element in $\mathcal{S}_{\OO}(S_2\otimes S_1)$. In Figure \ref{fig:braidedfunctor} we use isotopy to express $\emptyset \smallblacktriangleleft (a \otimes b)$ as a {\em left} action on $\emptyset$. The last term in Fig.\,\ref{fig:braidedfunctor} can be written as $\mathcal{R}\bigl(a_{[1]} \otimes b_{[1]}\bigr) \, \bigl( b_{[0]} \otimes a_{[0]} \bigr) \, \mathfrak{d}_1\bigl( b_{[2]} \bigr) \smallblacktriangleright \emptyset$, thanks to the definition of the product and of the $\OO$-comodule structure \eqref{HComodStatedSkein} for stated skein algebras, and also by definition of the morphism $\mathfrak{d}_1 : \mathscr{L} \to \mathcal{S}_{\OO}(S_1)$ given in Fig.\,\ref{QMMstatedSkein}. This proves that \eqref{eq:braidingtwist} is satisfied, as claimed.
\begin{figure}[h!]
\centering
{\def\svgwidth{.98\linewidth}
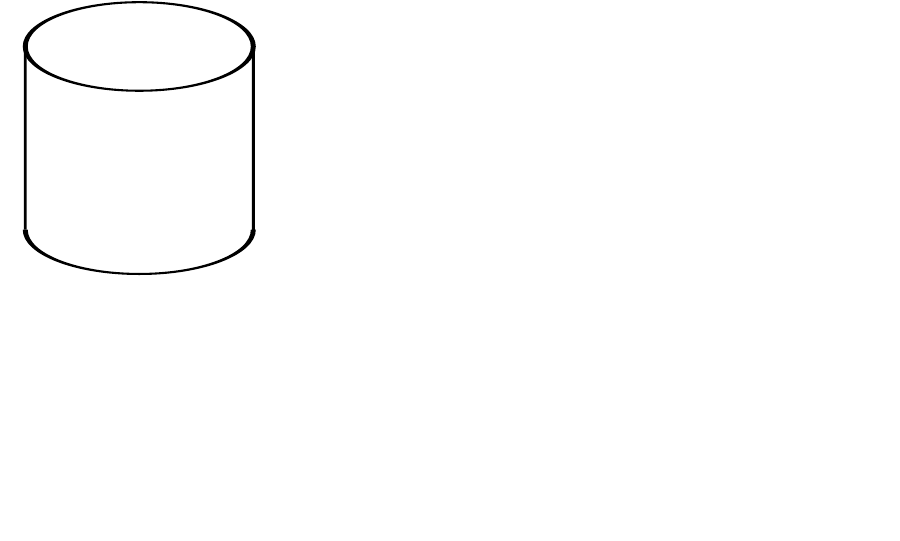
}
\caption{Computation of $\emptyset \smallblacktriangleleft (a \otimes b)$. We can always assume this orientation on the strands of $a$ and $b$ near the side boundary. For the first equality we push the skein up through an isotopy. (Remark that in the two first pictures we should have connected the skeins to the marking depicted in diagonal, but this can be done canonically by vertical segments and we did not for the sake of clarity.) The second equality is by definition of the left action, the third is a trick, the fourth uses skein relations (see Fig.\,\ref{skeinRmatrix}), the fifth uses \eqref{dinatSpecialMonogon} and evaluation of $w^i$.}\label{fig:braidedfunctor}
\end{figure}

\indent \textbullet~ {\em $\mathcal{S}_{\OO}$ is balanced.} The arguments are similar to the ones in the previous item. It suffices to check \eqref{eq:balanceBimodHComod}. Let $a \in \mathcal{S}_{\OO}(S)$ be some skein with state $v$. In Fig.\,\ref{figProofBal} we use isotopy to express $\emptyset \smallblacktriangleleft a$ (represented only in a tubular neighborhood of $\partial S \times [0,1]$) as a {\em left} action by some element in $\mathcal{S}_{\OO}(S)$. The last term in Fig.\,\ref{figProofBal} can be written as $a_{[0]} \, \mathfrak{d}(a_{[1]}) \, \mathsf{v}^{-1}(a_{[2]}) \smallblacktriangleright \emptyset$, thanks to the definition \eqref{HComodStatedSkein} of the coaction on $\mathcal{S}_{\OO}(S)$ and of the $\mathscr{L}$-linear structure $\mathfrak{d}$ (Fig.\,\ref{QMMstatedSkein}). This proves that \eqref{eq:balanceBimodHComod} is satisfied, as claimed.
\begin{figure}[h!]
\centering
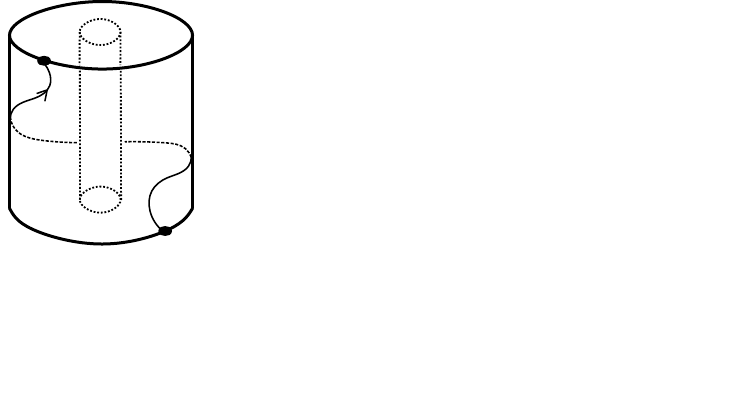
\caption{Computation of $\emptyset \smallblacktriangleleft a$. For the first equality we push the skein up through an isotopy. The second equality is by definition of the left action, the third is a trick where nothing is changed, the fourth uses skein relations (see Fig.\,\ref{skeinRmatrix}) and the fifth uses \eqref{dinatSpecialMonogon}.}
\label{figProofBal}
\end{figure}
\end{proof}

\subsection{A presentation of \texorpdfstring{$\mathrm{Cob}$}{the cobordism category}}\label{sec:BPhopf}
Recall from \cite[Def.\ 2.4.1]{BBDP} that a {\em Bobtcheva--Piergallini Hopf algebra} (BP Hopf algebra for short) in a braided monoidal category $\mathcal{C}$ is a Hopf algebra object $\mathscr{H}$ in $\mathcal{C}$ endowed with an integral $\lambda : \mathscr{H} \to \boldsymbol{1}$, a cointegral $\Lambda : \boldsymbol{1} \to \mathscr{H}$, a copairing $w : \boldsymbol{1} \to \mathscr{H} \otimes \mathscr{H}$ and a so-called ribbon automorphism $\tau : \mathscr{H} \to \mathscr{H}$ which are subject to a list of 12 axioms. Moreover, the BP Hopf algebra $\mathscr{H}$ is called {\em factorizable} if $(\lambda \otimes \mathrm{id}) \circ w = \Lambda$ and is called {\em anomaly free} if $\lambda \circ \tau \circ \eta = \mathrm{id}_{\boldsymbol{1}}$ \cite[Def.\,2.6.1]{BBDP}.

The following was proved by Bobtcheva and Piergallini and reformulated as follows in \cite[Thm.\,C]{BBDP}:
\begin{theorem}\label{teo:presentation}
$\mathrm{Cob}$ is the braided category freely generated by an anomaly free, factorizable, BP-Hopf algebra object, namely the once punctured torus $\Sigma_1$. 
\end{theorem}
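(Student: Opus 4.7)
The plan is to reduce the theorem to the presentation result of Bobtcheva--Piergallini in the refined form of \cite[Thm.~C]{BBDP}, which is really the backbone of the argument. I would structure the proof in three steps: first, exhibiting an explicit BP Hopf algebra structure on the object $\Sigma_1$ inside $\mathrm{Cob}$; second, checking the anomaly-free and factorizability conditions; and third, invoking the universal property coming from a presentation of $\mathrm{Cob}$ by generators and relations.

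For the first step I would write down each of the structural cobordisms on $\Sigma_1$ explicitly. The once-punctured torus has a canonical handle decomposition from which one reads off the multiplication $m : \Sigma_1 \otimes \Sigma_1 \to \Sigma_1$ as the pair-of-pants cobordism carrying the obvious marking, the unit $\eta : \boldsymbol{1} \to \Sigma_1$ as a genus-one handlebody with a prescribed marked arc on its side boundary, and the comultiplication, counit and antipode by time-reversal and Hopf-duality. The integral $\lambda$ and cointegral $\Lambda$ would be given by suitable $2$-handle attachments along a meridian of $\Sigma_1$, the copairing $w : \boldsymbol{1} \to \Sigma_1 \otimes \Sigma_1$ by a two-component $0$-framed link description in a ball, and the ribbon automorphism $\tau$ would be the balance $\tau_{\Sigma_1}$ already defined in Subsection \ref{subsec:categoryCob}. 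Verifying the twelve BP axioms then amounts to standard Kirby-calculus manipulations (handle slides and cancellations) on these diagrams; these computations are worked out in \cite{kerler, BP} and in more streamlined form in \cite{BBDP}.

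In the second step I would check the anomaly-free equality $\lambda \circ \tau \circ \eta = \mathrm{id}_{\boldsymbol{1}}$, which reduces to recognising that the composite $3$-manifold is a standard ball after a single handle cancellation, and the factorizability equality $(\lambda \otimes \mathrm{id}) \circ w = \Lambda$, which follows from an elementary handle cancellation in the surgery diagram describing $w$.

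The hard part will be the universality: for any anomaly-free factorizable BP Hopf algebra $\mathscr{H}$ in a braided category $\mathcal{D}$, one must construct a unique braided monoidal functor $F : \mathrm{Cob} \to \mathcal{D}$ sending $\Sigma_1$ to $\mathscr{H}$ and the structural cobordisms to the corresponding structural morphisms of $\mathscr{H}$. This is precisely where the technical core of \cite{BP, BBDP} intervenes: using a Heegaard-type decomposition together with Kirby calculus and Habiro's moves one shows that every object of $\mathrm{Cob}$ is a tensor power of $\Sigma_1$, every morphism is expressible as a braided composition of the generators constructed in the first step, and the complete list of relations among such compositions is generated exactly by the twelve BP axioms together with anomaly-freedom and factorizability. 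Given this presentation, $F$ is forced on generators and well-defined on relations by the very hypotheses on $\mathscr{H}$, and uniqueness is immediate. The real work, therefore, is not in constructing $F$ but in establishing the presentation, and this is the point at which one genuinely must import the results of \cite{BBDP}.
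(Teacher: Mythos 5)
The paper does not prove this statement at all: it is quoted directly from the literature, namely \cite[Thm.~C]{BBDP} (building on \cite{kerler,BP}), so the ``paper's proof'' is simply that citation. Your outline is a reasonable sketch of what the argument in those references looks like, and since you yourself correctly identify that the presentation of $\mathrm{Cob}$ by generators and relations must be imported from \cite{BBDP}, your approach is essentially the same as the paper's.
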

Stated differently, for any factorizable and anomaly free BP-Hopf algebra $\mathscr{H}$ in a braided monoidal category $\mathcal{C}$, there is a braided monoidal functor $F : \mathrm{Cob} \to \mathcal{C}$ uniquely determined on objects by the requirement $F(\Sigma_1) = \mathscr{H}$ and sending each morphism in the BP-Hopf structure of $\Sigma_1$ to the corresponding morphism in the BP-Hopf structure of $\mathscr{H}$. 

\begin{remark}
Theorem \ref{teo:presentation} was initially proved only in part by Kerler \cite{kerler} who could not find a complete set of relations satisfied by the punctured torus in $\mathrm{Cob}$. A few years later, Habiro announced a solution to the problem, and his presentation appeared in \cite{As11}.  A first complete proof appeared in \cite{BP}. In \cite{BBDP} a new proof appeared (and the name ``BP-Hopf algebra'' was introduced). 
\end{remark}

\begin{example}\label{exampleBPHopfEnd}
If  $\mathcal{C}$ is a braided monoidal category then $\boldsymbol{1}\in \mathcal{C}$ is a (trivial) anomaly free, factorizable BP-Hopf algebra object, when endowed with $\lambda=\mathrm{id}_{\boldsymbol{1}}=\Lambda=\tau=w$. In particular, $\underline{\End}(\boldsymbol{1}) = \boldsymbol{1}$ is an anomaly free, factorizable BP-Hopf algebra object in $\mathrm{Bim}^{\mathrm{hb}}_{\mathcal{C}}$.\end{example}
Recall the internal End algebras and their internal Hom bimodules discussed in length in \S\ref{sub:end}. Although Example \ref{exampleBPHopfEnd} is especially trivial, it implies the following:

\begin{proposition}\label{propKLFunctors}
Let $\mathcal{C}$ be a braided monoidal category. For all $V \in \mathcal{C}$ which has a left dual $V^*$ and such that $V = V' \oplus \boldsymbol{1}$, there is a braided monoidal functor $F_V : \mathrm{Cob} \to \mathrm{Bim}^{\mathrm{hb}}_{\mathcal{C}}$ uniquely defined on objects by $F_V(\Sigma_1) = \underline{\End}(V)$ and uniquely defined on morphisms by
\[ \begin{array}{c}
F_V(\mathrm{id}_{\Sigma_1}) = F_V(S) = F_V(\tau) = \underline{\Hom}(V,V), \quad F_V(m) = \underline{\Hom}(V \otimes V,V),\\[.3em]
F_V(\eta) = F_V(\Lambda) = \underline{\Hom}(\boldsymbol{1},V), \quad F_V(\Delta) = \underline{\Hom}(V,V \otimes V),\\[.3em]
F_V(\varepsilon) = F_V(\lambda) = \underline{\Hom}(V,\boldsymbol{1}), \quad F_V(w) = \underline{\Hom}(\boldsymbol{1}, V \otimes V).
\end{array} \]
\end{proposition}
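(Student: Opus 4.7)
The plan is to apply Theorem \ref{teo:presentation}: since $\mathrm{Cob}$ is the free braided category on an anomaly free, factorizable BP-Hopf algebra object ($\Sigma_1$), it suffices to equip $\underline{\End}(V) \in \mathrm{Bim}^{\mathrm{hb}}_{\mathcal{C}}$ with such a structure whose generating morphisms are (as isomorphism classes of bimodules) the advertised internal Hom bimodules.

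The hypothesis on $V$ means exactly that $V \in \overline{\mathcal{C}}$, so by item 2 of Proposition \ref{propEmbedCIntoBim}, $\underline{\End}(V)$ is isomorphic to $\boldsymbol{1} = \underline{\End}(\boldsymbol{1})$ in $\mathrm{Bim}^{\mathrm{hb}}_{\mathcal{C}}$, the mutually inverse isomorphisms being realised by $\underline{\Hom}(\boldsymbol{1},V)$ and $\underline{\Hom}(V,\boldsymbol{1})$. Example \ref{exampleBPHopfEnd} equips $\boldsymbol{1}$ with its trivial (anomaly free, factorizable) BP-Hopf algebra structure, and since the BP-Hopf axioms together with anomaly-freeness and factorizability are entirely formulated by commutative diagrams in a braided monoidal category, they are stable under isomorphism of objects. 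Transporting along the above isomorphism therefore endows $\underline{\End}(V)$ with an anomaly free, factorizable BP-Hopf algebra structure in $\mathrm{Bim}^{\mathrm{hb}}_{\mathcal{C}}$. Theorem \ref{teo:presentation} then produces a braided monoidal functor $F_V : \mathrm{Cob} \to \mathrm{Bim}^{\mathrm{hb}}_{\mathcal{C}}$ with $F_V(\Sigma_1) = \underline{\End}(V)$, uniquely determined by sending each BP-Hopf generator of $\Sigma_1$ to its counterpart in this transported structure.

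It remains to unwind each transported structural morphism into the internal Hom bimodule advertised in the statement. The main tools are the monoidal constraint $F^{(2)}_{V,W} = \underline{\End}(V\otimes W) \smallblacktriangleleft I_{V,W}$ of the embedding $\underline{\End}$ (see \eqref{monStructEmbCinBim} and Lemma \ref{lemmaIntHomAndBrProd}) together with the composition rule $\underline{\Hom}(V,W)\circ \underline{\Hom}(U,V)\cong \underline{\Hom}(U,W)$ of Lemma \ref{lemmaCompositionInternalHoms}. For instance, the transported multiplication is the composite
\[ \underline{\End}(V)\,\widetilde{\otimes}\,\underline{\End}(V)\xrightarrow{\underline{\Hom}(V,\boldsymbol{1})\,\widetilde{\otimes}\,\underline{\Hom}(V,\boldsymbol{1})} \boldsymbol{1}\,\widetilde{\otimes}\,\boldsymbol{1}=\boldsymbol{1}\xrightarrow{\underline{\Hom}(\boldsymbol{1},V)} \underline{\End}(V), \]
which via Lemmas \ref{lemmaIntHomAndBrProd}(2) and \ref{lemmaCompositionInternalHoms} simplifies to the isomorphism class of $\underline{\Hom}(V\otimes V,V)$ (once the canonical identification $\underline{\End}(V)\,\widetilde{\otimes}\,\underline{\End}(V)\cong\underline{\End}(V\otimes V)$ is absorbed). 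Entirely analogous computations handle $\eta$ and $\Lambda$ (giving $\underline{\Hom}(\boldsymbol{1},V)$), $\Delta$ (giving $\underline{\Hom}(V,V\otimes V)$), $\varepsilon$ and $\lambda$ (giving $\underline{\Hom}(V,\boldsymbol{1})$), and $w$ (giving $\underline{\Hom}(\boldsymbol{1},V\otimes V)$); the antipode $S$ and the ribbon automorphism $\tau$, being automorphisms in $\mathrm{Bim}^{\mathrm{hb}}_{\mathcal{C}}$ of the invertible object $\underline{\End}(V)$ obtained by transport of the identity on $\boldsymbol{1}$, must coincide with the identity morphism $\underline{\Hom}(V,V)$. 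The principal obstacle is precisely this last step: juggling the twistings of bimodules by algebra morphisms (\S\ref{subsecTwistingBimod}) and the non-strict constraints $F^{(2)}$, $I_{U,V}$, $_{W,Z}J_{U,V}$ of $\underline{\End}$ carefully enough that each transported generator is manifestly identified with the stated internal Hom bimodule; this is routine but tedious bookkeeping.
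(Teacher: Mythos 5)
Your proposal is correct and follows essentially the same route as the paper: both transport the trivial BP-Hopf structure on $\boldsymbol{1}$ (Example \ref{exampleBPHopfEnd}) along the mutually inverse bimodules $\underline{\Hom}(\boldsymbol{1},V)$ and $\underline{\Hom}(V,\boldsymbol{1})$ from Proposition \ref{propEmbedCIntoBim}, invoke Theorem \ref{teo:presentation}, and identify each transported generator with the stated internal Hom bimodule via Lemmas \ref{lemmaIntHomAndBrProd} and \ref{lemmaCompositionInternalHoms}. The paper records the same identifications in the compact form $I = N\circ E = E\,\widetilde{\otimes}\, N$, $\underline{\Hom}(V\otimes V,V) = E\,\widetilde{\otimes}\, N\,\widetilde{\otimes}\, N$, etc., which is exactly the bookkeeping you describe.
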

\begin{proof}

Let $E = \underline{\Hom}(\boldsymbol{1},V) \in \Hom_{\mathrm{Bim}^{\mathrm{hb}}_{\mathcal{C}}}\bigl( \underline{\End}(V), \boldsymbol{1} \bigr)$, $N = \underline{\Hom}(V,\boldsymbol{1}) \in \Hom_{\mathrm{Bim}^{\mathrm{hb}}_{\mathcal{C}}}\bigl( \boldsymbol{1},  \underline{\End}(V) \bigr)$ and $I = \underline{\Hom}(V,V) = \mathrm{id}_{\underline{\End}(V)} \in \Hom_{\mathrm{Bim}^{\mathrm{hb}}_{\mathcal{C}}}\bigl( \underline{\End}(V),  \underline{\End}(V) \bigr)$. 
To prove the claim, we show that the morphism $E: \underline{\End}(\boldsymbol{1})\to \underline{\End}(V)$ is an isomorphism of BP-Hopf algebras in $\mathrm{Bim}^{\mathrm{hb}}_{\mathcal{C}}$ whose inverse is $N: \underline{\End}(V)\to \underline{\End}(\boldsymbol{1})$.
This is a direct consequence of the following isomorphisms of bimodules, which are implied by Lemma \ref{lemmaIntHomAndBrProd} :
$$I=N\circ E=E\otimes N,\qquad  \underline{\Hom}(V\otimes V,V)=E\otimes N\otimes N, $$
$$\underline{\Hom}(V,V\otimes V)=E\otimes E\otimes N,\qquad \underline{\Hom}(V\otimes V,\boldsymbol{1})=N\otimes N.$$
For instance the isomorphism $\underline{\Hom}(V\otimes V,V)=E\otimes N\otimes N$ exhibits the multiplication of $\underline{\End}(V)$ as $E\circ m_{\underline{\End}(\boldsymbol{1})}\circ (N\otimes N)$. 
\end{proof}

As a consequence of the above proposition, one can find plenty of functors $\mathrm{Cob}\to \mathrm{Bim}_{\mathcal{C}}^{\mathrm{hb}}$, albeit all naturally isomorphic to the trivial one. As we will see later, rather surprisingly, these functors $F_V$ are related to the  
Kerler-Lyubashenko TQFTs (in the case $V = \mathscr{L}$, the coend of $\mathcal{C}$).

\smallskip

As explained in \cite{BD} Theorem 7.4, another important example of functor which can be understood from the point of view of BP-Hopf algebras is the Kerler-Lyubashenko \cite{KL} functor:
\begin{theorem}[Kerler-Lyubashenko for end]
Let $\mathcal{U}$ be a finite dimensional factorizable ribbon Hopf algebra. Then there exists a TQFT (i.e. a braided balanced functor) sending $\Sigma_1\in \mathrm{Cob}^{\sigma}$ to $\mathrm{ad}\in \mathcal{U}\text{-}\mathrm{mod}$, where $\mathrm{ad}$ is the left adjoint representation, considered as a BP-Hopf algebra object. 
\end{theorem}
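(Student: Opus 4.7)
The plan is to deduce this theorem as an application of a refined version of the presentation result recorded in Theorem \ref{teo:presentation}. Namely, $\mathrm{Cob}^{\sigma}$ satisfies an analogous but weaker universal property: it is the braided balanced category freely generated by a factorizable BP-Hopf algebra object \emph{which need not be anomaly free}. The lagrangian decorations on objects and the integer weights on morphisms are precisely what absorb the anomaly $\lambda \circ \tau \circ \eta$ through a signature-type correction, and this refined statement is the content of \cite{BD}.

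First I would endow the adjoint representation $\mathrm{ad}$ with the structure of a factorizable BP-Hopf algebra object in $\UU\text{-}\mathrm{mod}$. As a $\UU$-module, $\mathrm{ad}$ is just $\UU$ with the adjoint action $h \cdot x = h_{(1)} x S(h_{(2)})$; its multiplication is inherited from $\UU$, while the comultiplication is the transmutation $\widetilde{\Delta}$ from \eqref{PhiMorphismBialgebras}, which is precisely the unique twist of $\Delta$ that is $\UU$-equivariant. The integral $\lambda: \mathrm{ad} \to \boldsymbol{1}$ is a two-sided cointegral of $\UU$, the cointegral $\Lambda: \boldsymbol{1} \to \mathrm{ad}$ is a two-sided integral of $\UU$, the ribbon automorphism $\tau$ is multiplication by the ribbon element $\nu^{-1}$, and the copairing $w: \boldsymbol{1} \to \mathrm{ad} \otimes \mathrm{ad}$ is essentially $R^{fl}R \in \UU \otimes \UU$. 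This matches the observation, used already in the excerpt, that under the factorizability hypothesis the Drinfeld map \eqref{DrinfeldMap} identifies $\mathrm{ad}$ with the coend $\mathscr{L}$ of $\UU\text{-}\mathrm{mod}$; in particular, all the structure morphisms above are pushed forward from the corresponding ones on $\mathscr{L}$.

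Next I would verify the BP-Hopf axioms for this datum. Most of them are classical identities for factorizable ribbon Hopf algebras: associativity and coassociativity of $(m,\widetilde{\Delta})$, the antipode axioms, the interaction between $\widetilde{\Delta}$ and $m$ coming from $R\Delta = \Delta^{\mathrm{op}}R$, the characterizing properties of integrals and cointegrals for finite-dimensional Hopf algebras, and the standard identities $(\Delta \otimes \mathrm{id})(R) = R_{13}R_{23}$ together with its variants which yield the pentagonal relations on $w$. Crucially, the BP-Hopf factorizability axiom $(\lambda \otimes \mathrm{id}) \circ w = \Lambda$ is equivalent to bijectivity of the Drinfeld map, i.e.\ to factorizability of $\UU$; the axioms governing $\tau$ follow from the characteristic properties of the ribbon element.

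The main obstacle is the anomaly. For a generic factorizable ribbon $\UU$, the scalar $\lambda \circ \tau \circ \eta = \lambda(\nu^{-1}) \in k$ is not equal to $1$, so $\mathrm{ad}$ is typically \emph{not} an anomaly-free BP-Hopf algebra, and therefore Theorem \ref{teo:presentation} cannot be applied directly to manufacture a functor from $\mathrm{Cob}$. This is exactly the reason the source category must be refined to $\mathrm{Cob}^{\sigma}$: the integer decorations on cobordisms (and the lagrangian labels on objects, needed to define the gluing anomaly from triples of lagrangians) allow one to rescale the image of each cobordism by a controlled power of $\lambda(\nu^{-1})$, compensating exactly the failure of $\mathrm{ad}$ to satisfy the anomaly-free condition. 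Invoking the refined universal property of $\mathrm{Cob}^{\sigma}$ proved in \cite{BD}, we then obtain the desired braided monoidal functor $\mathrm{KL}: \mathrm{Cob}^{\sigma} \to \UU\text{-}\mathrm{mod}$ with $\mathrm{KL}(\Sigma_1) = \mathrm{ad}$, and its balance is automatic because the twist on $\UU\text{-}\mathrm{mod}$ is given by the ribbon element, which is the image of $\tau$.
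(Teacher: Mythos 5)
Your proposal is correct and follows exactly the route the paper indicates: the paper does not prove this statement but quotes it from \cite{KL} via \cite{BD} and \cite{BBDP}, pointing to the presentation of $\mathrm{Cob}^{\sigma}$ as the braided category freely generated by a factorizable BP-Hopf algebra object with the anomaly-free condition dropped, which is precisely the universal property you invoke after equipping $\mathrm{ad}$ (equivalently $\mathscr{L}$, via the Drinfeld map $\Phi$) with its BP-Hopf structure. Your identification of the structure morphisms, of the factorizability axiom with bijectivity of $\Phi$, and of the role of the lagrangian/integer decorations in absorbing the anomaly $\lambda(\nu^{-1})$ all match the standard argument the paper is citing.
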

Here $\mathrm{Cob}^{\sigma}$ is a category of surfaces endowed with lagrangian subspaces of their first homology and extended $3$-manifolds as cobordisms. 
We refer to the account provided in \cite{BBDP} for details. We limit ourselves to recall that there is a ribbon monoidal forgetful functor $\mathrm{Forget}: \mathrm{Cob}^{\sigma}\to \mathrm{Cob}$, and that in \cite{BBDP} an analogue of Theorem \ref{teo:presentation} was proved for $\mathrm{Cob}^\sigma$ in which one only needs to drop the ``anomaly free condition'' on the factorizable BP-Hopf algebra object $\Sigma_1$.

By what was explained in Section \ref{sub:LlinQMM}, the coend $\mathscr{L}$ of $ \mathcal{U}\text{-}\mathrm{mod}$ (namely $\mathcal{U}$ with the coadjoint action) is isomorphic as a braided Hopf algebra object in $\mathcal{U}\text{-}\mathrm{mod}$ to the end, namely to 
$\mathrm{ad}$. The isomorphism is given by the Drinfeld map \eqref{DrinfeldMap} and through it we can pull back the full structure of BP-Hopf algebra object from $\mathrm{ad}$ to $\mathscr{L}$. We will not explicit the values of the integral, cointegral twist and copairing as our next arguments will not depend on them. 
We limit ourselves here to remark that we can redefine the KL TQFT by mapping $\Sigma_1$ to the left coadjoint representation and restate the above theorem as follows:

\begin{theorem}[Kerler-Lyubashenko for coend]\label{teo:KLcoend}
Let $\UU$ be a finite dimensional factorizable ribbon Hopf algebra. Then there exists a TQFT (i.e. a braided balanced functor) sending $\Sigma_1\in \mathrm{Cob}^{\sigma}$ to $\mathscr{L}\in \mathcal{U}\text{-}\mathrm{mod}$, where $\mathscr{L}$ is the left coadjoint representation, considered as a BP-Hopf algebra object. 
\end{theorem}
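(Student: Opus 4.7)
The plan is to reduce Theorem \ref{teo:KLcoend} to the original Kerler--Lyubashenko theorem (the ``for end'' version) by transporting the BP-Hopf algebra structure from $\mathrm{ad}$ to $\mathscr{L}$ along an appropriate isomorphism, and then invoking the universal property of $\mathrm{Cob}^{\sigma}$.

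First I would recall from Section \ref{sub:LlinQMM} that for a factorizable ribbon Hopf algebra $\UU$, the Drinfeld map
\[ \Phi : \mathscr{L} \to \UU, \quad \varphi \mapsto (\varphi \otimes \mathrm{id}_{\UU})(R^{fl}R), \]
is an isomorphism of $\UU$-modules from $\mathscr{L} = (\UU,\mathrm{coad})$ to $\mathrm{ad} = (\UU,\mathrm{ad})$; $\UU$-equivariance was recorded in \eqref{equivarianceDrinfeld}, and factorizability is precisely the invertibility of $\Phi$. Since $\mathrm{ad}$ carries a canonical BP-Hopf algebra structure in $\UU\text{-}\mathrm{mod}$ (this is the input used to build the original KL TQFT of the ``for end'' version stated above), we can pull this structure back along $\Phi$: define the multiplication, unit, comultiplication, counit, antipode, integral $\lambda$, cointegral $\Lambda$, copairing $w$, and ribbon automorphism $\tau$ on $\mathscr{L}$ so that $\Phi$ is by construction an isomorphism of BP-Hopf algebras. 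The factorizability of the BP-Hopf algebra $\mathrm{ad}$ is transported along $\Phi$ to factorizability of the pulled-back structure on $\mathscr{L}$.

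Next I would apply the universal property of $\mathrm{Cob}^{\sigma}$ from \cite{BBDP}: $\mathrm{Cob}^{\sigma}$ is the braided balanced category freely generated by a factorizable BP-Hopf algebra object (namely $\Sigma_1$, with the anomaly-free condition dropped in the $\sigma$-refinement). Hence the pair $(\mathscr{L}, \text{BP-Hopf structure pulled back via }\Phi)$ determines a unique braided balanced functor
\[ \mathrm{KL}^{\mathscr{L}} : \mathrm{Cob}^{\sigma} \longrightarrow \UU\text{-}\mathrm{mod} \]
sending $\Sigma_1$ to $\mathscr{L}$ and each generating morphism in the BP-Hopf structure of $\Sigma_1$ to the corresponding structure morphism of $\mathscr{L}$. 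Equivalently, $\mathrm{KL}^{\mathscr{L}}$ can be described directly as $\mathrm{KL}$ conjugated by the functor-level isomorphism induced by $\Phi$ on the generator, but the universal property version is cleanest and avoids any explicit manipulation on cobordisms.

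The main (and essentially only) obstacle is verifying that the pull-back of the BP-Hopf algebra structure along $\Phi$ really is a \emph{factorizable} BP-Hopf structure in the sense of \cite[Def.\,2.6.1]{BBDP}. The underlying Hopf algebra axioms are automatic because $\Phi$ is a Hopf algebra isomorphism in $\UU\text{-}\mathrm{mod}$ (this is the content of \eqref{PhiMorphismBialgebras} combined with the compatibility with the antipode, which follows from the standard identity $\Phi \circ S_{\mathscr{L}} = S_{\UU} \circ \Phi$ for the Drinfeld map). The integral, cointegral, copairing, and ribbon automorphism axioms are preserved under transport along any Hopf algebra isomorphism, so they hold on $\mathscr{L}$ by construction; similarly the factorizability axiom $(\lambda\otimes \mathrm{id})\circ w = \Lambda$ is preserved. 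Once this verification is in place the theorem follows immediately from the universal property cited above, with no further calculation on the cobordism side.
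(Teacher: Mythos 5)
Your proposal is correct and follows essentially the same route as the paper: the paper also transports the BP-Hopf algebra structure from $\mathrm{ad}$ to $\mathscr{L}$ along the Drinfeld map $\Phi$ (an isomorphism of braided Hopf algebra objects by factorizability) and then invokes the presentation of $\mathrm{Cob}^{\sigma}$ from \cite{BBDP} to redefine the Kerler--Lyubashenko functor on the generator $\Sigma_1$. Your write-up is in fact somewhat more explicit than the paper's, which deliberately does not spell out the transported integral, cointegral, copairing and ribbon automorphism since the subsequent arguments do not depend on them.
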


\subsection{Stated skein vs Kerler-Lyubashenko's functor}\label{subsec:StSkVsKL}
Assume that the coribbon Hopf algebra $\OO$ is {\em finite-dimensional}. Then $(\mathcal{O},\mathcal{R},\mathsf{v})$ arises as the dual of a finite-dimensional ribbon Hopf algebra $(\UU, R,\nu)$, see \S\ref{sub:ComodVsMod} for details. In this subsection we make the crucial hypothesis that $\UU$ is {\em factorizable} (\textit{i.e.} the Drinfeld map \eqref{DrinfeldMap} is an isomorphism). 

We prefer to work with $\UU$ instead of $\mathcal{O}$ in this section, as this will make the discussion a bit simpler. Recall that $\mathcal{C} = \mathrm{Comod}\text{-}\OO \cong \UU\text{-}\mathrm{Mod}$, and this isomorphism is a ribbon functor (\S\ref{sub:ComodVsMod}). It restricts to a ribbon isomorphism between the subcategories $\mathcal{C}^{\mathrm{fin}}$ of finite-dimensional objects. As a result stated skeins can be colored by $\UU\text{-}\mathrm{mod}$ instead of $\mathrm{comod}\text{-}\OO$ and the skein relations are obtained from the Reshetikhin--Turaev functor for $\UU\text{-}\mathrm{mod}$. We thus allow ourselves to write $\mathcal{S}_{\UU}$ instead of $\mathcal{S}_{\OO}$.

\smallskip

For all $V \in \UU\text{-}\mathrm{mod}$, recall the $\mathscr{L}$-linear structure on the internal End algebras $\underline{\End}(V) \in \UU\text{-}\mathrm{mod}$ from \eqref{UactionOnIntHom}--\eqref{LlinearStructIntEnd}, or equivalently the QMM version of this structure in \eqref{QMMonIntEnd}.
\begin{lemma}[``Alekseev isomorphism'']\label{lemmaAlekseevIso}
Under the present assumptions on $\UU$, for any $g \geq 1$ there is an isomorphism of $\mathscr{L}$-linear algebras in $\UU\text{-}\mathrm{mod}$
\[ \mathcal{S}_{\UU}(\Sigma_g) \cong \underline{\End}\bigl( \mathscr{L}^{\otimes g} \bigr) \]
where $\mathscr{L}$ is the coend $\int^{X \in \UU\text{-}\mathrm{mod}} X^* \otimes X$ described in \S\ref{sub:ComodVsMod}.
\end{lemma}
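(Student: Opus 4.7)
The plan is to reduce the statement to the base case $g=1$ by exploiting the monoidal structure of both $\mathrm{Cob}$ and the target category, and then to establish the base case by an explicit ``Alekseev'' construction using factorizability. In $\mathrm{Cob}$ the tensor product of two once-punctured surfaces is realized geometrically by attaching a pair of pants along the two boundary circles (see Figure \ref{fig:cobmonoidal}), so the genus is additive and marked points behave as expected. Hence $\Sigma_g \cong \Sigma_1^{\otimes g}$ in $\mathrm{Cob}$. Applying the strict monoidal functor $\mathcal{S}_{\UU}$ of Theorem \ref{teo:monoidalfunctor} produces an isomorphism of $\mathscr{L}$-linear algebras $\mathcal{S}_{\UU}(\Sigma_g) \cong \mathcal{S}_{\UU}(\Sigma_1)^{\widetilde{\otimes} g}$. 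Dually, iterating Lemma \ref{lemmaIntHomAndBrProd} yields a canonical isomorphism of half-braided algebras $\underline{\End}(\mathscr{L})^{\widetilde{\otimes} g} \overset{\sim}{\to} \underline{\End}(\mathscr{L}^{\otimes g})$ (using that $\mathscr{L}$ is finite dimensional since $\UU$ is, hence rigid), which is also an isomorphism of $\mathscr{L}$-linear algebras by Corollary \ref{cor:correspondenceLmodvshalfbraided}. Combining these two reductions, the whole statement follows once the case $g = 1$ is proved.

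For the base case, I would construct the isomorphism $\mathcal{S}_{\UU}(\Sigma_1) \overset{\sim}{\to} \underline{\End}(\mathscr{L})$ explicitly. The once-punctured torus carries two canonical families of stated ``handle'' skeins: for every $V \in \UU\text{-}\mathrm{mod}$ and $f \otimes x \in V^* \otimes V$ one defines $\mathfrak{a}(f \otimes x)$ and $\mathfrak{b}(f \otimes x)$ in $\mathcal{S}_{\UU}(\Sigma_1)$ by pushing an oriented $V$-colored stated arc along the two standard generators of $\pi_1(\Sigma_1)$ and then connecting it back to the marked point. Dinaturality in $V$ produces morphisms of $\OO$-comodules $\mathfrak{a}, \mathfrak{b} : \mathscr{L} \to \mathcal{S}_{\UU}(\Sigma_1)$, and a short graphical argument using the crossing skein relation of Figure \ref{skeinRmatrix} together with the relevant local isotopy on $\Sigma_1$ shows that $\mathfrak{a}$ and $\mathfrak{b}$ satisfy an Alekseev--Grosse--Schomerus-type commutation relation. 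Moreover, isotoping the monogone of Figure \ref{QMMstatedSkein} so that it wraps both handles yields an expression of the canonical $\mathscr{L}$-linear structure $\mathfrak{d}$ in terms of $\mathfrak{a}, \mathfrak{b}$ and the coproduct of $\mathscr{L}$. On the other side, under the factorizability assumption the Drinfeld map $\Phi$ of \eqref{DrinfeldMap} is an isomorphism, so $\mathscr{L}$ is self-dual and $\underline{\End}(\mathscr{L}) \cong \mathscr{L} \otimes \mathscr{L}^* \cong \mathscr{L} \otimes \mathscr{L}$ canonically, producing two analogous morphisms $\mathscr{L} \to \underline{\End}(\mathscr{L})$ (one acting by left multiplication on the first factor, the other by a twisted right multiplication dictated by the $\mathscr{L}$-linear structure of Example \ref{LlinearStructEndV}). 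Mapping $\mathfrak{a}, \mathfrak{b}$ to these two generators defines the desired morphism of $\mathscr{L}$-linear algebras.

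The main obstacle is verifying that this assignment is a well-defined isomorphism. Well-definedness amounts to matching, for every $V, W \in \UU\text{-}\mathrm{mod}$, the cross-relation between $\mathfrak{a}_V$ and $\mathfrak{b}_W$ on the skein side with the corresponding relation in $\underline{\End}(\mathscr{L})$; both can be written in terms of the universal $R$-matrix, and this is essentially the content of the classical AGS/Buffenoir--Roche presentation. Bijectivity is where factorizability genuinely enters: the subalgebra generated by $\mathfrak{a}(\mathscr{L})$ and $\mathfrak{b}(\mathscr{L})$ exhausts $\mathcal{S}_{\UU}(\Sigma_1)$ by a standard slicing argument on skeins on $\Sigma_1$, while on the endomorphism side the image generates all of $\underline{\End}(\mathscr{L})$ precisely because $\Phi$ is bijective. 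Injectivity can then be deduced from an equality of dimensions, or more cleanly from the $\mathscr{L}$-bilinear structure of both sides. In practice, since \cite[\S 6.2]{BFR} already identifies stated skein algebras with the graph algebras $\mathcal{L}_{g,0}$ and the Alekseev isomorphism $\mathcal{L}_{1,0} \cong \underline{\End}(\mathscr{L})$ is classical in that setting, I would invoke this identification as a black box and limit the proof of the base case to verifying compatibility with the $\mathscr{L}$-linear structures on both sides, which follows from the description of $\mathfrak{d}$ given above.
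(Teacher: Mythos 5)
Your reduction to the case $g=1$ is exactly the paper's first step, and both arguments ultimately lean on the identification of $\mathcal{S}_{\UU}(\Sigma_1)$ with $\End_k(\mathscr{L})$ from \cite{BFR} and \cite{F}; but the construction of the base-case isomorphism and the verification of $\mathscr{L}$-linearity diverge from the paper. The paper introduces no handle generators: it takes the solid torus $M \in \Hom_{\mathrm{Cob}}(\varnothing,\Sigma_1)$, shows that $\mathcal{S}_{\UU}(M)\cong\mathscr{L}$ as $\UU$-modules (surjectivity of a dinatural map $J:\mathscr{L}\to\mathcal{S}_{\UU}(M)$ via skein relations, non-vanishing via an embedding into $B^3$ and the RT functor, and the fact that any module over $\End_k(\mathscr{L})$ has dimension a multiple of $\dim_k\mathscr{L}$), and then defines the isomorphism by $\Psi(\Gamma)(\psi)=\Gamma\smallblacktriangleright\psi$, i.e. realizes $\underline{\End}(\mathscr{L})$ as $\underline{\End}\bigl(\mathcal{S}_{\UU}(M)\bigr)$. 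The payoff is that the compatibility $\Psi\circ\mathfrak{d}_1=\mathfrak{d}_2$ becomes a short computation using the exchange relation \eqref{QMMGJSforComodAlg} and formula \eqref{LlinearStructIntEnd}, and the construction feeds directly into the proof of Theorem \ref{teo:commutativediagram}. Your AGS-style route with $\mathfrak{a},\mathfrak{b}$ is viable but costs more: well-definedness, the cross-relations, and bijectivity each need separate verification, which is why you sensibly retreat to quoting \cite{BFR} as a black box.

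The one place your proposal is genuinely thin is the final compatibility check, which is the actual new content of the lemma beyond the literature. An algebra isomorphism $\mathcal{S}_{\UU}(\Sigma_1)\cong\underline{\End}(\mathscr{L})$ in $\UU$-mod does not automatically intertwine the two quantum moment maps. You assert this "follows from the description of $\mathfrak{d}$ in terms of $\mathfrak{a},\mathfrak{b}$ and the coproduct," but that description is itself only sketched, and the black-box isomorphism you invoke is not a priori expressed in terms of $\mathfrak{a},\mathfrak{b}$, so the two would still need to be matched. Concretely you must show that the skein $\mathfrak{d}(\varphi)$ of Figure \ref{QMMstatedSkein} acts on whatever realizes $\mathscr{L}$ by $\psi\mapsto\Phi(\varphi)\cdot\psi$, with $\Phi$ the Drinfeld map \eqref{DrinfeldMap}; this is where factorizability enters on the target side, and it is a computation, not a formality. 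With that computation supplied, your argument closes.
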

\begin{proof}
It suffices to prove this for $g=1$, because $\mathcal{S}_{\UU}(\Sigma_g) \cong \mathcal{S}_{\UU}(\Sigma_1)^{\widetilde{\otimes}\,g}$ and $\underline{\End}\bigl( \mathscr{L}^{\otimes g} \bigr) \cong \underline{\End}(\mathscr{L})^{\widetilde{\otimes}\, g}$ as $\mathscr{L}$-linear algebras (Lemma \ref{lemmaIntHomAndBrProd}). A direct proof for all $g$ is also possible, but more cumbersome. Let $M$ be a ``solid torus'', viewed as an element $M \in \Hom_{\mathrm{Cob}}(\varnothing, \Sigma_1)$. There is an $\UU$-morphism $J : \mathscr{L} \to \mathcal{S}_{\UU}(M)$ defined as follows, using the universal dinatural transformation $i$ of $\mathscr{L}$: 
\begin{equation}\label{cobCoend}
%% Creator: Inkscape 1.1.2 (0a00cf5339, 2022-02-04), www.inkscape.org
%% PDF/EPS/PS + LaTeX output extension by Johan Engelen, 2010
%% Accompanies image file '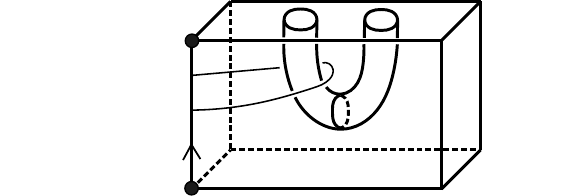' (pdf, eps, ps)
%%
%% To include the image in your LaTeX document, write
%%   \input{<filename>.pdf_tex}
%%  instead of
%%   \includegraphics{<filename>.pdf}
%% To scale the image, write
%%   \def\svgwidth{<desired width>}
%%   \input{<filename>.pdf_tex}
%%  instead of
%%   \includegraphics[width=<desired width>]{<filename>.pdf}
%%
%% Images with a different path to the parent latex file can
%% be accessed with the `import' package (which may need to be
%% installed) using
%%   \usepackage{import}
%% in the preamble, and then including the image with
%%   \import{<path to file>}{<filename>.pdf_tex}
%% Alternatively, one can specify
%%   \graphicspath{{<path to file>/}}
%% 
%% For more information, please see info/svg-inkscape on CTAN:
%%   http://tug.ctan.org/tex-archive/info/svg-inkscape
%%
\begingroup%
  \makeatletter%
  \providecommand\color[2][]{%
    \errmessage{(Inkscape) Color is used for the text in Inkscape, but the package 'color.sty' is not loaded}%
    \renewcommand\color[2][]{}%
  }%
  \providecommand\transparent[1]{%
    \errmessage{(Inkscape) Transparency is used (non-zero) for the text in Inkscape, but the package 'transparent.sty' is not loaded}%
    \renewcommand\transparent[1]{}%
  }%
  \providecommand\rotatebox[2]{#2}%
  \newcommand*\fsize{\dimexpr\f@size pt\relax}%
  \newcommand*\lineheight[1]{\fontsize{\fsize}{#1\fsize}\selectfont}%
  \ifx\svgwidth\undefined%
    \setlength{\unitlength}{269.09222169bp}%
    \ifx\svgscale\undefined%
      \relax%
    \else%
      \setlength{\unitlength}{\unitlength * \real{\svgscale}}%
    \fi%
  \else%
    \setlength{\unitlength}{\svgwidth}%
  \fi%
  \global\let\svgwidth\undefined%
  \global\let\svgscale\undefined%
  \makeatother%
  \begin{picture}(1,0.34893728)%
    \lineheight{1}%
    \setlength\tabcolsep{0pt}%
    \put(0,0){\includegraphics[width=\unitlength,page=1]{cobCoend.pdf}}%
    \put(0.30771623,0.20973747){\color[rgb]{0,0,0}\makebox(0,0)[lt]{\lineheight{1.25}\smash{\begin{tabular}[t]{l}$f$\end{tabular}}}}%
    \put(0,0){\includegraphics[width=\unitlength,page=2]{cobCoend.pdf}}%
    \put(0.30671088,0.14706315){\color[rgb]{0,0,0}\makebox(0,0)[lt]{\lineheight{1.25}\smash{\begin{tabular}[t]{l}$x$\end{tabular}}}}%
    \put(0.46536956,0.14282096){\color[rgb]{0,0,0}\makebox(0,0)[lt]{\lineheight{1.25}\smash{\begin{tabular}[t]{l}$_X$\end{tabular}}}}%
    \put(0.9419705,0.03580435){\color[rgb]{0,0,0}\makebox(0,0)[lt]{\lineheight{1.25}\smash{\begin{tabular}[t]{l}$\varnothing$\end{tabular}}}}%
    \put(0.9429431,0.3257375){\color[rgb]{0,0,0}\makebox(0,0)[lt]{\lineheight{1.25}\smash{\begin{tabular}[t]{l}$\Sigma_1$\end{tabular}}}}%
    \put(0,0){\includegraphics[width=\unitlength,page=3]{cobCoend.pdf}}%
    \put(0.98015428,0.18449239){\color[rgb]{0,0,0}\makebox(0,0)[lt]{\lineheight{1.25}\smash{\begin{tabular}[t]{l}(Cob)\end{tabular}}}}%
    \put(0,0){\includegraphics[width=\unitlength,page=4]{cobCoend.pdf}}%
    \put(-0.00104502,0.16812282){\color[rgb]{0,0,0}\makebox(0,0)[lt]{\lineheight{1.25}\smash{\begin{tabular}[t]{l}$J\bigl(i_X(f \otimes x) \bigr)=$\end{tabular}}}}%
  \end{picture}%
\endgroup%

\end{equation}
for all $X \in \UU\text{-}\mathrm{mod}$, $f \in X^*$ and $x \in X$. We claim that $J$ is an isomorphism of $\UU$-modules. First, using the skein relations (Fig.\,\ref{fig:SkeinRelation}), we see that any element in $\mathcal{S}_{\UU}(M)$ can be transformed to a linear combination of elements of the form \eqref{cobCoend}. Hence $J$ is surjective. Second, it is known by combining \cite[Th.\,6.5]{BFR} and \cite[Th.\,4.8]{F} that $\mathcal{S}_{\UU}(\Sigma_1) \cong \End_k(\mathscr{L})$ as algebras.\footnote{Actually \cite{BFR} uses the opposite convention for the product in stated skein algebras, but the isomorphism remains true within our convention since $\End_k(\mathscr{L})^{\mathrm{op}} \cong \End_k(\mathscr{L}^*) \cong \End_k(\mathscr{L})$ thanks to the transpose. Also note that the main point of the present proof is to show an isomorphism of \fbox{$\mathscr{L}$-linear} algebras. This could be deduced directly from previous works but the different conventions confuses things a bit. For convenience we decided to provide a proof within the present setting, which moreover gives a topological explanation of the Alekseev isomorphism.} Hence, since $\mathcal{S}_{\UU}(M)$ is a left $\mathcal{S}_{\UU}(\Sigma_1)$-module, the dimension of $\mathcal{S}_{\UU}(M)$ is necessarily a multiple of $\dim_k\mathscr{L}$ (see e.g. \cite[Th.\,3.3.1]{RepTh}). By surjectivity of $J$, two alternatives are left: $\dim_k\mathcal{S}_{\UU}(M) = 0$ or $\dim_k\mathcal{S}_{\UU}(M) = \dim_k \mathscr{L}$. The former is not possible because there exists an embedding $i:M \hookrightarrow B^3$ where the marked $3$-ball $B^3$ is the cobordism $\mathrm{Id}_{D^2}$, and we claim that $\mathcal{S}_{\UU}(B^3)=k$. Indeed the Reshetikhin-Turaev functor gives a morphism $\mathrm{RT}:\mathcal{S}_{\UU}(B^3)\to k$ by sending a skein $s$ whose state is $v$ to $RT(s)(v)\in k$. Then, the composition $\mathrm{RT}\circ i_*$  gives a non zero morphism to $k$ (the image of the empty skein is $1$). Hence $J$ identifies $\mathcal{S}_{\UU}(M)$ with $\mathscr{L}$ as $\UU$-modules.
\\By what has been said, we have an isomorphism of $\UU$-module-algebras
\[ \Psi : \mathcal{S}_{\UU}(\Sigma_1) \to \underline{\End}\bigl(   \mathcal{S}_{\UU}(M) \bigr) \cong \underline{\End}(\mathscr{L}), \quad \Psi(\Gamma)(\psi) = \Gamma \smallblacktriangleright \psi \]
where $\smallblacktriangleright$ is the left action of $\mathcal{S}_{\UU}(\Sigma_1)$ on $\mathcal{S}_{\UU}(M)$ and the target is the internal End because $\smallblacktriangleright$ is $\UU$-equivariant. It remains to show that $\Psi$ commutes with the $\mathscr{L}$-linear structure, \textit{i.e.} $\Psi \circ \mathfrak{d}_1 = \mathfrak{d}_2$, where $\mathfrak{d}_1 : \mathscr{L} \to \mathcal{S}_{\UU}(\Sigma_1)$ is defined in Fig.\,\ref{QMMstatedSkein} while $\mathfrak{d}_2 : \mathscr{L} \to \underline{\End}(\mathscr{L})$ is defined in \eqref{LlinearStructIntEnd}. First, it is readily seen that $\mathfrak{d}_1(\varphi) \smallblacktriangleright 1_{\mathscr{L}} = \varphi(1_{\UU})1_{\mathscr{L}}$ for all $\varphi \in \mathscr{L}$, where $1_{\mathscr{L}} \in \mathscr{L}$ is identified with the empty skein $\emptyset \in \mathcal{S}_{\UU}(M)$. Let $\psi \in \mathscr{L} = \mathcal{S}_{\UU}(M)$ and write $\psi = \Gamma \smallblacktriangleright \emptyset$ for some $\Gamma \in \mathcal{S}_{\UU}(\Sigma_1)$. Then for all $\varphi \in \mathscr{L}$ we have (using the $\mathscr{L}$-linearity of stated skein algebras)
\begin{align*}
\Psi\bigl( \mathfrak{d}_1(\varphi) \bigr)(\psi) &= \mathfrak{d}_1(\varphi) \smallblacktriangleright \psi = \mathfrak{d}_1(\varphi) \Gamma \smallblacktriangleright  1_{\mathscr{L}}\overset{\eqref{QMMGJSforComodAlg}}{=}  (R'_iR''_j \cdot \Gamma)\,\mathfrak{d}_1\bigl( R'_j \triangleright \varphi \triangleleft R''_i \bigr) \smallblacktriangleright  1_{\mathscr{L}}\\
&= \varphi(R''_iR'_j) (R'_iR''_j \cdot \Gamma) \smallblacktriangleright  1_{\mathscr{L}} \overset{\eqref{DrinfeldMap}}{=} \bigl( \Phi(\varphi) \cdot \Gamma \bigr) \smallblacktriangleright  1_{\mathscr{L}} = \Phi(\varphi) \cdot \bigl( \Gamma \smallblacktriangleright  1_{\mathscr{L}} \bigr) \overset{\eqref{LlinearStructIntEnd}}{=} \mathfrak{d}_2(\varphi)(\psi)
\end{align*}
where the last unlabelled equality uses that $1_{\mathscr{L}}$ is $\UU$-invariant and that $\smallblacktriangleright$ is $\UU$-equivariant. Since this is true for all $\psi$ we are done.
\end{proof}

\indent Recall from Cor.\,\ref{cor:correspondenceLmodvshalfbraided} that we can write indifferently $\mathrm{Bim}^{\mathrm{hb}}_{\mathcal{C}}$ or $\mathrm{Bim}^{\mathscr{L}}_{\mathcal{C}}$. The following theorem, which is the main result of this section, explains that for finite dimensional factorizable Hopf algebras the stated skeins are nothing else than the endomorphisms of the state spaces of the Kerler-Lyubashenko TQFT: 
\begin{theorem}\label{teo:commutativediagram}
Let $\UU$ be a finite dimensional, factorizable ribbon Hopf algebra, $\mathcal{C}$ be the category of finite dimensional $\UU$-modules and $\overline{\mathcal{C}}$ be the full (monoidal) subcategory of $\mathcal{C}$ consisting of those objects $V \in \mathcal{C}$ which have a left dual $V^*$ and have $\boldsymbol{1}$ as a direct summand.
Then  the image of the functor $KL$ is contained in $\overline{\mathcal{C}}$, and the following diagram of balanced monoidal functors commutes:
 \begin{equation*}
\xymatrix@C=4em{
\mathrm{Cob}^{\sigma} \ar[r]^-{KL} \ar[d]_-{\mathrm{Forget}} & \overline{\mathcal{C}}\ar[d]^-{\underline{\End}}\\
\mathrm{Cob} \ar[r]_-{\mathcal{S}_{\UU} = \mathcal{S}_{\OO}}  & \mathrm{Bim}_{\mathcal{C}}^{\mathscr{L}}
} \end{equation*}
where $\underline{\End}: \overline{\mathcal{C}}\to  \mathrm{Bim}_{\mathcal{C}}^{\mathscr{L}}$ was described in \eqref{internalEndFunctor} and $KL$ in Theorem \ref{teo:KLcoend}. 
\end{theorem}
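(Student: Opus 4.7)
The plan is to exploit Theorem \ref{teo:presentation} together with its analogue for $\mathrm{Cob}^\sigma$, obtained by dropping the anomaly-free condition on the BP-Hopf algebra object $\Sigma_1$: $\mathrm{Cob}$ and $\mathrm{Cob}^\sigma$ are the free braided monoidal categories generated by $\Sigma_1$ endowed with an anomaly-free factorizable, respectively a factorizable, BP-Hopf algebra structure. Both $\underline{\End}\circ\mathrm{KL}$ and $\mathcal{S}_\UU\circ\mathrm{Forget}$ are braided monoidal functors $\mathrm{Cob}^\sigma\to\mathrm{Bim}^{\mathrm{hb}}_\mathcal{C}$, so by this universal property they will be naturally isomorphic once we exhibit an isomorphism between their values on $\Sigma_1$, compatible with the two induced factorizable BP-Hopf algebra structures.

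First I would verify that $\mathrm{KL}$ lands in $\overline{\mathcal{C}}$. Since $\mathscr{L}=\mathrm{KL}(\Sigma_1)$ is a Hopf algebra in $\UU\text{-}\mathrm{mod}$, the identity $\varepsilon_\mathscr{L}\circ\eta_\mathscr{L}=\mathrm{id}_{\boldsymbol{1}}$ exhibits $\boldsymbol{1}$ as a direct summand, and the splitting tensorises, so $\mathrm{KL}(\Sigma_g)=\mathscr{L}^{\otimes g}\in\overline{\mathcal{C}}$ for every $g$. Next, the Alekseev isomorphism of Lemma \ref{lemmaAlekseevIso} provides an isomorphism of $\mathscr{L}$-linear algebras
\[\Psi_g:\mathcal{S}_\UU(\Sigma_g)\xrightarrow{\sim}\underline{\End}(\mathscr{L}^{\otimes g}),\]
which by Corollary \ref{cor:correspondenceLmodvshalfbraided} is also an isomorphism in $\mathrm{Bim}^{\mathrm{hb}}_\mathcal{C}$. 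This fixes the natural isomorphism on objects and, since $\Psi_1$ is in particular an algebra isomorphism, also matches the multiplication and unit generators of the BP-Hopf algebra structure on $\Sigma_1$.

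The third step is to verify that $\Psi_1$ intertwines the remaining BP-Hopf algebra generators of $\Sigma_1$: comultiplication, counit, antipode, copairing, integral, cointegral and ribbon. For each such generator, realised by a cobordism $f:\Sigma_1^{\otimes m}\to\Sigma_1^{\otimes n}$, both bimodules $\mathcal{S}_\UU(\mathrm{Forget}(f))$ and $\underline{\End}(\mathrm{KL}(f))=\underline{\Hom}(\mathscr{L}^{\otimes m},\mathscr{L}^{\otimes n})$ are finite-dimensional $\mathscr{L}$-compatible bimodules over the internal End algebras in $\UU\text{-}\mathrm{mod}$, by Proposition \ref{prop:skeinLlinear} and Lemma \ref{lemmaMatrixAlgebra}(2) respectively. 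The factorisability of $\UU$ then allows Lemma \ref{lemmaUniqueBimodCoh} to apply, yielding
\[\mathcal{S}_\UU(\mathrm{Forget}(f))\cong\underline{\Hom}(\mathscr{L}^{\otimes m},\mathscr{L}^{\otimes n})^{\oplus N_f}\]
as bimodules, for some nonnegative integer $N_f$.

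The main obstacle will be to establish $N_f=1$ for every BP-Hopf algebra generator. This can be achieved by a dimension count, using the identification of stated skein modules of the relevant cobordisms with appropriate tensor powers of $\mathscr{L}$, available either through the link between stated skein theory and factorisation homology developed in \cite{Ha}, or via direct skein-theoretic cutting arguments in the spirit of the proof of Lemma \ref{lemmaAlekseevIso}: iterated application of Lemma \ref{lem:cutting} along properly embedded discs decomposes the cobordism into elementary pieces whose stated skein modules have the expected dimension $(\dim_k\UU)^{m+n}$. Once $N_f=1$ is established for every generator, the resulting bimodule isomorphisms assemble, through the universal property, into a natural isomorphism of braided monoidal functors $\underline{\End}\circ\mathrm{KL}\cong\mathcal{S}_\UU\circ\mathrm{Forget}$, which is the content of the theorem.
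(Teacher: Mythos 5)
Your overall strategy is the same as the paper's: reduce to the BP-Hopf generators via Theorem \ref{teo:presentation}, identify objects via the Alekseev isomorphism of Lemma \ref{lemmaAlekseevIso}, and then use the rigidity of $\mathscr{L}$-compatible bimodules over internal End algebras (Lemma \ref{lemmaUniqueBimodCoh}) together with a skein-theoretic dimension count to pin down each generator. Your additional check that $\mathrm{KL}$ lands in $\overline{\mathcal{C}}$ (via the splitting $\varepsilon_{\mathscr{L}}\circ\eta_{\mathscr{L}}=\mathrm{id}_{\boldsymbol{1}}$) is a welcome detail that the paper's proof leaves implicit.

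There is, however, one concrete error. You claim that since $\Psi_1$ is an algebra isomorphism it ``also matches the multiplication and unit generators of the BP-Hopf algebra structure on $\Sigma_1$,'' and you consequently omit the multiplication cobordism from the list of generators to be checked in your third and fourth steps. This conflates two different structures. The isomorphism $\Psi_1:\mathcal{S}_{\UU}(\Sigma_1)\to\underline{\End}(\mathscr{L})$ intertwines the \emph{internal} algebra structures (stacking of skeins in $\Sigma_1\times[-1,1]$ versus composition of endomorphisms), which is what identifies the two \emph{objects} of $\mathrm{Bim}^{\mathscr{L}}_{\mathcal{C}}$. The BP-Hopf multiplication of $\Sigma_1$, by contrast, is a cobordism $m\in\Hom_{\mathrm{Cob}}(\Sigma_1\otimes\Sigma_1,\Sigma_1)$ whose image under $\mathcal{S}_{\UU}\circ\mathrm{Forget}$ is the stated skein module of a genus-two handlebody complement, a \emph{bimodule} over $\mathcal{S}_{\UU}(\Sigma_1)^{\widetilde{\otimes}2}$ and $\mathcal{S}_{\UU}(\Sigma_1)$; knowing that $\Psi_1$ is an algebra map says nothing about this bimodule. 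In fact the identification $\mathcal{S}_{\UU}(m)\cong\underline{\Hom}(\mathscr{L}^{\otimes 2},\mathscr{L})$ is precisely the hardest case in the argument: one must bound $\dim_k\mathcal{S}_{\UU}(m)$ by $(\dim_k\mathscr{L})^3$ by isotoping every skein to one side of a splitting disc, pushing the topology out through the boundary arc via the defining relations, and reducing the states to a basis, and separately show the module is nonzero via an embedding into a ball and the Reshetikhin--Turaev evaluation. Your own machinery (Lemma \ref{lemmaUniqueBimodCoh} plus the dimension count) does apply to $m$, so the fix is simply to include the multiplication and unit cobordisms among the generators treated in your fourth step rather than declaring them already handled; but as written the proof never verifies the case that carries most of the content.
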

\begin{proof}
By Lemma \ref{lemmaAlekseevIso} the diagram is commutative on the level of objects. 
Therefore, by Theorem \ref{teo:presentation}, it is sufficient to prove that the image of the structural morphisms of the BP-Hopf algebra $\Sigma_1$ through the functors $\underline{\End}\circ KL$ and $\mathcal{S}_{\UU}\circ \mathrm{Forget}$ are isomorphic bimodules. 
Observe that by Lemma \ref{lemmaUniqueBimodCoh}, it is sufficient to prove that the $k$-dimension of the stated skein module of each cobordism representing one of the structural morphism of $\Sigma_1$ is positive and is less than or equal to the dimension of the corresponding bimodule through $\underline{\End}\circ KL$. 
This is particularly simple to check for the morphisms $\eta,\Lambda:\Hom_{\mathrm{Cob}}(\boldsymbol{1},\Sigma_1)$ and $\epsilon,\lambda:\Hom_{\mathrm{Cob}}(\Sigma_1,\boldsymbol{1})$ which are all represented by a marked $3$-manifold $M$ homeomorphic to a solid torus and with a single oriented arc $\mathcal{N}$ in the boundary, differing only in the orientation of $M$ and in the identification of $\partial M$ with $\Sigma_1$. 
In all these cases, by arguments similar to the ones in the proof of Lemma \ref{lemmaAlekseevIso}, we have that $\mathcal{S}_{\UU}(M)$ is isomorphic to $\mathscr{L}=KL(\Sigma_1)$ as an $\UU$-module and so it has the same dimension as $\underline{\Hom}(\boldsymbol{1},KL(\Sigma_1))\simeq \mathscr{L}$ or $\underline{\Hom}(KL(\Sigma_1),\boldsymbol{1})\simeq \mathscr{L}^*$ (where the isomorphisms are intended as $k$-vector spaces). Thanks to $\mathscr{L}$-coherence (item 2 in Prop.\,\ref{prop:skeinLlinear} ) and to Lemma \ref{lemmaUniqueBimodCoh}, these isomorphisms are also $\UU$-module isomorphisms. We stress here that the functor $\underline{\End}$ is very rough on morphisms : two morphisms $f,g:V\to W$ are send by it to the same bimodule $\underline{\Hom}(V,W)$. This is the reason why we did not describe explicitly the BP-Hopf algebra structural morphisms of $\mathscr{L}$ in Subsection \ref{sec:BPhopf}: the precise definition of these morphisms has no importance for the present proof.

The remaining structural morphisms are the product $m:\Sigma_1\otimes \Sigma_1\to \Sigma_1$, the co-product $\Delta:\Sigma_1\to   \Sigma_1\otimes \Sigma_1$, the antipode and twist $S,\tau:\Sigma_1\to \Sigma_1$ and the pairing $w:\Sigma_1\otimes \Sigma_1\to \boldsymbol{1}$. 
We show the claimed inequality for the product, the coproduct being similar and $\tau,S$ being easier. So we have to prove that $$1\leq \dim_k \mathcal{S}_{\UU}(m)\leq \dim_k \underline{\Hom}\bigl(\mathscr{L}^{\otimes 2},\mathscr{L}\bigr)=(\dim_k \mathscr{L})^3.$$
To show the right inequality, we prove that a generating family of skeins in $\mathcal{S}_{\UU}(m)$ are those of the three kinds shown in the lower right part of Figure \ref{fig:skeingenerators}.
First of all observe that $m$ is split by the vertical plane $\pi$ depicted in the figure in two ``sides", say $m_+$ and $m_-$ and that, up to isotopy, we can suppose that each skein is almost entirely contained in $m_+$ except for some arcs encircling the upper torus only along the leftmost annulus as shown in the upper right part of Figure \ref{fig:skeingenerators}.  Then we can use the skein relations (Fig.\,\ref{fig:SkeinRelation}) in order to ``push all the topology" of the skeins out of the cube: in the upper right part of Figure \ref{fig:skeingenerators} we represent the part to be pushed out by a white box. In this way we are left with linear combinations of skeins which consist only of the three types of parallel strands displayed in the lower left part of Fig.\,\ref{fig:skeingenerators}.
Finally by arguing as in \eqref{dinatSpecialMonogon} the state of each component of the skein can be supposed to be a linear combination of states of the form $\varepsilon_{\UU^*} \otimes \omega_i$ where $\omega_1,\ldots \omega_d$ is a fixed basis of $\UU^*$ and $\varepsilon_{\UU^*} : \UU^* \to k$ is the counit.
This proves that $\dim_k \mathcal{S}_{\UU}(m)\leq \dim_k \underline{\Hom}\bigl(\mathscr{L}^{\otimes 2},\mathscr{L}\bigr)=(\dim_k \mathscr{L})^3.$
To prove that $1\leq \dim_k \mathcal{S}_{\UU}(m)$, it is sufficient to show that  $\mathcal{S}_{\UU}(m)$ is non zero.  For this we use the same argument as in the proof of Lemma \ref{lemmaAlekseevIso}: there exists an embedding $i:m\hookrightarrow B^3$ where the marked $3$-ball $B^3$ is the cobordism $\mathrm{Id}_{D^2}$, and the Reshetikhin-Turaev functor gives a non zero morphism to $\mathcal{S}_{\UU}(B^3) \to k$ (the image of the empty skein is $1$). 
By Lemma \ref{lemmaUniqueBimodCoh} this isomorphism is an $\UU$-module isomorphism and the proof is complete. 
\end{proof}

\begin{figure}[h!]
\centering
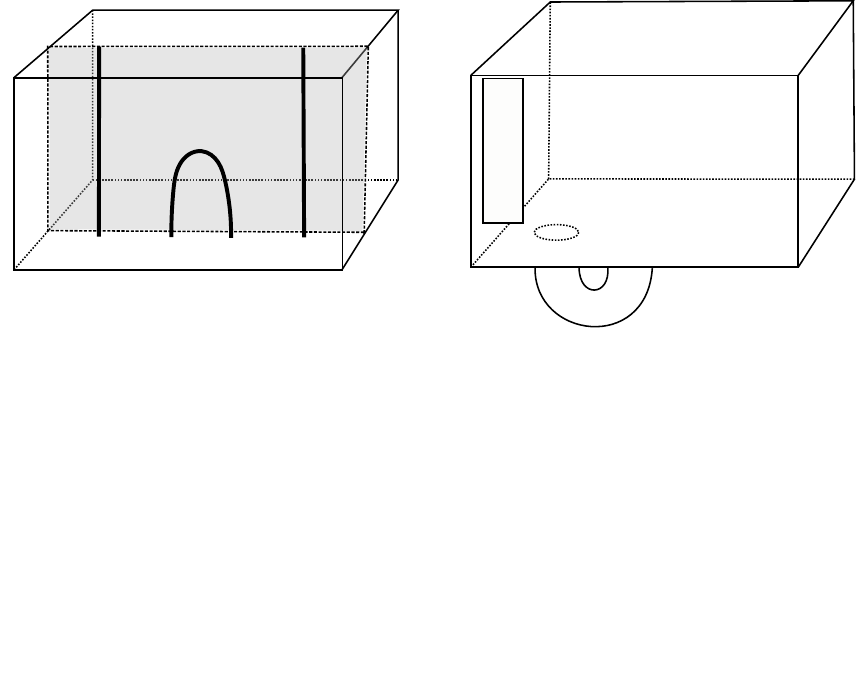
\caption{The cobordism $m$ is the complement of a tubular neighborhood of the thick black path in the genus $2$-handlebody. In the upper left figure, the plane $\pi$ (depicted in grey) divides $m$ into two submanifolds $m_\pm$: up to isotopy we can make sure that each skein is almost entirely contained in $m_+$ except for some small arcs linking as shown on the upper right figure, in which the coupon stands for a complicated ribbon graph $G$ (to get convinced of this statement, one should isotope the links along the black path so to push all the linking in the upper left part of it). Up to pushing $G$ out of the boundary arc thanks to defining relation in Fig.\,\ref{fig:SkeinRelation}, we can reduce to a skein as in the lower left figure. Finally arguing as in \eqref{dinatSpecialMonogon}, we can reduce each of the three families of skeins into a linear combination of skeins as those in the lower right figure.}
\label{fig:skeingenerators}
\end{figure}

\appendix

\section{Natural transformations for LFP categories}
The concept of {\em LFP category} has been recalled at the beginning of \S\ref{sectionLlinear}. We denote by $\mathcal{C}_{\mathrm{cp}}$ the full subcategory of compact objects. The goal of this appendix is to show that a natural transformation $F \Rightarrow G$ is uniquely determined by its values on compact objects, provided that $F$ is cocontinuous. Precisely:

\begin{lemma}\label{lemmaExtensionNat}
Let $\mathcal{C}$ be a LFP category, $\mathcal{D}$ be any category and $F,G : \mathcal{C} \to \mathcal{D}$ be functors. Assume that we are given a natural transformation $\alpha = \bigl( \alpha_K : F(K) \to G(K) \bigr)_{K \in \mathcal{C}_{\mathrm{cp}}}$ defined on the subcategory of compact objects.
\\1. If $F$ is cocontinuous, then there exists a unique natural transformation $\widetilde{\alpha} : F \Rightarrow G$ such that $\widetilde{\alpha}_K = \alpha_K$ for all $K \in \mathcal{C}_{\mathrm{cp}}$.
\\2. If moreover $\alpha$ is an isomorphism, $G$ is cocontinuous and $\mathcal{D}$ is LFP, then $\widetilde{\alpha}$ is an isomorphism.
\end{lemma}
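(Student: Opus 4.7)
The approach will be to construct $\widetilde{\alpha}$ pointwise, by writing each $V \in \mathcal{C}$ as a filtered colimit of compact objects and using the cocontinuity of $F$; to establish naturality via the factorization property of Lemma \ref{lemmaFactoCompactObjects}; and to deduce item 2 from item 1 applied to the pointwise inverse of $\alpha$ on $\mathcal{C}_{\mathrm{cp}}$.

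For item 1, given $V \in \mathcal{C}$, I would fix a filtered colimit presentation $V = \mathrm{colim}_{X \in \mathcal{I}} K(X)$ by compacts, with universal cocone $(\phi_X : K(X) \to V)_{X \in \mathcal{I}}$. Since $F$ is cocontinuous, $(F(\phi_X))_X$ is universal for $F \circ K$. The naturality of $\alpha$ on morphisms in $\mathcal{C}_{\mathrm{cp}}$ makes $(G(\phi_X) \circ \alpha_{K(X)})_X$ into a cocone under $F \circ K$ with apex $G(V)$, which will determine a unique $\widetilde{\alpha}_V : F(V) \to G(V)$ characterized by
\[
\widetilde{\alpha}_V \circ F(\phi_X) = G(\phi_X) \circ \alpha_{K(X)} \qquad (X \in \mathcal{I}).
\]
Taking the trivial presentation for a compact object will give $\widetilde{\alpha}_K = \alpha_K$, and uniqueness of $\widetilde{\alpha}_V$ (hence independence from the chosen presentation) will be automatic from the defining relation.

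The main technical step will be naturality in $V$. Given $f : V \to W$, for each compact $K(X)$ in a presentation of $V$ I would apply Lemma \ref{lemmaFactoCompactObjects} to factor the composite $f \circ \phi_X : K(X) \to W$ as $\psi_Y \circ g$, with $L(Y)$ a compact appearing in a presentation of $W$ and $g : K(X) \to L(Y)$ a morphism in $\mathcal{C}_{\mathrm{cp}}$. Combining the defining relations for $\widetilde{\alpha}_V$ and $\widetilde{\alpha}_W$ with the naturality of $\alpha$ on $g$ should yield
\[
\widetilde{\alpha}_W \circ F(f) \circ F(\phi_X) = G(f) \circ \widetilde{\alpha}_V \circ F(\phi_X)
\]
for every $X$; universality of $(F(\phi_X))_X$ will then upgrade this to the naturality square $\widetilde{\alpha}_W \circ F(f) = G(f) \circ \widetilde{\alpha}_V$. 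This is the step that requires working with both presentations simultaneously, and it is where compactness of the $K(X)$'s is essential.

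For item 2, the plan is to invoke item 1 twice. The family $(\alpha_K^{-1})_{K \in \mathcal{C}_{\mathrm{cp}}}$ is a natural transformation $G|_{\mathcal{C}_{\mathrm{cp}}} \Rightarrow F|_{\mathcal{C}_{\mathrm{cp}}}$; since $G$ is cocontinuous, item 1 will extend it to some $\beta : G \Rightarrow F$. The composite $\beta \circ \widetilde{\alpha} : F \Rightarrow F$ restricts on $\mathcal{C}_{\mathrm{cp}}$ to the identity, so the uniqueness part of item 1 (applied to $F$) forces $\beta \circ \widetilde{\alpha} = \mathrm{id}_F$; the same reasoning applied to $G$ yields $\widetilde{\alpha} \circ \beta = \mathrm{id}_G$. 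Thus $\widetilde{\alpha}$ will be a natural isomorphism with no essentially new work beyond item 1.
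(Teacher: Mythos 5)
Your proposal is correct and follows essentially the same route as the paper: pointwise definition of $\widetilde{\alpha}$ via filtered colimit presentations and cocontinuity of $F$, naturality via the factorization property of Lemma \ref{lemmaFactoCompactObjects}, and invertibility in item 2 obtained from the same machinery applied to $\alpha^{-1}$ using cocontinuity of $G$. The only cosmetic differences are that the paper checks independence of the chosen presentation directly before proving naturality (whereas you deduce it a posteriori from the uniqueness of natural extensions) and constructs the pointwise inverse explicitly rather than invoking uniqueness twice; both variants are sound.
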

\begin{proof}
1. Let $C \in \mathcal{C}$ be an arbitrary object. By definition of a LFP category there exists a filtered category $\mathcal{I}$ and a functor $K : \mathcal{I} \to \mathcal{C}_{\mathrm{cp}}$ such that $C = \mathrm{colim} \, K$. Denote by $\phi = \bigl( \phi_X : K(X) \to C \bigr)_{X \in \mathcal{I}}$ the universal cocone of $K$. Cocontinuity means that $F(\phi) = \bigl( F(\phi_X) : FK(X) \to F(C) \bigr)_{X \in \mathcal{I}}$ is the universal cocone of $FK$. Note that $\bigl( G(\phi_X) \circ \alpha_{K(X)} : FK(X) \to G(C) \bigr)_{X \in \mathcal{I}}$ is also a cocone of $FK$; indeed for all morphism $f \in \Hom_{\mathcal{I}}(X,Y)$ we have
\[ G(\phi_{Y}) \circ \alpha_{K(Y)} \circ FK(f) = G(\phi_{Y}) \circ GK(f) \circ \alpha_{K(X)} = G(\phi_X) \circ \alpha_{K(X)}. \]
By the universal property of $F(\phi)$, we thus have a factorization
\begin{equation}\label{defExtension}
\xymatrix@C=4em@R=1.5em{
FK(X) \ar[r]^{\alpha_{K(X)}} \ar[d]_{F(\phi_X)} & GK(X) \ar[d]^{G(\phi_X)}\\
F(C) \ar[r]_{\exists!\, \widetilde{\alpha}_C} & G(C)
} \end{equation}
which holds for all $X \in \mathcal{I}$. We must show that $\widetilde{\alpha}_C$ does not depend on the choice of a colimit presentation of $C$. So let $\mathcal{I}'$ be another filtered category and $K' : \mathcal{I}' \to \mathcal{C}_{\mathrm{cp}}$ such that $C = \mathrm{colim}\, K'$, with universal cocone $\phi' = \bigl( \phi'_{X'} : K'(X') \to C \bigr)_{X' \in \mathcal{I}'}$. Similarly to $\widetilde{\alpha}_C$ in \eqref{defExtension}, let $\widetilde{\alpha}'_C : F(V) \to G(V)$ be defined by $\widetilde{\alpha}'_C \circ F(\phi'_{X'}) = G(\phi'_{X'}) \circ \alpha_{K'(X')}$ for all $X' \in \mathcal{I}'$. By Lemma \ref{lemmaFactoCompactObjects}, the morphism $\phi'_{X'} : K'(X') \to C$ can be factored as $\phi'_{X'} = \phi_X \circ g$ for some $X \in \mathcal{I}$ and $g : K'(X') \to K(X)$. But then we can use the naturality of $\alpha$ to obtain:
\begin{align*}
\widetilde{\alpha}'_C \circ F(\phi'_{X'}) = G(\phi'_{X'}) \circ \alpha_{K'(X')} = G(\phi_X) \circ G(g) \circ \alpha_{K'(X')} &= G(\phi_X) \circ \alpha_{K(X)} \circ F(g)\\
&= \widetilde{\alpha}_C \circ F(\phi_X) \circ F(g)= \widetilde{\alpha}_C \circ F(\phi'_{X'}).
\end{align*}
Since this is true for all $X' \in \mathcal{I}'$, we conclude that $\widetilde{\alpha}'_C = \widetilde{\alpha}_C$ because $F(\phi')$ is a universal cocone. We now prove naturality of $\widetilde{\alpha}$. So let $f \in \Hom_{\mathcal{C}}(C_1,C_2)$ and write $C_i = \mathrm{colim} \bigl( K_i : \mathcal{I}_i \to \mathcal{C}_{\mathrm{cp}} \bigr)$ with universal cocone $\phi^i = \bigl( \phi^i_X : K_i(X) \to C_i \bigr)_{X \in \mathcal{I}_i}$. By Lemma \ref{lemmaFactoCompactObjects}, the morphism $f \circ \phi^1_X : K_1(X) \to C_1$ can be factored as $f \circ \phi^1_{X} = \phi^2_{Y} \circ g$ for some $Y \in \mathcal{I}_2$ and $g : K_1(X) \to K_2(Y)$. Then, using this factorization and the naturality of $\alpha$, we get
\begin{align*}
G(f) \circ \widetilde{\alpha}_{C_1} \circ F(\phi^1_X) &\overset{\eqref{defExtension}}{=} G(f) \circ G(\phi^1_X) \circ \alpha_{K_1(X)} = G(\phi^2_Y) \circ G(g) \circ \alpha_{K_1(X)}\\
&\:= G(\phi^2_Y) \circ \alpha_{K_2(Y)} \circ F(g) = \widetilde{\alpha}_{C_2} \circ F(\phi^2_Y) \circ F(g) = \widetilde{\alpha}_{C_2} \circ F(f) \circ F(\phi^1_X).
\end{align*}
Since this is true for all $X \in \mathcal{I}_1$, we conclude that $G(f) \circ \widetilde{\alpha}_{C_1} = \widetilde{\alpha}_{C_2} \circ F(f)$ because $F(\phi^1)$ is a universal cocone.
\\2. By the same arguments which led to \eqref{defExtension}, we have a commutative diagram
\[ \xymatrix@C=4em@R=1.5em{
GK(X) \ar[r]^{\alpha^{-1}_{K(X)}} \ar[d]_{G(\phi_X)} & FK(X) \ar[d]^{F(\phi_X)}\\
G(C) \ar[r]_{\exists!} & F(C)
} \]
which holds for all $X \in \mathcal{I}$. The unique arrow on the bottom is nothing but the inverse of $\widetilde{\alpha}_C$.
\end{proof}

\section{Relating \texorpdfstring{$\mathscr{L}$}{coend}-modules and the Drinfeld center \texorpdfstring{$\mathcal{Z}(\mathcal{C})$}{}}\label{subsectionIsoZCLModC}
In this appendix, $\mathcal{C}$ is a category which satisfies assumptions \eqref{assumptionsCategoryC}. The goal is to show that each $\mathscr{L}$-module is naturally endowed with a half-braiding and that this construction yields an isomorphism of categories between $\mathscr{L}\text{-}\mathrm{mod}_{\mathcal{C}}$ and $\mathcal{Z}(\mathcal{C})$. This fact is well-known for finite braided tensor categories $\mathcal{C}$; the point here is to establish it under the assumptions \eqref{assumptionsCategoryC}. We also provide explicit details in the case $\mathcal{C} = \mathrm{Comod}\text{-}H$.

\smallskip

\indent Recall that we denote by $\mathcal{C}_{\mathrm{cp}}$ the full subcategory of compact objects (Def.\,\ref{defCompact}).

\begin{lemma}\label{lemmaHBonCompacts}
1. A half-braiding is uniquely determined by its values on compact objects. More precisely, if $V \in \mathcal{C}$ and $u = \bigl( u_K : V \otimes K \overset{\sim}{\to} K \otimes V \bigr)_{K \in \mathcal{C}_{\mathrm{cp}}}$ is a natural family of isomorphisms which satisfies the half-braiding property \eqref{axiomHalfBraiding}, then there exists a unique half-braiding $t : V \otimes - \overset{\sim}{\implies} - \otimes V$ such that $t_K = u_K$ for all $K \in \mathcal{C}_{\mathrm{cp}}$.
\\2. Let $(V,t), (V',t') \in \mathcal{Z}(\mathcal{C})$ and $f \in \Hom_{\mathcal{C}}(V,V')$. If
\[ \forall \, K \in \mathcal{C}_{\mathrm{cp}}, \quad t_K \circ (f \otimes \mathrm{id}_K) = (\mathrm{id}_K \otimes f) \circ t'_K \]
then $f \in \Hom_{\mathcal{Z}(\mathcal{C})}\bigl( (V,t),(V',t') \bigr)$, i.e. this equality holds for any $K \in \mathcal{C}$.
\end{lemma}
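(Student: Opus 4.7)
The key observation is that assumption \eqref{assumptionsCategoryC} makes the endofunctors $V \otimes -$ and $- \otimes V$ cocontinuous for every $V \in \mathcal{C}$, so Lemma~\ref{lemmaExtensionNat} is directly applicable with $F = V \otimes -$ and $G = - \otimes V$. For item 1, the naturality of $u$ means precisely that $u$ is a natural transformation $F|_{\mathcal{C}_{\mathrm{cp}}} \Rightarrow G|_{\mathcal{C}_{\mathrm{cp}}}$ whose components are isomorphisms; Lemma~\ref{lemmaExtensionNat}(1) applied to $F$ yields a unique extension $t : V \otimes - \Rightarrow - \otimes V$, and Lemma~\ref{lemmaExtensionNat}(2) (using that $G$ is also cocontinuous and $\mathcal{D} = \mathcal{C}$ is LFP) promotes $t$ to a natural isomorphism. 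The only thing left to verify is the half-braiding axiom $t_{X \otimes Y} = (\mathrm{id}_X \otimes t_Y) \circ (t_X \otimes \mathrm{id}_Y)$ for arbitrary $X, Y \in \mathcal{C}$.

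The idea for the axiom is to reduce to the case when both $X$ and $Y$ are compact, by invoking Lemma~\ref{lemmaExtensionNat}(1) twice. Fix $Y \in \mathcal{C}$; both sides then define natural transformations in $X$ between the cocontinuous functor $V \otimes - \otimes Y$ and the functor $- \otimes Y \otimes V$, so by the uniqueness clause of Lemma~\ref{lemmaExtensionNat}(1) it suffices to check the equality on $X \in \mathcal{C}_{\mathrm{cp}}$. Now fix such a compact $X$; both sides are natural in $Y$ with cocontinuous source functor $V \otimes X \otimes -$, so a second application of Lemma~\ref{lemmaExtensionNat}(1) further reduces the problem to $Y \in \mathcal{C}_{\mathrm{cp}}$. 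For $X, Y$ both compact, $X \otimes Y$ is compact by Lemma~\ref{lemmaCompactStableByMonoidalProduct}, hence $t_{X \otimes Y} = u_{X \otimes Y}$ and the desired equality is precisely the half-braiding property \eqref{axiomHalfBraiding} assumed for $u$.

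For item 2, the assignments $X \mapsto t_X \circ (f \otimes \mathrm{id}_X)$ and $X \mapsto (\mathrm{id}_X \otimes f) \circ t'_X$ visibly assemble into natural transformations $V \otimes - \Rightarrow - \otimes V'$ which agree on every $K \in \mathcal{C}_{\mathrm{cp}}$ by hypothesis; the uniqueness statement in Lemma~\ref{lemmaExtensionNat}(1), applied once more with the cocontinuous source $V \otimes -$, therefore forces them to agree on all of $\mathcal{C}$. No serious obstacle is expected: once one recognises that everything is governed by Lemma~\ref{lemmaExtensionNat}, the proof is a mechanical iteration of extension by compactness. The only point of care is the two-stage reduction for the half-braiding axiom, where the two variables must be separated and Lemma~\ref{lemmaCompactStableByMonoidalProduct} invoked at the last step to keep $u_{X\otimes Y}$ meaningful.
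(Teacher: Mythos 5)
Your proof is correct. It rests on the same two pillars as the paper's argument --- Lemma~\ref{lemmaExtensionNat} for existence, uniqueness and invertibility of the extension, and Lemma~\ref{lemmaCompactStableByMonoidalProduct} to ensure $X \otimes Y$ is compact when $X$ and $Y$ are --- but you organise the verification of the half-braiding axiom differently. The paper takes filtered colimit presentations of both objects $C$ and $C'$, observes via \eqref{colimPresOfMonProd} that $\phi \otimes \phi'$ is a universal cocone for $C \otimes C'$ indexed by the filtered category $\mathcal{I} \times \mathcal{I}'$, and then checks by a direct chain of equalities that both sides of the axiom agree after precomposition with $\mathrm{id}_V \otimes \phi_X \otimes \phi'_{X'}$. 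You instead treat each side of the axiom as a natural transformation out of a cocontinuous functor and apply the uniqueness clause of Lemma~\ref{lemmaExtensionNat}(1) twice, once in each variable; this trades the explicit computation for the (easy but worth recording) check that $(\mathrm{id}_X \otimes t_Y) \circ (t_X \otimes \mathrm{id}_Y)$ is natural in each variable separately and that the relevant source functors $X \mapsto V \otimes X \otimes Y$ and $Y \mapsto V \otimes X \otimes Y$ are cocontinuous as composites of cocontinuous functors. Your treatment of item 2 by the same uniqueness argument is likewise a clean alternative to the paper's direct cocone computation. The two routes are of comparable length; yours avoids \eqref{colimPresOfMonProd} entirely, at the cost of slightly more bookkeeping about which functor plays the role of the cocontinuous $F$ at each stage.
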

\begin{proof}
1. Let $C \in \mathcal{C}$ be an arbitrary object of $\mathcal{C}$, which we write as $C = \mathrm{colim} \bigl(K : \mathcal{I} \to \mathcal{C}_{\mathrm{cp}} \bigr)$ with $\mathcal{I}$ filtered. Denote by $\phi = \bigl( \phi_X : K(X) \to C \bigr)_{X \in \mathcal{I}}$ the universal cocone. By assumption on $\mathcal{C}$, the functors $V \otimes -$ and $- \otimes V$ are cocontinuous. Hence, by Lemma \ref{lemmaExtensionNat}, there is a unique natural isomorphism $t : V \otimes - \overset{\sim}{\implies} - \otimes V$ defined by
\begin{equation}\label{defExtendedHalfBraiding}
\xymatrix@R=.4em@C=1.5em{
V \otimes K(X) \ar[rr]^{u_{K(X)}} \ar[dd]_-{\mathrm{id}_V \otimes \phi_X} & & K(X) \otimes V \ar[dd]^-{\phi_X \otimes \mathrm{id}_V}\\
&\circlearrowright&\\
V \otimes C \ar[rr]_{t_C} & & C \otimes V.
}
\end{equation}
for all $X \in \mathcal{I}$. It remains to check that $t$ satisfies the half-braiding property \eqref{axiomHalfBraiding}. So let $C'$ be another object in $\mathcal{C}$, written as $C' = \mathrm{colim} \bigl( K' : \mathcal{I}' \to \mathcal{C}_{\mathrm{cp}} \bigr)$ with universal cocone $\phi'$. Recall that for any functor $T : \mathcal{I} \times \mathcal{I}' \to \mathcal{C}$ we have
\[ \underset{(X,X') \in \mathcal{I} \times \mathcal{I}'}{\mathrm{colim}} T(X,X') \,\cong\, \underset{X \in \mathcal{I}}{\mathrm{colim}} \bigl( \underset{X' \in \mathcal{I}'}{\mathrm{colim}} \:T(X,X') \bigr). \]
Applying this to the functor $T(X,X') = K(X) \otimes K'(X')$, we get
\begin{equation}\label{colimPresOfMonProd}
\underset{(X,X') \in \mathcal{I} \times \mathcal{I}'}{\mathrm{colim}} \: K(X) \otimes K'(X) = (C \otimes C', \, \phi \otimes \phi')
\end{equation}
because $\otimes$ is cocontinuous in each variable. Since $\mathcal{I} \times \mathcal{I}'$ is filtered and the objects $K(X) \otimes K'(X')$ are compact (Lemma \ref{lemmaCompactStableByMonoidalProduct}) we can use $\phi \otimes \phi'$ to compute $t_{C \otimes C'}$ through \eqref{defExtendedHalfBraiding}, thus obtaining:
\begin{align*}
t_{C \otimes C'} \circ (\mathrm{id}_V \otimes \phi_X \otimes \phi'_{X'}) &= (\phi_X \otimes \phi'_{X'} \otimes \mathrm{id}_C) \circ u_{K(X) \,\otimes\, K'(X')}\\
&= (\phi_X \otimes \phi'_{X'} \otimes \mathrm{id}_C) \circ (\mathrm{id}_{K(X)} \otimes u_{K'(X')}) \circ (u_{K(X)} \otimes \mathrm{id}_{K'(X')})\\
&= (\mathrm{id}_{K(X)} \otimes t_{C'}) \circ (\phi_X \otimes \mathrm{id}_C \otimes \phi'_{X'}) \circ (u_{K(X)} \otimes \mathrm{id}_{K'(X')})\\
&= (\mathrm{id}_C \otimes t_{C'}) \circ (t_{C} \otimes \mathrm{id}_{C'}) \circ (\mathrm{id}_C \otimes \phi_X \otimes \phi'_{X'})
\end{align*}
for all $(X,X') \in \mathcal{I} \times \mathcal{I}'$, which gives the desired equality by universality of the cocone $\mathrm{id}_C \otimes \phi \otimes \phi'$.
\\2. Let $C \in \mathcal{C}$ and take again a filtered colimit presentation $(C,\phi) = \mathrm{colim} \bigl(K : \mathcal{I} \to \mathcal{C}_{\mathrm{cp}} \bigr)$. Then using the naturality of $t$ and the assumption we have
\begin{align*}
&t_C \circ (f \otimes \mathrm{id}_C) \circ (\mathrm{id}_V \otimes \phi_X) = t_C \circ (\mathrm{id}_{V'} \otimes \phi_X) \circ (f \otimes \mathrm{id}_{K(X)})\\
=\:& (\phi_X \circ \mathrm{id}_{V'}) \circ t_{K(X)} \circ (f \otimes \mathrm{id}_{K(X)}) = (\phi_X \circ \mathrm{id}_{V'}) \circ (\mathrm{id}_{K(X)} \otimes f) \circ t'_{K(X)}\\
&= (\mathrm{id}_C \otimes f) \circ (\phi \otimes \mathrm{id}_V) \circ t'_{K(X)} = (\mathrm{id}_C \otimes f) \circ t'_C \circ (\mathrm{id}_V \otimes \phi_X)
\end{align*}
which gives the desired equality by universality of $\mathrm{id}_V \otimes \phi_X$ as cocone for $V \otimes -$.
\end{proof}

\indent Note that $\mathscr{L}\text{-}\mathrm{mod}_{\mathcal{C}}$ is a monoidal category, whose monoidal product comes from the coproduct \eqref{defStructureCoend} of the Hopf algebra $\mathscr{L}$ in $\mathcal{C}$.
\begin{proposition}\label{propIsoZCandLModC}
Let $\Upsilon : \mathscr{L}\text{-}\mathrm{mod}_{\mathcal{C}} \to \mathcal{Z}(\mathcal{C})$ be the functor defined by $\Upsilon(V, \lambda) = \bigl(V,\, t(\lambda)\bigr)$ and $\Upsilon(f) = f$, where
\begin{center}
%% Creator: Inkscape 1.1.2 (0a00cf5339, 2022-02-04), www.inkscape.org
%% PDF/EPS/PS + LaTeX output extension by Johan Engelen, 2010
%% Accompanies image file '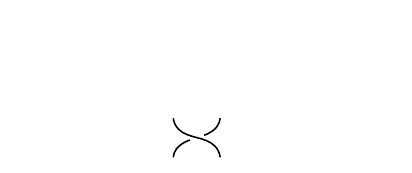' (pdf, eps, ps)
%%
%% To include the image in your LaTeX document, write
%%   \input{<filename>.pdf_tex}
%%  instead of
%%   \includegraphics{<filename>.pdf}
%% To scale the image, write
%%   \def\svgwidth{<desired width>}
%%   \input{<filename>.pdf_tex}
%%  instead of
%%   \includegraphics[width=<desired width>]{<filename>.pdf}
%%
%% Images with a different path to the parent latex file can
%% be accessed with the `import' package (which may need to be
%% installed) using
%%   \usepackage{import}
%% in the preamble, and then including the image with
%%   \import{<path to file>}{<filename>.pdf_tex}
%% Alternatively, one can specify
%%   \graphicspath{{<path to file>/}}
%% 
%% For more information, please see info/svg-inkscape on CTAN:
%%   http://tug.ctan.org/tex-archive/info/svg-inkscape
%%
\begingroup%
  \makeatletter%
  \providecommand\color[2][]{%
    \errmessage{(Inkscape) Color is used for the text in Inkscape, but the package 'color.sty' is not loaded}%
    \renewcommand\color[2][]{}%
  }%
  \providecommand\transparent[1]{%
    \errmessage{(Inkscape) Transparency is used (non-zero) for the text in Inkscape, but the package 'transparent.sty' is not loaded}%
    \renewcommand\transparent[1]{}%
  }%
  \providecommand\rotatebox[2]{#2}%
  \newcommand*\fsize{\dimexpr\f@size pt\relax}%
  \newcommand*\lineheight[1]{\fontsize{\fsize}{#1\fsize}\selectfont}%
  \ifx\svgwidth\undefined%
    \setlength{\unitlength}{197.74159417bp}%
    \ifx\svgscale\undefined%
      \relax%
    \else%
      \setlength{\unitlength}{\unitlength * \real{\svgscale}}%
    \fi%
  \else%
    \setlength{\unitlength}{\svgwidth}%
  \fi%
  \global\let\svgwidth\undefined%
  \global\let\svgscale\undefined%
  \makeatother%
  \begin{picture}(1,0.41767077)%
    \lineheight{1}%
    \setlength\tabcolsep{0pt}%
    \put(0,0){\includegraphics[width=\unitlength,page=1]{halfBraidingFromAction.pdf}}%
    \put(0.39952617,0.00656583){\color[rgb]{0,0,0}\makebox(0,0)[lt]{\lineheight{1.25}\smash{\begin{tabular}[t]{l}$_V$\end{tabular}}}}%
    \put(0,0){\includegraphics[width=\unitlength,page=2]{halfBraidingFromAction.pdf}}%
    \put(0.44455522,0.39651881){\color[rgb]{0,0,0}\makebox(0,0)[lt]{\lineheight{1.25}\smash{\begin{tabular}[t]{l}$_V$\end{tabular}}}}%
    \put(0,0){\includegraphics[width=\unitlength,page=3]{halfBraidingFromAction.pdf}}%
    \put(0.4431897,0.28183325){\color[rgb]{0,0,0}\makebox(0,0)[lt]{\lineheight{1.25}\smash{\begin{tabular}[t]{l}$\lambda$\end{tabular}}}}%
    \put(0,0){\includegraphics[width=\unitlength,page=4]{halfBraidingFromAction.pdf}}%
    \put(-0.00091627,0.18860618){\color[rgb]{0,0,0}\makebox(0,0)[lt]{\lineheight{1.25}\smash{\begin{tabular}[t]{l}$t(\lambda)_K =$\end{tabular}}}}%
    \put(0,0){\includegraphics[width=\unitlength,page=5]{halfBraidingFromAction.pdf}}%
    \put(0.35983955,0.156312){\color[rgb]{0,0,0}\makebox(0,0)[lt]{\lineheight{1.25}\smash{\begin{tabular}[t]{l}$i_K$\end{tabular}}}}%
    \put(0,0){\includegraphics[width=\unitlength,page=6]{halfBraidingFromAction.pdf}}%
    \put(0.51223366,0.01038053){\color[rgb]{0,0,0}\makebox(0,0)[lt]{\lineheight{1.25}\smash{\begin{tabular}[t]{l}$_K$\end{tabular}}}}%
    \put(0,0){\includegraphics[width=\unitlength,page=7]{halfBraidingFromAction.pdf}}%
    \put(0.21901002,0.39392274){\color[rgb]{0,0,0}\makebox(0,0)[lt]{\lineheight{1.25}\smash{\begin{tabular}[t]{l}$_K$\end{tabular}}}}%
    \put(0.73962303,0.1881944){\color[rgb]{0,0,0}\makebox(0,0)[lt]{\lineheight{1.25}\smash{\begin{tabular}[t]{l}$(\forall \, K \in \mathcal{C}_{\mathrm{cp}})$\end{tabular}}}}%
  \end{picture}%
\endgroup%

\end{center}
(these values uniquely define $t(\lambda)$ by item 1 in Lemma \ref{lemmaHBonCompacts}). Then $\Upsilon$ is an isomorphism of monoidal categories which satisfies $\Upsilon(\mathbf{V} \otimes \mathbf{W}) = \Upsilon(\mathbf{V}) \otimes \Upsilon(\mathbf{W})$.
\end{proposition}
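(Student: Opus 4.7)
The strategy is to construct an inverse functor $\Upsilon^{-1}$ directly and then verify the four layers of structure (objects, morphisms, monoidal product of objects, monoidal product of morphisms) pass through the equivalence. I would first check that the formula defining $t(\lambda)_K$ on compacts really extends to a half-braiding: naturality on $\mathcal{C}_{\mathrm{cp}}$ follows from dinaturality of $i$ together with naturality of $\lambda$, and the hexagon-style axiom \eqref{axiomHalfBraiding} reduces after precomposing with $i_K \otimes i_Q \otimes \mathrm{id}_V$ to the multiplicativity of the action $\lambda$ combined with the explicit formula \eqref{defStructureCoend} for the product $m_{\mathscr{L}}$. Lemma \ref{lemmaHBonCompacts}(1) then extends $t(\lambda)$ uniquely to a full half-braiding, so $\Upsilon(V,\lambda)$ is well-defined, and Lemma \ref{lemmaHBonCompacts}(2) makes functoriality on morphisms immediate once checked on compacts (where it is again a direct computation via $i_K$).

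To build $\Upsilon^{-1}$, given $(V,t)\in \mathcal{Z}(\mathcal{C})$ I would define a dinatural family
\[
\lambda_K : K^* \otimes K \otimes V \xrightarrow{\mathrm{id}_{K^*} \otimes t_K} K^* \otimes V \otimes K \xrightarrow{\mathrm{id}_{K^*} \otimes c_{V,K}^{-1}} K^* \otimes K \otimes V \xrightarrow{\mathrm{ev}_K \otimes \mathrm{id}_V} V
\]
(or equivalently the simpler cap-version $(\mathrm{ev}_K \otimes \mathrm{id}_V)\circ(\mathrm{id}_{K^*}\otimes t_K)$ depending on the orientation conventions implicit in \eqref{defStructureCoend}). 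Dinaturality in $K$ follows from the naturality of $t$ and of duals. The universal property of $\mathscr{L}$ then produces a unique morphism $\lambda(t) : \mathscr{L}\otimes V \to V$, which I must check is a left $\mathscr{L}$-action. Associativity and unitality reduce, after precomposing with the dinatural family, to the half-braiding axiom \eqref{axiomHalfBraiding} and the normalization $t_{\boldsymbol{1}} = \mathrm{id}_V$; the multiplication $m_{\mathscr{L}}$ from \eqref{defStructureCoend} produces exactly the cups and braidings that appear when one iterates $t$. Setting $\Upsilon^{-1}(V,t)=(V,\lambda(t))$ and $\Upsilon^{-1}(f)=f$ gives a functor in view of the second item of Lemma \ref{lemmaHBonCompacts}.

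For the inverse relations, $\Upsilon \circ \Upsilon^{-1} = \mathrm{Id}$ and $\Upsilon^{-1} \circ \Upsilon = \mathrm{Id}$ are both identities on underlying objects and morphisms, so only the half-braiding, respectively the action, needs to be matched. On compact $K$ both directions reduce to the same cup/cap identity
\[
t_K = (\mathrm{id}_K \otimes \lambda(t))\circ(\mathrm{id}_K \otimes i_K \otimes \mathrm{id}_V)\circ(\mathrm{coev}_K \otimes \mathrm{id}_{V\otimes K})\circ(\mathrm{id}_V \otimes ?),
\]
expressing $t_K$ through its associated action and vice versa; the extension to arbitrary objects is then automatic by Lemma \ref{lemmaHBonCompacts}(1).

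The final and most delicate point is the strict monoidal identity $\Upsilon(\mathbf{V}\otimes \mathbf{W}) = \Upsilon(\mathbf{V})\otimes \Upsilon(\mathbf{W})$. Here the left-hand side uses the coproduct $\Delta_{\mathscr{L}}$ of \eqref{defStructureCoend} to define the $\mathscr{L}$-module structure on $V\otimes W$, while the right-hand side uses the formula \eqref{defTensorProductInZC} for the half-braiding of a tensor product in $\mathcal{Z}(\mathcal{C})$. The plan is to compute $t(\lambda_{V\otimes W})_K \circ (i_K \otimes \mathrm{id}_{V\otimes W})$ and, using the explicit form of $\Delta_{\mathscr{L}}$ (namely $\Delta_{\mathscr{L}} \circ i_K = (i_K \otimes i_K) \circ (\mathrm{id}_{K^*} \otimes \mathrm{coev}_K \otimes \mathrm{id}_K)$), show that it splits as $(t(\lambda_V)_K \otimes \mathrm{id}_W)\circ(\mathrm{id}_V \otimes t(\lambda_W)_K)$ on the universal wedge, after which Lemma \ref{lemmaHBonCompacts}(1) concludes. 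The hard part will be this diagrammatic bookkeeping with $\mathscr{L}$ and $\Delta_{\mathscr{L}}$; everything else is formal once one commits to verifying identities only on compact objects and then invoking the extension lemma.
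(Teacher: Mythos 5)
Your overall route is the same as the paper's: define $t(\lambda)$ on compacts and extend by Lemma \ref{lemmaHBonCompacts}(1), construct $\Upsilon^{-1}$ by feeding a dinatural family built from $t$ into the universal property of $\mathscr{L}$, and check the monoidal identity by precomposing with $i_K$ and using the formula $\Delta_{\mathscr{L}} \circ i_K = (i_K \otimes i_K)\circ(\mathrm{id}_{K^*}\otimes\mathrm{coev}_K\otimes\mathrm{id}_K)$ from \eqref{defStructureCoend} against \eqref{defTensorProductInZC}. All of that is fine, modulo the sign/orientation of the braidings in your formula for $\lambda_K$ (as written, $\mathrm{id}_{K^*}\otimes t_K$ does not typecheck on $K^*\otimes K\otimes V$ since $t_K : V\otimes K \to K\otimes V$; you flag this yourself, and the paper's \eqref{actionFromHalfBraiding} indeed inserts an inverse braiding $c^{-1}$ before applying $t_K$).

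There is, however, one step your plan never addresses and which is not automatic: you must show that each $t(\lambda)_K$ is an \emph{isomorphism}. This is needed twice — it is part of the definition of a half-braiding, and it is an explicit hypothesis of Lemma \ref{lemmaHBonCompacts}(1) (the extension lemma takes as input a natural family of \emph{isomorphisms}). Naturality and the hexagon axiom alone do not give invertibility. The paper supplies an explicit inverse
\[
t(\lambda)_K^{-1} = \bigl(\mathrm{id}_K\otimes\lambda\bigr)\circ\bigl(\mathrm{id}_K\otimes i_{{}^*\!K}\otimes\mathrm{id}_V\bigr)\circ\bigl(\widetilde{\mathrm{coev}}_K\otimes\mathrm{id}_{{}^*\!K}\otimes\cdots\bigr),
\]
i.e.\ it reuses the action $\lambda$ but evaluated at the dinatural component $i_{{}^*\!K}$ of the \emph{right} dual of $K$ (which exists because compact objects are rigid under \eqref{assumptionsCategoryC}), and verifying that this is a two-sided inverse uses the associativity of $\lambda$ together with the identity $m_{\mathscr{L}}\circ(i_{{}^*\!K}\otimes i_K) = i_{{}^*\!K\otimes K}\circ(\cdots)$ and the zig-zag relations. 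This is a genuine idea you would need to add; without it the appeal to Lemma \ref{lemmaHBonCompacts}(1) does not go through.
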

\begin{proof}
Naturality is clear, by dinaturality of $i$. The half-braiding axiom \eqref{axiomHalfBraiding} is checked by a straightforward graphical computation. Moreover $t(\lambda)$ is an isomorphism, with inverse
\begin{center}
%% Creator: Inkscape 1.1.2 (0a00cf5339, 2022-02-04), www.inkscape.org
%% PDF/EPS/PS + LaTeX output extension by Johan Engelen, 2010
%% Accompanies image file '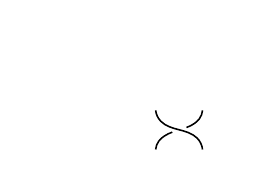' (pdf, eps, ps)
%%
%% To include the image in your LaTeX document, write
%%   \input{<filename>.pdf_tex}
%%  instead of
%%   \includegraphics{<filename>.pdf}
%% To scale the image, write
%%   \def\svgwidth{<desired width>}
%%   \input{<filename>.pdf_tex}
%%  instead of
%%   \includegraphics[width=<desired width>]{<filename>.pdf}
%%
%% Images with a different path to the parent latex file can
%% be accessed with the `import' package (which may need to be
%% installed) using
%%   \usepackage{import}
%% in the preamble, and then including the image with
%%   \import{<path to file>}{<filename>.pdf_tex}
%% Alternatively, one can specify
%%   \graphicspath{{<path to file>/}}
%% 
%% For more information, please see info/svg-inkscape on CTAN:
%%   http://tug.ctan.org/tex-archive/info/svg-inkscape
%%
\begingroup%
  \makeatletter%
  \providecommand\color[2][]{%
    \errmessage{(Inkscape) Color is used for the text in Inkscape, but the package 'color.sty' is not loaded}%
    \renewcommand\color[2][]{}%
  }%
  \providecommand\transparent[1]{%
    \errmessage{(Inkscape) Transparency is used (non-zero) for the text in Inkscape, but the package 'transparent.sty' is not loaded}%
    \renewcommand\transparent[1]{}%
  }%
  \providecommand\rotatebox[2]{#2}%
  \newcommand*\fsize{\dimexpr\f@size pt\relax}%
  \newcommand*\lineheight[1]{\fontsize{\fsize}{#1\fsize}\selectfont}%
  \ifx\svgwidth\undefined%
    \setlength{\unitlength}{122.40763816bp}%
    \ifx\svgscale\undefined%
      \relax%
    \else%
      \setlength{\unitlength}{\unitlength * \real{\svgscale}}%
    \fi%
  \else%
    \setlength{\unitlength}{\svgwidth}%
  \fi%
  \global\let\svgwidth\undefined%
  \global\let\svgscale\undefined%
  \makeatother%
  \begin{picture}(1,0.70535541)%
    \lineheight{1}%
    \setlength\tabcolsep{0pt}%
    \put(0,0){\includegraphics[width=\unitlength,page=1]{inverseOfHalfBraidingFromAction.pdf}}%
    \put(0.57631902,0.01060667){\color[rgb]{0,0,0}\makebox(0,0)[lt]{\lineheight{1.25}\smash{\begin{tabular}[t]{l}$_V$\end{tabular}}}}%
    \put(0,0){\includegraphics[width=\unitlength,page=2]{inverseOfHalfBraidingFromAction.pdf}}%
    \put(0.64906064,0.67118578){\color[rgb]{0,0,0}\makebox(0,0)[lt]{\lineheight{1.25}\smash{\begin{tabular}[t]{l}$_V$\end{tabular}}}}%
    \put(0,0){\includegraphics[width=\unitlength,page=3]{inverseOfHalfBraidingFromAction.pdf}}%
    \put(0.6468547,0.48591872){\color[rgb]{0,0,0}\makebox(0,0)[lt]{\lineheight{1.25}\smash{\begin{tabular}[t]{l}$\lambda$\end{tabular}}}}%
    \put(0,0){\includegraphics[width=\unitlength,page=4]{inverseOfHalfBraidingFromAction.pdf}}%
    \put(-0.00148017,0.33481397){\color[rgb]{0,0,0}\makebox(0,0)[lt]{\lineheight{1.25}\smash{\begin{tabular}[t]{l}$t(\lambda)^{-1}_K =$\end{tabular}}}}%
    \put(0,0){\includegraphics[width=\unitlength,page=5]{inverseOfHalfBraidingFromAction.pdf}}%
    \put(0.48430242,0.3137826){\color[rgb]{0,0,0}\makebox(0,0)[lt]{\lineheight{1.25}\smash{\begin{tabular}[t]{l}$i_{^*\!K}$\end{tabular}}}}%
    \put(0,0){\includegraphics[width=\unitlength,page=6]{inverseOfHalfBraidingFromAction.pdf}}%
    \put(0.95868454,0.666992){\color[rgb]{0,0,0}\makebox(0,0)[lt]{\lineheight{1.25}\smash{\begin{tabular}[t]{l}$_K$\end{tabular}}}}%
    \put(0,0){\includegraphics[width=\unitlength,page=7]{inverseOfHalfBraidingFromAction.pdf}}%
    \put(0.45377762,0.01060667){\color[rgb]{0,0,0}\makebox(0,0)[lt]{\lineheight{1.25}\smash{\begin{tabular}[t]{l}$_K$\end{tabular}}}}%
    \put(0,0){\includegraphics[width=\unitlength,page=8]{inverseOfHalfBraidingFromAction.pdf}}%
  \end{picture}%
\endgroup%

\end{center}
where we use that $^*(K^*) = K$. The strict monoidality of $\Upsilon$ is easily checked from the definitions of monoidal product in $\mathcal{Z}(\mathcal{C})$ (see \eqref{defTensorProductInZC}) and in $\mathscr{L}\text{-}\mathrm{mod}_{\mathcal{C}}$. Finally we have $\Upsilon^{-1}(V,t) = \bigl( V, \lambda(t) \bigr)$, with $\lambda(t) : \mathscr{L} \otimes V \to V$ defined by
\begin{equation}\label{actionFromHalfBraiding}
%% Creator: Inkscape 1.1.2 (0a00cf5339, 2022-02-04), www.inkscape.org
%% PDF/EPS/PS + LaTeX output extension by Johan Engelen, 2010
%% Accompanies image file '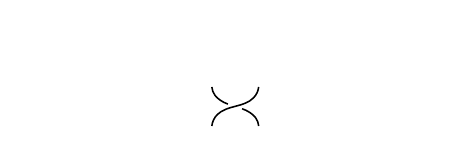' (pdf, eps, ps)
%%
%% To include the image in your LaTeX document, write
%%   \input{<filename>.pdf_tex}
%%  instead of
%%   \includegraphics{<filename>.pdf}
%% To scale the image, write
%%   \def\svgwidth{<desired width>}
%%   \input{<filename>.pdf_tex}
%%  instead of
%%   \includegraphics[width=<desired width>]{<filename>.pdf}
%%
%% Images with a different path to the parent latex file can
%% be accessed with the `import' package (which may need to be
%% installed) using
%%   \usepackage{import}
%% in the preamble, and then including the image with
%%   \import{<path to file>}{<filename>.pdf_tex}
%% Alternatively, one can specify
%%   \graphicspath{{<path to file>/}}
%% 
%% For more information, please see info/svg-inkscape on CTAN:
%%   http://tug.ctan.org/tex-archive/info/svg-inkscape
%%
\begingroup%
  \makeatletter%
  \providecommand\color[2][]{%
    \errmessage{(Inkscape) Color is used for the text in Inkscape, but the package 'color.sty' is not loaded}%
    \renewcommand\color[2][]{}%
  }%
  \providecommand\transparent[1]{%
    \errmessage{(Inkscape) Transparency is used (non-zero) for the text in Inkscape, but the package 'transparent.sty' is not loaded}%
    \renewcommand\transparent[1]{}%
  }%
  \providecommand\rotatebox[2]{#2}%
  \newcommand*\fsize{\dimexpr\f@size pt\relax}%
  \newcommand*\lineheight[1]{\fontsize{\fsize}{#1\fsize}\selectfont}%
  \ifx\svgwidth\undefined%
    \setlength{\unitlength}{216.9578121bp}%
    \ifx\svgscale\undefined%
      \relax%
    \else%
      \setlength{\unitlength}{\unitlength * \real{\svgscale}}%
    \fi%
  \else%
    \setlength{\unitlength}{\svgwidth}%
  \fi%
  \global\let\svgwidth\undefined%
  \global\let\svgscale\undefined%
  \makeatother%
  \begin{picture}(1,0.31153933)%
    \lineheight{1}%
    \setlength\tabcolsep{0pt}%
    \put(0,0){\includegraphics[width=\unitlength,page=1]{actionFromHalfBraiding.pdf}}%
    \put(0.44941617,0.00598428){\color[rgb]{0,0,0}\makebox(0,0)[lt]{\lineheight{1.25}\smash{\begin{tabular}[t]{l}$_K$\end{tabular}}}}%
    \put(0,0){\includegraphics[width=\unitlength,page=2]{actionFromHalfBraiding.pdf}}%
    \put(0.55959481,0.27497635){\color[rgb]{0,0,0}\makebox(0,0)[lt]{\lineheight{1.25}\smash{\begin{tabular}[t]{l}$_V$\end{tabular}}}}%
    \put(0,0){\includegraphics[width=\unitlength,page=3]{actionFromHalfBraiding.pdf}}%
    \put(0.50649678,0.13587969){\color[rgb]{0,0,0}\makebox(0,0)[lt]{\lineheight{1.25}\smash{\begin{tabular}[t]{l}$t_K$\end{tabular}}}}%
    \put(0,0){\includegraphics[width=\unitlength,page=4]{actionFromHalfBraiding.pdf}}%
    \put(0.05027081,0.09061386){\color[rgb]{0,0,0}\makebox(0,0)[lt]{\lineheight{1.25}\smash{\begin{tabular}[t]{l}$i_K$\end{tabular}}}}%
    \put(0.55214106,0.00946109){\color[rgb]{0,0,0}\makebox(0,0)[lt]{\lineheight{1.25}\smash{\begin{tabular}[t]{l}$_V$\end{tabular}}}}%
    \put(0,0){\includegraphics[width=\unitlength,page=5]{actionFromHalfBraiding.pdf}}%
    \put(0.76268493,0.13489645){\color[rgb]{0,0,0}\makebox(0,0)[lt]{\lineheight{1.25}\smash{\begin{tabular}[t]{l}$(\forall \, K \in \mathcal{C}_{\mathrm{cp}})$\end{tabular}}}}%
    \put(0.34472746,0.00946109){\color[rgb]{0,0,0}\makebox(0,0)[lt]{\lineheight{1.25}\smash{\begin{tabular}[t]{l}$_{K^*}$\end{tabular}}}}%
    \put(0,0){\includegraphics[width=\unitlength,page=6]{actionFromHalfBraiding.pdf}}%
    \put(0.12748316,0.29226082){\color[rgb]{0,0,0}\makebox(0,0)[lt]{\lineheight{1.25}\smash{\begin{tabular}[t]{l}$_V$\end{tabular}}}}%
    \put(0,0){\includegraphics[width=\unitlength,page=7]{actionFromHalfBraiding.pdf}}%
    \put(0.0999192,0.19140125){\color[rgb]{0,0,0}\makebox(0,0)[lt]{\lineheight{1.25}\smash{\begin{tabular}[t]{l}$\lambda(t)$\end{tabular}}}}%
    \put(0,0){\includegraphics[width=\unitlength,page=8]{actionFromHalfBraiding.pdf}}%
    \put(0.0163226,0.00946117){\color[rgb]{0,0,0}\makebox(0,0)[lt]{\lineheight{1.25}\smash{\begin{tabular}[t]{l}$_{K^*}$\end{tabular}}}}%
    \put(0,0){\includegraphics[width=\unitlength,page=9]{actionFromHalfBraiding.pdf}}%
    \put(0.08546046,0.00946117){\color[rgb]{0,0,0}\makebox(0,0)[lt]{\lineheight{1.25}\smash{\begin{tabular}[t]{l}$_K$\end{tabular}}}}%
    \put(0,0){\includegraphics[width=\unitlength,page=10]{actionFromHalfBraiding.pdf}}%
    \put(0.18916726,0.00946121){\color[rgb]{0,0,0}\makebox(0,0)[lt]{\lineheight{1.25}\smash{\begin{tabular}[t]{l}$_V$\end{tabular}}}}%
    \put(0.27886216,0.13630085){\color[rgb]{0,0,0}\makebox(0,0)[lt]{\lineheight{1.25}\smash{\begin{tabular}[t]{l}$=$\end{tabular}}}}%
  \end{picture}%
\endgroup%

\end{equation}
\end{proof}

To conclude, take the category $\mathcal{C} = \mathrm{Comod}\text{-}\OO$ of comodules over a quasitriangular Hopf $k$-algebra (\S\ref{subsecComodH}). We recall from Lemma \ref{propLFPComod} and Prop.\,\ref{descriptionCoendComodH} that in this case the compact objects are the finite dimensional comodules and the coend $\mathscr{L}$ is $\OO$ as a coalgebra, but endowed with a ``braided product'' as an algebra.

\smallskip

\indent Item 1 in Lemma \ref{lemmaHBonCompacts} is easily established in this particular case. Indeed, assume that we have a half-braiding $u : V \otimes - \Rightarrow - \otimes V$ relative to the finite dimensional comodules. We define $(V,t) \in \mathcal{Z}(\mathcal{C})$ by $t_X(v \otimes x) = u_{F_x}(v \otimes x)$ for all $v \in V$ and $x \in X$, where $F_x \subset X$ is some finite-dimensional subcomodule containing $x$ (it exists by the fundamental theorem on comodules \cite[Th. 5.1.1]{Mon}). Naturality of $u$ makes this independent of the choice of $F_x$.

\smallskip

\indent Here the explicit form of Prop.\,\ref{propIsoZCandLModC} in the case $\mathcal{C} = \mathrm{Comod}\text{-}\OO$:

\begin{lemma}\label{lemmaIsoZCvsLModCForComodH}
1. For $(V,t) \in \mathcal{Z}(\mathcal{C})$ define a left action $\smallblacksquare$ of $\mathscr{L}$ on $V$ by
\[ \forall \, \varphi \in \mathscr{L}, \:\: \forall \, v \in V, \quad \varphi \smallblacksquare v = \mathcal{R}\bigl( \varphi_{(2)} \otimes v_{[1]} \bigr) \, (\varepsilon_{\OO} \otimes \mathrm{id}_V) \circ t_H\bigl(v_{[0]} \otimes \varphi_{(1)}\bigr) \]
where $\OO$ is viewed as a right $\OO$-comodule thanks to the coproduct $\Delta : \OO \to \OO \otimes \OO$ and $\varepsilon_{\OO} : \OO \to k$ is the counit. Then $\Upsilon(V,t) = (V,\smallblacksquare)$.
\\2. Conversely for $(M,\smallblacksquare) \in \mathscr{L}\text{-}\mathrm{mod}_{\mathcal{C}}$ and any $X \in \mathcal{C}$ define $t_X : M \otimes X \to X \otimes M$ by
\[ \forall \, m \in M, \:\: \forall \, x \in X, \quad t_X(m \otimes x) = \mathcal{R}^{-1}\bigl(x_{[2]} \otimes m_{[1]}\bigr)\, x_{[0]} \otimes \bigl( x_{[1]} \smallblacksquare m_{[0]} \bigr). \]
Then $\Upsilon^{-1}(M,\smallblacksquare) = (M,t)$.\footnote{In the formula defining $t_X$, note that $x_{[1]} \in \OO$ is viewed as an element in $\mathscr{L}$ in order to use the action $\smallblacksquare$. This is possible since $\mathscr{L}$ is $\OO$ as a vector space.}
\end{lemma}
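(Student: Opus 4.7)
The strategy is to specialize the general isomorphism $\Upsilon : \mathscr{L}\text{-mod}_{\mathcal{C}} \overset{\sim}{\to} \mathcal{Z}(\mathcal{C})$ from Proposition \ref{propIsoZCandLModC} to the case $\mathcal{C} = \mathrm{Comod}\text{-}\OO$, using the explicit description of the coend $\mathscr{L} = \OO$ with universal dinatural transformation $i_X(f \otimes x) = f(x_{[0]}) x_{[1]}$ from Proposition \ref{descriptionCoendComodH}. Since items 1 and 2 describe the mutually inverse functors $\Upsilon^{-1}$ and $\Upsilon$, it suffices to verify one of them and check that the two formulas are indeed inverse to each other; I plan to attack item 2 directly and then read off item 1.

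For item 2, given $(M, \smallblacksquare)$, I define $t_X$ by the displayed formula and verify the four properties in turn. That each $t_X$ is a morphism of $\OO$-comodules follows from the fact that $\smallblacksquare$ is a comodule morphism $\mathscr{L} \otimes M \to M$ (with the adjoint coaction on $\mathscr{L}$ from \eqref{defAdjointCoaction}) combined with the standard $\mathcal{R}$-matrix identities for $\OO$; naturality in $X$ is immediate from the definition; the half-braiding identity $t_{X \otimes Y} = (\mathrm{id}_X \otimes t_Y) \circ (t_X \otimes \mathrm{id}_Y)$ reduces to an $\mathcal{R}$-matrix manipulation combined with the module axiom $\varphi \smallblacksquare (\psi \smallblacksquare m) = (\varphi \odot \psi) \smallblacksquare m$, where the product $\odot$ in $\mathscr{L}$ is the one given in \eqref{braidedProductCoend} and $\sigma$ is as in \eqref{halfBraidingSigmaComodH}; invertibility is produced explicitly using $\mathcal{R}^{-1}$ (compare with the explicit inverse in the proof of Proposition \ref{propIsoZCandLModC}).

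For item 1, I start from the characterization of $\lambda(t) = \Upsilon^{-1}(V,t)$ provided by the diagrammatic formula \eqref{actionFromHalfBraiding}, which unwound reads
$$\lambda(t)(i_X(f \otimes x) \otimes v) = (f \otimes \mathrm{id}_V)\bigl(t_X(v \otimes x)\bigr)$$
for every finite-dimensional $X$, $f \in X^*$, $x \in X$. Given $\varphi \in \mathscr{L} = \OO$, the fundamental theorem on comodules produces a finite-dimensional subcomodule $F_\varphi$ of the regular right $\OO$-comodule $(\OO, \Delta)$ containing $\varphi$; choosing $X = F_\varphi$, $x = \varphi$ and $f = \varepsilon_\OO|_{F_\varphi}$ gives $i_{F_\varphi}(f \otimes \varphi) = \varepsilon_\OO(\varphi_{(1)})\varphi_{(2)} = \varphi$, and naturality of $t$ applied to the inclusion $F_\varphi \hookrightarrow \OO$ replaces $t_{F_\varphi}$ with $t_{\OO}$. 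The $\mathcal{R}$-matrix factor and the splitting of $\varphi$ in the stated formula then appear when one uses the formula from item 2 (already established) to express $t_{\OO}$ in terms of $\smallblacksquare$ and collapses the resulting scalars via the counit axiom.

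The main obstacle will be the half-braiding identity in item 2(c), which requires careful bookkeeping of three $\mathcal{R}$-matrix factors and repeated use of the Yang--Baxter-type axioms together with the braided commutativity of $\mathscr{L}$ recorded in Lemma \ref{lemmaPropertiesHalfBraidingSigma}; the remaining verifications are routine consequences of the coquasitriangular axioms and the equivariance of $\smallblacksquare$.
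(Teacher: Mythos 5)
There is a genuine gap, and it sits at the junction between the two items. For item 2 you propose to check that the displayed formula for $t_X$ satisfies the axioms of a half-braiding (comodule morphism, naturality, the identity \eqref{axiomHalfBraiding}, invertibility). Even carried out in full, this only shows that $(M,t)$ is \emph{some} object of $\mathcal{Z}(\mathcal{C})$; it does not show that $t$ is the half-braiding produced by the functor of Proposition \ref{propIsoZCandLModC}, which is what item 2 asserts. The missing step is to evaluate the abstract formula $t_X = (\mathrm{id}_X \otimes \smallblacksquare) \circ (\mathrm{id}_X \otimes i_X \otimes \mathrm{id}_M) \circ (\mathrm{coev}_X \otimes c^{-1}_{M,X})$ from Proposition \ref{propIsoZCandLModC} on a compact $X$, using the explicit $i_X(f \otimes x) = f(x_{[0]})x_{[1]}$, and observe that it collapses to the displayed formula; uniqueness of half-braidings determined on compacts (Lemma \ref{lemmaHBonCompacts}) then handles general $X$. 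Once this is done, none of the four axiom verifications is needed --- they come for free because $\Upsilon^{-1}$ lands in $\mathcal{Z}(\mathcal{C})$.

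The item 1 argument then inherits a second problem. Your unwinding of \eqref{actionFromHalfBraiding} as $\lambda(t)(i_X(f \otimes x) \otimes v) = (f \otimes \mathrm{id}_V)\bigl(t_X(v \otimes x)\bigr)$ drops the braiding $c_{X,V}$ present in that diagram; the correct unwinding is $(\mathrm{ev}_X \otimes \mathrm{id}_V) \circ (\mathrm{id}_{X^*} \otimes t_X) \circ (\mathrm{id}_{X^*} \otimes c_{X,V})$, and it is precisely $c_{F_\varphi,V}$ that produces the factor $\mathcal{R}(\varphi_{(2)} \otimes v_{[1]})$ and the splitting of $\varphi$ in the stated formula. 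Attributing these to ``the formula from item 2'' is circular in a direct computation (in item 1 the half-braiding $t$ is the given datum and is not a priori of the form $t(\smallblacksquare)$), and if instead you intend the roundtrip argument --- substitute item 2's formula for $t_{\OO}$ into item 1's formula and cancel $\mathcal{R}$ against $\mathcal{R}^{-1}$ via the counit --- that argument is sound but only after item 2 has been genuinely identified with $\Upsilon^{-1}$, which returns you to the first gap. Your choice of $F_\varphi$, the covector $\varepsilon_{\OO}|_{F_\varphi}$ with $i_{F_\varphi}(\varepsilon_{\OO}|_{F_\varphi} \otimes \varphi) = \varphi$, and the use of naturality to pass from $t_{F_\varphi}$ to $t_{\OO}$ are exactly the paper's devices and are correct; the computation just has to be completed with the braiding in place rather than imported from item 2.
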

\begin{proof}
1. Let $\varphi \in \mathscr{L}$. Note that $\mathscr{L} = \OO$ as vector spaces and choose a finite-dimensional subcomodule $F_{\varphi} \subset \OO$ containing $\varphi$. Then $F_{\varphi} \in \mathcal{C}_{\mathrm{cp}}$ and by \eqref{univDinatTransfoTransmutation} we have $i_{F_{\varphi}}(\varepsilon|_{F_{\varphi}} \otimes \varphi) = \varphi$ where $\varepsilon|_{F_\varphi} : F_\varphi \to k$ is the restriction of the counit to $F_\varphi$, because the comodule structure on $\OO$ is $\varphi_{[0]} \otimes \varphi_{[1]} = \varphi_{(1)} \otimes \varphi_{(2)}$. Now using \eqref{actionFromHalfBraiding} we find
\begin{align*}
\varphi \smallblacksquare v &= i_{F_\varphi}(\varepsilon|_{F_\varphi} \otimes \varphi) \smallblacksquare v = \bigl( \mathrm{ev}_{F_\varphi} \otimes \mathrm{id}_V \bigr) \circ \bigl( \mathrm{id}_{F_\varphi^*} \otimes t_{F_\varphi} \bigr) \circ \bigl( \mathrm{id}_{F_\varphi^*} \otimes c_{F_\varphi,V} \bigr)(\varepsilon|_{F_\varphi} \otimes \varphi \otimes v)\\
&= \mathcal{R}(\varphi_{(2)} \otimes v_{[1)}) \bigl( \mathrm{ev}_{F_\varphi} \otimes \mathrm{id}_V \bigr) \circ \bigl( \mathrm{id}_{F_\varphi^*} \otimes t_{F_\varphi} \bigr)(\varepsilon|_{F_\varphi} \otimes v_{[0]} \otimes \varphi_{(1)})\\
&=\mathcal{R}\bigl( \varphi_{(2)} \otimes v_{[1]} \bigr) \, (\varepsilon|_{F_\varphi} \otimes \mathrm{id}_V) \circ t_{F_\varphi}\bigl(v_{[0]} \otimes \varphi_{(1)}\bigr) =\mathcal{R}\bigl( \varphi_{(2)} \otimes v_{[1]} \bigr) \, (\varepsilon \otimes \mathrm{id}_V) \circ t_{\OO}\bigl(v_{[0]} \otimes \varphi_{(1)}\bigr)
\end{align*}
where for the last equality we used naturality of $t$ to commute it with the inclusion $F_\varphi \hookrightarrow \OO$ from which $\varepsilon|_{F_\varphi}$ is defined.
\\2. It suffices to check this formula when $X$ is a compact object: $X \in \mathcal{C}_{\mathrm{cp}} = \mathrm{comod}\text{-}\OO$. Then by \eqref{univDinatTransfoTransmutation} and the formula in Proposition \ref{propIsoZCandLModC} we get
\begin{align*}
t_X(m \otimes x) &= \bigl( \mathrm{id}_X \otimes \smallblacksquare \bigr) \circ \bigl( \mathrm{id}_X \otimes i_X \otimes \mathrm{id}_M \bigr) \circ \bigl( \mathrm{coev}_X \otimes c^{-1}_{M,X} \bigr)(m \otimes x)\\
&= \mathcal{R}^{-1}(x_{[1]} \otimes m_{[1]}) \, \bigl( \mathrm{id}_X \otimes \smallblacksquare \bigr) \circ \bigl( \mathrm{id}_X \otimes i_X \otimes \mathrm{id}_M \bigr)(x_i \otimes x^i \otimes x_{[0]} \otimes m_{[0]})\\
&= \mathcal{R}^{-1}(x_{[2]} \otimes m_{[1]}) \, x_i \otimes x^i(x_{[0]})x_{[1]} \smallblacksquare m_{[0]} = \mathcal{R}^{-1}(x_{[2]} \otimes m_{[1]}) \, x_{[0]} \otimes x_{[1]} \smallblacksquare m_{[0]}
\end{align*}
where $(x_i)$ is a basis of $X$ with dual basis $(x^i)$ and we used implicit summation.
\end{proof}

\section{Balance on \texorpdfstring{$\mathcal{Z}(\mathcal{C})$}{the Drinfeld center}}\label{appBalanceZC}
Here we assume that the category $\mathcal{C}$ satisfies the assumptions \eqref{assumptionsCategoryC} and is moreover cp-ribbon, which means that the subcategory of compact objects $\mathcal{C}_{\mathrm{cp}}$ has a ribbon structure $\theta$ (see \eqref{axiomsTwist}). Then right dual objects can be realized on left dual objects by means of the duality morphisms \eqref{dualityMorphByMeansOfTwist}.

\smallskip

\indent Let $(V,t) \in \mathcal{Z}(\mathcal{C})$ and take a colimit presentation $V = \mathrm{colim}\bigl( K : \mathcal{I} \to \mathcal{C}_{\mathrm{cp}} \bigr)$ with universal cocone $\phi$. There exists a unique $\Theta_{(V,t)} \in \Hom_{\mathcal{C}}(V,V)$ such that
\begin{equation}\label{defBalZC}
\xymatrix@C=5.5em{
K(X) \ar[d]_{\phi_X} \ar[r]^-{\widetilde{\mathrm{coev}}_{K(X)} \,\otimes\, \mathrm{id}} & K(X)^* \otimes K(X) \otimes K(X) \ar[r]^-{\mathrm{id} \,\otimes\, \phi_X \,\otimes\, \mathrm{id}} & K(X)^* \otimes V \otimes K(X) \ar[d]^{\mathrm{id} \,\otimes\, t_{K(X)}}\\
V \ar[r]_{\Theta_{(V,t)}} & V &K(X)^* \otimes K(X) \otimes V \ar[l]^-{\mathrm{ev}_{K(X)} \,\otimes\, \mathrm{id}}
} \end{equation}
commutes for all $X \in \mathcal{I}$. This is because the clockwise composition of arrows in this diagram is easily seen to be a cocone for $K$, and hence factors uniquely through $\phi$. Thanks to arguments which are completely similar to those in the proof of Lemma \ref{lemmaExtensionNat}, one can show that $\Theta_{(V,t)}$ does not depend on the choice of the colimit presentation.

\smallskip

\indent We recall from \S\ref{subsecZC} that the braiding in $\mathcal{Z}(\mathcal{C})$ is given by $T_{\mathbf{V}, \mathbf{W}} = t_W$ for $\mathbf{V} = (V,t)$ and $\mathbf{W} = (W,u)$.
\begin{proposition}\label{propBalanceZC}
1. $\Theta_{(V,t)}$ is in $\Hom_{\mathcal{Z}(\mathcal{C})}\bigl( (V,t), (V,t) \bigr)$, i.e. it commutes with $t$.
\\2. $\Theta_{(V,t)}$ is an isomorphism whose inverse is given by
\[ \xymatrix@C=5.5em{
K(X) \ar[d]_{\phi_X} \ar[r]^-{\mathrm{id} \,\otimes\, \mathrm{coev}_{K(X)}} & K(X) \otimes K(X) \otimes K(X)^* \ar[r]^-{\mathrm{id} \,\otimes\, \phi_X \,\otimes\, \mathrm{id}} & K(X) \otimes V \otimes K(X)^* \ar[d]^{t_{K(X)}^{-1} \,\otimes\, \mathrm{id}}\\
V \ar[r]_{\Theta_{(V,t)}^{-1}} & V &V \otimes K(X) \otimes K(X)^* \ar[l]^-{\mathrm{id} \,\otimes\, \widetilde{\mathrm{ev}}_{K(X)}}
} \]
3. The collection $\Theta = \bigl( \Theta_{\mathbf{V}} \bigr)_{\mathbf{V} \in \mathcal{Z}(\mathcal{C})}$ is a  natural isomorphism $\mathrm{Id}_{\mathcal{Z}(\mathcal{C})} \overset{\sim}{\implies} \mathrm{Id}_{\mathcal{Z}(\mathcal{C})}$.
\\4. $\Theta$ is a balance on $\mathcal{Z}(\mathcal{C})$, which means that $\Theta_{\mathbf{V}_1 \otimes \mathbf{V}_2} = T_{\mathbf{V}_2, \mathbf{V}_1} \circ T_{\mathbf{V}_1,\mathbf{V}_2} \circ \bigl( \Theta_{\mathbf{V}_1} \otimes \Theta_{\mathbf{V}_2} \bigr)$ for all $\mathbf{V}_1, \mathbf{V}_2 \in \mathcal{Z}(\mathcal{C})$, where $T$ is the braiding in $\mathcal{Z}(\mathcal{C})$ recalled above.
\end{proposition}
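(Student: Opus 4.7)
Plan of proof. The overall strategy is to reduce each assertion to a verification on compact objects, exploiting two facts: the defining equation \eqref{defBalZC} characterizes $\Theta_{(V,t)}$ precisely by precomposition with the arrows $\phi_X$ of a universal cocone, and by Lemma \ref{lemmaHBonCompacts}(2) it suffices to check the intertwining property with $t_W$ for $W$ compact. The ``elementary'' data on a compact object $K(X)$ is the quadruple $(\mathrm{ev}, \mathrm{coev}, t_{K(X)}, \theta_{K(X)})$, and throughout the argument we shall freely use the realization of the right duality in terms of the left duality and the twist given by \eqref{dualityMorphByMeansOfTwist}, as well as formula \eqref{twistAsLoop}.

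First I would dispatch items 1, 2 and 3. For item 2, I would substitute the definitions of $\widetilde{\mathrm{coev}}_{K(X)}$ and $\widetilde{\mathrm{ev}}_{K(X)}$ from \eqref{dualityMorphByMeansOfTwist} and verify that the stated formula for $\Theta_{(V,t)}^{-1}$ composes with \eqref{defBalZC} to $\phi_X$, reducing the claim to duality zig-zags, naturality of $\theta$, and the half-braiding axiom \eqref{axiomHalfBraiding}; by the universal property of $\phi$ this yields $\Theta_{(V,t)}^{-1} \circ \Theta_{(V,t)} = \mathrm{id}_V$, and the other composition is analogous. For item 1, by Lemma \ref{lemmaHBonCompacts}(2) the equation $t_W \circ (\Theta_{(V,t)} \otimes \mathrm{id}_W) = (\mathrm{id}_W \otimes \Theta_{(V,t)}) \circ t_W$ need only be checked after precomposing with $\mathrm{id}_V \otimes \mathrm{id}_W$ for $W \in \mathcal{C}_{\mathrm{cp}}$, and after further precomposing with $\phi_X \otimes \mathrm{id}_W$ both sides become a diagrammatic expression which collapses to the same morphism via the half-braiding axiom \eqref{axiomHalfBraiding} applied to the duality caps/cups involved. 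Item 3 is the easiest: given $f : (V,t) \to (V',t')$ in $\mathcal{Z}(\mathcal{C})$, pick colimit presentations $V = \mathrm{colim}\, K$, $V' = \mathrm{colim}\, K'$ and use Lemma \ref{lemmaFactoCompactObjects} to factor $f \circ \phi_X = \phi'_Y \circ g_X$ for some $g_X : K(X) \to K'(Y)$; plugging this into the defining diagram for $\Theta_{(V',t')}$ and using naturality of $t'$ together with the fact that $f$ commutes with half-braidings, one obtains $\Theta_{(V',t')} \circ f \circ \phi_X = f \circ \Theta_{(V,t)} \circ \phi_X$ for every $X$, whence the naturality follows by universality.

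The substantive step is item 4. To compute $\Theta_{\mathbf{V}_1 \otimes \mathbf{V}_2}$ I would use the product colimit presentation \eqref{colimPresOfMonProd}, namely $V_1 \otimes V_2 = \mathrm{colim}_{(X_1,X_2) \in \mathcal{I}_1 \times \mathcal{I}_2}\, K_1(X_1) \otimes K_2(X_2)$ with universal cocone $\phi^1 \otimes \phi^2$, together with the fact that $K_1(X_1) \otimes K_2(X_2)$ is compact (Lemma \ref{lemmaCompactStableByMonoidalProduct}). Set $K := K_1(X_1) \otimes K_2(X_2)$ and recall that the half-braiding $t^{1,2}$ on $\mathbf{V}_1 \otimes \mathbf{V}_2$ is given by \eqref{defTensorProductInZC}. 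Plugging the definition of $t^{1,2}_K$ into \eqref{defBalZC} and rewriting $\widetilde{\mathrm{coev}}_K$ and $\mathrm{ev}_K$ of a tensor product in terms of the ones of $K_1(X_1)$, $K_2(X_2)$, one produces a long diagram which, by standard rigid braided diagrammatic moves, splits into two copies of the single-factor expression for $\Theta_{\mathbf{V}_i}$ connected by the composite $t^1_{V_2} \circ t^2_{V_1}$; invoking \eqref{braidingOnZC} recognizes the latter as $T_{\mathbf{V}_2,\mathbf{V}_1} \circ T_{\mathbf{V}_1,\mathbf{V}_2}$ evaluated on $V_1 \otimes V_2$.

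The main obstacle will be precisely this diagrammatic rewrite: the interaction between the twist-defined $\widetilde{\mathrm{coev}}_{K_1 \otimes K_2}$ (which by \eqref{dualityMorphByMeansOfTwist} involves $\theta_{K_1 \otimes K_2}$, hence a full crossing $c_{K_2,K_1}\circ c_{K_1,K_2}$ through the balance property \eqref{axiomsTwist}) and the tensor half-braiding $t^{1,2}_K$ must be unravelled consistently, using the half-braiding axiom \eqref{axiomHalfBraiding} to slide $t^1$ and $t^2$ past duality morphisms and the twists $\theta_{K_1}, \theta_{K_2}$. Once this careful bookkeeping is carried out on $K$ and the result is postcomposed with $\phi^1_{X_1} \otimes \phi^2_{X_2}$, both sides of the desired balance identity agree after precomposition with $\mathrm{id} \otimes (\phi^1_{X_1} \otimes \phi^2_{X_2})$, and universality of the cocone $\mathrm{id}_{V_1 \otimes V_2} \otimes (\phi^1 \otimes \phi^2)$ finishes the proof.
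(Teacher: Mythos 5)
Your overall strategy is the paper's: reduce everything to compact objects via universal cocones, then verify the identities diagrammatically using the half-braiding axiom \eqref{axiomHalfBraiding}, the twist identities \eqref{dualityMorphByMeansOfTwist}--\eqref{twistAsLoop}, and the universal property of the cocone at the end. Items 3 and 4 are described in essentially the way the paper carries them out (two small slips in item 4: the composite you should recognize is $t^2_{V_1} \circ t^1_{V_2} = T_{\mathbf{V}_2,\mathbf{V}_1} \circ T_{\mathbf{V}_1,\mathbf{V}_2}$, not $t^1_{V_2} \circ t^2_{V_1}$, and the cocone to invoke at the very end is $\phi^1 \otimes \phi^2$ itself, with no extra identity factor).

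The gap is in items 1 and 2, and it is the same gap in both. The morphism $\Theta_{(V,t)}$ is \emph{only} characterized by its precomposites $\Theta_{(V,t)} \circ \phi_Y$ with cocone arrows; it has no closed-form expression on all of $V$. Hence any occurrence of $\Theta_{(V,t)}$ (or of the candidate inverse) sitting after a morphism that is not visibly of the form $(\text{cocone arrow}) \circ (\text{something})$ cannot be evaluated until that morphism has first been factored through the relevant cocone, which is precisely what Lemma \ref{lemmaFactoCompactObjects} supplies. In item 1 the right-hand side $(\mathrm{id}_C \otimes \Theta_{(V,t)}) \circ t_C \circ (\phi_X \otimes \mathrm{id}_C)$ requires first writing $t_C \circ (\phi_X \otimes \mathrm{id}_C) = (\mathrm{id}_C \otimes \phi_Y) \circ \varphi$ for some $Y \in \mathcal{I}$ and $\varphi : K(X) \otimes C \to C \otimes K(Y)$, using that $K(X) \otimes C$ is compact and that $\mathrm{id}_C \otimes \phi$ is a universal cocone; your claim that ``both sides collapse via the half-braiding axiom'' skips this, and without it the computation of the right-hand side cannot even start. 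Likewise in item 2: to compute $\Theta_{(V,t)} \circ \overline{\Theta}_{(V,t)} \circ \phi_X$ one must first factor $\overline{\Theta}_{(V,t)} \circ \phi_X = \phi_Y \circ g$; a direct composition of the two explicit formulas is not available, because the defining formula for $\Theta_{(V,t)} \circ \phi_X$ contains $t_{K(X)}$, through which $\overline{\Theta}_{(V,t)}$ cannot be slid without presupposing that it commutes with the half-braiding (which is item 1 for a different morphism, and is not yet established). You do deploy exactly this factorization in item 3, so the ingredient is at hand; it must also be inserted in items 1 and 2, where it is indispensable. A secondary point in item 1: the paper's computation at one stage needs the half-braiding on the \emph{dual} of the compact object $C$, via the identity \eqref{inverseHBwithDuals} expressing $t_{C^*}$ in terms of $t_C^{-1}$ and the twisted duality morphisms; ``the half-braiding axiom applied to the caps and cups'' is too coarse a description to guarantee this step would be found.
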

\begin{proof}
1. We have to show that $t_C \circ \bigl( \Theta_{(V,t)} \otimes \mathrm{id}_C \bigr) = \bigl(\mathrm{id}_C \otimes \Theta_{(V,t)}\bigr) \circ t_C$ for all $C \in \mathcal{C}$. By Lemma \ref{lemmaExtensionNat}, it suffices to prove this when $C$ is a compact object. Take a colimit presentation of $V$ as above \eqref{defBalZC}. By assumption on $\mathcal{C}$, see \eqref{assumptionsCategoryC}, the object $K(X) \otimes C$ is compact for all $X \in \mathcal{I}$. Hence we can apply Lemma \ref{lemmaFactoCompactObjects} to the functor $K \otimes \mathrm{id}_C : \mathcal{I} \to \mathcal{C}_{\mathrm{cp}}$ and to the morphism $f = t_C \circ (\phi_X \otimes \mathrm{id}_C) \in \Hom_{\mathcal{C}}\bigl( K(X) \otimes C, C \otimes V \bigr)$ for each $X \in \mathcal{I}$. This yields the existence of $Y \in \mathcal{I}$ and $\varphi \in \Hom_{\mathcal{C}}\bigl(  K(X) \otimes C, C \otimes K(Y)\bigr)$ such that $t_C \circ (\phi_X \otimes \mathrm{id}_C) = (\mathrm{id}_C \otimes \phi_Y) \circ \varphi$. The following computation, whose details are explained below, is guided by the principle that only compact objects can be dualized, and hence all manipulations involving cups and caps must necessarily be done with them.
\begin{center}
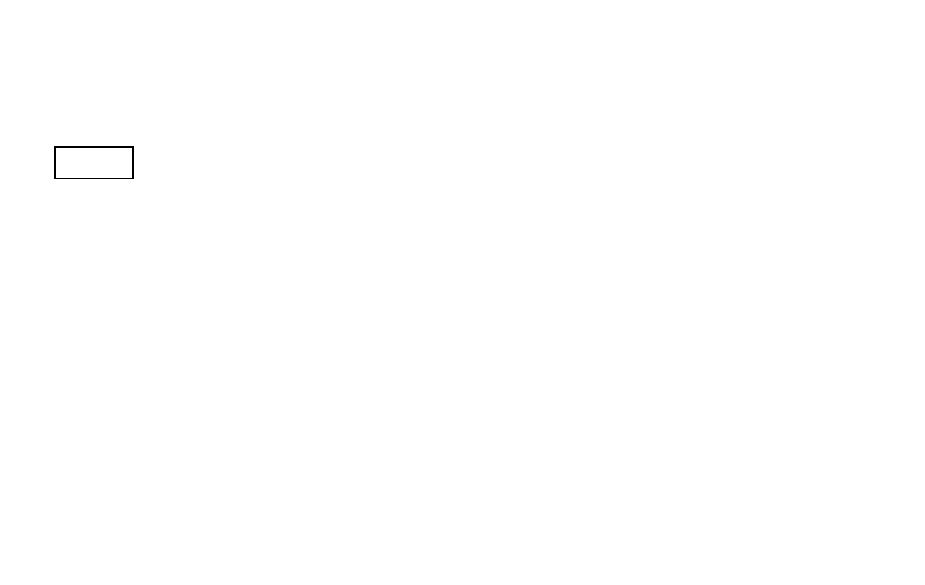
\end{center}
and the desired equality follows because $\phi \otimes \mathrm{id}_C$ is a universal cocone (recall from \eqref{assumptionsCategoryC} the assumption that $- \otimes C$ is cocontinuous) and the object $X \in \mathcal{I}$ is arbitrary. The first equality uses \eqref{defBalZC}, the second equality uses the half-braiding axiom \eqref{axiomHalfBraiding}, the third equality is a trick (nothing is changed after simplification), the fourth equality uses the morphism $\varphi$ introduced before the computation, the fifth equality is by naturality of cups/caps and of the half-braiding $t$, the sixth equality uses the fact that
\begin{equation}\label{inverseHBwithDuals}
t_{C^*} = \bigl( \widetilde{\mathrm{coev}}_C \otimes \mathrm{id}_V \bigr) \circ \bigl(\mathrm{id}_{C^*} \otimes t_C^{-1} \otimes \mathrm{id}_{C^*} \bigr) \circ \bigl( \mathrm{id}_V \otimes \widetilde{\mathrm{ev}}_V \bigr)
\end{equation}
which easily follows from the half-braiding axiom \eqref{axiomHalfBraiding} and naturality of $t$, the seventh equality uses the half-braiding property \eqref{axiomHalfBraiding}, the eighth equality uses naturality of $t$ and the zig-zag axiom between ev and coev, the ninth equality is by \eqref{defBalZC} and the last equality is by definition of $\varphi$.
\\2. Denote by $\overline{\Theta}_{(V,t)} : V \to V$ the proposed inverse. Take again a colimit presentation of $V$ made of compact objects as above \eqref{defBalZC}, and let $X \in \mathcal{I}$. By Lemma \ref{lemmaFactoCompactObjects}, there exists $Y \in \mathcal{I}$ and $g \in \Hom_{\mathcal{C}}(K(X),K(Y))$ such that $\overline{\Theta}_{(V,t)} \circ \phi_X = \phi_Y \circ g$. Then
\begin{center}
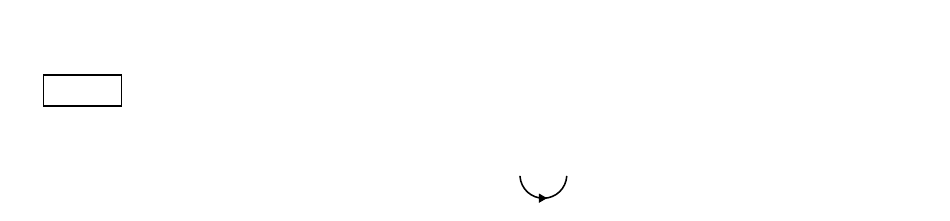
\end{center}
and the last term equals $\phi_X$. For the second equality we used \eqref{defBalZC}, for the third equality we used the naturality of $t$ and of cups/caps, the fourth equality is by definition of $g$ and of $\overline{\Theta}_{(V,t)}$, the fifth equality uses \eqref{inverseHBwithDuals}, the sixth equality uses the axiom of half-braidings \eqref{axiomHalfBraiding} and the last equality is by naturality of $t$. The last term equals $\phi_X$ and this is true for all $X \in \mathcal{I}$. We thus conclude that $\Theta_{(V,t)} \circ \overline{\Theta}_{(V,t)} = \mathrm{id}_V$ because $\phi$ is an universal cocone.
\\3. We have to prove naturality. Let $f \in \Hom_{\mathcal{Z}(\mathcal{C})}\bigl( (V_1,t^1), (V_2,t^2) \bigr)$ and take colimit presentations $V_i = \mathrm{colim}\bigl( K_i : \mathcal{I}_i \to \mathcal{C}_{\mathrm{cp}} \bigr)$ with universal cocones $\phi^i$ for $i=1,2$. Let $X \in \mathcal{I}_1$ and apply Lemma \ref{lemmaFactoCompactObjects} to the morphism $f \circ \phi^1_X : K_1(X) \to V_2$. This yields $Y \in \mathcal{I}_2$ and $g \in \Hom_{\mathcal{C}}\bigl( K_1(X), K_2(Y) \bigr)$ such that $f \circ \phi^1_X = \phi^2_Y \circ g$. We then have
\begin{center}
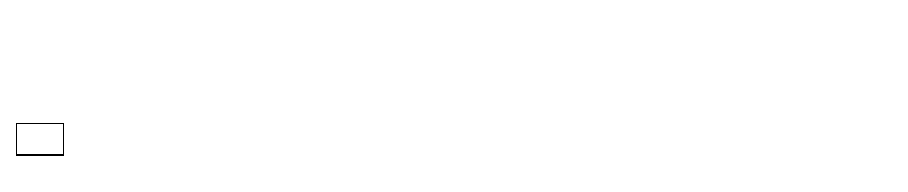
\end{center}
where the first and second equalities are by definition of $g$ and $\Theta$, the third equality is by naturality, the fourth is by definition of $g$, the fifth uses that $f$ is a morphism in $\mathcal{Z}(\mathcal{C})$ and the last is by definition of $\Theta$.
\\4. Write $\mathbf{V}_i = (V_i,t^i)$ and take colimit presentations of $V_1, V_2 \in \mathcal{C}$ as in the proof of the previous item. Consider the functor $K : \mathcal{I}_1 \times \mathcal{I}_2 \to \mathcal{C}_{\mathrm{cp}}$ given by $K(X,Y) = K_1(X) \otimes K_2(Y)$. Then by \eqref{colimPresOfMonProd} we have $V_1 \otimes V_2 = \mathrm{colim} \, K$, with universal cocone $\phi^1 \otimes \phi^2$ given by $(\phi^1 \otimes \phi^2)_{(X,Y)} = \phi^1_X \otimes \phi^2_Y$. Also let $\mathbf{V}_1 \otimes \mathbf{V}_2 = (V_1 \otimes V_2, t^{12})$ as defined in \eqref{defTensorProductInZC}. Then we have
\begin{center}
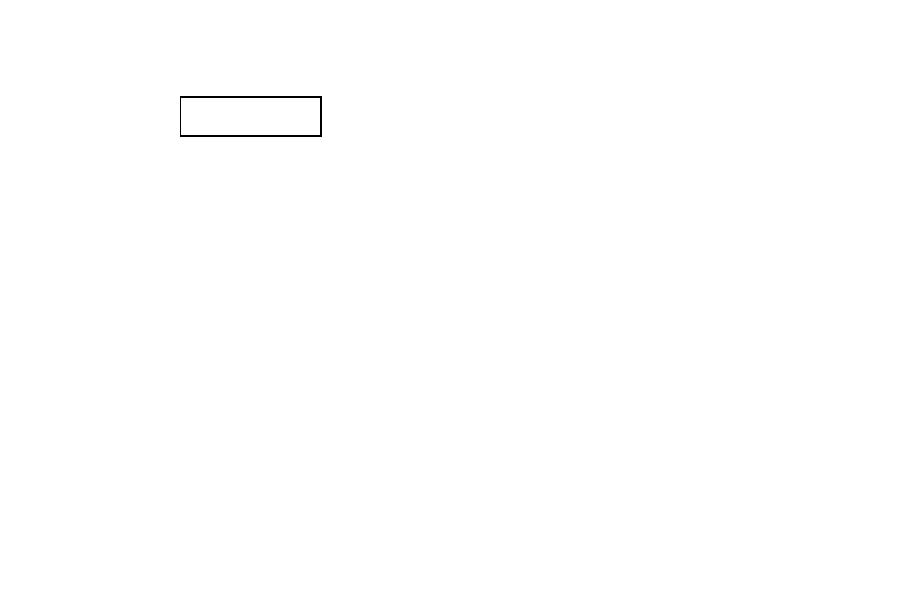
\end{center}
where the first equality is by definition of $\Theta$ in \eqref{defBalZC}, the second equality is by definition of $t^{12}$ in \eqref{defTensorProductInZC}, the third equality uses the half-braiding axiom \eqref{axiomHalfBraiding}, the fourth equality is by definition of $\Theta$, the fifth equality uses naturality of $t^2$, the sixth equality uses the half-braiding axiom and the last equality is by naturality $t^1$ and definition on $\Theta$. Since $\phi^1 \otimes \phi^2$ is a universal cocone the desired identity follows.
\end{proof}

\section{On \texorpdfstring{$\mathscr{L}$}{coend}-linear algebras in \texorpdfstring{$\mathrm{Comod}\text{-}\OO$}{categories of comodules}}\label{appendixLlinearComodH}
Here we give yet another equivalent definition for $\mathscr{L}$-linear algebras in $\mathrm{Comod}\text{-}\OO$ (recall that these have been described in \S\ref{subsec:BimodHcomod}). Hopefully this will facilitate the comparison with other works. We use notations introduced in \S\ref{subsecComodH} regarding right $\OO$-comodules.

\smallskip

\indent Recall that a (right-right) {\em Yetter--Drinfeld module}\footnote{Also known as a {\em crossed module}, e.g. in \cite[Prop.\,7.1.6]{Majid}.} over $\OO$ is a $k$-vector space $M$ which is both a right $\OO$-comodule and a right $\OO$-module and such that
\begin{equation}\label{axiomYDmodule}
\forall \, m \in M, \:\: \forall \, \varphi \in \OO, \quad (m \cdot \varphi)_{[0]} \otimes (m \cdot \varphi)_{[1]} = m_{[0]} \cdot \varphi_{(2)} \otimes S(\varphi_{(1)}) m_{[1]} \varphi_{(3)}.
\end{equation}
They form a monoidal category $\mathcal{Y}\mathcal{D}^{\OO}_{\OO}$, morphisms being linear maps which commute with both actions and coactions. There is a well-known monoidal isomorphism
\begin{equation}\label{isoYDZComodH}
\mathcal{Y}\mathcal{D}^{\OO}_{\OO} \cong \mathcal{Z}(\mathrm{Comod}\text{-}\OO)
\end{equation}
given by
\begin{equation}\label{FromYDHHToZComodH}
\mathcal{Y}\mathcal{D}^{\OO}_{\OO} \ni M \mapsto (M,t) \in \mathcal{Z}(\mathrm{Comod}\text{-}\OO) \quad \text{with } t_X(m \otimes x) = x_{[0]} \otimes m \cdot x_{[1]}
\end{equation}
for all $X \in \mathrm{Comod}\text{-}\OO$, $m \in M$ and $x \in X$. Note that $t_X^{-1}(x \otimes m) = m \cdot S^{-1}(x_{[1]}) \otimes x_{[0]}$. Conversely
\begin{equation}\label{FromZComodHToYDHH}
\mathcal{Z}(\mathrm{Comod}\text{-}\OO) \ni (V,t) \mapsto V \in \mathcal{Y}\mathcal{D}^{\OO}_{\OO} \quad \text{with } v \cdot \varphi = (\mathrm{id}_V \otimes \varepsilon_{\OO}) \circ t_{\OO}^{-1}\bigl(S(\varphi) \otimes v\bigr)
\end{equation}
for all $v \in V$, $\varphi \in \OO$ and where the $\OO$-comodule isomorphism $t_{\OO} : V \otimes \OO \to \OO \otimes V$ is obtained by endowing $\OO$ with the $\OO$-comodule structure coming from its coproduct (by definition we are given $t_X : V \otimes X \overset{\sim}{\to} X \otimes V$ for all $X \in \mathrm{Comod}\text{-}\OO$).

\smallskip

\indent Through the isomorphism \eqref{isoYDZComodH}, a half-braided algebra in $\mathrm{Comod}\text{-}\OO$ becomes a right $\OO$-comodule-algebra $A$ endowed with a right action $\cdot$ of $\OO$ such that $A$ is a Yetter--Drinfeld module and it holds
\begin{equation}\label{eq:crossedalgebra}
 \mathcal{R}\bigl(a_{[1]} \otimes \varphi_{(1)}\bigr) \, a_{[0]} (b \cdot \varphi_{(2)}) = (a b) \cdot \varphi = \mathcal{R}^{-1}\bigl( \varphi_{(2)} \otimes b_{[1]}  \bigr) \, (a \cdot \varphi_{(1)})b_{[0]} 
\end{equation}
for all $a,b \in A$ and $\varphi \in \OO$. Let us call this a {\em YD-comodule-algebra}.\footnote{Note that this notion is different from what is called a Yetter--Drinfeld algebra in the literature (see e.g. \cite[\S 3]{taipe}).} The next lemma gives an equivalent definition of these objects:

\begin{lemma}\label{charactYDComodAlgThroughPseudoQMM}
1. Let $A$ be a YD-comodule-algebra and consider the linear map $d : \OO \to A$ given by $d(\varphi) = 1_A \cdot \varphi$. It satisfies
\begin{align}
&d(\varphi)_{[0]} \otimes d(\varphi)_{[1]} = d(\varphi_{(2)}) \otimes S(\varphi_{(1)})\varphi_{(3)},\label{equivariancePseudoQMMforYDALg}\\
&d(\varphi\psi) = \mathcal{R}\bigl( S(\varphi_{(1)})\varphi_{(3)} \otimes \psi_{(1)} \bigr) \, d(\varphi_{(2)}) d(\psi_{(2)}),\label{OlinearPseudoQMM}\\
&\mathcal{R}\bigl( a_{[1]} \otimes \varphi_{(1)} \bigr) \, a_{[0]} d(\varphi_{(2)}) = \mathcal{R}^{-1}\bigl( \varphi_{(2)} \otimes a_{[1]} \bigr) \, d(\varphi_{(1)})a_{[0]} \label{exchangePseudoQMMforYDALg}
\end{align}
for all $\varphi,\psi \in \OO$ and $a \in A$.
\\2. Conversely let $A$ be a right $\OO$-comodule-algebra and let $d : \OO \to A$ be a linear map which satisfies \eqref{equivariancePseudoQMMforYDALg}, \eqref{OlinearPseudoQMM} and \eqref{exchangePseudoQMMforYDALg}. For all $a \in A$ and $\varphi \in \OO$ define
\begin{equation}\label{actionFromPseudoQMMComodAlg}
a \cdot \varphi = \mathcal{R}\bigl(a_{[1]} \otimes \varphi_{(1)}\bigr) \, a_{[0]}d(\varphi_{(2)}).
\end{equation}
Then $A$ is a YD-comodule-algebra.
\end{lemma}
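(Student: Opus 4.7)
The plan is to carry out both directions by direct manipulation in Sweedler notation, exploiting the special role of $1_A$ (whose coaction is trivial by \eqref{axiomComodAlg}).

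For item 1, setting $d(\varphi) = 1_A\cdot\varphi$, the coaction formula \eqref{equivariancePseudoQMMforYDALg} is an immediate application of the Yetter--Drinfeld axiom \eqref{axiomYDmodule} with $m=1_A$. To obtain \eqref{exchangePseudoQMMforYDALg}, I would specialize the compatibility \eqref{eq:crossedalgebra} in two different ways: putting $b=1_A$ into the first equality yields
\[ a\cdot\varphi \,=\, \mathcal{R}\bigl(a_{[1]}\otimes\varphi_{(1)}\bigr)\,a_{[0]}\,d(\varphi_{(2)}), \]
which is the formula \eqref{actionFromPseudoQMMComodAlg} (useful for item 2), while putting $a=1_A$ into the second equality yields
\[ b\cdot\varphi \,=\, \mathcal{R}^{-1}\bigl(\varphi_{(2)}\otimes b_{[1]}\bigr)\,d(\varphi_{(1)})\,b_{[0]}. \]
Equating these two expressions for $a\cdot\varphi$ gives precisely \eqref{exchangePseudoQMMforYDALg}. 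Finally, \eqref{OlinearPseudoQMM} follows from the associativity of the module action: $d(\varphi\psi) = (1_A\cdot\varphi)\cdot\psi = d(\varphi)\cdot\psi$, and we rewrite the right-hand side using the just-derived formula \eqref{actionFromPseudoQMMComodAlg} applied with $a=d(\varphi)$, replacing the coaction of $d(\varphi)$ via \eqref{equivariancePseudoQMMforYDALg}.

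For item 2, I define the action by \eqref{actionFromPseudoQMMComodAlg}. The unitality $a\cdot 1_{\OO}=a$ reduces to $d(1_{\OO})=1_A$, which I would either include as an implicit normalization of the statement or deduce from the interplay between \eqref{OlinearPseudoQMM} and the fact that $A$ is unital (one checks that $d(1_{\OO})$ is central and idempotent, and acts as the identity). Associativity $(a\cdot\varphi)\cdot\psi=a\cdot(\varphi\psi)$ is then a direct Sweedler calculation that combines \eqref{OlinearPseudoQMM} with the hexagon identities for $\mathcal{R}$ (namely $\mathcal{R}(\varphi\otimes\psi\chi)=\mathcal{R}(\varphi_{(1)}\otimes\chi)\mathcal{R}(\varphi_{(2)}\otimes\psi)$ and its mirror). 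The Yetter--Drinfeld axiom \eqref{axiomYDmodule} is verified by expanding $(a\cdot\varphi)_{[0]}\otimes(a\cdot\varphi)_{[1]}$ using \eqref{actionFromPseudoQMMComodAlg}, the comodule-algebra axiom $(xy)_{[1]}=x_{[1]}y_{[1]}$, and the coaction formula \eqref{equivariancePseudoQMMforYDALg} for $d$.

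Finally, for the compatibility \eqref{eq:crossedalgebra}: the first equality follows by expanding both $(ab)\cdot\varphi$ and $a_{[0]}(b\cdot\varphi_{(2)})$ with \eqref{actionFromPseudoQMMComodAlg}, using the comodule-algebra axiom on $(ab)_{[1]}=a_{[1]}b_{[1]}$, so that the $\mathcal{R}(a_{[1]}b_{[1]}\otimes\varphi_{(1)})$ factor separates via a hexagon axiom of $\mathcal{R}$ into the required combination. The second equality is then obtained from the first by substituting \eqref{exchangePseudoQMMforYDALg}, which lets one replace $a_{[0]}\,d(\varphi)$ by $\mathcal{R}\text{-factors}\cdot d\,a_{[0]}$ and reorganize.

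The main obstacle is bookkeeping: both the associativity of the action and the first equality of \eqref{eq:crossedalgebra} rely on the quasitriangular hexagon identities together with the fact that \eqref{equivariancePseudoQMMforYDALg} inserts three Sweedler components of $\varphi$, so each verification requires carefully tracking up to five Sweedler indices and applying $\mathcal{R}$-matrix axioms several times. Everything else is straightforward substitution once \eqref{actionFromPseudoQMMComodAlg} is established in item 1 and taken as the definition in item 2.
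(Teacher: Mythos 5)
Your proposal follows essentially the same route as the paper's proof: item 1 by specializing the Yetter--Drinfeld axiom \eqref{axiomYDmodule} and the compatibility \eqref{eq:crossedalgebra} at $1_A$, with \eqref{OlinearPseudoQMM} extracted from $d(\varphi\psi)=d(\varphi)\cdot\psi$ together with \eqref{equivariancePseudoQMMforYDALg}; and item 2 by taking \eqref{actionFromPseudoQMMComodAlg} as the definition of the action and verifying associativity, the Yetter--Drinfeld condition and \eqref{eq:crossedalgebra} through Sweedler computations using the hexagon identities for $\mathcal{R}$, exactly as the paper does. The one caveat is your aside on unitality: the hypotheses only yield that $d(1_{\OO})$ is a central idempotent acting as a unit on the image of $d$, which does not by itself force $d(1_{\OO})=1_A$, so your claimed deduction there is not automatic; since the paper's own proof silently omits the unitality check as well, this does not constitute a divergence from the published argument, but if you keep that remark you should state $d(1_{\OO})=1_A$ as an explicit normalization rather than as a consequence.
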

\begin{proof}
1. \eqref{equivariancePseudoQMMforYDALg} is obtained by taking $m=1_A$ in \eqref{axiomYDmodule}. \eqref{OlinearPseudoQMM} is obtained as follows:
\begin{align*}
d(\varphi\psi) &= (1_A \cdot \varphi) \cdot \psi = d(\varphi) \cdot \psi = \bigl( d(\varphi)1_A \bigr) \cdot \psi \overset{\eqref{eq:crossedalgebra}}{=} \mathcal{R}\bigl( d(\varphi)_{[1]} \otimes \psi_{(1)} \bigr) \, d(\varphi)_{[0]} \, (1_A \cdot \psi_{(2)})\\
&= \mathcal{R}\bigl( d(\varphi)_{[1]} \otimes \psi_{(1)} \bigr) \, d(\varphi)_{[0]} \, d(\psi_{(2)}) \overset{\eqref{equivariancePseudoQMMforYDALg}}{=} \mathcal{R}\bigl( S(\varphi_{(1)}) \varphi_{(3)} \otimes \psi_{(1)} \bigr) \, d(\varphi_{(2)}) \, d(\psi_{(2)})
\end{align*}
For \eqref{exchangePseudoQMMforYDALg} it suffices to note from \eqref{eq:crossedalgebra} that
\[ \mathcal{R}\bigl( a_{[1]} \otimes \varphi_{(1)} \bigr) \, a_{[0]} \bigl(1_A \cdot \varphi_{(2)} \bigr) = (a1_A) \cdot \varphi = (1_Aa) \cdot \varphi = \mathcal{R}^{-1}\bigl( \varphi_{(2)} \otimes a_{[1]} \bigr) \, \bigl( 1_A \cdot \varphi_{(1)} \bigr) a_{[0]}. \]
2. Let us first prove that \eqref{actionFromPseudoQMMComodAlg} defines a right $\OO$-action:
\begin{align*}
a \cdot (\varphi \psi) &= \mathcal{R}\bigl(a_{[1]} \otimes \varphi_{(1)}\psi_{(1)}\bigr) \, a_{[0]}d(\varphi_{(2)}\psi_{(2)})\\
&\overset{\eqref{OlinearPseudoQMM} + \eqref{equivariancePseudoQMMforYDALg}}{=} \mathcal{R}\bigl(a_{[1]} \otimes \varphi_{(1)}\psi_{(1)}\bigr) \mathcal{R}\bigl( d(\varphi_{(2)})_{[1]} \otimes \psi_{(2)} \bigr) \, a_{[0]}d(\varphi_{(2)})_{[0]}d(\psi_{(3)})\\
&= \mathcal{R}\bigl( a_{[2]} \otimes \varphi_{(1)} \bigr) \mathcal{R}\bigl( a_{[1]}d(\varphi_{(2)})_{[1]} \otimes \psi_{(1)} \bigr) \, a_{[0]} d(\varphi_{(2)})_{[0]} d(\psi_{(2)})\\
&\overset{\eqref{actionFromPseudoQMMComodAlg}}{=} \mathcal{R}\bigl( a_{[1]} \otimes \varphi_{(1)} \bigr) \, \bigl( a_{[0]}d(\varphi_{(2)}) \bigr) \cdot \psi \overset{\eqref{actionFromPseudoQMMComodAlg}}{=} (a \cdot \varphi) \cdot \psi
\end{align*}
where the third equality uses co-quasitriangularity of $\mathcal{R}$, see \cite[Def.\,2.21]{Majid}. Let us now show that $A$ is a Yetter--Drinfeld module, \textit{i.e.} condition \eqref{axiomYDmodule}:
\begin{align*}
&(a \cdot \varphi)_{[0]} \otimes (a \cdot \varphi)_{[1]} \overset{\eqref{actionFromPseudoQMMComodAlg}}{=} \mathcal{R}\bigl(a_{[1]} \otimes \varphi_{(1)}\bigr) \, \bigl( a_{[0]}d(\varphi_{(2)}) \bigr)_{[0]} \otimes \bigl( a_{[0]}d(\varphi_{(2)}) \bigr)_{[1]}\\
&\overset{\eqref{axiomComodAlg}}{=} \mathcal{R}\bigl(a_{[2]} \otimes \varphi_{(1)}\bigr) \, a_{[0]}d(\varphi_{(2)})_{[0]} \otimes a_{[1]}d(\varphi_{(2)})_{[1]} \overset{\eqref{equivariancePseudoQMMforYDALg}}{=} \mathcal{R}\bigl(a_{[2]} \otimes \varphi_{(1)}\bigr) \, a_{[0]}d(\varphi_{(3)}) \otimes a_{[1]}S(\varphi_{(2)})\varphi_{(4)}\\
&= \mathcal{R}\bigl(a_{[1]} \otimes \varphi_{(2)}\bigr) \, a_{[0]}d(\varphi_{(3)}) \otimes S(\varphi_{(1)})a_{[2]}\varphi_{(4)} \overset{\eqref{actionFromPseudoQMMComodAlg}}{=} a_{[0]} \cdot \varphi_{(2)} \otimes S(\varphi_{(1)})a_{[2]}\varphi_{(3)}
\end{align*}
where the unlabelled equality uses properties of a co-R-matrix, more precisely eqs. (2.7) and (2.9) in \cite{Majid}. Finally, the first equality in condition \eqref{eq:crossedalgebra} is obtained as follows
\begin{align*}
&(ab) \cdot \varphi \overset{\eqref{actionFromPseudoQMMComodAlg}}{=} \mathcal{R}\bigl((ab)_{[1]} \otimes \varphi_{(1)}\bigr) \, (ab)_{[0]}d(\varphi_{(2)}) \overset{\eqref{axiomComodAlg}}{=} \mathcal{R}\bigl(a_{[1]}b_{[1]} \otimes \varphi_{(1)}\bigr) \, a_{[0]}b_{[0]}d(\varphi_{(2)})\\
&= \mathcal{R}\bigl(a_{[1]} \otimes \varphi_{(1)}\bigr) \mathcal{R}\bigl(b_{[1]} \otimes \varphi_{(2)}\bigr) \, a_{[0]}b_{[0]}d(\varphi_{(3)}) \overset{\eqref{actionFromPseudoQMMComodAlg}}{=} \mathcal{R}\bigl(a_{[1]} \otimes \varphi_{(1)}\bigr) \, a_{[0]} (b \cdot \varphi_{(2)}). 
\end{align*}
The second equality is obtained similarly, but now by writing the right action as $a \cdot \varphi = \mathcal{R}^{-1}\bigl( \varphi_{(2)} \otimes a_{[1]} \bigr) \, d(\varphi_{(1)})a_{[0]}$ thanks to \eqref{exchangePseudoQMMforYDALg}.
\end{proof}
\noindent We note that the commutation relation  \eqref{exchangePseudoQMMforYDALg} can be equivalently written as
\[ d(\varphi)a = \mathcal{R}\bigl(a_{[1]} \otimes \varphi_{(1)}\bigr) \mathcal{R}\bigl( \varphi_{(3)} \otimes a_{[2]} \bigr) \, a_{[0]}d(\varphi_{(2)}). \]

\indent Let $\mathbf{B} = (B,\smallblacktriangleright,\smallblacktriangleleft)$ be a $(A_2,A_1)$-bimodule over YD-comodule-algebras $A_1$, $A_2$ and for $i = 1,2$ denote by $d_i : \OO \to A_i$ the morphism of algebras defined in Lemma \ref{charactYDComodAlgThroughPseudoQMM}. The correspondence \eqref{FromYDHHToZComodH} gives half-braidings $t^i : A_i \otimes - \Rightarrow - \otimes A_i$ (for $i=1,2$) from which the half-braidings $\mathrm{hbl}^{\mathbf{B}}, \mathrm{hbr}^{\mathbf{B}} : B \otimes - \Rightarrow - \otimes B$ are defined in \eqref{halfBraidingInTermOfAction}. We get
\begin{align*}
\mathrm{hbl}^{\mathbf{B}}_X(b \otimes x) &= \mathcal{R}^{-1}\bigl( x_{[2]} \otimes b_{[1]} \bigr) \, x_{[0]} \otimes d_2(x_{[1]}) \smallblacktriangleright b_{[0]},\\
\mathrm{hbr}^{\mathbf{B}}_X(b \otimes x) &= \mathcal{R}\bigl( b_{[1]} \otimes x_{[1]} \bigr) \, x_{[0]} \otimes b_{[0]} \smallblacktriangleleft d_1(x_{[2]})
\end{align*}
for all $X \in \mathrm{Comod}\text{-}\OO$, $x \in X$ and $b \in B$. Using \eqref{INVERSEhalfBraidingInTermOfAction}, their inverses are given by $\mathrm{hbl}^{\mathbf{B}-1}_X(x \otimes b) = \mathcal{R}\bigl( x_{[1]} \otimes b_{[1]} \bigr) \, d_2\bigl( S^{-1}(x_{[2]}) \bigr) \smallblacktriangleright b_{[0]} \otimes x_{[0]}$ and $\mathrm{hbr}^{\mathbf{B}-1}_X(x \otimes b) = \mathcal{R}^{-1}\bigl( b_{[1]} \otimes x_{[2]} \bigr) \, b_{[0]} \smallblacktriangleleft d_1\bigl( S^{-1}(x_{[1]}) \bigr) \otimes x_{[0]}$. Now the correspondence  \eqref{FromZComodHToYDHH} gives two right actions of $\cdot^l, \cdot^r : B \otimes \OO \to B$ coming from $\mathrm{hbl}^{\mathbf{B}}$ and $\mathrm{hbr}^{\mathbf{B}}$ respectively. A short computation reveals that
\[ b \cdot^l \varphi = \mathcal{R}^{-1}\bigl( \varphi_{(2)} \otimes b_{[1]} \bigr) \, d_2(\varphi_{(1)}) \smallblacktriangleright b_{[0]}, \qquad b \cdot^r \varphi = \mathcal{R}\bigl( b_{[1]} \otimes \varphi_{(1)} \bigr) \, b_{[0]} \smallblacktriangleleft d_1(\varphi_{(2)}). \]
for all $b \in B$ and $\varphi \in \OO$. Hence the bimodule $\mathbf{B}$ is hb-compatible if and only if the two right $\OO$-actions above are equal. We could call this condition {\em YD-compatibility}.


\begin{thebibliography}{99}
\bibitem[AR94]{AR} J. Ad\'amek, J. Rosick\'y, \emph{Locally Presentable and Accessible Categories}. London Math. Soc. Lecture Note Ser. 189, Cambridge University Press, Cambridge, 1994

\bibitem[Ale93]{Al} A.~Y. Alekseev, \emph{Integrability in the Hamiltonian Chern-Simons theory}. St. Petersburg Math. J. \textbf{6} (1995), no.~2, 241--253

\bibitem[As11]{As11} M. Asaeda, \emph{Tensor functors on a certain category constructed from spherical categories}. J. Knot Theory Ramifications \textbf{20} (2011), no.1, 1--46

\bibitem[BFR23]{BFR} S. Baseilhac, M. Faitg, P. Roche, \emph{Noetherian and affine properties of quantum moduli and $\mathfrak{g}$-skein algebras}. Quantum Topol. (2025), published online first, DOI 10.4171/QT/245

\bibitem[BD23]{BD} A. Beliakova, M. De Renzi, {\em Kerler-Lyubashenko functors on $4$-dimensional $2$-handlebodies}. Int. Math. Res. Not. IMRN \textbf{2024}, no.~13, 10005--10080

\bibitem[BBDP23]{BBDP} A. Beliakova, I. Bobtcheva, M. De Renzi, R. Piergallini, {\em Algebraic presentation of 4-dimensional 2-handlebodies and 3-dimensional cobordisms}. 2023, arxiv:2312.15986

\bibitem[BJS21]{BJS} A. Brochier, D. Jordan, N. Snyder, {\em On dualizability of braided tensor categories}. Compos. Math. \textbf{157} (2021), no.~3, 435--483

\bibitem[BP12]{BP} I. Bobtcheva, R. Piergallini, {\em On 4-Dimensional 2-Handlebodies and 3-Manifolds}. J. Knot Theory Ramifications \textbf{21}
(2012), no.~12, 1250110, 230 pp

\bibitem[BZBJ18]{BZBJ} D. Ben-Zvi, A. Brochier, D. Jordan, {\em Integrating quantum groups over surfaces}. J. Topol. \textbf{11} (2018), no.~4, 874--917

\bibitem[BZBJ18b]{BZBJ2} D. Ben-Zvi, A. Brochier, D. Jordan, {\em Quantum character varieties and braided module categories}. Selecta Math. (N.S.) \textbf{24} (2018), no.~5, 4711--4748
 
\bibitem[CL22]{CL} F. Costantino, T. T. Q. L\^e, {\em Stated skein algebras of surfaces}. J. Eur. Math. Soc. (JEMS) \textbf{24} (2022), no.~12, 4063--4142

\bibitem[CL23]{CL3Man} F. Costantino, T.T.Q. L\^e, {\em Stated skein modules of 3-manifolds and TQFT}. \emph{J. Inst. Math. Jussieu} \textbf{24} (2025), no.~3, 663--703

\bibitem[CKL]{CKL} F. Costantino, J. Korinman, T.T.Q. L\^e, {\em Stated skein modules for Tannakian (half) ribbon categories}. In preparation

\bibitem[Coo23]{Coo} J. Cooke, {\em Excision of skein categories and factorisation homology}. Adv. Math. \text{414} (2023), Paper No.~108848, 51 pp

\bibitem[EGNO]{EGNO} P. Etingof, S. Gelaki, D. Nikshych, V. Ostrik, \emph{Tensor categories}. Math. Surveys Monogr. 205, American Mathematical Society, Providence, RI, 2015

\bibitem[E+11]{RepTh} P. Etingof, O. Golberg, S. Hensel, T. Liu, A. Schwendner, D. Vaintrob, E.Yudovina, S. Gerovitch, \emph{Introduction to Representation Theory}. Stud. Math. Libr. 59,
American Mathematical Society, Providence, RI, 2011

\bibitem[Fai20]{F} M. Faitg, {\em Projective representations of mapping class groups in combinatorial quantization}. Comm. Math. Phys. \textbf{377} (2020), no.~1, 161--198

\bibitem[FGR19]{FGR} V. Farsad, A.M. Gainutdinov, I. Runkel, {\em $SL(2,\mathbb{Z})$-action for ribbon quasi-Hopf algebras}. J. Algebra \textbf{522} (2019), 243--308

\bibitem[GJS23]{GJS} S. Gunningham, D. Jordan, P. Safronov, {\em The finiteness conjecture for skein modules}. Invent. Math. \textbf{232} (2023), no.~1, 301--363

\bibitem[JR24]{JR} T. Johnson-Freyd, D. Reutter, {\em Minimal nondegenerate extensions}. J. Amer. Math. Soc. \textbf{37} (2024), no.~1, 81--150

\bibitem[Hai22]{Ha} B. Ha\"ioun, {\em Relating stated skein algebras and internal skein algebras}. SIGMA Symmetry Integrability Geom. Methods Appl. \textbf{18} (2022), Paper no.~042, 39 pages.

\bibitem[Kas95]{kassel} C. Kassel, \emph{Quantum Groups}. Grad. Texts in Math. 155, Springer-Verlag, New York, 1995

\bibitem[KL01]{KL} T. Kerler, V.V. Lyubashenko, \emph{Non-Semisimple Topological Quantum Field Theories for 3-Manifolds with Corners}. Lecture Notes in Math. 1765, Springer-Verlag, Berlin, 2001

\bibitem[Ker03]{kerler} T. Kerler, Towards an {\em Algebraic Characterization of 3-Dimensional Cobordisms}. In \emph{Diagrammatic morphisms and applications (San Francisco, CA, 2000)}, pp. 141--173, Contemp. Math. 318, American Mathematical Society, Providence, RI, 2003

\bibitem[Le18]{LeTriDe} T. T. Q. L\^e, {\em Triangular decomposition of skein algebras}. Quantum Topol. \textbf{9} (2018), no.~3,  591--632 

\bibitem[LS22]{LeSi} T. T. Q. L\^e, A.S. Sikora, {\em Stated $\mathrm{SL}(n)$-skein modules and algebras}. J. Topol. \textbf{17} (2024), no.~3, Paper no.~e12350, 93 pp

\bibitem[Lu93]{Lu} J.-H. Lu, {\em Moment maps at the quantum level}. Comm. Math. Phys. \textbf{157} (1993), no.~2, 389--404.

\bibitem[ML98]{MLCat} S. Mac Lane, \emph{Categories for the Working Mathematician}. Second edition. Grad. Texts in Math. 5, Springer-Verlag, New York, 1998

\bibitem[Maj93]{majBrGr} S. Majid, {\em Braided groups}. J. Pure Appl. Algebra \textbf{86} (1993), no.~2, 187--221

\bibitem[Maj95]{Majid} S. Majid, \emph{Foundations of Quantum Group Theory}, Cambridge University Press, Cambridge, 1995

\bibitem[Mon93]{Mon} S. Montgomery, \emph{Hopf algebras and their actions on rings}, CBMS Regional Conf. Ser. in Math. 82, Published for the Conference Board of the Mathematical Sciences, Washington, DC; by the American Mathematical Society, Providence, RI, 1993

\bibitem[Lyu95a]{lyu} V. Lyubashenko, {\em Modular transformations for tensor categories}. J. Pure Appl. Algebra \textbf{98} (1995), no.~3, 279--327

\bibitem[Lyu95b]{lyuMCG} V. Lyubashenko, {\em Invariants of 3-manifolds and representations of mapping class groups via quantum groups at roots of unity}. Comm. Math. Phys. \textbf{172} (1995), no.~3,  467--516

\bibitem[NS98]{NS} M. Neuchl, P. Schauenburg, {\em Reconstruction in braided categories and a notion of commutative bialgebra}. J. Pure. Appl. Algebra \textbf{124} (1998), no.~1-3, 241--259

\bibitem[Por06]{porst} H-E. Porst, {\em On corings and comodules}. Arch. Math. (Brno) \textbf{42} (2006), no.~4, 419--425

\bibitem[RS88]{RS} N. Reshetikhin, M.A. Semenov-Tian-Shansky, {\em Quantum R-matrices and factorization problems}. J. Geom. Phys. \textbf{5} (1988), no.~4, 533--550

\bibitem[RT90]{RT} N. Yu. Reshetikhin, V. G. Turaev, {\em Ribbon graphs and their invariants derived from quantum groups}. Comm. Math. Phys. \textbf{127} (1990), no.~1, 1--26

\bibitem[Rie16]{riehl} E. Riehl, \emph{Category Theory in Context}, Aurora Dover Mod. Math Orig.,
Dover Publications, Mineola, NY, 2016. Freely available at \url{https://emilyriehl.github.io/files/context.pdf}

\bibitem[Saf19]{safronov} P. Safronov, {\em A categorical approach to quantum moment maps}. Theory Appl. Categ. \textbf{37} (2021), Paper no.~24, 818--862

\bibitem[Sch07]{schauenburg} P. Schauenburg, {\em Central braided Hopf algebras}. In \emph{Hopf algebras and generalizations}, 117–147, Contemp. Math. 441, American Mathematical Society, Providence, RI, 2007

\bibitem[Sch10]{sch2} P. Schauenburg, {\em Braided bi-Galois theory II: The cocommutative case}. J. Algebra \textbf{324} (2010), no.~11, 3199--3218

\bibitem[Shi19]{shimizu} K. Shimizu, {\em Non-degeneracy conditions for braided finite tensor categories}. Adv. Math. \textbf{355} (2019), 106778, 36 pp

\bibitem[Tai22]{taipe} F. Taipe, {\em Notes on Yetter-Drinfeld algebras over Hopf algebras}. 2022, arxiv:2211.02189

\bibitem[VV10]{VV} M. Varagnolo, E. Vasserot, {\em Double affine Hecke algebras at roots of unity}. Represent. Theory \textbf{14} (2010), 510--600

\bibitem[Wis75]{wis} M.B. Wischnewsky, {\em On linear representations of affine groups I}. Pacific J. Math. \textbf{61} (1975), no.~2, 551--572

\end{thebibliography}
\end{document}